\newtheorem{theorem}{Theorem}
\newtheorem{definition}{Definition}
\newcommand{\ssize}{\text{size\,}}
\newcommand{\eenergy}{\text{energy\,}}
\newtheorem{lemma}{Lemma}
\newtheorem{corollary}{Corollary}
\newtheorem{proposition}{Proposition}
\newtheorem*{notation}{Notation:}
\newcommand{\sssize}{\widetilde{\text{size}}\,}
\newcommand{\one}{\mathbf{1}}
\newcommand{\dist}{\text{ dist }}
\newcommand{\rr}{\mathbb}
\newcommand{\ii}{\mathscr}
\newcommand{\ci}{\tilde{\chi}}
\newcommand{\ds}{\displaystyle}
\newcommand{\vBHT}{\overrightarrow{BHT}}
\newtheorem{question}{\underline{Question}}
\newtheorem*{main*}{\underline{Induction statement}}
\newcommand{\lft}{\left|}
\newcommand{\rg}{\right|}
\newcommand{\ic}{\mathcal}
\newcommand{\supp}{\text{supp\,}}
\newtheoremstyle{dotless}{}{}{\itshape}{}{\bfseries}{}{ }{}
\theoremstyle{dotless}
\newtheorem*{remark}{Remark:}
\author{Cristina Benea}
\address{Cristina Benea, Universit\'{e} de Nantes, Laboratoire Jean Leray, Nantes 44322, France}
\email{cristina.benea@univ-nantes.fr}
\author[Camil Muscalu]{Camil Muscalu*}
\thanks{$^*$The author is also a Member of the ``Simion Stoilow" Institute of Mathematics of the Romanian Academy}
\address{Camil Muscalu, Department of Mathematics, Cornell University, Ithaca, NY 14853, USA}
\email{camil@math.cornell.edu}
 \title{Multiple Vector Valued Inequalities via the Helicoidal Method
} 
\begin{document}

\begin{abstract}
We develop a new method of proving vector-valued estimates in harmonic analysis, which we like to call ``the helicoidal method". As a consequence of it, we are able to give affirmative answers to several questions that have been circulating for some time. In particular, we show that the tensor product $BHT \otimes \Pi$ between the bilinear Hilbert transform $BHT$ and a paraproduct $\Pi$ satisfies the same $L^p$ estimates as the $BHT$ itself, solving completely a problem introduced in \cite{bi-parameter_paraproducts}. Then, we prove that for ``locally $L^2$ exponents" the corresponding vector-valued $\vBHT$ satisfies (again) the same $L^p$ estimates as the $BHT$ itself. Before the present work there was not even a single example of such exponents. 

Finally, we prove a bi-parameter Leibniz rule in mixed norm $L^p$ spaces, answering a question of Kenig in nonlinear dispersive PDE.
\end{abstract}

 \maketitle 

\section{Introduction}
\label{introduction}

Vector-valued estimates for classical Calder\'{o}n-Zygmund operators are known from the work of Burkholder \cite{Bourk83},  Benedek, Calder\'{o}n and Panzone \cite{vv_convolution}, Rubio de Francia, Ruiz and Torrea \cite{vv_CZ}, to mention a few. A customary way of proving such vector-valued estimates is through weighted norm inequalities and extrapolation, as explained in \cite{RFCuervaBook}. Initially, the vector-valued approach unified the existing theory for maximal operators, square functions, and singular integrals. Later on, the setting was generalized to Banach spaces which have \emph{unconditional martingale difference} property, and it was shown by Bourgain \cite{Bou86} that this is in fact a necessary condition for this theory.

For bilinear operators however, the theory is far from being fully understood, even in the scalar case. In this paper, we study vector-valued estimates for the bilinear Hilbert transform and for paraproducts. Our initial motivation was an $AKNS$ system-related problem, which can be reduced to understanding a Rubio de Francia operator for iterated Fourier integrals. Because of the specific nature of this question, our general approach is concrete, rather than abstract. As much as possible, the present article is aiming to be self-contained.

Central to time-frequency analysis is the bilinear Hilbert transform operator, defined by 
\begin{equation*}
BHT(f, g)(x)=p.v. \int_{\rr{R}} f(x-t)g(x+t) \frac{dt}{t}.
\end{equation*}

This operator was first introduced by Calder\'{o}n, in connection with his work on the Cauchy integral on Lipschitz curves. $L^p$ estimates for $BHT$ were proved nearly thirty years later, by Lacey and Thiele, without establishing the optimality of the range.

\begin{theorem}[M. Lacey, C. Thiele \cite{initial_BHT_paper}] $BHT$ is a bounded bilinear operator from $L^p \times L^q$ into $L^s$, for any $1< p, q \leq \infty$, $0<s<\infty$, satisfying $\ds \frac{1}{p}+\frac{1}{q}=\frac{1}{s}$ and $\ds \frac{2}{3} <s < \infty$.

\psscalebox{.6 .6} 
{
\begin{pspicture}(-3,-5.2)(11.520909,3)
\rput[bl](4,-2){ \Large $\left(1, 0, 0 \right)$}
\rput[bl](4.55,-3.73205080756888){\Large $\left( 1, \frac{1}{2}, -\frac{1}{2}\right)$}
\rput[b](8,1.5){\Large $\left( 0, 0, 1\right)$}
\rput[br](11.8,-2){ \Large $\left( 0, 1, 0 \right)$}
\rput[bl](9.4,-3.73205080756888){\Large  $\left( \frac{1}{2}, 1, -\frac{1}{2}\right)$}
\pscustom[linecolor=black, linewidth=0.04]
{
\newpath
\moveto(1.2857143,-5.7638702)
}
\pscustom[linecolor=black, linewidth=0.04]
{
\newpath
\moveto(5.0,-5.7638702)
}
\psdiamond[linecolor=black, linewidth=0.04, dimen=outer](8,-2)(2,3.46410161513775)
\pspolygon[linecolor=black, linewidth=0.04, fillstyle=solid,fillcolor=lightgray](8,1.46410161513775)(10,-2)(9,-3.7)(7,-3.7)(6,-2)
\psline[linecolor=black, linewidth=0.04](6,-2)(10,-2)
\end{pspicture}
}

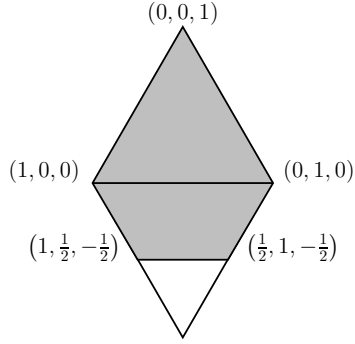
\captionof{figure}{Range for $BHT$ operator}

\end{theorem}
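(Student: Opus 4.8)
The plan is to prove this by the time--frequency method of Lacey and Thiele; I will only indicate the architecture.

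\textbf{Step 1: discretization.} Expanding the kernel $1/t$ into smooth pieces at dyadic scales and using the resulting Fourier localization, one writes, after pairing with a third function $f_3$, the form $\langle BHT(f_1,f_2),f_3\rangle$ as an absolutely convergent sum of \emph{model forms}
\[
\Lambda_{\mathbf P}(f_1,f_2,f_3)=\sum_{P\in\mathbf P}\frac{1}{|I_P|^{1/2}}\,\langle f_1,\phi_{P_1}\rangle\langle f_2,\phi_{P_2}\rangle\langle f_3,\phi_{P_3}\rangle ,
\]
where $\mathbf P$ is a ``rank-$1$'' family of tri-tiles $P=(P_1,P_2,P_3)$, each $P_i=I_P\times\omega_{P_i}$ of area one with the three frequency intervals separated and comparable, and $\phi_{P_i}$ is an $L^2$-normalized wave packet adapted to $P_i$. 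The modulation, dilation and translation symmetries of $BHT$ let one reduce, after a standard averaging over translated dyadic grids, to estimating a single $\Lambda_{\mathbf P}$ uniformly in $\mathbf P$.

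\textbf{Step 2: interpolation and the exceptional set.} Because the target region is open, by the multilinear restricted-type interpolation theorem it suffices to prove \emph{generalized restricted weak-type} estimates at the vertices: for all finite-measure sets $E_1,E_2,E_3$ there is a major subset $E_j'\subset E_j$, $|E_j'|\ge\tfrac12|E_j|$ (with $j$ depending on the vertex), such that
\[
|\Lambda_{\mathbf P}(f_1,f_2,f_3)|\lesssim |E_1|^{\alpha_1}|E_2|^{\alpha_2}|E_3|^{\alpha_3},\qquad \alpha_1+\alpha_2+\alpha_3=1,
\]
whenever $|f_i|\le\mathbf 1_{E_i}$ for $i\ne j$ and $|f_j|\le\mathbf 1_{E_j'}$. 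One coordinate is permitted to be negative, and the fact that it can be pushed only down to, but not including, $-1/2$ is exactly what produces the endpoint $s=2/3$. The major subset comes from deleting a maximal-function exceptional set such as $\Omega=\{M\mathbf 1_{E_1}>C|E_1|/|E_j|\}\cup\{M\mathbf 1_{E_2}>C|E_2|/|E_j|\}$, of measure $<\tfrac12|E_j|$ for $C$ large; one discards tri-tiles whose time interval lies deep inside $\Omega$ and organizes the rest according to $1+\mathrm{dist}(I_P,\Omega^c)/|I_P|$, which contributes a summable geometric factor via the rapid decay of the $\phi_{P_i}$.

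\textbf{Step 3: sizes, energies, trees.} On the localized family introduce, for each $i$, the \emph{size} (a Carleson-type supremum over trees of $L^2$-normalized coefficients) and the \emph{energy} (an $\ell^2$-type maximal quantity over pairwise disjoint trees). The three workhorses are: (i) a \emph{size lemma}, a John--Nirenberg-type inequality bounding the size by an averaged Hardy--Littlewood maximal quantity, which on the retained tri-tiles is $\lesssim|E_i|/|E_j|$ for $i\ne j$ and $\lesssim 1$ for $i=j$; (ii) an \emph{energy lemma}, $\mathrm{energy}_i(\mathbf P)\lesssim\|f_i\|_2$; and (iii) the \emph{single-tree estimate}
\[
\Big|\sum_{P\in T}\frac{1}{|I_P|^{1/2}}\langle f_1,\phi_{P_1}\rangle\langle f_2,\phi_{P_2}\rangle\langle f_3,\phi_{P_3}\rangle\Big|\lesssim |I_T|\prod_{i=1}^3\mathrm{size}_i(T),
\]
proved by Cauchy--Schwarz exploiting that on any tree exactly one frequency component is lacunary --- its coefficients then satisfying a Bessel inequality --- while the other two are overlapping and are summed by a single maximal function. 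One then runs the stopping-time selection algorithm to split $\mathbf P$ into subcollections on which each $\mathrm{size}_i$ is dyadically fixed and the tops satisfy a counting bound $\sum_T|I_T|\lesssim\mathrm{energy}_i^2/\mathrm{size}_i^2$ for every $i$; combining this with the tree estimate and summing the resulting geometric series --- using, for each block, whichever of the three counting bounds is most efficient --- gives $|\Lambda_{\mathbf P}|\lesssim\prod_i\mathrm{size}_i^{1-\theta_i}\mathrm{energy}_i^{\theta_i}$ for any $\theta_i\in[0,1)$ with $\sum_i\theta_i=1$. Inserting (i) and (ii) and optimizing the $\theta_i$ at each vertex yields the restricted-type bounds of Step 2.

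\textbf{Where the difficulty lies.} The essential point is that $BHT$ is invariant under simultaneous modulations of $f$ and $g$, which rules out any Calder\'on--Zygmund or square-function shortcut and forces the entire time--frequency machine; within it the hard part is Step 3 --- the selection algorithm together with the multi-parameter summation that must balance the three size/energy constraints at once (this is what generates the sharp admissible ranges of the $\theta_i$) --- and, layered on top, the delicate bookkeeping near the exceptional set required to reach exponents with the negative coordinate arbitrarily close to $-1/2$, i.e. $s$ arbitrarily close to $2/3$.
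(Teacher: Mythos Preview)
Your sketch is correct and follows exactly the Lacey--Thiele time--frequency architecture that the paper invokes: the paper does not prove this theorem itself but cites \cite{initial_BHT_paper} and then recalls, in Section~\ref{some classical results BHT}, precisely the ingredients you list --- the model operator $BHT_{\rr P}$, sizes and energies, the size lemma (Lemma~\ref{BHT size estimate}), the energy lemma (Lemma~\ref{BHT energy estimate}), and the size--energy trilinear estimate (Proposition~\ref{BHT trilinear estimate}) --- all quoted from Chapter~6 of \cite{multilinear_harmonic}. Your outline of the generalized restricted-type interpolation, exceptional set, tree selection, and the $\prod_i \mathrm{size}_i^{\theta_i}\mathrm{energy}_i^{1-\theta_i}$ bound matches this machinery (up to the harmless swap $\theta_i\leftrightarrow 1-\theta_i$ in the exponents relative to the paper's convention).
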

The range of the operator $\ds Range(BHT)$ consists of the set of triples $(p, q, s)$  satisfying the conditions above. The question that remains open is whether the bilinear Hilbert transform is bounded also for $s \in \left(\frac{1}{2}, \frac{2}{3} \right]$. The H\"{o}lder-type condition $\ds \frac{1}{p}+\frac{1}{q}=\frac{1}{s}$ reflects the scaling invariance of the operator, and it can be reformulated as $\ds \frac{1}{p}+\frac{1}{q}+\frac{1}{s'}=1$, where $s'$ is the conjugate exponent of $s$. Thus a triple $(p, q, s) \in Range(BHT)$ if $\ds \left(\frac{1}{p}, \frac{1}{q}, \frac{1}{s'} \right)$ lies in the plane $\ds \lbrace \left(x, y, z \right) \in \rr{R}^3 \vert x+y+z=1 \rbrace$, and is contained inside the convex hull of the points
\[
 \left(0, 0, 1 \right), \left(1, 0, 0 \right),  \left(1,\frac{1}{2}, -\frac{1}{2} \right), \left(\frac{1}{2}, 1,  -\frac{1}{2} \right), \left(0, 1, 0 \right).
\]
Regarded as a bilinear multiplier operator, $BHT$ becomes equivalent to 
\begin{equation}
\label{eq: BHT multiplier}
(f, g) \mapsto \int_{\xi < \eta} \hat{f}(\xi) \hat{g}(\eta) e^{2 \pi i x(\xi +\eta)} d \xi d \eta.
\end{equation}
The method of the proof, which breaks down when $\ds \frac{1}{p}+\frac{1}{q} \geq \frac{3}{2}$, consists in approximating $BHT$ by a \emph{model operator} obtained through a Whitney decomposition of the frequency region $\lbrace \xi< \eta   \rbrace$. Morally speaking, this model operator is a superposition of ``almost orthogonal" objects of a lower complexity, called \emph{discretized paraproducts}.

Paraproducts play an important role on their own, especially in the analysis of PDE. A \emph{paraproduct} is an expression of the form 
\begin{equation}
\label{def paraproducts}
(f, g) \mapsto \int_{\rr{R}}\int_{\rr{R}}f(x-t) g(x-s) k(s, t) ds dt,
\end{equation}
where $\ds k(s,t)$ is a Calder\'{o}n-Zygmund kernel in the plane $\ds \rr{R}^2$. Alternatively, a paraproduct can be regarded as a bilinear multiplier operator
\begin{equation*}
\left(f, g \right) \mapsto \int_{\rr{R}^2} m(\xi, \eta) \hat{f}(\xi) \hat{g}(\eta) e^{2 \pi i x \left( \xi +\eta  \right)} d \xi d \eta,
\end{equation*}
where $m$ is a classical Marcinkiewicz-Mikhlin-H\"{o}rmander multiplier in two variables, sufficiently smooth away from the origin. The singularity of the multiplier $m$ consists of one point: $\left( \xi, \eta \right)=\left( 0, 0\right)$. On the other hand, we can see from \eqref{eq: BHT multiplier} that the $BHT$ multiplier is singular along the line $\xi= \eta$.

We have the following result on paraproducts: 

\begin{theorem}[Coifman, Meyer \cite{CoifMeyer-ondelettes}]
\label{CoifMeyer-paraproducts}
Any bilinear multiplier operator associated to a symbol $m(\xi, \eta)$ satisfying $\ds \lft \partial^\alpha m (\xi, \eta)\rg \lesssim \lft \left( \xi, \eta\right) \rg^{-\alpha}$ for sufficiently many multi-indices $\alpha$, maps $L^p(\rr{R}) \times L^q(\rr{R})$ into $L^s(\rr{R})$ provided that $1< p, q \leq \infty $, $\dfrac{1}{2} < s < \infty$, and $\ds \frac{1}{p}+\frac{1}{q}=\frac{1}{s}$.
\end{theorem}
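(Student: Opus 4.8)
The plan is to discretize $\Pi$ into a finite sum of \emph{model paraproducts} and to reduce the $L^s$ norm of each of these to the $L^s$ norm of a product of a Hardy--Littlewood maximal function and a Littlewood--Paley square function (or of two square functions); once that is done, the full range $\tfrac12<s<\infty$ drops out of H\"older's inequality together with the classical $L^p$ bounds for $M$ and for the square function $S$, $1<p<\infty$. No interpolation and no weak-type endpoint enter, and the restriction $s>\tfrac12$ appears only because $1<p,q$ forces $\tfrac1p+\tfrac1q<2$.

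\emph{Reduction.} I would first run a Whitney--Littlewood--Paley decomposition of $\rr{R}^2\setminus\{0\}$: writing $1=\sum_k\chi_k(\xi,\eta)$ with $\supp\chi_k\subset\{|(\xi,\eta)|\sim 2^k\}$, split $m=m^{(1)}+m^{(2)}+m^{(3)}$ according to whether, on $\supp(m\chi_k)$, one has $|\xi|\ll|\eta|$ (so $|\eta|\sim|\xi+\eta|\sim 2^k$), $|\eta|\ll|\xi|$, or $|\xi|\sim|\eta|\sim 2^k$ (so only $|\xi+\eta|\lesssim 2^k$). This gives $\Pi=\Pi_1+\Pi_2+\Pi_3$. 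Expanding the smooth symbol pieces in Fourier series over the relevant dyadic boxes, each $\Pi_i$ becomes a rapidly convergent superposition of model sums
\[
\sum_{I\ \mathrm{dyadic}}\frac{1}{|I|^{1/2}}\langle f,\phi_I^1\rangle\,\langle g,\phi_I^2\rangle\,\phi_I^3,
\]
the $\phi_I^j$ being $L^2$-normalized bumps adapted to $I$, with at least two of the three families $\{\phi_I^j\}_I$ \emph{lacunary} (of vanishing mean): the second and third families for $\Pi_1,\Pi_2$, and the two input families for $\Pi_3$.

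\emph{Estimates.} For $\Pi_1$ (and symmetrically $\Pi_2$) the scale-$k$ block is of the form $(S_kf)(\Delta_kg)$ with $S_k$ a smooth frequency truncation to $\{|\xi|\lesssim 2^k\}$ and $\Delta_k$ a Littlewood--Paley projection at scale $2^k$; it has Fourier support in the annulus $\{|\zeta|\sim 2^k\}$, so these blocks are almost orthogonal, and the Littlewood--Paley inequality (in the direction valid for all $0<s<\infty$, using for $s\le1$ the atomic characterization of $H^s$ and the embedding $H^s\hookrightarrow L^s$) gives
\[
\|\Pi_1(f,g)\|_s\lesssim\Big\|\Big(\sum_k\big|(S_kf)(\Delta_kg)\big|^2\Big)^{1/2}\Big\|_s\le\big\|\,Mf\cdot Sg\,\big\|_s,
\]
since $\sup_k|S_kf|\lesssim Mf$ and $\big(\sum_k|\Delta_kg|^2\big)^{1/2}=Sg$; H\"older and the $L^p$ bounds for $M$ and $S$ finish it for $1<p,q<\infty$. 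For $\Pi_3$ there is no output orthogonality --- the scale-$k$ blocks sit in the nested balls $\{|\zeta|\lesssim 2^k\}$ --- so I would estimate it directly: for fixed $x$ and fixed scale essentially only the interval $I\ni x$ contributes (neighbouring intervals are absorbed into a maximal function via the Schwartz tails of $\phi_I^3$), whence $|\Pi_3(f,g)(x)|\lesssim\sum_k|\Delta_kf(x)|\,|\Delta_kg(x)|\le Sf(x)\,Sg(x)$ by Cauchy--Schwarz in $k$, and H\"older again yields $\|\Pi_3(f,g)\|_s\lesssim\|f\|_p\|g\|_q$ for $1<p,q<\infty$. Summing the three pieces proves the claim on the open range $1<p,q<\infty$, $\tfrac12<s<\infty$; the limiting cases with $p$ or $q$ equal to $\infty$ are classical and follow by viewing $\Pi(\,\cdot\,,g)$, respectively $\Pi(f,\,\cdot\,)$, as a Calder\'on--Zygmund operator in the remaining variable of norm $\lesssim\|g\|_{\mathrm{BMO}}\le\|g\|_\infty$.

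The main obstacle is the reduction together with the tail bookkeeping for $\Pi_3$: one must arrange the Whitney decomposition so that every model sum genuinely carries two lacunary families, control the errors from truncating the Fourier series, and --- the bumps $\phi_I^3$ not being compactly supported --- upgrade the ``one interval per scale'' heuristic to a rigorous inequality, which is exactly where a Fefferman--Stein vector-valued maximal inequality keeps the square functions $L^p$-bounded. Everything past the reduction is soft. Alternatively one could carry out the whole argument inside the size/energy formalism of time--frequency analysis, proving the model-sum bounds by stopping-time decompositions; for single-parameter paraproducts the square-function route above is shorter.
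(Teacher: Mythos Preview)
The paper does not give its own proof of this theorem --- it is quoted as a classical result of Coifman and Meyer and used as background. Your sketch is the standard square-function route and is correct.

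Two remarks that connect your argument to the paper. First, in the paper's normalization (stated right after \eqref{eqn:paraprod 1}--\eqref{eqn:paraprod 3} and used again in Section~\ref{tensor products}) the outer multiplier $\hat\varphi_k$ or $\hat\psi_k$ is chosen identically $1$ on the support of the product of the inner pieces, so for the type~\eqref{eqn:paraprod 1} paraproduct one literally has $\Pi(f,g)=\sum_k Q_kf\cdot Q_kg$, and your pointwise bound $|\Pi(f,g)|\le Sf\cdot Sg$ is immediate --- no tail bookkeeping or Fefferman--Stein is needed there. The inequality $\|F\|_s\lesssim\|(\sum_k|Q_k F|^2)^{1/2}\|_s$ for $0<s<\infty$ that you invoke for types \eqref{eqn:paraprod 2}, \eqref{eqn:paraprod 3} is exactly Lemma~\ref{norm bounded by square function} in the paper.

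Second, the alternative you mention at the end --- the size/energy stopping-time formalism --- is what the paper actually uses when it returns to paraproducts in Section~\ref{paraproduct results} (Proposition~\ref{paraproduct estimates} together with the localization Propositions~\ref{Localization for local L^1 paraproducts} and~\ref{Localization for paraproducts with target L^1}). It is not used there to reprove the scalar Coifman--Meyer theorem, but as the engine for the localized and multiple vector-valued estimates of Theorem~\ref{multiple vector valued paraproducts}. For the bare scalar statement your direct approach is shorter; the size/energy route earns its keep only once one needs the localized bounds with explicit operator norms that drive the helicoidal induction.
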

 Following the presentation in \cite{multilinear_harmonic}, any bilinear operator of this form can be essentially written as a finite sum of paraproducts of the form
\begin{equation}
\tag{I}
\label{eqn:paraprod 1}
(f,g) \mapsto \sum_{k } \left(\left(f \ast \psi_k   \right)   \cdot \left(g \ast \psi_k  \right) \right) \ast \varphi_k(x)= \sum_{k}P_k(Q_kf \cdot Q_k g)
\end{equation}
\begin{equation}
\tag{II}
\label{eqn:paraprod 2}
(f,g) \mapsto \sum_{k } \left(\left(f \ast \varphi_k   \right)   \cdot \left(g \ast \psi_k  \right) \right) \ast \psi_k(x)= \sum_{k} Q_k(P_kf \cdot Q_k g).
\end{equation}
\begin{equation}
\tag{III}
\label{eqn:paraprod 3}
(f,g) \mapsto \sum_{k } \left(\left(f \ast \psi_k   \right)   \cdot \left(g \ast \varphi_k  \right) \right) \ast \psi_k(x)= \sum_{k}Q_k(Q_kf \cdot P_k g).
\end{equation}
From now on, a \emph{paraproduct} will designate any of the expressions $(I), (II)$ or $(III)$, and will be denoted $\Pi(f,g)$. Here $\psi_k(x)=2^k \psi(2^k x)$, $\varphi_k(x)=2^k \varphi(2^k x)$,  $\hat{\varphi}(\xi) \equiv 1$ on $\ds \left[-1/2, 1/2 \right]$, is supported on $[-1,1]$  and $\hat{\psi}(\xi)=\hat{\varphi}(\xi/2)-\hat{\varphi}(\xi) $. The $\ds \lbrace Q_k \rbrace_k$ represent Littlewood-Paley projections onto the frequency $|\xi| \sim 2^k$, while $\ds \lbrace P_k \rbrace_k$ are convolution operators associated with dyadic dilations of a nice bump function of integral 1.

A classical application of Theorem \ref{CoifMeyer-paraproducts} is the following Leibniz rule:
\begin{equation}
\label{Leibniz rule R^1}
\big \|  D^\alpha(f \cdot g)   \big \|_s \lesssim \big \|  D^\alpha f \big \|_{p_1} \big \| g \big \|_{q_1}+\big \| f \big \|_{p_2}\big \|  D^\alpha g \big\|_{q_2}, 
\end{equation}
which holds for any $\alpha>0$, as long as $\ds \frac{1}{p_i}+\frac{1}{q_i}=\frac{1}{s}$, $\ds 1< p_i, q_i \leq \infty$, and $\ds \frac{1}{1+\alpha} < s < \infty$. In particular, if $s \geq 1$, which is the case in most applications, the Leibniz rule holds for any $\alpha >0$.

For functions on $\rr{R}^2$, with (fractional) partial derivatives in both variables, a corresponding Leibniz rule is
\begin{align}
\label{Leibniz rule R^2}
\| D_1^\alpha D_2^\beta(f \cdot g) \|_s & \lesssim \|  D_1^\alpha D_2^\beta f \|_{p_1} \| g \|_{q_1}+\| f \|_{p_2}\|  D_1^\alpha D_2^\beta g \|_{q_2}\\
&+\|  D_1^\alpha f \|_{p_3}\|D_2^\beta g \|_{q_3} +\| D_2^\beta f \|_{p_4} \| D_1^\alpha g \|_{q_4} \nonumber.
\end{align}
The proof of the above inequality relies on discrete biparameter paraproducts $\Pi \otimes \Pi$, which are expressions of the form
\begin{equation}
\sum_{k, l} \left(  \left( f \ast \left(\varphi_k \otimes \psi_l \right) \right) \cdot \left( g \ast \left(\psi_k \otimes \varphi_l  \right) \right)  \right) \ast \psi_k \otimes \psi_l(x,y).
\end{equation}
In \cite{bi-parameter_paraproducts}, the following theorem was proved:
\begin{theorem}[Muscalu, Pipher, Thiele, Tao \cite{bi-parameter_paraproducts}]
\label{thm: biparam paraprod} $\Pi \otimes \Pi$ is a bounded operator from $L^p(\rr{R}^2) \times L^q(\rr{R}^2)$ into $L^s(\rr{R}^2)$ provided that $\ds 1< p, q \leq \infty$, $\ds \frac{1}{p}+\frac{1}{q}=\frac{1}{s}$, and $0< s< \infty$.
\end{theorem}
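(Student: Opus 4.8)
The natural plan is to reduce $\Pi\otimes\Pi$ to a discrete bi-parameter model operator, pass to the associated trilinear form, and then split according to the exponents: a short duality argument handles the ``main'' range $1<p,q<\infty$, $1<s<\infty$, while the harder range $s\le 1$ together with the endpoints $p=\infty$ and $q=\infty$ is reached through restricted-type estimates and interpolation. By a routine Fourier-analytic reduction, $\Pi\otimes\Pi(f,g)$ equals, modulo rapidly decaying error terms, a finite sum of operators of the form
\[
(f,g)\longmapsto \sum_{R=I\times J}\frac{1}{|R|^{1/2}}\langle f,\phi^1_R\rangle\,\langle g,\phi^2_R\rangle\,\phi^3_R ,
\]
where $R$ runs over dyadic rectangles, each $\phi^j_R=\phi^{j,1}_I\otimes\phi^{j,2}_J$ is an $L^2$-normalized bump adapted to $R$, and in each of the two variables at least two of the three families are bumps with mean zero (the ``paraproduct'' configuration inherited from the two Littlewood--Paley factors present in each parameter). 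It then suffices to control the trilinear form $\Lambda(f,g,h)=\sum_R|R|^{-1/2}\langle f,\phi^1_R\rangle\langle g,\phi^2_R\rangle\langle h,\phi^3_R\rangle$, either by $\|f\|_p\|g\|_q\|h\|_{s'}$ when $s>1$, or by $|F|^{1/p}|G|^{1/q}|H|^{1/s'}$ for $|f|\le\one_F$, $|g|\le\one_G$, $|h|\le\one_{H'}$ with $H'\subset H$ a suitable major subset, when $s\le 1$.

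For the main range one dualizes and exploits the mean-zero structure in each variable: splitting the Littlewood--Paley parameters and applying the Cauchy--Schwarz inequality in them yields a pointwise bound $|\Lambda(f,g,h)|\lesssim \int_{\rr{R}^2} V_1 f\cdot V_2 g\cdot V_3 h$, where each $V_i$ is a bi-parameter hybrid of $\ell^2$-valued Hardy--Littlewood maximal and Littlewood--Paley square function operators, with a square function appearing in each variable in which the $i$-th family of bumps has mean zero. All such $V_i$ are bounded on $L^r(\rr{R}^2)$ for $1<r<\infty$ by the Fefferman--Stein inequalities and standard Littlewood--Paley theory (one-parameter versions of which are behind Theorem \ref{CoifMeyer-paraproducts}). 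Since $\tfrac1p+\tfrac1q+\tfrac1{s'}=1$, H\"{o}lder closes the estimate, giving the theorem whenever $1<p,q<\infty$ and $1<s<\infty$.

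The full range is obtained from restricted-type estimates via a bi-parameter time--frequency analysis. One attaches to a family $\ic{P}$ of rectangles a \emph{size} --- a product-$BMO$-type supremum of the averages $|R'|^{-1}\sum_{R\subset R',\,R\in\ic{P}}|\langle f,\phi_R\rangle|^2$ over open sets $R'$ --- and an \emph{energy}, an $\ell^2$ quantity built from maximal trees in each variable; the crucial inequality is
\[
|\Lambda(f,g,h)|\lesssim \prod_{j=1}^3 \big(\mathrm{size}_j\big)^{1-\theta_j}\big(\mathrm{energy}_j\big)^{\theta_j},\qquad 0\le\theta_j<1,\ \textstyle\sum_j\theta_j=1,
\]
proven by a stopping-time decomposition that organizes the rectangles into trees along which the sizes decay geometrically, followed by summation of the resulting geometric series. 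Bounding each size by the appropriate $L^\infty$-type average and each energy by an $L^2$-type quantity, and optimizing the $\theta_j$, produces the restricted-type bound $|F|^{1/p}|G|^{1/q}|H|^{1/s'}$ for every $(p,q,s)$ in the stated range (including the $L^\infty$ endpoints); a multilinear Marcinkiewicz-type interpolation, valid in the quasi-Banach setting, then upgrades these to the strong-type estimates, in particular for $s\le 1$. (Alternatively, the theorem can be obtained by a tensoring argument in the spirit of the present paper, reducing it to a vector-valued, mixed-norm extension of Theorem \ref{CoifMeyer-paraproducts} for one-parameter paraproducts in the full range.)

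The step I expect to be the genuine obstacle is the bi-parameter one: bounding the \emph{size} by $\|f\|_p$ (and similarly for $g,h$) when $p<\infty$. This is no longer a tree-by-tree matter as in one parameter, because the exceptional set on which the local $L^2$ averages of the wave coefficients are large is a union of dyadic rectangles whose measure is \emph{not} controlled by the measure of its shadow onto either axis --- precisely the way product-$BMO$ differs from $BMO$. The remedy is either a Journ\'{e}-type covering lemma for the strong maximal operator, or the substitute of \cite{bi-parameter_paraproducts}: enlarge the exceptional set by iterating the strong maximal function, and verify that the rectangles surviving outside this enlarged set satisfy a one-parameter Carleson packing condition in each variable separately, which is enough to sum. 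The same bi-parameter geometric difficulty is what one must confront in any vector-valued or ``helicoidal'' route to the theorem; once it is resolved, the stopping time, the geometric summation, and the passage from restricted-type to strong-type bounds are routine, if lengthy.
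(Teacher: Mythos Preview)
Your proposal is correct in outline and, in its main thrust, follows the original argument of \cite{bi-parameter_paraproducts}: a bi-parameter size/energy estimate combined with a Journ\'{e}-type or iterated-strong-maximal-function substitute to control the bi-parameter Carleson packing. That is indeed how Theorem~\ref{thm: biparam paraprod} was first proved, and you have identified the genuine difficulty (the product-$BMO$ geometry of the exceptional set) accurately.

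However, the present paper does not reprove Theorem~\ref{thm: biparam paraprod} along those lines. The theorem is stated here only as a cited result; the paper's own contribution is precisely the \emph{alternative} you mention parenthetically at the end. Concretely, via Proposition~\ref{characterization of tensor with paraproduct} one writes, say, $\Pi\otimes\Pi(f,g)(x,y)=\sum_k Q_k^2\,\Pi\big(P_k^y f,\,Q_k^y g\big)(x)$; Lemma~\ref{norm bounded by square function} (valid for all $0<s<\infty$) reduces matters to the square function $\big(\sum_k|\Pi(P_k^y f,Q_k^y g)|^2\big)^{1/2}$ in $L^s(\rr{R}^2)$; and then one applies the multiple vector-valued paraproduct estimates of Theorem~\ref{multiple vector valued paraproducts}, namely $\Pi:L^p(\ell^\infty)\times L^q(\ell^2)\to L^s(\ell^2)$ (or the $(\ell^2,\ell^2,\ell^1)$ variant), followed by Fubini and one-variable maximal/square-function bounds. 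This is exactly the mechanism in Section~\ref{section proof tensor products} and in the proof of Theorem~\ref{bi-parameter paraproducts in L^p spaces with mixed norms}.

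The comparison is instructive. Your main route requires genuinely bi-parameter tools --- Journ\'{e}'s lemma or the iterated-maximal enlargement of the exceptional set --- because the stopping time lives on dyadic rectangles. The paper's route \emph{never} performs a bi-parameter stopping time: the second parameter is absorbed into a vector index, and all stopping times in the helicoidal method are one-parameter, on dyadic intervals. The price is that one must first establish the full range of (multiple) vector-valued estimates for the one-parameter $\Pi$, which is the content of Theorem~\ref{multiple vector valued paraproducts}; the payoff is that the argument iterates cleanly to $\Pi^{\otimes n}$ and to mixed-norm spaces, and that the product-$BMO$ obstacle simply does not arise.
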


This further implies that \eqref{Leibniz rule R^2} is true whenever $\ds \frac{1}{p_i}+\frac{1}{q_i}=\frac{1}{s}, 1<p_i, q_i \leq \infty$, and $ \max\left( \frac{1}{1+\alpha}, \frac{1}{1+\beta} \right) < r < \infty$. If $r \geq 1$ the last condition is redundant, so \eqref{Leibniz rule R^2} holds for any $\alpha, \beta >0$.

Related to this, Carlos Kenig asked the following question, that has been circulating for some time:
\begin{question}
\label{Q1}
Assuming that $\ds 1 \leq s_1, s_2 < \infty$, and $\alpha, \beta >0$, is there a Leibniz rule for mixed norm $L^p$ spaces of the form
\begin{align*}
\Big \| D_1^\alpha D^\beta_2 (f \cdot g)  \Big \|_{L^{s_1}_xL^{s_2}_y} &\lesssim \Big \| D_1^\alpha D^\beta_2 f  \Big \|_{L^{p_1}_xL^{p_2}_y} \cdot \|g\|_{L^{q_1}_xL^{q_2}_y} +\|f\|_{L^{p_3}_xL^{p_4}_y} \cdot \Big \| D_1^\alpha D^\beta_2 g  \Big \|_{L^{q_3}_xL^{q_4}_y} \\
&+\Big \| D_1^\alpha f  \Big \|_{L^{p_5}_xL^{p_6}_y} \cdot \Big \| D^\beta_2 g  \Big \|_{L^{q_5}_xL^{q_6}_y} +\Big \| D_2^\beta f  \Big \|_{L^{p_7}_xL^{p_8}_y} \cdot \Big \| D^\alpha_1 g  \Big \|_{L^{q_7}_xL^{q_8}_y} ?
\end{align*}
\end{question}
Here the mixed norms are defined by 
\begin{equation}
\label{def: mixed norms}
\big \|  f  \big \|_{L_x^p L_y^q}:=  \big \| \big \|  f \big \|_{L_y^q}  \big \|_{L_x^p}:=  \left(   \int_{\rr{R}}   \left(   \int_{\rr{R}} \big \vert f(x,y)  \big \vert^q  dy  \right)^{p/q}     dx \right)^{1/p}.
\end{equation}

A result of a similar type appeared in \cite{kenig1993_vvLeibniz}, as an important tool in establishing local well-posedness for the generalized Korteweg-de Vries equation. This is a dispersive, nonlinear equation given by
 \begin{equation}
 \label{eq: gen KdV}
      \begin{cases}
               \frac{\partial u}{\partial t}+\frac{\partial^3 u}{\partial x^3}+u^k \frac{\partial u}{\partial x}=0, \quad \quad t, x \in \rr{R}, k \in \rr{Z}^+\\
                u(x, 0)=u_0(x)
                \end{cases}
 \end{equation}
In order to prove existence, the authors use the contraction principle, but to be able to do so, they need to construct a suitable Banach space. The norm of the Banach space involves mixed $L^p$ norms of fractional derivatives in the first variable $\ds D_1^\alpha$, and the Leibniz rule employed in this paper is 
\begin{equation}
\label{eq: leibniz rule Kenig}
\big \| D_1^\alpha(f \cdot g) -f \cdot D_1^\alpha g -D_1^\alpha f \cdot g   \big \|_{L_x^p L_t^q} \lesssim C \big \|  D_1^{\alpha_1} f \big \|_{L_x^{p_1}L_t^{q_1}}  \big \|  D_1^{\alpha_2} g \big \|_{L_x^{p_2}L_t^{q_2}}.
\end{equation}
Here $\alpha \in (0, 1), \alpha_1+\alpha_2=\alpha$ and $\ds \frac{1}{p_1}+\frac{1}{p_2}=\frac{1}{p}$, $\ds \frac{1}{q_1}+\frac{1}{q_2}=\frac{1}{q}$. Also,  $ p, p_1, p_2, q, q_1, q_2 \in \left( 1, \infty \right)$, but one can allow $q_1=\infty$ if $\alpha_1=0$.
 
The fractional derivatives appear as a consequence of the smoothness requirement on the initial data: $u_0$ is assumed to be in some Sobolev space $H^\alpha(\rr{R})$, where $\alpha$ depends on the value of $k$ in \eqref{eq: gen KdV}.
 
Question \ref{Q1} is an extension of \eqref{eq: leibniz rule Kenig}, and we managed to provide an answer by proving estimates for $\Pi \otimes \Pi$ in $L^p$ spaces with mixed norms.

Bi-parameter bilinear operators where first studied in \cite{JournCZopRF2}, where Journ\'{e} is introducing a new way of generalizing Calder\'{o}n-Zygmund operators on product spaces. More exactly, it is proved in \cite{JournCZopRF2} that ``bi-commutators of Calder\'{o}n-Coifman type" are bounded, which translates to ``$\Pi \otimes \Pi$ maps $L^2(\rr{R}^2) \times L^\infty(\rr{R}^2)$ into $L^2(\rr{R}^2)$''. The full range of estimates for $\Pi \otimes \Pi$ was established in \cite{bi-parameter_paraproducts}, where was also noticed that $BHT \otimes BHT$ does not satisfy any $L^p$ estimates. What remained undecided for some time was the following question:
\begin{question}
\label{Q2}
Does the tensor product $BHT \otimes \Pi$ satisfy any $L^p$ estimates? Would it be possible to prove it satisfies the same estimates as the $BHT$ itself?
\end{question}
Some significant progress in answering this question was made by Silva in \cite{vv_bht-Prabath}. It was showed that $BHT \otimes \Pi$ maps $L^p \times L^q$ into $L^s$ under the constraints that $\ds \frac{1}{p}+\frac{2}{q} <2$ and $\ds \frac{1}{q}+\frac{2}{p}<2$. Our helicoidal method allows us to remove these restrictions, proving in this way that $BHT\otimes \Pi$ satisfies indeed the same $L^p$ estimates as $BHT$.

As it turned out, the study of Question \ref{Q1} and Question \ref{Q2} is related to proving (sometimes multiple) vector-valued inequalities for $\Pi$ and $BHT$. Let $\vec{r}=(r_1, r_2, r)$ be a tuple so that $\ds 1< r_1, r_2 \leq \infty, 1 \leq r < \infty $ and $\ds \frac{1}{r_1}+\frac{1}{r_2}=\frac{1}{r}$. We say that an inequality of the type
\begin{equation}
\label{eq: vvBHT}
\left\| \left(\sum_k \big \vert BHT(f_k, g_k) \big \vert ^r \right)^{1/r}  \right \|_s \lesssim \left \| \left(  \sum_k \big \vert f_k \big \vert^{r_1}  \right)^{r_1}  \right \|_p \left \| \left(  \sum_k \big \vert g_k \big \vert^{r_2}  \right)^{r_2}  \right \|_q
\end{equation}
represents $L^p$ estimates for vector-valued $BHT$, corresponding to the exponent $\vec{r}$; in short, we have $L^p$ estimates for $\overrightarrow{BHT}_{\vec{r}}$.

Some $L^p$ estimates for vector-valued $BHT$ have been proved recently by Silva in \cite{vv_bht-Prabath}, provided $\ds r \in \left( 4/3, 4 \right)$. $UMD$-valued extensions for the quartile operator (the Fourier-Walsh analogue of $BHT$) were studied by Hyt\"{o}nen, Lacey and Parissis in \cite{vv_quartile}. The results in \cite{vv_quartile}, transferred to the $L^p$ setting, hold under the same constraint that $\ds r \in \left( 4/3, 4 \right)$. Moreover, through this method it is impossible to obtain vector-valued extensions when $L^1$ or $L^\infty$ spaces are involved, as these are not $UMD$ spaces. A similar abstract approach was taken in \cite{francesco_UMDparaproducts}, where Banach-valued estimates for paraproducts were proved.

In spite of these results, some important questions remained unsettled:
\begin{question}
\label{Q3}Are there any exponents $\vec{r}$ as before, for which the corresponding vector-valued $\ds \vBHT_{\vec{r}}$ satisfy the same $L^p$ estimates as the $BHT$ itself?
\end{question}

As the question suggests, until the present work, there was not even a single example of such an exponent. We show that whenever $\vec{r}$ is in the ``local $\ell^2$ range"(that is, $\ds 0 \leq \frac{1}{r_1}, \frac{1}{r_2}, \frac{1}{r'} \leq \frac{1}{2}$), $\overrightarrow{BHT}_{\vec{r}}$ satisfies the same $L^p$ estimates as the $BHT$ operator. Moreover, whenever $2 \leq p, q \leq \infty$, $L^p$ estimates exist for any exponent $\vec{r}=(r_1, r_2, r)$.

To summarize, the main task of the present work is to give affirmative answers to Question \ref{Q1}, Question \ref{Q2}, and Question \ref{Q3} described above. In what follows, we will present our main results, sometimes in a more general setting.

\begin{theorem}
\label{Leibniz rule}
For any $\alpha, \beta >0$
\begin{align*}
\Big \| D_1^\alpha D^\beta_2 (f \cdot g)  \Big \|_{L^{s_1}_xL^{s_2}_y} &\lesssim \Big \| D_1^\alpha D^\beta_2 f  \Big \|_{L^{p_1}_xL^{p_2}_y} \cdot \|g\|_{L^{q_1}_xL^{q_2}_y} +\|f\|_{L^{p_3}_xL^{p_4}_y} \cdot \Big \| D_1^\alpha D^\beta_2 g  \Big \|_{L^{q_3}_xL^{q_4}_y} \\
&+\Big \| D_1^\alpha f  \Big \|_{L^{p_5}_xL^{p_6}_y} \cdot \Big \| D^\beta_2 g  \Big \|_{L^{q_5}_xL^{q_6}_y} +\Big \| D_2^\beta f  \Big \|_{L^{p_7}_xL^{p_8}_y} \cdot \Big \| D^\alpha_1 g  \Big \|_{L^{q_7}_xL^{q_8}_y} ,
\end{align*}
whenever $1< p_j,q_j \leq \infty$, $\frac{1}{2}< s_1 <\infty$, $1\leq s_2< \infty$, with $\max \big( \frac{1}{1+\alpha}, \frac{1}{1+\beta}  \big) <s_1$ and so that the indices satisfy the natural H\"{o}lder-type conditions.
\end{theorem}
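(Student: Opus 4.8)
The plan is to deduce this Leibniz rule from mixed-norm $L^p$ estimates for bi-parameter paraproducts, exactly in the way \eqref{Leibniz rule R^1} is deduced from Theorem \ref{CoifMeyer-paraproducts} and \eqref{Leibniz rule R^2} from Theorem \ref{thm: biparam paraprod}; the one genuinely new ingredient is that the underlying paraproduct bounds must now be available in $L^p$ spaces with \emph{mixed} norms. Concretely, I would start with a double Littlewood--Paley decomposition: writing $Q^1_k,Q^2_l$ for the Littlewood--Paley projections in the first and in the second variable and $P^1_k,P^2_l$ for the corresponding smooth partial sums, expand $f\cdot g=\sum (Q^1_kQ^2_l f)\,(Q^1_{k'}Q^2_{l'}g)$ and sort the terms according to which factor has the higher frequency in the first variable and which has it in the second. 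Resumming the low-frequency sides into $P$-type bumps turns $f\cdot g$ into a finite sum of bi-parameter paraproducts, each of one of the types \eqref{eqn:paraprod 1}--\eqref{eqn:paraprod 3} in each of the two variables.

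Next I would let $D_1^\alpha D_2^\beta$ act on each such piece. A generic piece is frequency localized where $|\xi_1|\sim 2^{k}$ (or $|\xi_1|\lesssim 2^{k}$ on the ``diagonal'' pieces) and $|\xi_2|\sim 2^{l}$ (resp.\ $\lesssim 2^{l}$), so the symbol $|\xi_1|^\alpha|\xi_2|^\beta$ restricted there equals $2^{k\alpha}2^{l\beta}$ times a symbol that rescales to a fixed smooth bump. Hence $2^{k\alpha}$ can be absorbed either into the projection hitting $f$ in the first variable, converting $Q^1_kf$ into a bump-adapted piece of $D_1^\alpha f$, or into the one hitting $g$, and independently $2^{l\beta}$ can be moved onto $f$ or onto $g$ in the second variable. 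Choosing independently in the two variables converts each piece into a paraproduct-type operator applied to one of the four pairs $(D_1^\alpha D_2^\beta f,\,g)$, $(f,\,D_1^\alpha D_2^\beta g)$, $(D_1^\alpha f,\,D_2^\beta g)$, $(D_2^\beta f,\,D_1^\alpha g)$ — precisely the four products on the right-hand side. Here one uses, as in the scalar case, that for $1<p<\infty$ the homogeneous Sobolev norm $\|D_i^\gamma h\|_p$ of a function $h$ is comparable to the corresponding Littlewood--Paley square function, so that ``$2^{k\gamma}Q^i_k$'' really reconstructs $D_i^\gamma$.

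The analytic core is then the mixed-norm paraproduct bound: for every choice of types,
\[
\big\|\Pi\otimes\Pi(F,G)\big\|_{L^{s_1}_xL^{s_2}_y}\lesssim\|F\|_{L^{a_1}_xL^{a_2}_y}\,\|G\|_{L^{b_1}_xL^{b_2}_y}
\]
whenever $1<a_j,b_j\le\infty$, $\frac12<s_1<\infty$, $1\le s_2<\infty$ and the H\"older relations hold. This is where the helicoidal method is used: one runs the single-parameter paraproduct analysis in the $y$-variable first, landing in $L^{s_2}_y$ — legitimate by Coifman--Meyer since $s_2>\frac12$ — and then in the $x$-variable on functions valued in $L^{s_2}_y$, which requires the vector-valued, and in fact \emph{multiple} vector-valued, estimates for $\Pi$ that are among the main results of the paper; the size/energy localization intrinsic to that method is what allows the outer exponent $s_1$ to drop below $1$, down to $\frac12$. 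The fractional derivatives contribute, through the geometric sums over the diagonal pieces, the Leibniz-type restriction of \eqref{Leibniz rule R^1}: one needs the relevant output exponent to exceed $1/(1+\alpha)$, resp.\ $1/(1+\beta)$, and in the iterated mixed-norm estimate both are absorbed by requiring $\max(1/(1+\alpha),1/(1+\beta))<s_1$, the hypothesis $s_2\ge 1$ already making these automatic for the second exponent.

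The step I expect to be the main obstacle is precisely this mixed-norm bi-parameter paraproduct estimate with genuinely distinct integrability exponents in the two variables — in particular reaching the quasi-Banach range $s_1\in(\frac12,1]$ while carrying the vector-valued structure that the second variable forces on the first — since the classical $UMD$/extrapolation route is unavailable here and the helicoidal method is what must replace it. A secondary, more routine, point is to check that once the $2^{k\alpha}2^{l\beta}$ factors have been absorbed the modified bumps still have the support and regularity required by those paraproduct bounds and the modified ``$Q$''-projections retain their square-function / vector-valued properties; this needs a little extra care at the endpoint $s_2=1$, where one should argue directly with the paraproduct rather than through a square function.
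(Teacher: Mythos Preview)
Your proposal is correct and follows essentially the same route as the paper: reduce to mixed-norm estimates for $\Pi\otimes\Pi$ (Theorem \ref{bi-parameter paraproducts in L^p spaces with mixed norms}) via a double Littlewood--Paley splitting, absorb the $2^{k\alpha}2^{l\beta}$ factors into the appropriate bumps, and invoke the multiple vector-valued paraproduct bounds from the helicoidal method. The one point where the paper is more specific than your sketch is the origin of the constraint $s_1>\max\big(\tfrac{1}{1+\alpha},\tfrac{1}{1+\beta}\big)$: it comes not from a geometric resummation but from the ``diagonal'' pieces where the outer bump is of $\varphi$-type, so that $D^\alpha$ produces $\widehat{\tilde\varphi}_k(\xi)=\tfrac{|\xi|^\alpha}{2^{k\alpha}}\hat\varphi_k(\xi)$, whose inverse transform decays only like $(1+|x|)^{-1-\alpha}$; the Fourier-series linearization then has coefficients $|c_n|\lesssim n^{-(1+\alpha)}$, and summability (in $\|\cdot\|_{s_1}^{s_1}$ when $s_1<1$) forces $s_1(1+\alpha)>1$.
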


This answers Question \ref{Q1} in the affirmative. Of course, one may wonder if Theorem \ref{Leibniz rule} holds in arbitrary dimensions. As the careful reader will notice, our methods allow for such a generalization, with the outer-most Lebesgue exponent possibly less than $1$, if all the indices $p_i, q_i$ involved are \emph{strictly} between $1$ and $\infty$. However, in applications $L^\infty$ norms appear, so it will be of interest to have a more general theorem, for $1< p_i, q_i \leq \infty$. Although we cannot obtain this result in this paper due to some delicate technical issues, we plan to return to this problem sometimes in the future.

An $n$-dimensional version of a Leibniz rule was presented in \cite{TorresWardLeibniz}, for indices that are again strictly between $1$ and $\infty$:
{
\fontsize{10}{11}
\[
\Big \| D^\beta_2 (f \cdot g)  \Big \|_{L^{s_1}_xL^{s_2}_y\left( \rr{R}\times \rr{R}^n \right)} \lesssim \Big \|  D^\beta_2 f  \Big \|_{L^{p_1}_xL^{p_2}_y\left( \rr{R}\times \rr{R}^n \right)} \cdot \|g\|_{L^{q_1}_xL^{q_2}_y\left( \rr{R}\times \rr{R}^n \right)} +\|f\|_{L^{p_1}_xL^{p_2}_y\left( \rr{R}\times \rr{R}^n \right)} \cdot \Big \|  D^\beta_2 g  \Big \|_{L^{q_1}_xL^{q_2}_y\left( \rr{R}\times \rr{R}^n \right)}. 
\]
}
This can be regarded as an $n$-dimensional generalization of \eqref{eq: leibniz rule Kenig}, and it is simpler than our variant of the Leibniz rule because it doesn't require a multi-parameter analysis.

Our Theorem \ref{Leibniz rule} is a consequence, modulo technical but ``classical" complications, of the following result:
\begin{theorem}[Mixed norm estimates for paraproducts on the bi-disc]
\label{bi-parameter paraproducts in L^p spaces with mixed norms}
Let $1< p_j, q_j \leq \infty $, $\frac{1}{2}< s_1 <\infty$, $1\leq s_2< \infty$, so that $\ds \frac{1}{p_j}+\frac{1}{q_j}=\frac{1}{s_j}$, $1 \leq j \leq 2$. Then 
\[
\big \| \Pi \otimes \Pi(f, g)    \big \|_{L^{s_1}_x L^{s_2}_y} \lesssim \big \| f  \big\|_{L^{p_1}_xL^{p_2}_y} \big \| g  \big\|_{L^{q_1}_xL^{q_2}_y}.
\]
\end{theorem}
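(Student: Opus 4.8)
The plan is to prove Theorem \ref{bi-parameter paraproducts in L^p spaces with mixed norms} by first reducing to a single-parameter statement which is then iterated across the two variables. The essential observation is that $\Pi \otimes \Pi$, acting on functions of $(x,y)$, can be viewed as a paraproduct in the $x$-variable whose ``coefficients'' are themselves vector-valued quantities controlled by a paraproduct in the $y$-variable. So the first step is to record the needed vector-valued estimates for a single paraproduct $\Pi$: namely, that for exponents $(r_1, r_2, r)$ in the appropriate H\"older range (including the endpoint cases $r_i = \infty$, which correspond to the $L^\infty$ inputs we must allow), one has
\[
\Big \| \big( \textstyle\sum_k |\Pi(f_k, g_k)|^r \big)^{1/r} \Big \|_{s_1} \lesssim \Big \| \big( \textstyle\sum_k |f_k|^{r_1} \big)^{1/r_1} \Big \|_{p_1} \Big \| \big( \textstyle\sum_k |g_k|^{r_2} \big)^{1/r_2} \Big \|_{q_1}.
\]
This is the scalar building block delivered by the helicoidal method in the single-parameter setting; I would take it as known (or prove it first as the base case of the induction advertised in the abstract), paying attention to uniformity of constants so that the estimate survives being applied fiberwise.

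Second, I would set up the tensor structure. Write $\Pi \otimes \Pi = \Pi^{(1)} \circ \Pi^{(2)}$, where $\Pi^{(2)}$ acts on the $y$-variable and $\Pi^{(1)}$ on the $x$-variable; because both are sums of the paraproduct types $(I)$–$(III)$ in each variable, it suffices to treat one combination and sum. Freezing $x$, the operator $\Pi^{(2)}$ maps the pair $(f(x,\cdot), g(x,\cdot))$ to a function of $y$, and by the single-parameter Theorem \ref{CoifMeyer-paraproducts} we control $\|\Pi^{(2)}(f(x,\cdot),g(x,\cdot))\|_{L^{s_2}_y}$ pointwise in $x$. But this crude fiberwise bound does not suffice: applying $\Pi^{(1)}$ afterwards requires controlling $\Pi^{(1)}$ acting on $L^{s_2}_y$-valued functions, i.e.\ a vector-valued (indeed Banach-space-valued, with the Banach space $L^{s_2}_y$) estimate for a single paraproduct in $x$. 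The right framework is therefore: decompose $\Pi^{(1)}$ into its Littlewood–Paley pieces $\sum_k P_k(Q_k f \cdot Q_k g)$ (or the analogous form), and observe that the inner $y$-paraproduct interacts with each $x$-frequency scale; one then needs the mixed-norm / $\ell^{s_2}(L^{s_2}_y)$-valued square function and maximal function bounds, which hold precisely in the stated range $\frac{1}{2} < s_1 < \infty$, $1 \le s_2 < \infty$.

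Concretely, the key steps in order: (1) reduce to a single paraproduct type in each variable by triangle inequality; (2) discretize $\Pi^{(1)}$, so that $\Pi\otimes\Pi(f,g) = \sum_k P_k^x\big( (Q_k^x f)\, (Q_k^x g) \big)$ where now $Q_k^x f, Q_k^x g$ are $L^{s_2}_y$-valued and each still carries the inner $\Pi^{(2)}$ structure; (3) apply the Fefferman–Stein / Banach-valued maximal and square-function inequalities in the $x$-variable with values in $L^{s_2}_y$, using Minkowski's integral inequality to pass the $L^{s_2}_y$-norm inside where $s_2 \ge 1$ (this is exactly why $s_2 \ge 1$ is required, while $s_1$ may drop below $1$); (4) at the innermost level, invoke the single-parameter vector-valued paraproduct estimate in $y$ with exponent $\ell^{s_2}$ on the inside and outside, plus Hölder in the sequence index, to bound the resulting double square function by $\|f\|_{L^{p_1}_x L^{p_2}_y}\|g\|_{L^{q_1}_x L^{q_2}_y}$. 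The main obstacle I anticipate is item (3)–(4) at the $L^\infty$ endpoints: when one of $p_j, q_j$ equals $\infty$, the corresponding ``function'' must be handled via a BMO-type or Carleson-measure argument rather than a vector-valued $L^p$ bound, since there is no $\ell^\infty(L^\infty)$ square function. This forces a separate treatment of the $L^\infty$ inputs—essentially a bi-parameter Carleson embedding / stopping-time argument adapted to mixed norms—and is the technically delicate part, mirroring the ``$L^\infty$ issues'' the authors flag for higher dimensions. Once that endpoint case is in hand, multilinear interpolation (in the mixed-norm scale) fills in the remaining exponents and closes the proof.
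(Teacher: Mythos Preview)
Your overall strategy matches the paper's: represent $\Pi\otimes\Pi$ via Proposition~\ref{characterization of tensor with paraproduct} as $\sum_k Q_k^{2}\,\Pi_x(P_k^y f, Q_k^y g)$ (or a variant), pass to the square function in $k$ via Lemma~\ref{norm bounded by square function} (or its Banach-valued analogue), and then invoke the multiple vector-valued paraproduct estimate of Theorem~\ref{multiple vector valued paraproducts}, namely $\Pi_x: L_x^{p_1}(L_y^{p_2}(\ell^\infty))\times L_x^{q_1}(L_y^{q_2}(\ell^2))\to L_x^{s_1}(L_y^{s_2}(\ell^2))$, followed by maximal and square function bounds in $y$. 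So the skeleton is right.

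Two points deserve correction. First, your step~(2) is garbled: the formula $\Pi\otimes\Pi(f,g)=\sum_k P_k^x\big((Q_k^x f)(Q_k^x g)\big)$ has lost the $y$-paraproduct entirely. The correct identity (Proposition~\ref{characterization of tensor with paraproduct}) keeps the inner paraproduct $\Pi_x$ intact and discretizes only the \emph{outer} variable, e.g.\ $\sum_k Q_k^{2}\,\Pi_x(P_k^y f, Q_k^y g)(x)$. There is no ``inner $\Pi^{(2)}$ structure still carried'' by $Q_k^x f$; rather, the inner operator is a genuine one-parameter paraproduct applied to the $k$-indexed sequences $(P_k^y f)_k$ and $(Q_k^y g)_k$. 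This is precisely why the \emph{multiple} vector-valued result (values in $L_y^{p_2}(\ell^\infty)$, etc.) is needed, not just an $\ell^r$-valued one.

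Second, your plan for the $L^\infty$ endpoint (``BMO-type or Carleson-measure argument'') is not what the paper does and would be hard to execute in the mixed-norm bi-parameter setting. The paper handles $p_2=\infty$ by duality and localization: one passes to the adjoint $(\Pi_{I_0}^{F,G,H'}\otimes\Pi)^{*,1}$, uses the localized operatorial norm coming from the helicoidal method (sizes of $\one_F,\one_G,\one_{H'}$ on $I_0$), and runs a restricted-type interpolation with a triple stopping time in the $x$-variable only. No bi-parameter Carleson embedding is needed; the $L^\infty_y$ input is absorbed because the adjoint maps $L^{q'}L^{q'}\times L^q L^q\to L^1L^1$, which is a purely Banach estimate in $y$.
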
 
The above theorem provides $L^p$ estimates for $\ds \Pi \otimes \Pi$  in mixed norm $L^p$ spaces. Through our methods, we can also recover the results from \cite{multi-parameter_paraproducts}, stating that $\ds \Pi \otimes \ldots \otimes \Pi$ maps $L^p(\rr{R}^n) \times L^q(\rr{R}^n)$ into $L^s(\rr{R}^n)$ whenever $ 1< p, q \leq \infty, \frac{1}{2}< s< \infty$ and $\frac{1}{p}+\frac{1}{q}=\frac{1}{s}$. Moreover, we answer Question \ref{Q2} by proving that $BHT\otimes \Pi$ and $BHT \otimes \Pi^{\otimes n}$ satisfy the same $L^p$ estimates as $BHT$:
\begin{theorem}
\label{tensor product BHT d-paraproducts_intro}
For any $p, q, r$ with $\ds \frac{1}{p}+\frac{1}{q}=\frac{1}{r}$, with $1< p, q \leq \infty$ and $2/3 < r < \infty$:
\[
\|  BHT \otimes \Pi \otimes \ldots \otimes \Pi(f, g)   \|_{L^r(\rr{R}^{n+1})} \lesssim \|f\|_{L^{p}(\rr{R}^{n+1})} \| g  \|_{L^q(\rr{R}^{n+1})}.
\]
The same is true for $\ds \Pi \otimes \ldots \otimes \Pi \otimes BHT \otimes \Pi \otimes \ldots \otimes \Pi$.
\end{theorem}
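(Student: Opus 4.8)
\medskip
\noindent\textbf{Proof strategy.} The plan is to reduce the estimate for $BHT\otimes\Pi\otimes\cdots\otimes\Pi$ to the sharp one-parameter \emph{local} estimates that lie at the heart of the helicoidal method, combined with the multi-parameter paraproduct theory, in such a way that the only restriction on the exponents which survives is the one already present for $BHT$ itself. First I would run the standard reductions: replace $BHT$ by its discretized model operator, obtained from a Whitney decomposition of $\{\xi<\eta\}$ into a rank-$1$ family of tri-tiles $\vec P$ in the first variable $x_0$, and replace each $\Pi$ by one of the discrete paraproducts $(I)$, $(II)$, $(III)$ built from wave packets attached to dyadic intervals in the corresponding variable. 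Because the operator is a genuine tensor product, all wave packets factor as $\phi_{\vec P}(x_0)\,\phi_{J_1}(x_1)\cdots\phi_{J_n}(x_n)$, so it suffices to bound the fully discrete model $\sum_{\vec P,J_1,\dots,J_n}(\cdots)$ in $L^r(\rr{R}^{n+1})$, with $f,g$ supported in dyadic boxes and normalized in $L^p$, $L^q$.

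The engine consists of two ``single-scale'' estimates. For the $x_0$-variable, the helicoidal method supplies a sharp local estimate for the $BHT$ model: for every dyadic interval $I$, with $\mathcal{P}(I)$ the tri-tiles whose spatial interval lies inside $I$, the quantity $\|BHT_{\mathcal P(I)}(h_1,h_2)\,\one_E\|_1$ is controlled by $|I|$ times a product of $L^{a}$-averages ($\ssize$'s and $\eenergy$'s) of $h_1,h_2,\one_E$ over a neighbourhood of $I$, in the \emph{whole} Lacey--Thiele range $2/3<r<\infty$. Crucially, this local estimate is flexible enough to be upgraded, by a Fefferman--Stein/interpolation argument, to the vector-valued bounds $\vBHT_{\vec r}$ for \emph{every} admissible $\vec r$ --- including those touching $L^1$ or $L^\infty$, which is the content of the answer to Question~\ref{Q3} --- making the $BHT$-factor behave, in the remaining variables, like a harmless averaging operator. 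For the variables $x_1,\dots,x_n$, the corresponding statement is the local estimate for the $n$-parameter paraproduct $\Pi\otimes\cdots\otimes\Pi$, together with the shifted square-function and maximal bounds underlying it: on every dyadic rectangle $R\subset\rr{R}^n$ its restriction is dominated by the product of the $n$-parameter $\ssize$'s of the inputs on $R$, valid for \emph{all} exponents $0<s<\infty$ --- the strengthening of Theorem~\ref{thm: biparam paraprod} reproved by the same method, whose mixed-norm refinement is Theorem~\ref{bi-parameter paraproducts in L^p spaces with mixed norms}.

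With these inputs, the assembly is a multi-parameter stopping time. Keeping $BHT$ in $x_0$ and discretizing only the paraproduct factors in $\bar x=(x_1,\dots,x_n)$, the operator becomes a superposition, indexed by families of dyadic rectangles $R\subset\rr{R}^n$, of terms of the form $BHT\big(\text{a }\Pi\text{-building block of }f,\ \text{a }\Pi\text{-building block of }g\big)$ averaged in $\bar x$ against wave packets. Summing over these building blocks uses $\vBHT_{\vec r}$ with the vector-valued index $\ell^r$ dictated by the paraproduct piece, together with the multi-parameter square-function estimates in $\bar x$; then one runs a Journ\'{e}-type stopping-time decomposition, selecting sparse boxes $I\times R$ on which all relevant one- and multi-parameter averages of $f$ and $g$ are essentially constant. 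The non-local (large-scale) contributions decay geometrically and the stopping boxes are summable because of the $L^p$-boundedness of the governing maximal and square functions; summing everything yields $\|BHT\otimes\Pi^{\otimes n}(f,g)\|_r\lesssim\|f\|_p\|g\|_q$ for all $1<p,q\le\infty$, $2/3<r<\infty$. Since $\Pi^{\otimes n}$ contributes no constraint, the range is exactly that of $BHT$, and the variant $\Pi^{\otimes a}\otimes BHT\otimes\Pi^{\otimes b}$ follows verbatim after permuting coordinates.

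The step I expect to be the main obstacle is precisely this last one, in combination with the endpoint behaviour: one must carry the $BHT$ local estimate through the multi-parameter stopping time with exponents that survive both the Journ\'{e} covering in $\bar x$ and the interpolation needed to reach the $L^1$ and $L^\infty$ corners, so that the paraproduct factors genuinely contribute nothing new. Equivalently, the real difficulty is to make $\vBHT_{\vec r}$ uniform up to the boundary of the admissible range of $\vec r$'s and to track the now $n$-parameter exceptional sets sharply enough that the resulting tail sums still converge --- which is exactly the bookkeeping that the helicoidal method is built to handle.
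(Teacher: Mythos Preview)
Your strategy is quite different from what the paper actually does, and considerably more complicated. The paper does \emph{not} run any multi-parameter stopping time or Journ\'e-type covering argument for this theorem. Instead, Theorem~\ref{tensor product BHT d-paraproducts_intro} is obtained as an almost direct \emph{corollary} of the multiple vector-valued estimates for $BHT$ (Theorem~\ref{multiple vector valued BHT}), via two simple structural ingredients: (i) a representation formula (Proposition~\ref{characterization of tensor with paraproduct}) that writes $BHT\otimes\Pi^{\otimes n}(f,g)$ as an $n$-fold sum $\sum_{k_1,\dots,k_n}$ of $BHT$ applied in $x$ to Littlewood--Paley pieces $P_{k_j}^{y_j}$/$Q_{k_j}^{y_j}$ of $f$ and $g$, wrapped by outer projections $Q_{k_j}$ or $P_{k_j}$; and (ii) a multi-parameter square-function bound (Lemma~\ref{norm bounded by square function}, valid even for $0<p\le 1$) that converts the outer $Q_{k_j}$'s into an $\ell^2$ norm. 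After that, one applies the already-proved multiple vector-valued bound
\[
BHT:\ L^p\big(\ell^{\infty}(\cdots(\ell^2(\cdots(\ell^2)))\big)\times L^q\big(\ell^2(\cdots(\ell^\infty(\cdots(\ell^2)))\big)\ \to\ L^s\big(\ell^2(\cdots(\ell^2(\cdots(\ell^1)))\big),
\]
with the pattern of $\ell^\infty/\ell^2/\ell^1$ dictated by the type of each paraproduct factor, followed by Fubini and standard maximal/square-function bounds in the $y$-variables. All of these vector-valued indices lie in the ``local $\ell^2$'' region (Remark after Theorem~\ref{multiple vector valued BHT}), so $\ii D_{R_1,R_2,R}=Range(BHT)$ and no restriction beyond the $BHT$ range appears.

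What your proposal misses is precisely this reduction: the hard analysis (the helicoidal stopping times) has already been packaged into Theorem~\ref{multiple vector valued BHT}, and the tensor-product theorem is then a matter of unpacking the operator via the Littlewood--Paley representation and quoting that theorem. In particular, no localization of the paraproduct to dyadic rectangles $R\subset\rr R^n$, no sparse boxes $I\times R$, and no Journ\'e covering are needed. Your route would force you to confront genuine multi-parameter stopping-time issues (non-nestedness of rectangles, Journ\'e-type geometric lemmas) that the paper entirely sidesteps; even if it could be made to work, it would be a substantially harder proof of a statement that follows cleanly once the vector-valued $BHT$ bounds are in hand. The paper's point of view is also what explains why $\ell^1$-valued estimates for $BHT$ are essential for $n\ge 3$ (see Remark~\ref{remark: BHT vector spaces}): the case $\Pi=\sum_k P_k(Q_k\cdot Q_k)$ forces the target to be $\ell^1$, and this is exactly the content that was not available before Theorem~\ref{vector valued BHT}.
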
 

For $n \geq 2$, no such results were known previously, and furthermore,  a new approach was necessary for $n \geq 3$. This will be explained later in Remark \ref{remark: BHT vector spaces}.

Some mixed norm $L^p$ estimates for $BHT \otimes \Pi$ and $\Pi^{\otimes d_1} \otimes BHT \otimes \Pi^{\otimes d_2}$ can be obtained, which are similar to those in \cite{francesco_UMDparaproducts} in the case $n=1$. These are presented in Section \ref{section proof tensor products}. We recently learned that in \cite{francesco_UMDparaproducts} mixed norm estimates for $\Pi \otimes \Pi$, similar to our Theorem \ref{bi-parameter paraproducts in L^p spaces with mixed norms} are also obtained.

In proving the results mentioned above, multiple vector-valued extensions for $BHT$ and $\Pi$ play a very important role. Given a totally $\sigma$-finite measure space $(\ii{W}, \Sigma, \mu)$, and $f, g: \rr{R}\times \ii{W} \to \rr{C}$, we define
\begin{equation*}
BHT(f, g)(x, w):=p.v.\int_{\rr{R}} f(x-t, w) g(x+t, w) \frac{dt}{t}.
\end{equation*}
Note that for a fixed value $w \in \ii{W}$, we have $\ds BHT(f, g)(x, w)=BHT(f_w, g_w)(x)$, where $f_w(x)=f(x,w)$.

\begin{theorem}
\label{vector valued BHT}
For any triple $\ds (r_1, r_2, r)$ with $1<r_1, r_2 \leq \infty,1 \leq r < \infty$ and so that $\ds \frac{1}{r_1}+\frac{1}{r_2}=\frac{1}{r}$, there exists a nonempty set $\ii{D}_{r_1, r_2, r}$ of triples $(p, q, s)$ satisfying $\ds \frac{1}{p}+\frac{1}{q}=\frac{1}{s}$ for which 
\[
BHT:L^p \left( \rr{R}; L^{r_1}(\ii{W}, \mu) \right) \times L^q\left(\rr{R}; L^{r_2}(\ii{W}, \mu)\right) \to L^s\left( \rr{R}; L^{r}(\ii{W}, \mu)  \right).
\]
This means that there exists a constant $C$ so that 
\[
\Big \| ~ \Big\| BHT(f, g)\Big \|_{L^r(\ii{W}, \mu)} \|_{L^s(\rr{R})} \leq C \big\|~ \big\| f \big \|_{L^{r_1}(\ii{W}, \mu)}\big \|_{L^p(\rr{R})}\cdot \big\|~ \big\| g \big \|_{L^{r_2}(\ii{W}, \mu)}\big \|_{L^q(\rr{R})}.
\]
Depending on the values of $r_1, r_2, r'$, we can give an explicit characterization of $\ii{D}_{r_1, r_2, r}$, as follows:
\begin{itemize}
\item[i)]If $\ds \frac{1}{r_1}, \frac{1}{r_2}, \frac{1}{r'} \leq \frac{1}{2} $, then $\ii{D}_{r_1, r_2, r}=Range(BHT)$.

\item[ii)] If $\ds\frac{1}{r_2},  \frac{1}{r'} \leq \frac{1}{2},  \frac{1}{r_1}>\frac{1}{2}$, then $\ii{D}_{r_1, r_2, r}$ corresponds to those $(p, q, s)\in Range(BHT)$ for which  $\ds 0\leq \frac{1}{q}< \frac{3}{2}-\frac{1}{r_1}.$                                                                                                                                                          
\item[iii)] If $\ds\frac{1}{r_1},  \frac{1}{r'} \leq \frac{1}{2},  \frac{1}{r_2}>\frac{1}{2}$, then  the range of exponents is similar to the  one in $ii)$, with the roles of $r_1$ and $r_2$ interchanged. That is, $\ii{D}_{r_1, r_2, r}$ consists of tuples  $(p, q, s)\in Range(BHT)$ for which $\ds 0\leq \frac{1}{p}< \frac{3}{2}-\frac{1}{r_2}.$

\item[iv)] If $\ds\frac{1}{r_1},  \frac{1}{r_2} \leq \frac{1}{2},  \frac{1}{r'}>\frac{1}{2}$, then $\ii{D}_{r_1, r_2, r}$ corresponds to those $(p, q, s) \in Range(BHT)$ for which $\ds 0 \leq \frac{1}{p}, \frac{1}{q}<\frac{1}{2}+\frac{1}{r}, \quad -\frac{1}{r} < \frac{1}{s'}<1.$
\end{itemize}
\psscalebox{.7 .7} 
{
\begin{pspicture}(-2,-4)(11.720909,1)
\definecolor{colour0}{rgb}{0.8,0.8,0.8}
\pscustom[linecolor=black, linewidth=0.04]
{
\newpath
\moveto(1.2857143,-5.86387)
}
\pscustom[linecolor=black, linewidth=0.04]
{
\newpath
\moveto(5.0,-5.86387)
}
\rput[bl](7.8727274,-1.9963378){ $\left(1, 0, 0 \right)$}
\rput[bl](8.163636,-3.4145195){\large $\left( 1, \frac{1}{2}, -\frac{1}{2}\right)$}
\rput[bl](10.709091,0.7672986){$\left( 0, 0, 1\right)$}
\rput[bl](13.490909,-2.0327015){ $\left( 0, 1, 0 \right)$}
\rput[bl](12.509091,-3.4327013){ $\left( \frac{1}{2}, 1, -\frac{1}{2}\right)$}
\psdiamond[linecolor=black, linewidth=0.04, dimen=outer](11.271428,-1.8352988)(1.7857143,2.5714285)
\pspolygon[linecolor=black, linewidth=0.04, fillstyle=solid,fillcolor=colour0](11.2615385,0.6833825)(13.030769,-1.855079)(12.166,-3.0858483)(10.40,-3.0858483)(9.492308,-1.855079)
\pstriangle[linecolor=black, linewidth=0.04, dimen=outer](5.142857,-1.9495846)(2.5714285,2.0)
\pspolygon[linecolor=black, linewidth=0.04, fillstyle=solid,fillcolor=colour0](4.5384617,-0.96277136)(5.769231,-0.96277136)(5.1538463,-1.8858483)
\psdots[linecolor=black, dotsize=0.1](5.16129,-1.2555753)
\psdots[linecolor=black, dotsize=0.1](5.16129,-1.2555753)
\rput[bl](2.0774193,-0.5523495){\large $\left( \frac{1}{r_1}, \frac{1}{r_2}, \frac{1}{r'}\right)$}
\psline[linecolor=black, linewidth=0.04, arrowsize=0.05291666666666668cm 2.0,arrowlength=1.4,arrowinset=0.0]{->}(2.967742,-0.7394463)(5.032258,-1.2555753)
\psline[linecolor=black, linewidth=0.04](9.492308,-1.855079)(13.030769,-1.855079)
\end{pspicture}
}
\captionof{figure}{Range for vector-valued $BHT$ when $\frac{1}{r_1}, \frac{1}{r_2}, \frac{1}{r'} \leq \frac{1}{2}$}

\psscalebox{.7 .7} 
{
\begin{pspicture}(-2,-4.5)(14,2)
\definecolor{colour0}{rgb}{0.8,0.8,0.8}
\pscustom[linecolor=black, linewidth=0.04]
{
\newpath
\moveto(1.2857143,-5.86387)
}
\pscustom[linecolor=black, linewidth=0.04]
{
\newpath
\moveto(5.0,-5.86387)
}
\psdiamond[linecolor=black, linewidth=0.04, dimen=outer](11.271428,-1.8352988)(1.7857143,2.5714285)
\rput[bl](12.109091,-3.6327014){ $\left( \frac{1}{r_1}, \frac{3}{2}-\frac{1}{r_1}, -\frac{1}{2}\right)$}
\rput[bl](13.090909,-2.2327015){ $\left( 0, 1, 0 \right)$}
\rput[bl](10.709091,0.7672986){$\left( 0, 0, 1\right)$}
\rput[bl](8.163636,-3.4145195){\large $\left( 1, \frac{1}{2}, -\frac{1}{2}\right)$}
\rput[bl](7.8727274,-1.9963378){ $\left(1, 0, 0 \right)$}
\pstriangle[linecolor=black, linewidth=0.04, dimen=outer](5.142857,-1.9495846)(2.5714285,2.0)
\rput[bl](2.0774193,-0.5523495){\large $\left( \frac{1}{r_1}, \frac{1}{r_2}, \frac{1}{r'}\right)$}
\pspolygon[linecolor=black, linewidth=0.04, fillstyle=solid,fillcolor=colour0](4.6153846,-0.8089252)(5.230769,-1.9371303)(3.897436,-1.9371303)
\psdots[linecolor=black, dotsize=0.1](4.5612903,-1.6555753)
\psline[linecolor=black, linewidth=0.04, arrowsize=0.05291666666666668cm 2.0,arrowlength=1.4,arrowinset=0.0]{->}(2.871795,-0.603797)(4.4102564,-1.629438)
\pspolygon[linecolor=black, linewidth=0.04, fillstyle=solid,fillcolor=colour0](11.272727,0.6945713)(9.454545,-1.8508832)(10.454545,-3.2145195)(11.545455,-3.2145195)(12.727273,-1.3963377)(12.727273,-1.3963377)
\psline[linecolor=black, linewidth=0.04](9.454545,-1.8508832)(13.0,-1.8508832)
\rput[bl](12.733334,-1.2448226){$\left(0, \frac{3}{2}-\frac{1}{r_1}, \frac{1}{r_1}-\frac{1}{2} \right)$}
\psline[linecolor=black, linewidth=0.04, arrowsize=0.05291666666666668cm 2.0,arrowlength=1.4,arrowinset=0.0]{->}(12.133333,-3.4448225)(11.6,-3.3114893)
\end{pspicture}
}
\captionof{figure}{Range for vector-valued  $BHT$ when $\frac{1}{r_1} > \frac{1}{2}$}

\psscalebox{.7 .7} 
{
\begin{pspicture}(-2,-4)(14,1)
\definecolor{colour0}{rgb}{0.8,0.8,0.8}
\pscustom[linecolor=black, linewidth=0.04]
{
\newpath
\moveto(1.2857143,-5.86387)
}
\pscustom[linecolor=black, linewidth=0.04]
{
\newpath
\moveto(5.0,-5.86387)
}
\psdiamond[linecolor=black, linewidth=0.04, dimen=outer](11.271428,-1.8352988)(1.7857143,2.5714285)
\rput[bl](13.090909,-2.0327015){ $\left( 0, 1, 0 \right)$}
\rput[bl](10.709091,0.7672986){$\left( 0, 0, 1\right)$}
\rput[bl](8.072727,-1.9963378){ $\left(1, 0, 0 \right)$}
\pstriangle[linecolor=black, linewidth=0.04, dimen=outer](5.142857,-1.9495846)(2.5714285,2.0)
\rput[bl](1.8774194,-1.1523495){\large $\left( \frac{1}{r_1}, \frac{1}{r_2}, \frac{1}{r'}\right)$}
\pspolygon[linecolor=black, linewidth=0.04, fillstyle=solid,fillcolor=colour0](5.142857,0.0504154)(4.499999875,-0.9495846)(5.784714125,-0.9495846)
\psdots[linecolor=black, dotsize=0.1](5.3612905,-0.65557534)
\psline[linecolor=black, linewidth=0.04, arrowsize=0.05291666666666668cm 2.0,arrowlength=1.4,arrowinset=0.0]{->}(3.8367348,-0.80264574)(5.2244897,-0.7210131)
\pspolygon[linecolor=black, linewidth=0.04, fillstyle=solid,fillcolor=colour0](11.25,0.671844)(10.0,-1.1614894)(11.0,-2.578156)(11.5,-2.578156)(12.583333,-1.1614894)
\rput[bl](7.8333335,-3.8448226){$\left( \frac{1}{2}+\frac{1}{r}, \frac{1}{2}, -\frac{1}{r}\right)$}
\rput[bl](12.833333,-3.9448225){$\left( \frac{1}{2}, \frac{1}{2}+\frac{1}{r}, -\frac{1}{r} \right)$}
\rput[bl](12.55,-0.778156){$\left( 0, \frac{1}{2}+\frac{1}{r}, \frac{1}{2}-\frac{1}{r} \right)$}
\rput[bl](7.3333335,-0.728156){$\left( \frac{1}{2}+\frac{1}{r}, 0, \frac{1}{2}-\frac{1}{r} \right)$}
\psline[linecolor=black, linewidth=0.04, arrowsize=0.05291666666666668cm 2.0,arrowlength=1.4,arrowinset=0.0]{->}(8.833333,-3.1614892)(10.833333,-2.6614892)
\psline[linecolor=black, linewidth=0.04, arrowsize=0.05291666666666668cm 2.0,arrowlength=1.4,arrowinset=0.0]{->}(13.666667,-3.1614892)(11.666667,-2.6614892)
\psline[linecolor=black, linewidth=0.04](9.488372,-1.8107141)(13.023255,-1.8107141)
\end{pspicture}
}

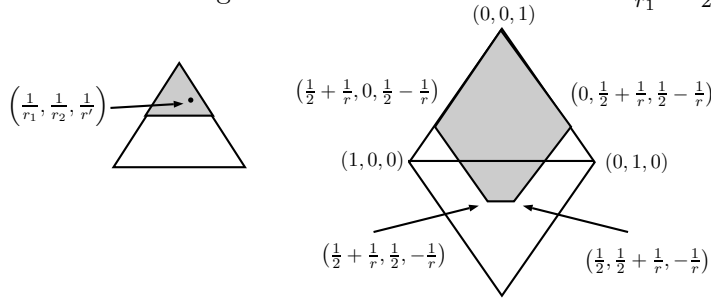
\captionof{figure}{Range for vector-valued  $BHT$ when $\frac{1}{r'} > \frac{1}{2}$}
\end{theorem}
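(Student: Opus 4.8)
The starting point is the Lacey--Thiele time--frequency discretization. After the Whitney decomposition of the region $\{\xi<\eta\}$, $BHT$ is written as a sum of $O(1)$ model operators
\[
BHT_{\ic P}(f,g)(x,w)=\sum_{P\in\ic P}\frac{1}{|I_P|^{1/2}}\langle f(\cdot,w),\phi_P^1\rangle\,\langle g(\cdot,w),\phi_P^2\rangle\,\phi_P^3(x),
\]
where $\ic P$ runs over finite families of tri-tiles with $L^2$-normalized wave packets, and it suffices to bound each $BHT_{\ic P}$ uniformly in $\ic P$. Since $BHT$ acts diagonally in the $w$-variable, i.e. $BHT_{\ic P}(f,g)(\cdot,w)=BHT_{\ic P}\big(f(\cdot,w),g(\cdot,w)\big)$, the assertion is equivalent to
\[
\Big\|\,\big\|BHT_{\ic P}(f,g)\big\|_{L^r(\ii W,\mu)}\Big\|_{L^s(\rr R)}\lesssim\Big\|\,\|f\|_{L^{r_1}(\ii W,\mu)}\Big\|_{L^p(\rr R)}\cdot\Big\|\,\|g\|_{L^{r_2}(\ii W,\mu)}\Big\|_{L^q(\rr R)}
\]
with an implicit constant independent of $\ic P$; the vector structure enters only through the coefficient maps $w\mapsto\langle f(\cdot,w),\phi_P\rangle$, which I would handle via the Fefferman--Stein vector-valued maximal inequality and $L^{r_i}(\ii W,\mu)$-valued Littlewood--Paley square-function estimates.

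The engine of the argument is the \emph{helicoidal method}: instead of proving the displayed inequality directly, I would prove, by induction on the number of scales present in $\ic P$, a \emph{localized} strengthening of it. Fixing a dyadic interval $I_0$ that contains the spatial intervals of all tiles of $\ic P$, the induction statement asserts
\[
\Big\|\,\big\|BHT_{\ic P}(f,g)\big\|_{L^r(\ii W)}\Big\|_{L^s(E)}\lesssim |I_0|^{1/s}\cdot\widetilde{\ssize}_{I_0}\!\big(\|f\|_{L^{r_1}(\ii W)}\big)\cdot\widetilde{\ssize}_{I_0}\!\big(\|g\|_{L^{r_2}(\ii W)}\big)
\]
for all measurable $E\subseteq 3I_0$, where $\widetilde{\ssize}_{I_0}$ denotes the appropriate $I_0$-localized, $L^\rho$-averaged size of the scalar function in parentheses --- with $\rho=2$ in case i), and $\rho$ dictated by $r_1$, $r_2$, respectively $r'$, in cases ii)--iv). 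The point of this formulation is that the normalization $|I_0|^{1/s}$ together with the localized sizes is stable under the two surgical moves needed in the inductive step: (a) restricting the operator to the two dyadic children of $I_0$, and (b) peeling off the single tree whose top sits at the coarsest remaining scale. In the inductive step one runs a stopping-time decomposition of $\ic P$ adapted to the sizes of $f$ and of $g$; the exceptional tiles assemble into $O(1)$ trees, on each of which a \emph{single-tree} (Bessel-type) estimate applies --- a local $L^2$ estimate in case i), a local $L^{r_1}$, $L^{r_2}$ or $L^{r'}$ estimate in cases ii)--iv) --- while the surviving tiles involve strictly fewer scales and are dealt with by the induction hypothesis after descending to the children of $I_0$. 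Summing the resulting geometric series over stopping generations --- here the vector-valued maximal and square-function bounds are used to absorb the $L^{r_i}(\ii W)$-valued sizes and energies --- recovers the localized estimate at $n$ scales from the cases with fewer than $n$ scales. Finally one globalizes: summing the localized estimate over a Whitney / Calder\'on--Zygmund decomposition of the super-level sets of the maximal functions of $\|f\|_{L^{r_1}(\ii W)}$ and $\|g\|_{L^{r_2}(\ii W)}$ produces a restricted-type bound at the relevant $(p,q,s)$, which Marcinkiewicz-type interpolation inside $Range(BHT)$ upgrades to the strong estimate --- the interpolation being carried out in the distributional (restricted) sense when $s<1$, which is exactly why the full $Range(BHT)$, and not merely its part with $s\ge1$, is reached in case i).

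It remains to pin down the admissible region $\ii{D}_{r_1,r_2,r}$. Since $1/r_1+1/r_2+1/r'=1$ with $1/r\le1$, at most one of $1/r_1,1/r_2,1/r'$ can exceed $1/2$, so the four cases i)--iv) are exhaustive, and by the permutation symmetry of the trilinear form dual to $BHT$ it suffices to treat case i) and one of ii)--iv). In case i) every slot carries the $L^2$ single-tree estimate with no loss, the induction closes over all of $Range(BHT)$, and $\ii{D}_{r_1,r_2,r}=Range(BHT)$. When, say, $1/r_1>1/2$, the $f$-slot carries only a local $L^{r_1}$ (not $L^2$) single-tree estimate; feeding this weaker input into the helicoidal machinery one still obtains the vector-valued bound on a proper sub-polytope of $Range(BHT)$, and a scaling/interpolation bookkeeping identifies it as exactly $\{(p,q,s)\in Range(BHT):0\le\tfrac1q<\tfrac32-\tfrac1{r_1}\}$, with cases iii) and iv) following by relabeling the slots. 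The genuinely delicate point --- the main obstacle --- is the inductive step of the helicoidal scheme itself: the stopping time must be organized so that the localized sizes decay geometrically while the $|I_0|^{1/s}$ normalization is preserved and the $L^{r_i}(\ii W,\mu)$-valued structure is threaded through every maximal- and square-function estimate without loss. It is precisely this interplay between the $\ell^r$-summation and the tree selection that blocked the earlier ($UMD$-based and other) approaches outside $r\in(4/3,4)$, and accommodating the endpoints $r_i=\infty$ and $r=1$, which are invisible to $UMD$ methods, forces the single-tree estimates to be used in their sharpest localized form.
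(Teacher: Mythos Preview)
Your proposal misidentifies what the helicoidal method actually is, and the argument you sketch would not close.

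First, the induction in the paper is \emph{not} on the number of scales in $\ic P$. The helicoidal induction is on the depth of the vector-valued nesting; for the single-level vector-valued estimate in this theorem there is no scale induction at all, only the passage $\ii P(0)\Rightarrow\ii P(1)$. The scalar size--energy machinery (Proposition~6.12) is used as a black box, not rebuilt scale by scale, and there is no ``peeling off the single tree at the coarsest scale'' step. What drives the proof is a \emph{localized trilinear} estimate (Proposition~\ref{Localization for local L^1 BHT}): for fixed sets $F,G,H'$ and a fixed spatial interval $I_0$,
\[
\big|\Lambda_{BHT;\rr P(I_0)}^{F,G,H'}(f,g,h)\big|\lesssim
\big(\sssize_{I_0}\one_F\big)^{a_1}\big(\sssize_{I_0}\one_G\big)^{a_2}\big(\sssize_{I_0}\one_{H'}\big)^{a_3}
\|f\ci_{I_0}\|_{r_1}\|g\ci_{I_0}\|_{r_2}\|h\ci_{I_0}\|_{r'},
\]
with $a_j=\tfrac{1+\theta_j}{2}-\tfrac{1}{r_j}-\epsilon$. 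Your localized statement has only two size factors, no third (dual) function $h$, and the wrong normalization; without the $H'$-slot you cannot run the generalized restricted type argument needed to reach $s<1$.

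Second, the vector variable $w$ is \emph{not} handled by Fefferman--Stein or $L^{r_i}(\ii W)$-valued Littlewood--Paley theory. The whole point is that after applying the scalar localized estimate above for each fixed $w$, a single application of H\"older's inequality in $w$ (with exponents $r_1,r_2,r'$) converts $\int_{\ii W}\|f_w\ci_I\|_{r_1}\|g_w\ci_I\|_{r_2}\|h_w\ci_I\|_{r'}\,d\mu$ into $\|\one_F\ci_I\|_{r_1}\|\one_G\ci_I\|_{r_2}\|\one_{H'}\ci_I\|_{r'}$, using only the pointwise bounds $\|f(x,\cdot)\|_{L^{r_1}}\le\one_F(x)$ etc. The stopping time is then run on the \emph{scalar} functions $\one_F,\one_G,\one_{H'}$ (not on $f,g$), producing collections $\ii I^{n_1,n_2,n_3}$ with $\sum_I|I|\lesssim(2^{n_1}|F|)^{\gamma_1}(2^{n_2}|G|)^{\gamma_2}(2^{n_3}|H|)^{\gamma_3}$. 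Summability of the resulting triple series is exactly what forces the positivity constraints $\tfrac{1+\theta_j}{2}>\tfrac{1}{r_j}$, and optimizing over admissible $(\theta_1,\theta_2,\theta_3)$ is what produces the four regions i)--iv). Your derivation of the ranges via ``local $L^{r_1}$ single-tree estimates'' is not the mechanism; the constraint is purely the requirement that the size exponents $a_j$ stay positive so that the localized operatorial norm can be factored out before summing the stopping-time pieces.
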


We emphasize that whenever $(p, q, s)$ are so that $\ds 0 \leq \frac{1}{p}, \frac{1}{q} \leq \frac{1}{2}$ (and consequently $1 \leq s < \infty$), vector-valued estimates exist for any tuple $(r_1, r_2, r)$. These are the first examples of tuples $(p, q, s)$ which allow for any $\overrightarrow{BHT}_{\vec{r}}$ extension.

Theorem \ref{vector valued BHT} can be further generalized to multiple vector-valued inequalities. For an $n-$tuple $\ds P=\left(p_1, \ldots, p_n \right)$, the mixed $L^P$ norm on the product space $$\left( \ii{W}, \Sigma, \mu  \right)=\left( \prod_{j=1}^n \ii{W}_j, \prod_{j=1}^n \Sigma_j, \prod_{j=1}^n \mu_j\right)$$ is defined as:
\[
\big \| f  \big \|_{P}:= \left(  \int_{\ii{W}_1}~ \ldots \left(  \int_{\ii{W}_n}  \big\vert f(w_1, \ldots, w_n)    \big \vert^{p_n} d \mu_n(w_n)      \right)^{p_{n-1}/p_n} \ldots d \mu_1(w_1) \right)^{1/{p_1}}.
\]
Consider the tuples $R_1=\left(r_1^1, \ldots, r_1^n \right)$, $R_2=\left(r_2^1, \ldots, r_2^n \right)$ and $R=\left(r^1, \ldots, r^n \right)$ satisfying for every $1 \leq j \leq n$: $ 1 <r_1^j, r_2^j \leq \infty, 1\leq r^j < \infty, \ds \frac{1}{r_1^j}+\frac{1}{r_2^j}=\frac{1}{r^j}$ (from now on, this will be written as  $1<R_1, R_2\leq \infty$, $1 \leq R < \infty$, and $\ds \frac{1}{R_1}+\frac{1}{R_2}=\frac{1}{R}$). Then we have the following multiple vector-valued result:
\begin{theorem}
\label{multiple vector valued BHT}
Let $R_1, R_2$ and $R$ be as above. If the tuples $R_1, R_2, R$ satisfy the condition $\left(r_1^{j}, r_2^{j}, r^{j}\right) \in \ii{D}_{r_1^{j+1}, r_2^{j+1}, r^{j+1}}$ for every $1 \leq j \leq n-1$, then there exists a set $\ii{D}_{R_1, R_2, R}$ of triples $(p, q, s)$ for which
\[
BHT:L^p \left( \rr{R}; L^{R_1}(\ii{W}, \mu) \right) \times L^q\left(\rr{R}; L^{R_2}(\ii{W}, \mu)\right) \to L^s\left( \rr{R}; L^{R}(\ii{W}, \mu)  \right),
\]
In addition, $\ii{D}_{R_1, R_2, R}= \ii{D}_{r_1^1, r_2^1, r^1}$.
\end{theorem}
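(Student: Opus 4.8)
The plan is to argue by induction on $n$, the number of factors in $\mathscr{W} = \prod_{j=1}^{n} \mathscr{W}_j$. The base case $n=1$ is exactly Theorem~\ref{vector valued BHT}, which in addition identifies $\mathscr{D}_{r_1^1, r_2^1, r^1}$ through its cases i)--iv). For the inductive step I would split off the outermost factor: write $\mathscr{W} = \mathscr{W}_1 \times \mathscr{W}'$ with $\mathscr{W}' = \prod_{j=2}^{n} \mathscr{W}_j$, and note that the iterated mixed norm factors as $L^{R_1}(\mathscr{W}) = L^{r_1^1}\bigl(\mathscr{W}_1; L^{R_1'}(\mathscr{W}')\bigr)$ with $R_1' = (r_1^2,\dots,r_1^n)$, and similarly for $R_2$ and $R$. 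The chain conditions $(r_1^j, r_2^j, r^j) \in \mathscr{D}_{r_1^{j+1}, r_2^{j+1}, r^{j+1}}$ for $2 \le j \le n-1$ are precisely what is needed to apply the inductive hypothesis to the $(n-1)$-fold product $\mathscr{W}'$; doing so, and using that its range of outer exponents equals $\mathscr{D}_{r_1^2, r_2^2, r^2}$ together with the remaining chain condition $(r_1^1, r_2^1, r^1) \in \mathscr{D}_{r_1^2, r_2^2, r^2}$, we obtain the single ``inner'' vector-valued bound
\[
BHT \colon L^{r_1^1}\bigl(\mathbb{R}; L^{R_1'}(\mathscr{W}')\bigr) \times L^{r_2^1}\bigl(\mathbb{R}; L^{R_2'}(\mathscr{W}')\bigr) \longrightarrow L^{r^1}\bigl(\mathbb{R}; L^{R'}(\mathscr{W}')\bigr).
\]

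Next I would rerun the helicoidal proof of Theorem~\ref{vector valued BHT} essentially verbatim, but with the measure space denoted $\mathscr{W}$ there replaced by $\mathscr{W}_1$ and the scalar values of $f, g, h$ replaced by elements of the lattices $L^{R_1'}(\mathscr{W}')$, $L^{R_2'}(\mathscr{W}')$, $L^{R'}(\mathscr{W}')$. The feature that makes this possible is that the helicoidal argument is \emph{local}: one localizes the discretized model form to the tiles whose time interval is contained in a fixed dyadic interval $I_0$, and bounds this localized trilinear form by $|I_0|$ times a product of localized sizes and energies of the functions $x \mapsto \|f(x,\cdot)\|_{L^{R_1}(\mathscr{W})}$, $x \mapsto \|g(x,\cdot)\|_{L^{R_2}(\mathscr{W})}$, $x \mapsto \|h(x,\cdot)\|_{L^{R'}(\mathscr{W})}$. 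In this local step the inner lattice enters nontrivially only where one cannot treat these mixed-norm functions as ordinary scalar functions of $x$ --- because $BHT$ does not commute with the inner norm --- and those places are handled exactly by the inner vector-valued bound displayed above, together with vector-valued maximal and square-function inequalities, which remain valid in iterated mixed-norm lattices by Fefferman--Stein-type estimates. Since the range of admissible outer exponents $(p,q,s)$ coming out of this local estimate and its summation depends only on $(r_1^1, r_2^1, r^1)$ and not on the inner structure, it is exactly $\mathscr{D}_{r_1^1, r_2^1, r^1}$; this, together with $\mathscr{D}_{R_1, R_2, R} = \mathscr{D}_{r_1^1, r_2^1, r^1}$, is the claim.

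Finally I would pass from the local estimate to the global one by the same stopping-time/sparse decomposition over a family of dyadic intervals and the same generalized restricted-type interpolation already used for Theorem~\ref{vector valued BHT}; in particular one uses the interpolation reaching the $L^\infty$ endpoints, which is available because the helicoidal method treats those endpoints directly --- this matters since some $L^{r_i^j}(\mathscr{W}_j)$ may be an $L^\infty$ space, hence not a UMD space. As each layer of the induction costs only a single application of an already-established bound, the implied constant picks up at most a multiplicative factor per layer and stays finite.

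I expect the main obstacle to be the bookkeeping needed to justify rigorously that the whole proof of Theorem~\ref{vector valued BHT} ``sees'' the inner variables only through the single black-box bound above: one must check that every auxiliary ingredient invoked there --- the local size and energy estimates for the model operator, the vector-valued Littlewood--Paley and Hardy--Littlewood maximal inequalities, the stopping-time sums, and the restricted-type interpolation near $L^1$ and $L^\infty$ --- has a verbatim counterpart for iterated mixed-norm-valued functions, and that none of them secretly relies on reflexivity or the UMD property of the value space. A secondary point to track is that the chain condition $(r_1^j, r_2^j, r^j) \in \mathscr{D}_{r_1^{j+1}, r_2^{j+1}, r^{j+1}}$ is genuinely used at each level and cannot be dropped: without it, the inner vector-valued bound at the exponents $(r_1^j, r_2^j, r^j)$ is simply unavailable and the local step fails.
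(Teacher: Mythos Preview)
Your inductive skeleton --- peel off the outermost factor $\mathscr{W}_1$, invoke an $(n-1)$-fold bound on $\mathscr{W}'$, then run the helicoidal argument one more time --- matches the paper's strategy. But you have misidentified the object on which the induction is carried, and this is not just a cosmetic issue.

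The paper does \emph{not} use the global inner bound
\[
BHT\colon L^{r_1^1}\bigl(\mathbb{R};L^{R_1'}\bigr)\times L^{r_2^1}\bigl(\mathbb{R};L^{R_2'}\bigr)\to L^{r^1}\bigl(\mathbb{R};L^{R'}\bigr)
\]
as a black box in the inductive step. The induction is on the \emph{localized} statement $\mathscr{P}(n)$: for $\|f\|_{R_1}\le\mathbf{1}_F$, $\|g\|_{R_2}\le\mathbf{1}_G$, $\|h\|_{R'}\le\mathbf{1}_{H'}$,
\[
\big|\Lambda^n_{I_0}(f,g,h)\big|\lesssim\bigl(\widetilde{\mathrm{size}}_{I_0}\mathbf{1}_F\bigr)^{\frac{1+\theta_1}{2}-\epsilon}\bigl(\widetilde{\mathrm{size}}_{I_0}\mathbf{1}_G\bigr)^{\frac{1+\theta_2}{2}-\epsilon}\bigl(\widetilde{\mathrm{size}}_{I_0}\mathbf{1}_{H'}\bigr)^{\frac{1+\theta_3}{2}-\epsilon}|I_0|.
\]
It is this statement, with its explicit operatorial norm expressed through sizes of $\mathbf{1}_F,\mathbf{1}_G,\mathbf{1}_{H'}$, that feeds into the next layer: in the step $\mathscr{P}(n)\Rightarrow\mathscr{P}(n+1)$ one performs a further stopping time inside $I_0$ and applies $\mathscr{P}(n)$ on each selected subinterval $I\subseteq I_0$, after which the size exponents are split and the pieces summed (see Proposition~\ref{local vv-BHT} and Proposition~\ref{two iterated vv-BHT}). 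The global $(n-1)$-fold bound alone does not carry the size information needed to make this summation converge, so treating it as a black box leaves a genuine hole.

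A second, smaller point: the ``vector-valued Littlewood--Paley and Hardy--Littlewood maximal inequalities'' and ``Fefferman--Stein-type estimates'' you list as ingredients play no role here. The helicoidal method for $BHT$ deliberately avoids them; everything is done through localized trilinear-form estimates, sizes, energies, and restricted-type interpolation. Indeed, one of the selling points of the method is that it reaches $\ell^1$ and $\ell^\infty$ endpoints precisely because it does not pass through UMD-type or Fefferman--Stein machinery. So those ingredients should be removed from your outline, and the inductive hypothesis should be reformulated as the localized estimate $\mathscr{P}(n)$ rather than the global boundedness statement.
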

\begin{remark}
\begin{enumerate}
\item The vector spaces $L^r(\ii{W}_j, \Sigma_j, \mu_j)$ can be both discrete $\ell^r$ spaces or the Euclidean $L^r(\rr{R})$ spaces. For our applications, they are going to be either of these.
\item If the exponents $R_1=\left(r_1^1, \ldots, r_1^n \right)$, $R_2=\left(r_2^1, \ldots, r_2^n \right)$ and $R=\left(r^1, \ldots, r^n \right)$ are in the ``local $L^2$" range, then the multiple vector-valued inequalities hold for any $(p, q, s) \in Range(BHT)$. As particular cases, we mention the following:
\begin{align*}
&BHT: L^p\left( \ell^2 \left(  \ell^\infty \right)\right) \times L^q\left( \ell^\infty \left(  \ell^2 \right)   \right) \to L^s\left( \ell^2 \left(  \ell^2 \right) \right), \\
&BHT: L^p\left( \ell^2 \left(  \ell^\infty \right)\right) \times L^q\left( \ell^2 \left(  \ell^2 \right)   \right) \to L^s\left( \ell^1\left(  \ell^2 \right) \right),
\end{align*}
for any $(p, q, s) \in Range(BHT)$.

 Also, for proving an equivalent of Theorem \ref{tensor product BHT d-paraproducts_intro} in mixed norm spaces, we need the more complex version
\[
BHT: L_x^{p_1} \left( L_y^{p_2} \left(  \ell^{\infty}(\ell^2) \right)\right) \times L_x^{q_1} \left( L_y^{q_2} \left(  \ell^{2}(\ell^2) \right)\right) \to L_x^{s_1} \left( L_y^{s_2} \left(  \ell^{2}(\ell^1) \right)\right).
\]
\item
\label{remark: BHT vector spaces}
As mentioned earlier, multiple vector-valued estimates for $BHT$ play an important role in estimating $BHT \otimes \Pi^{\otimes^n}$. In the case $n=1$, one can obtain estimates for $BHT \otimes \Pi$ in the Banach range by using duality and vector-valued inequalities of the type 
\begin{equation*}
BHT: L^p\left( \ell^2 \right) \times L^q\left( \ell^\infty \right) \to L^s\left( \ell^2 \right) \quad \text{and }\quad BHT: L^p\left(  \ell^\infty \right) \times L^q\left(  \ell^2  \right) \to L^s\left( \ell^2 \right).
\end{equation*}
However, $\ell^1$- valued estimates cannot be avoided for $n \geq 3$, for example if $\ds \Pi \otimes \Pi \otimes \Pi$ has the form
\[
\Pi \otimes \Pi \otimes \Pi(f, g)(x, y, z)=\sum_{k, l, m} Q_k^1 Q_l^2 P_m^3 \left( P_k^1 Q_l^2 Q_m^3 f \cdot Q_k^1 P_l^2 Q_m^3  \right)(x,y,z).
\]
This is in part the novelty of our approach in Theorem \ref{tensor product BHT d-paraproducts_intro}, and it contrasts with the situation of classical Calder\'{o}n-Zygmund operators, where $\ell^1$- valued estimates cannot be expected. 
\item The optimality of the range in Theorem \ref{vector valued BHT} or that in Theorem \ref{multiple vector valued BHT} remains without answer, for now. Since we use in our proofs the model operator for $BHT$, the obstructions appearing are similar to those in \cite{initial_BHT_paper}. These are described in \ref{eq:condition}.
\end{enumerate}
\end{remark}

Equally important are multiple vector-valued inequalities for paraproducts, as they are essential in proving Theorem \ref{Leibniz rule}.
\begin{theorem}
\label{multiple vector valued paraproducts}
For any tuples $R_1=\left(r_1^1, \ldots, r_1^n \right)$, $R_2=\left(r_2^1, \ldots, r_2^n \right)$ and $R=\left(r^1, \ldots, r^n \right)$ satisfying component-wise $1<R_1, R_2\leq \infty$, $1 \leq R < \infty$, and $\ds \frac{1}{R_1}+\frac{1}{R_2}=\frac{1}{R}$, 
\[
\Pi:L^p \left( \rr{R}; L^{R_1}(\ii{W}, \mu) \right) \times L^q\left(\rr{R}; L^{R_2}(\ii{W}, \mu)\right) \to L^s\left( \rr{R}; L^{R}(\ii{W}, \mu)  \right),
\]
provided $1< p, q \leq \infty$, $\ds \frac{1}{2} < s < \infty$, and $\ds \frac{1}{p}+\frac{1}{q}=\frac{1}{s}$.
\end{theorem}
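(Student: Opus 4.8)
The plan is to run the helicoidal method, inducting on the number of vector-valued parameters $n$. First I would carry out the standard reductions: write $\Pi$ as a finite sum of model operators of the three types $(I), (II), (III)$ and discretize each one into an $L^2$-normalized wave-packet sum
\[
\Pi(f,g)=\sum_{Q}\frac{1}{|Q|^{1/2}}\langle f,\phi_Q^1\rangle\,\langle g,\phi_Q^2\rangle\,\phi_Q^3
\]
over a collection of dyadic intervals $Q$, in which at most one of the three families $\{\phi_Q^i\}_Q$ fails to have mean zero (for type $(I)$ this is $\{\phi_Q^3\}_Q$; for types $(II),(III)$ it is one of the input families). After this reduction the scalar-valued statement is Theorem \ref{CoifMeyer-paraproducts}, which serves as the base case $n=0$; the inductive claim is that the same bounds survive when the functions take values in the iterated mixed-norm space $L^{R_i}(\ii{W},\mu)$.

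The heart of the matter is the case $n=1$ --- that is, $\Pi:L^p(\rr{R};L^{r_1})\times L^q(\rr{R};L^{r_2})\to L^s(\rr{R};L^{r})$ for every admissible scalar triple $(r_1,r_2,r)$ and every $(p,q,s)$ in the paraproduct range --- which I would obtain through a \emph{local estimate}: for each dyadic interval $I_0$,
\[
\Big\|\,\big\|\Pi\big(f\one_{5I_0},g\one_{5I_0}\big)\big\|_{L^{r}(\ii{W},\mu)}\,\Big\|_{L^s(I_0)}\lesssim |I_0|^{1/s}\cdot\ssize_{I_0}\!\big(\|f\|_{L^{r_1}}\big)\cdot\ssize_{I_0}\!\big(\|g\|_{L^{r_2}}\big),
\]
where $\ssize_{I_0}(\varphi)=\sup_{J\subseteq I_0}\frac{1}{|J|}\langle|\varphi|,\tilde{\chi}_J^M\rangle$ is the localized maximal average. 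To prove it, expand the model sum over $Q\subseteq I_0$ and, after Cauchy--Schwarz in $Q$, dominate the output pointwise by a product of Hardy--Littlewood maximal functions and Littlewood--Paley square functions; then apply Hölder in the $\ii{W}$ variable and in $x$, and control the resulting mixed-norm maximal- and square-function norms by the mixed-norm Fefferman--Stein inequalities (the square functions use $1<r_1,r_2<\infty$; maximal functions require nothing, which is exactly why an $L^1$ or $L^\infty$ endpoint in one factor can be absorbed). When the output family $\{\phi_Q^3\}_Q$ is lacunary one first invokes the reverse square-function estimate $\|\sum_Q c_Q\phi_Q^3\|_{L^s}\lesssim\big\|(\sum_Q|c_Q|^2|Q|^{-1}\one_Q)^{1/2}\big\|_{L^s}$, valid for all $0<s<\infty$, so that no duality in $s$ is needed and the range $\frac12<s\le1$ is reached. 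A stopping-time decomposition of $\rr{R}$ into a sparse family of intervals on which $\ssize_{I_0}(\|f\|_{L^{r_1}})$ and $\ssize_{I_0}(\|g\|_{L^{r_2}})$ are dominated by the global norms $\|f\|_{L^p(L^{r_1})}$ and $\|g\|_{L^q(L^{r_2})}$, followed by summation of the local estimates over that family exactly as in the scalar Coifman--Meyer argument (whence $s>\frac12$), then gives the global bound; multilinear interpolation among the vector-valued estimates so produced, together with duality, reduces the number of $(p,q,s)$-vertices one must treat by hand.

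For the inductive step $n-1\to n$, split $\ii{W}=\ii{W}_1\times\ii{W}'$ with $\ii{W}'=\prod_{j\ge 2}\ii{W}_j$ and $R'=(r^2,\dots,r^n)$, $R_1'=(r_1^2,\dots,r_1^n)$, $R_2'=(r_2^2,\dots,r_2^n)$. The local estimate above is built entirely from maximal and square functions applied to paraproduct-type expressions; each of these admits an $L^{R'}(\ii{W}')$-valued extension --- for the square functions by the induction hypothesis, for the maximal functions by the mixed-norm Fefferman--Stein inequality --- so the whole local estimate upgrades verbatim to $L^{R_1'}(\ii{W}')$- and $L^{R_2'}(\ii{W}')$-valued inputs, with $\ssize_{I_0}$ now measuring localized averages of the full mixed norm. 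Globalizing as before yields Theorem \ref{multiple vector valued paraproducts}; since every triple $(r_1^j,r_2^j,r^j)$ with $1<r_1^j,r_2^j\le\infty$ and $1\le r^j<\infty$ is already admissible for the $n=1$ result, no side condition propagates and the statement holds for all tuples. \emph{The main obstacle} is making the sizes--energies organization transport the entire nested mixed norm $L^{R}(\ii{W},\mu)$ simultaneously at every scale, uniformly in the spaces $\ii{W}_j$ and in $n$, and reaching the endpoints: $p=\infty$ or $q=\infty$, where the relevant factor lives in $\mathrm{BMO}$ rather than $L^\infty$ and a John--Nirenberg-type argument intervenes; the range $\frac12<s\le1$, where global duality is unavailable; and the levels $r^j\in\{1,\infty\}$ on the output side. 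The feature that keeps everything clean --- and explains why there is no analogue here of the restricted range $\ii{D}_{r_1,r_2,r}$ of Theorem \ref{vector valued BHT} --- is that a paraproduct has a single frequency singularity, so each of $f$ and $g$ meets at most one lacunary projection and the $\ell^1$- and $\ell^\infty$-endpoints stay available, in contrast with $BHT$.
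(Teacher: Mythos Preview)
Your approach is different from the paper's and has a gap at the endpoints $r_i^j=\infty$.

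The paper works with the trilinear form $\Lambda_\Pi(f,g,h)$ rather than the bilinear operator, and proves by induction on $n$ the localized restricted-type inequality
\[
\big|\Lambda^n_{I_0}(f,g,h)\big|\lesssim(\sssize_{I_0}\one_F)^{1-\epsilon}(\sssize_{I_0}\one_G)^{1-\epsilon}(\sssize_{I_0}\one_{H'})^{1-\epsilon}\,|I_0|
\]
under the assumptions $\|f(x,\cdot)\|_{L^{R_1}}\le\one_F(x)$, $\|g(x,\cdot)\|_{L^{R_2}}\le\one_G(x)$, $\|h(x,\cdot)\|_{L^{R'}}\le\one_{H'}(x)$. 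The scalar input is the paraproduct size--energy bound (Proposition~\ref{paraproduct estimates}), where the energy is an $L^1$ quantity; the localization exponents $a_j=1-1/r_j-\epsilon$ in Proposition~\ref{Localization for local L^1 paraproducts} are therefore positive for every $1<r_j\le\infty$, which is precisely why no constraint of the type \ref{eq:condition} appears. The induction step feeds this localized estimate back as an operatorial norm and re-runs the triple stopping time, and the global result follows from Banach-valued generalized restricted-type interpolation --- no square-function or reverse square-function inequality is invoked, so $s\le1$ and $p,q,r_i^j=\infty$ require no separate treatment.

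Your route rests instead on pointwise domination $|\Pi|\le Sf\cdot Sg$ (type $(I)$) or $\mathcal{M}f\cdot Sg$ (types $(II),(III)$), followed by vector-valued Fefferman--Stein and Littlewood--Paley inequalities. The gap is that Littlewood--Paley square functions are not bounded on $\ell^\infty$-valued spaces. For the type-$(I)$ model both inputs are lacunary, so your bound forces a square function on each factor and hence needs $1<r_1^j,r_2^j<\infty$ at every depth; for type $(II)$ you need $r_2^j<\infty$ and for type $(III)$ you need $r_1^j<\infty$. Since the theorem allows any $r_i^j=\infty$ and you must handle all three model types, no single assignment of ``which factor gets the maximal function'' covers all cases; your remark that an $L^\infty$ endpoint ``can be absorbed because maximal functions require nothing'' does not apply when the $L^\infty$ factor carries a $\psi$. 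A related issue: your induction step never actually invokes the inductive hypothesis on $\Pi$ --- it appeals only to vector-valued bounds for $\mathcal{M}$ and $S$, which are separate facts. The paper's helicoidal induction is genuinely self-referential: the localized level-$(n{-}1)$ trilinear estimate enters as the operatorial norm at level $n$, and that is what propagates the full range through the iteration.
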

In other words, vector-valued estimates for paraproducts exist within the same range as that of scalar paraproducts. This is also the case with classical Calder\'{o}n-Zygmund operators.

\subsection*{Original Motivation}~\\
We now describe the previously mentioned Rubio de Francia operator for iterated Fourier integrals, and the context where it appeared. AKNS systems are systems of differential equations of the form
\begin{equation}
u'=i \lambda D u +A u
\end{equation}
where $u=[u_1,\ldots, u_n]^t$ is a vector-valued function defined on $\mathbb{R}$, $D$ is a diagonal $n \times n$ matrix with real and distinct entries $d_1,d_2, \ldots d_n$, and $A=\left( a_{jk} (\cdot)  \right)_{j,k=1}^n$ is a matrix valued function defined on $\mathbb{R}$, and so that $a_{jj} \equiv 0$ for all $1 \leq j \leq n$.

Then one would like to prove that the solutions $u_j^\lambda$ (which depend on $\lambda$ as well), are bounded ``for all times"; that is,
\begin{equation}
\label{eq: akns}
\|u_j^\lambda  \|_{\infty} < \infty \quad \text{ for a. e. } \lambda \quad \text{and all } 1\leq j \leq n.
\end{equation}
We want to have such an estimate under the weakest possible assumptions, so we only require the entries of the potential matrix $A$ to be integrable in some $L^p$ spaces:
\[
a_{jk}(\cdot) \in L^{p_{jk}}(\mathbb{R}), \quad \text{for all } 1 \leq j,k \leq n, j \neq k.
\]

In the case of an upper triangular matrix $A$, whose entries are functions $g_k \in L^{p_k}$, the solutions $u_j(t)$ at a fixed time $t$ are a finite sum of expressions of the form
\[
C \int_{x_1< \ldots< x_m <t}g_1(x_1) \ldots g_m(x_m) e^{i \lambda \left( \alpha_1 x_1+\ldots+\alpha_m x_m  \right)}dx_1 \ldots d x_m. 
\]
Here $m \leq n $ and $\alpha_k \neq 0$ for all $k$, as a consequence of $d_1 \neq \ldots \neq d_n$.
Hence the problem \eqref{eq: akns} reduces to estimating
\[
\tilde{C}_m^\alpha \left( g_1,g_2, \ldots, g_m \right)(\lambda):=\sup_t \big \vert\int_{x_1< \ldots< x_m <t}g_1(x_1) \ldots g_m(x_m) e^{i \lambda \left( \alpha_1 x_1+\ldots+\alpha_m x_m  \right)}dx_1 \ldots d x_m \big \vert.
\]
It was proved by Christ and Kiselv \cite{maximal_multilinear_and_filtrations}, \cite{wkb} that $\tilde{C}_m^\alpha $ is a bounded operator:
\[
\| \tilde{C}_m^\alpha \left(g_1, \ldots, g_m  \right) \|_{s_m} \lesssim \prod\limits_{k=1}^m \| g_k \|_{p_k}
\]
for all $1\leq p_k < 2$, such that $\frac{1}{s_m}=\frac{1}{p_1'}+ \ldots + \frac{1}{p_m'}$.

On the other hand, if the entries of the matrix $A$ are $L^2$ functions, the previous expression becomes equivalent to
 \begin{align}
 \label{eq: bi-est}
&\sup_{t} \vert  \int_{x_1< \ldots< x_m <t}\hat{f}_1(x_1)  \ldots \hat{f}_m(x_m) e^{i \lambda \left( \alpha_1 x_1+\ldots+\alpha_m x_m  \right)}dx_1 \ldots d x_m \vert
\end{align}
denoted  $C_m^\alpha (f_1, \ldots, f_m)(\lambda)$. For $m=1$, this is exactly the Carleson operator, while $m=2$ corresponds to the Bi-Carleson operator of \cite{bi-Carleson}, both of which are known to be bounded operators(with the remark that for the Bi-Carleson, the $\alpha_k$s need to satisfy some non-degeneracy condition):
\[
\| C_2^\alpha(h_1,h_2) \|_{s_2} \lesssim \|h_1  \|_{p_1} \| h_2  \|_{p_2}
\]
for $1< p_1, p_2\leq \infty$, $\displaystyle \frac{1}{s_2}=\frac{1}{p_1}+\frac{1}{p_2}$, and $\frac{2}{3} < s_2 <\infty$.


Moreover, if instead of considering the $\sup$ in the expression \eqref{eq: bi-est}, we look at the limiting behavior $\ds \lim_{t \to \infty} u_j(t)$, then we encounter iterated Fourier integrals: for example, the $BHT$ operator as seen in \eqref{eq: BHT multiplier}, or the Bi-est operator of \cite{biest}:
\[
 \int_{\xi_1<\xi_2<\xi_3} \hat{f_1}(\xi_1) \hat{f_2}(\xi_2) \hat{f_3}(\xi_3) e^{2 \pi i x(\xi_1+\xi_2+\xi_3)} d \xi_1 d \xi_2 d \xi_3.
\]

Now we consider the following mixed problem: the matrix $A$ is the sum of a lower triangular matrix with entries $\hat{f}_k \in L^2$, and an upper triangular matrix with entries $g_k \in L^{p_k}$, where $1 \leq p_k <2$. Using Picard iteration, the solutions $u_j(t)$ can be expressed as a series of terms of the form
{
\fontsize{9}{10}
\[
 C \cdot \int_R \hat{f}_{11}(\xi_{11}) \ldots \hat{f}_{1m_1}(\xi_{1m_1})  \ldots g_{21}(x_{21}) \ldots g_{2n_2}(x_{2 n_2}) \hat{f}_{l1}(\xi_{l1}) \ldots \hat{f}_{lm_l}(\xi_{l m_l}) dx d\xi,
\]
}
where $R=\lbrace \xi_{11}<\ldots \xi_{1m_1}< \ldots < x_{21}< \ldots x_{2n_2}< \ldots <\xi_{l1}< \ldots < \xi_{l m_l} <t \rbrace$.

The simplest of these operators, where $\sup$ is dropped, is given by
\begin{equation}
\label{hybrid operator}
M(f_1, f_2, g)(\xi)= \int_{x_1<x_2<x_3} \hat{f}_1(x_1) \hat{f}_2(x_2) g(x_3) e^{2 \pi i \xi (x_1+x_2+x_3)} dx_1 d x_2 d x_3,
\end{equation}
where $f_1 \in L^{p_1}, f_2 \in L^{p_2}$, $1 < p_1, p_2 < \infty$, and $g \in L^p$ with $1<p<2$.
The techniques from \cite{maximal_multilinear_and_filtrations}, \cite{wkb},\cite{absolutely_continuous_spectrum}, akin to those used by Paley in \cite{PaleyOrthFun}, are based on a dyadic filtration associated to one of the functions. This involves a structure on $\rr{R}$ similar to that of the dyadic mesh: on every level of the filtration, one has a partition of $\rr{R}$, and passing to the next level of the filtration means refining the previous partition. We want to use $g$ in order to obtain this structure and for simplicity we assume $\|g\|_p=1$. Define the function 
\[
\varphi(x)=\int_{- \infty}^x |g(y)|^p dy.
\]
Its image is the unit interval $[0,1]$, and the filtration will consist of pre-images through $\varphi$ of the collection $\mathcal{D}$ of dyadic intervals in $[0,1]$. Because $\varphi$ is increasing, whenever $x_2< x_3$ we have $0 \leq \varphi(x_2) \leq \varphi(x_3) \leq 1$. Hence there exists a unique dyadic interval $\omega \subset [0,1]$ so that $\varphi(x_2)$ is contained in the left half of $\omega$, which we denote $\omega_L$, while $\varphi(x_3)$ is contained in the right half $\omega_R$. To simplify notation, we identify $\varphi^{-1}(\omega)$ with $\omega$.

Then the operator $M$ can be written as
\begin{align}
&\sum_{\omega \in \mathcal{D}} \int_{\substack{ x_1<x_2\\ x_2 \in \omega_L ,  x_3 \in \omega_R   }} \hat{f}_1(x_1) \hat{f}_2(x_2) g(x_3) e^{2 \pi i \xi (x_1+x_2+x_3)} d x_1 d x_2 d x_3  \nonumber \\
&= \sum_{\omega} \int_{\substack{ x_1< x_2, x_1, x_2 \in \omega_L \\ x_3 \in \omega_R }} \hat{f}_1(x_1) \hat{f}_2(x_2) g(x_3) e^{2 \pi i \xi (x_1+x_2+x_3)} d x_1 d x_2 d x_3\label{breaking op 1}\\
&+\sum_{\omega} \int_{\substack{x_1< L(\omega_L), x_2 \in \omega_L \\ x_3 \in \omega_R}} \hat{f}_1(x_1) \hat{f}_2(x_2) g(x_3) e^{2 \pi i \xi (x_1+x_2+x_3)} d x_1 d x_2 d x_3. \label{breaking op 2}
\end{align}

Here $L(\omega_L)$ denotes the left endpoint of the interval $\omega_L$. We call the operators in \eqref{breaking op 1} and \eqref{breaking op 2} $M_1$ and $M_2$ respectively. The first term $M_1$ accounts for the occurrence of arbitrary intervals (they are in fact $\varphi^{-1}(\omega_L)$), and this combined with H\"{o}lder's inequality motivates the operator 
\begin{equation}
\label{RF for iterated Fourier integrals}
T_r(f,g)(x)=\left(  \sum_{k=1}^N \big|   \int\limits_{a_k < \xi_1<\xi_2<b_k} \hat{f}(\xi_1) \hat{g}(\xi_2) e^{2\pi i x(\xi_1 +\xi_2)} d\xi_1 d\xi_2  \big| ^r  \right) ^{\frac{1}{r}}.
\end{equation}

We have the following result:
\begin{theorem}
\label{main theorem}
If  $1 \leq r \leq 2$, then
\[
\| T_r(f,g) \|_s \lesssim \|f\|_p  \|g\|_q
\]
whenever $\dfrac{1}{p}+\dfrac{1}{q}=\dfrac{1}{s}$, and $p, q, s$ satisfy
\[
0 \leq \frac{1}{p}, \frac{1}{q}<\frac{1}{2}+\frac{1}{r}, \qquad -\frac{1}{r'}<\frac{1}{s'}<1.
\]

On the other hand, if $r \geq 2$, $T_r$ is a bounded operator with the same range as the $BHT$ operator.

\psscalebox{.7 .7} 
{
\begin{pspicture}(0,-4.421844)(15.3,2)
\definecolor{colour0}{rgb}{0.8,0.8,0.8}
\pscustom[linecolor=black, linewidth=0.04]
{
\newpath
\moveto(1.2857143,-5.86387)
}
\pscustom[linecolor=black, linewidth=0.04]
{
\newpath
\moveto(5.0,-5.86387)
}
\psdiamond[linecolor=black, linewidth=0.04, dimen=outer](11.271428,-1.8352988)(1.7857143,2.5714285)
\rput[bl](13.090909,-2.0327015){ $\left( 0, 1, 0 \right)$}
\rput[bl](10.709091,0.7672986){$\left( 0, 0, 1\right)$}
\rput[bl](8.072727,-1.9963378){ $\left(1, 0, 0 \right)$}
\psline[linecolor=black, linewidth=0.04](9.488372,-1.8107141)(13.023255,-1.8107141)
\pspolygon[linecolor=black, linewidth=0.04, fillstyle=solid,fillcolor=colour0](11.271428,0.7361297)(10.0,-1.1614894)(11.0,-2.578156)(11.5,-2.578156)(12.583333,-1.1614894)
\rput[bl](7.8333335,-3.8448226){$\left( \frac{1}{2}+\frac{1}{r'}, \frac{1}{2}, -\frac{1}{r'}\right)$}
\rput[bl](12.833333,-3.9448225){$\left( \frac{1}{2}, \frac{1}{2}+\frac{1}{r'}, -\frac{1}{r'} \right)$}
\rput[bl](12.55,-0.778156){$\left( 0, \frac{1}{2}+\frac{1}{r'}, \frac{1}{2}-\frac{1}{r'} \right)$}
\rput[bl](7.3333335,-0.728156){$\left( \frac{1}{2}+\frac{1}{r'}, 0, \frac{1}{2}-\frac{1}{r'} \right)$}
\psline[linecolor=black, linewidth=0.04, arrowsize=0.05291666666666668cm 2.0,arrowlength=1.4,arrowinset=0.0]{->}(8.833333,-3.1614892)(10.833333,-2.6614892)
\psline[linecolor=black, linewidth=0.04, arrowsize=0.05291666666666668cm 2.0,arrowlength=1.4,arrowinset=0.0]{->}(13.666667,-3.1614892)(11.666667,-2.6614892)
\end{pspicture}
}

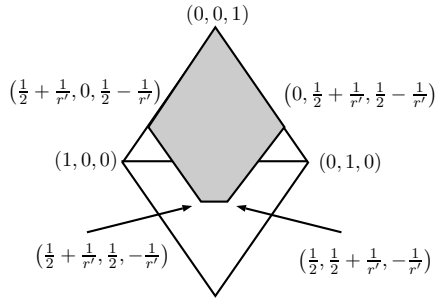
\captionof{figure}{Range for $T_r$ operator for $1 \leq r \leq 2$}

\end{theorem}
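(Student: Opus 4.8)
The first step is to recognize $T_r$ as a vector-valued $BHT$. For each $k$ one has the pointwise identity
\[
\int_{a_k<\xi_1<\xi_2<b_k}\hat f(\xi_1)\hat g(\xi_2)e^{2\pi i x(\xi_1+\xi_2)}\,d\xi_1 d\xi_2=BHT\big(P_{(a_k,\infty)}f,\,P_{(-\infty,b_k)}g\big)(x),
\]
where $P_\omega$ denotes the Fourier projection onto $\omega$: intersecting the region $\{\xi_1<\xi_2\}$ defining $BHT$ with $\{\xi_1>a_k\}\cap\{\xi_2<b_k\}$ reproduces exactly $\{a_k<\xi_1<\xi_2<b_k\}$ (and both sides vanish when $a_k\ge b_k$). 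Hence, setting $f_k:=P_{(a_k,\infty)}f$ and $g_k:=P_{(-\infty,b_k)}g$,
\[
T_r(f,g)=\Big(\sum_{k=1}^N\big|BHT(f_k,g_k)\big|^r\Big)^{1/r}.
\]
This is an object of the type controlled by Theorem \ref{vector valued BHT}, \emph{except} that $(f_k)_k$ and $(g_k)_k$ are frequency truncations of the \emph{single} functions $f,g$. The whole point is therefore to avoid paying the full vector-valued price $\|(\sum_k|f_k|^{r_1})^{1/r_1}\|_p$ on the right, which is precisely what makes a range larger than that of $\overrightarrow{BHT}_{\vec{r}}$ conceivable.

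The plan for $1\le r\le 2$ is to pass to the standard discretized model operator for $BHT$ (a sum over tri-tiles built from a Whitney decomposition of $\{\xi_1<\xi_2\}$) and run the helicoidal sizes-and-energies argument on the resulting vector-valued model sum, while treating the sharp cut-offs carefully. A Whitney box $\omega_1\times\omega_2$ is altered by $\{\xi_1>a_k\}$ only for the $O(1)$ boxes per scale containing $a_k$, and likewise for $\{\xi_2<b_k\}$; separating these \emph{boundary} boxes writes $BHT(f_k,g_k)$ as an ``interior'' model operator, keeping only tri-tiles with frequency data strictly inside $(a_k,b_k)$, plus error pieces in which one smooth Littlewood--Paley bump is multiplied by a sharp half-line indicator. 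On a fixed frequency band these error pieces are, up to harmless modulations, one-dimensional ``smooth band $\cap$ half-line'' operators, so their $\ell^r$-aggregate over $k$ is dominated by the Rubio de Francia $\ell^r$-square function attached to the arbitrary intervals $(a_k,b_k)$ (together with a maximally-modulated-Hilbert-transform/Carleson-type bound on the remaining factor), and summing the bands by almost-orthogonality closes this contribution. This is exactly the step that forces $0\le\frac1p,\frac1q<\frac12+\frac1r$.

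For the interior model operator I would carry out the local sizes-and-energies analysis exactly as in the proof of Theorem \ref{vector valued BHT}: the key observation is that, over any fixed dyadic interval $I$, the $\ell^r$-aggregated local $L^2$ sizes of $\{f_k\}$ and $\{g_k\}$ are, by the Rubio de Francia inequality applied on $I$, controlled by the \emph{scalar} local averages of $f$ and $g$, so the vector-valued energies collapse to the scalar ones and the Lacey--Thiele argument goes through on all of $Range(BHT)$. At $r=2$ this already produces exactly $Range(BHT)$. The case $r\ge2$ is then immediate: since $\|\cdot\|_{\ell^r}\le\|\cdot\|_{\ell^2}$ for $r\ge2$, one has $T_r(f,g)\le T_2(f,g)$ pointwise, so $T_r$ inherits the full $BHT$ range.

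The main obstacle is twofold. First, disentangling the ``arbitrary interval'' part of the operator --- where the Rubio de Francia inequality, and hence the constraint on $p$ and $q$, is genuinely unavoidable --- from the bilinear part, and keeping the almost-orthogonality over the sharp cut-offs $\{a_k\},\{b_k\}$ under control. Second, and more delicate, for $1\le r<2$ one must push the model-operator estimates past the scalar endpoints, since the claimed range then contains triples $(p,q,s)\notin Range(BHT)$ (for instance with $p$ or $q$ below $1$); this requires exploiting the single-function structure of the inputs more quantitatively than the mere collapse of energies, and is where the sharp exponents $\tfrac12+\tfrac1r$ and $-\tfrac1{r'}$ are matched.
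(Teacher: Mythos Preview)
Your approach is far more involved than the paper's, and it rests on a misreading of the claimed range. Note that the figure and the paper's own proof use $\frac{1}{r'}$, not $\frac{1}{r}$, in the constraints; for $1\le r\le 2$ one has $\frac{1}{2}+\frac{1}{r'}\le 1$, so the claimed region is \emph{contained} in $Range(BHT)$, not larger. Your final paragraph, in which you worry about pushing past the scalar endpoints and allowing $p$ or $q$ below $1$, is therefore addressing a difficulty that does not exist.

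Once this is clear, the paper's argument is a two-line black-box application. First, use the interval projections $P_{I_k}f$ and $P_{I_k}g$ rather than your half-line projections: the identity $BHT(P_{I_k}f,P_{I_k}g)=\int_{a_k<\xi_1<\xi_2<b_k}\hat f(\xi_1)\hat g(\xi_2)e^{2\pi i x(\xi_1+\xi_2)}\,d\xi_1d\xi_2$ is equally valid, and the intervals $I_k$ are \emph{disjoint}, which is exactly what Rubio de Francia needs. Second, for $1\le r<2$ pick any $r_1,r_2\ge 2$ with $\frac{1}{r_1}+\frac{1}{r_2}=\frac{1}{r}$, apply Theorem~\ref{vector valued BHT} (case (iv), since $\frac{1}{r'}>\frac{1}{2}$) to get
\[
\|T_r(f,g)\|_s\lesssim \Big\|\Big(\sum_k|P_{I_k}f|^{r_1}\Big)^{1/r_1}\Big\|_p\Big\|\Big(\sum_k|P_{I_k}g|^{r_2}\Big)^{1/r_2}\Big\|_q,
\]
and then bound the right-hand side by $\|f\|_p\|g\|_q$ via the linear $RF_{r_j}$ operator, valid exactly when $\frac{1}{p}<\frac{1}{r_1'}$ and $\frac{1}{q}<\frac{1}{r_2'}$. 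Optimizing over admissible $r_1,r_2$ gives precisely $\frac{1}{p},\frac{1}{q}<\frac{1}{2}+\frac{1}{r'}$. The case $r\ge 2$ is identical with $\frac{1}{r_1}+\frac{1}{r_2}=\frac{1}{2}$, and recovers the full $BHT$ range.

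Your choice of half-line projections $P_{(a_k,\infty)}f$, $P_{(-\infty,b_k)}g$ is what forces you into the interior/boundary decomposition: those half-lines overlap, so no Rubio de Francia estimate is available for the input sequences directly, and you must first peel off the overlapping frequency content tile by tile. This machinery is unnecessary once you use $P_{I_k}$ on both inputs. Your proposal also does not actually complete the interior step (you assert that local sizes collapse but do not verify the size/energy bookkeeping), and the boundary analysis you sketch would still need the Rubio de Francia constraint, so you would not gain any range over the simple argument above.
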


In Section \ref{Rubio de Francia Theorem for Iterated Fourier Integrals} we will show how both $M_1$ and $M_2$ are bounded operators: 
\begin{theorem} The operators $M_1$ and $M_2$ satisfy the following:
\[
M_1 : L^{p_1} \times L^{p_2} \times L^p \to L^q \quad \text{provided } 1<p<2 \text{ and } \frac{1}{p_1}+\frac{1}{p_2}+\frac{1}{p'}=\frac{1}{q} , \quad \text{while}
\]
\[
M_2 : L^{p_1} \times L^{p_2} \times L^p \to L^q \quad \text{provided } 1<p<2,  \frac{1}{p_2}+\frac{1}{p'}<1 \text{ and } \frac{1}{p_1}+\frac{1}{p_2}+\frac{1}{p'}=\frac{1}{q}.
\]
Hence $M=M_1+M_2$ is a bounded operator from $\ds  L^{p_1} \times L^{p_2} \times L^p \to L^q$ provided $\ds 1<p<2,  \frac{1}{p_2}+\frac{1}{p'}<1 \text{ and } \frac{1}{p_1}+\frac{1}{p_2}+\frac{1}{p'}=\frac{1}{q}$.

\end{theorem}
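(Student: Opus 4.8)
The plan is to estimate $M_1$ and $M_2$ separately, exploiting two structural features of the filtration $\{\varphi^{-1}(\omega):\omega\in\mathcal{D}\}$: within a fixed generation $\mathcal{D}_j$ of dyadic intervals the sets $\varphi^{-1}(\omega_L)$ (and likewise $\varphi^{-1}(\omega_R)$) are pairwise disjoint, and one has the equal--mass identity $\int_{\varphi^{-1}(\omega_R)}|g|^p=|\omega|/2$ (under the normalization $\|g\|_p=1$). Throughout, abbreviate $G_\omega(\xi):=\int_{\varphi^{-1}(\omega_R)}g(x_3)\,e^{2\pi i\xi x_3}\,dx_3=\widehat{g\,\mathbf{1}_{\varphi^{-1}(\omega_R)}}(-\xi)$; Hausdorff--Young (legitimate since $1<p<2$) together with the equal--mass identity gives $\|G_\omega\|_{p'}\le\|g\,\mathbf{1}_{\varphi^{-1}(\omega_R)}\|_p=(|\omega|/2)^{1/p}$.

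For $M_1$ as in \eqref{breaking op 1}, factor each summand: $M_1(f_1,f_2,g)(\xi)=\sum_{\omega\in\mathcal{D}}B_\omega(\xi)\,G_\omega(\xi)$, where $B_\omega(\xi)=\int_{\{x_1<x_2,\ x_1,x_2\in\varphi^{-1}(\omega_L)\}}\hat f_1(x_1)\hat f_2(x_2)\,e^{2\pi i\xi(x_1+x_2)}\,dx_1\,dx_2$ is a bilinear Hilbert transform of $f_1,f_2$ with both frequencies truncated to the single interval $\varphi^{-1}(\omega_L)$. Hölder in $\xi$ and in $\omega$ (with exponents $2,2$) gives
\[
\|M_1\|_q\le\Big\|\big(\textstyle\sum_{\omega}|B_\omega|^2\big)^{1/2}\Big\|_{q_1}\cdot\Big\|\big(\textstyle\sum_{\omega}|G_\omega|^2\big)^{1/2}\Big\|_{p'},\qquad \frac1q=\frac1{q_1}+\frac1{p'}.
\]
The first factor is precisely the operator $T_2$ from \eqref{RF for iterated Fourier integrals} applied to $f_1,f_2$ with the collection $\{\varphi^{-1}(\omega_L)\}_{\omega\in\mathcal{D}}$ — one passes to finite subcollections and takes a supremum, using that the bound in Theorem \ref{main theorem} is independent of the intervals — hence it is $\lesssim\|f_1\|_{p_1}\|f_2\|_{p_2}$ for $(p_1,p_2,q_1)\in Range(BHT)$, with $\frac1{q_1}=\frac1{p_1}+\frac1{p_2}$. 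For the second factor, Minkowski's inequality ($p'\ge2$) and the bound on $\|G_\omega\|_{p'}$ yield
\[
\Big\|\big(\textstyle\sum_{\omega}|G_\omega|^2\big)^{1/2}\Big\|_{p'}\le\Big(\sum_{\omega\in\mathcal{D}}\|G_\omega\|_{p'}^2\Big)^{1/2}\le\Big(\sum_{j\ge0}2^{j}\,(2^{-j-1})^{2/p}\Big)^{1/2}=2^{-1/p}\Big(\sum_{j\ge0}2^{j(1-2/p)}\Big)^{1/2}\lesssim 1=\|g\|_p,
\]
the series converging precisely because $1<p<2$. Hence $\|M_1\|_q\lesssim\|f_1\|_{p_1}\|f_2\|_{p_2}\|g\|_p$ with $\frac1q=\frac1{p_1}+\frac1{p_2}+\frac1{p'}$. (Should $(p_1,p_2,q_1)$ fall outside $Range(BHT)$ one iterates the decomposition \eqref{breaking op 1}--\eqref{breaking op 2} on the pair $x_1<x_2$, replacing the truncated $BHT$ by a sum of paraproduct--type pieces of the kind treated below for $M_2$; in the applications $p_1,p_2$ are large, so this complication does not arise.)

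For $M_2$ as in \eqref{breaking op 2}, write $M_2(f_1,f_2,g)(\xi)=\sum_{\omega\in\mathcal{D}}A_\omega(\xi)\,\big(P_{\varphi^{-1}(\omega_L)}f_2\big)(\xi)\,G_\omega(\xi)$, where $A_\omega(\xi)=\int_{x_1<L(\omega_L)}\hat f_1(x_1)\,e^{2\pi i\xi x_1}\,dx_1$ is a one--sided partial Fourier integral, so $|A_\omega(\xi)|\le\mathcal{C}f_1(\xi)$ uniformly in $\omega$, with $\mathcal{C}$ the Carleson--Hunt maximal operator. Pulling $\mathcal{C}f_1$ out, grouping the $\omega$--sum by generations $\mathcal{D}_j$, and applying Hölder in $\xi$ with exponents $(p_1,p_2,p')$ and in $\omega$ with $\frac1{r_1}+\frac1{r_2}=1$,
\[
\|M_2\|_q\lesssim\|\mathcal{C}f_1\|_{p_1}\sum_{j\ge0}\Big\|\big(\textstyle\sum_{\omega\in\mathcal{D}_j}|P_{\varphi^{-1}(\omega_L)}f_2|^{r_1}\big)^{1/r_1}\Big\|_{p_2}\cdot\Big\|\big(\textstyle\sum_{\omega\in\mathcal{D}_j}|G_\omega|^{r_2}\big)^{1/r_2}\Big\|_{p'}.
\]
Now $\|\mathcal{C}f_1\|_{p_1}\lesssim\|f_1\|_{p_1}$ for $1<p_1<\infty$; the $f_2$--factor is Rubio de Francia's $\ell^{r_1}$ square function over disjoint intervals, bounded on $L^{p_2}$ uniformly in $j$ once $r_1$ is chosen large enough (any $r_1>\max(2,p_2')$ works, and $r_1=2$ works when $p_2\ge2$ — one interpolates the classical $\ell^2$ bound on $L^2$ with the $\ell^\infty$ bound $\sup_\omega|P_{\varphi^{-1}(\omega_L)}f_2|\le 2\mathcal{C}f_2$, valid on every $L^{p_2}$, $p_2>1$); and the $g$--factor, by Minkowski ($r_2\le p'$) and the bound on $\|G_\omega\|_{p'}$, is $\le\big(2^j(2^{-j-1})^{r_2/p}\big)^{1/r_2}=2^{-1/p}\,2^{j(1/r_2-1/p)}$, summable in $j$ iff $r_2>p$. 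A pair $(r_1,r_2)$ with $\frac1{r_1}+\frac1{r_2}=1$, $r_1\ge2$ (large enough for Rubio de Francia on $L^{p_2}$), and $r_2\in(p,p']$ exists exactly when $p<p_2$, i.e. when $\frac1{p_2}+\frac1{p'}<1$ — precisely the hypothesis. This gives $\|M_2\|_q\lesssim\|f_1\|_{p_1}\|f_2\|_{p_2}\|g\|_p$ with $\frac1q=\frac1{p_1}+\frac1{p_2}+\frac1{p'}$, and $M=M_1+M_2$ is bounded on the stated range.

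The crux is the sum over $\omega$: because the sets $\varphi^{-1}(\omega)$ are nested across generations, this sum cannot be absorbed all at once but must be organized by dyadic scale, and the only mechanism producing a geometric gain in the scale parameter is the $L^p$--normalization of $g$ through the equal--mass identity — it is exactly the assumption $p<2$ that renders $1-2/p$ (respectively $1/r_2-1/p$) negative. The delicate point, and the main obstacle, is to make this gain coexist with an admissible choice of vector--valued exponents $(r_1,r_2)$ while simultaneously affording the Carleson bound on $f_1$ and the Rubio de Francia bound on $f_2$; it is the balance between these three requirements that forces the extra condition $\frac1{p_2}+\frac1{p'}<1$ in the statement for $M_2$.
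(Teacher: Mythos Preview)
Your treatment of $M_2$ is essentially the paper's argument: Carleson on $f_1$, organize by generation, H\"older in the $\omega$ variable with a conjugate pair $(r_1,r_2)$, Rubio de Francia $RF_{r_1}$ on $f_2$, Hausdorff--Young plus the equal--mass identity on $g$, and the balance $\max(2,p_2')<r_1<p'$ forces exactly $\frac{1}{p_2}+\frac{1}{p'}<1$. The paper parametrizes the same inequality slightly differently (it fixes H\"older exponents $(p,p')$ in $\omega$ and then embeds $\ell^p\hookrightarrow\ell^\nu$ with a loss), but the content is identical.

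For $M_1$ there is a genuine gap. You bound $\big\|(\sum_{\omega\in\mathcal D}|B_\omega|^2)^{1/2}\big\|_{q_1}$ by invoking $T_2$ over the \emph{entire} dyadic tree $\{\varphi^{-1}(\omega_L):\omega\in\mathcal D\}$. But Theorem~\ref{main theorem} (and the whole Rubio de Francia philosophy behind it) is stated for \emph{disjoint} intervals; the family $\{\varphi^{-1}(\omega_L)\}_{\omega\in\mathcal D}$ is nested across generations, and the $\ell^2$ square function over a nested family is simply unbounded. Concretely, if $\hat f_1,\hat f_2$ are concentrated near a single point $x_0$, then for every generation $j$ there is one $\omega\in\mathcal D_j$ with $x_0\in\varphi^{-1}(\omega_L)$, and all of those $B_\omega$ are comparable --- the square sum diverges. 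The remark ``pass to finite subcollections and take a supremum'' does not help: the constant in Theorem~\ref{main theorem} is uniform over finite \emph{disjoint} families, not over arbitrary finite families.

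The fix is exactly what you already do for $M_2$: organize the $\omega$--sum by generation \emph{before} applying H\"older. Within a fixed generation $\mathcal D_j$ the intervals $\varphi^{-1}(\omega_L)$ are disjoint, so $T_2$ applies uniformly in $j$ and gives $\big\|(\sum_{\omega\in\mathcal D_j}|B_\omega|^2)^{1/2}\big\|_{q_1}\lesssim\|f_1\|_{p_1}\|f_2\|_{p_2}$; meanwhile your Minkowski computation on the $g$ factor, restricted to generation $j$, produces $2^{j(1/2-1/p)}$, which is summable in $j$ precisely because $p<2$. (The paper runs the same generation--by--generation scheme, H\"oldering with $(p,p')$ in $\omega$ and then embedding $\ell^p\hookrightarrow\ell^2$ at the cost of $2^{j(1/p-1/2)}$ before invoking $T_2$; the resulting decay $2^{-j(1/p-1/2)}$ is the same as yours.)
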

However, as Robert Kesler noticed in \cite{Robert-MixedDeg}, the boundedness of the operator $M$ can also be proved by making use of a vector-valued extension for the $`$linear' operator $BHT( f_1, \cdot)$. The constraint for the exponents is given by $\ds \frac{1}{p_2}+\frac{1}{p'}<1$. So even if $M$ splits as $M=M_1+M_2$ and the range of $M_1$ is larger, one gets the same range for $M$ through both methods.

Because the intervals $\ds \lbrace \left[ a_k, b_k  \right]  \rbrace_k$ are disjoint and arbitrary, we refer to $T_r$ as a \emph{bilinear Rubio de Francia operator for iterated Fourier integrals}. Recall that \emph{Rubio de Francia's square function} is the operator
\[
 f \mapsto  RF(f)(x):= \left(  \sum_{k=1}^N  \vert  \int_{I_k} \hat{f}(\xi) e^{2 \pi i \xi x} d \xi   \vert^2   \right)^{1/2}=    \left( \sum_{k=1}^N   \vert  P_{I_k}f(x)  \vert ^2  \right)^{1/2},
\]
where $\lbrace  I_k=\left[ a_k,b_k \right] \rbrace_{1 \leq k \leq N}$ is a family of disjoint intervals, and $P_I(f)$ denotes the Fourier projection of $f$ onto the interval $I$. Using vector-valued singular integrals theory, Rubio de Francia \cite{RF} proved the boundedness of the $RF$ operator on $L^p$, for $p \geq 2$. Interpolating this result with estimates for Carleson's operator from \cite{initial_Carleson}, one gets more generally that the operator
\[
RF_\nu(f)(x):=\left( \sum_{k=1}^N   \vert  P_{I_k}f(x)  \vert ^\nu  \right)^{1/\nu}
\]
is bounded on $L^p$, as long as $\dfrac{1}{p}+\dfrac{1}{\nu} <1$.

In the particular case of a $``$\emph{lacunary}'' family of intervals (that is, $I_k=[2^{k-1}, 2^k]$ and $k \in \rr{Z}$), the above operator corresponds to a Littlewood-Paley square function with sharp cutoffs, which is bounded on $L^p(\rr{R})$ for any $1< p < \infty$. Even more, the $L^p$ norm of the square function is comparable to the $L^p$ norm of the initial function: 
\[
C_p^{-1} \|f\|_p \leq \| \left( \sum_{k \in  \rr{Z}} \vert  \int_{\rr{R}} \one_{ \lbrace 2^{k-1}  \leq \xi <2^k   \rbrace } \hat{f}(\xi) e^{2 \pi i x \xi} d \xi   \vert ^2   \right)^{1/2}\|_p \leq C_p \|f\|_p.
\]
Rubio de Francia's theorem addresses the boundedness of a square function associated to an arbitrary family of intervals, and in this sense it is optimal: in the case $\nu=2$, the condition $p \geq 2$ is necessary, while for $\nu>2$, we need the strict inequality $\nu > p'$.

Returning to our operator $T_r$, note that it can also be regarded as a vector-valued bilinear Hilbert transform
\[
T_r(f,g)(x)=\left( \sum_k |BHT(P_{I_k}f, P_{I_k} g)(x)|^r   \right)^{1/r},
\]
because the multiplier of the $BHT$ operator is equivalent to $\ds \one_{\lbrace \xi_1 < \xi_2  \rbrace}$, as seen in \eqref{eq: BHT multiplier}.

Using solely Khintchine's inequality, it was proved in \cite{bilinear_disc_multiplier} that:
\[
\|  \left( \sum_k |BHT(f_k, g_k)|^2   \right)^{1/2}  \|_s  \lesssim \| \left( \sum_k |f_k|^2   \right)^{1/2}  \|_{p} \| \left( \sum_k |g_k|^2   \right)^{1/2}  \|_q.
\]
This implies the boundedness of $T_r$ for $r \geq 2$, $p, q \geq 2$. But this is a very limited range, and in order to obtain estimates in the case $p<2$ or $q<2$ one needs the full power of vector-valued extensions. 

We note that our estimates for the operator $T_r$ are sharp, in the sense that the same estimates are satisfied by
\begin{equation}
\label{eq: RF+Holder}
(f, g) \mapsto \left( \sum_k \big \vert P_{I_k}f(x) \cdot P_{I_k} g (x) \big \vert^r   \right)^{1/r}.
\end{equation}
In \eqref{eq: RF+Holder}, $BHT(P_{I_k}f, P_{I_k}g)$ is replaced by the product of the functions $P_{I_k}f \cdot P_{I_k}g$. In general, the best one can hope for a bilinear Fourier multiplier operator is that it satisfies the same $L^p$ estimates as the product $\ds (f, g) \mapsto f \cdot g$, and this is the case for $T_r$.

Moreover, in the special case of lacunary dyadic intervals, for any $1 \leq r <\infty$, we have that
\begin{equation*}
(f, g) \mapsto \left(  \sum_k   \big \vert  \int_{2^k < \xi < \eta <2^{k+1}} \hat{f}(\xi) \hat{g}(\eta) e^{2 \pi i x(\xi+\eta)} d \xi d \eta     \big \vert^r \right)^{1/r}
\end{equation*}
is a bounded operator from $L^p \times L^q$ to $L^s$ for any $(p, q, s) \in Range(BHT)$. The cases $p=\infty$ or $q=\infty$ cannot be obtained directly, but follow by duality. 

Our initial proof of Theorem \ref{main theorem} did not involve vector-valued bilinear Hilbert transform operators, but it was built around localizations of $BHT$, in conjunction with several stopping times. Afterwards we realized that this method is suitable for other general situations, which eventually led to the development of the helicoidal method. This applies to paraproducts, $BHT$, Carleson operator, Rubio de Francia operator, etc. In the study of the $T_r$ operator, the stopping times were dictated by level sets of linear Rubio de Francia operators: $RF_{r_1}(f)$ and $RF_{r_2}(g)$. For the vector-valued $BHT$, the three stopping times that are used for estimating the trilinear form are dictated by level sets of $\left( \sum_k \lft f_k\rg^{r_1} \right)^{\frac{1}{r_1}}$, $\left( \sum_k \lft g_k\rg^{r_2} \right)^{\frac{1}{r_2}}$ and $\left( \sum_k \lft h_k\rg^{r'} \right)^{\frac{1}{r'}}$. The method of the proof is described in more detail in Section \ref{subsec:method_of_the_proof}.

Lastly, we want to point out an interesting connection with another open problem in time-frequency analysis: the boundedness of the Hilbert transform along vector fields. More exactly, if $v : \rr{R}^2 \to \rr{R}^2$ is a non-vanishing measurable vector field, then one defines the Hilbert transform along $v$ as
\[
H_v f(x,y)=p.v. \int_{\rr{R}} f\left((x, y)-t \cdot v(x,y) \right) \frac{dt}{t}.
\]
It was conjectured by Stein that $H_v$ is a bounded operator on $L^2$ whenever $v$ is Lipschitz. Some partial results in this direction are known in the case of a one-variable vector field. In \cite{BatemanThiele2011}, M. Bateman and C. Thiele proved the $L^p$ boundedness of $H_v$ for $\ds \frac{3}{2}< p <\infty$, and provided that $\ds v(x,y)=v(x,0)$.

The proof is making use of the Littlewood-Paley square function in the second variable, restrictions to certain fixed sets $G$ and $H$, together with single annulus estimates for $H_v$ from \cite{Bateman-annulus}. In the special case when $f(x,y)=g(x)h(y)$, estimates for the variational Carleson from \cite{variational_Carleson} yield the same result whenever $\ds p> \frac{4}{3}$. It is still not known if this can be extended to general functions $f(x,y)$, or whether one can push the lower bound for $p$ below $4/3$.

In \cite{vv_bht-Prabath}, Silva is using ideas similar to the ones described above, obtaining in this way vector-valued extensions for $BHT$ whenever $\ds \frac{4}{3}<r < 4$. Our methods allow us to prove that vector-valued extensions exist for any $1 \leq r < \infty$ (in fact, for any triple $(r_1, r_2, r)$). It would be interesting to understand whether the localization argument that we are employing can be transferred to the study of the Hilbert transform along vector fields.

Besides having sharp estimates for the local version of the operator, the structure of the intervals chosen through the triple stopping time can play a role in itself. The collections of intervals constitute a maximal covering for the level sets of certain maximal operators, and for that reason, they form a \emph{sparse} collection of intervals (in the sense of \cite{Lerner-simplerA_2}). From here, weighted estimates can be deduced, and a similar approach was carried out in \cite{weighted_BHT}. 

The rest of the paper is organized as follows: in Section \ref{some classical results BHT} we recall some definitions and results regarding multilinear operators. The helicoidal method is described in details in Section \ref{subsec:method_of_the_proof}. Multiple vector-valued extensions for $BHT$ are presented in Section \ref{sec: multiple vector estimates for BHT}, and those for paraproducts in Section \ref{paraproduct results}. Following in Section \ref{tensor products} are the estimates for $BHT\otimes \Pi^{\otimes^n}$. The Leibniz rules are a modification of mixed norm $L^p$ estimates for $\Pi \otimes \Pi$ and are discussed in Section \ref{sec: Leibniz rule}. The Rubio de Francia theorem for iterated Fourier integrals and its application to the AKNS system problem appear in  Section \ref{Rubio de Francia Theorem for Iterated Fourier Integrals}.

\subsection*{Acknowledgements} The first author was partially supported by NSF grant DMS 1500262  and ERC project FAnFArE no. 637510; the second author was partially supported by NSF grant DMS 1500262.

\section{Some Classical Results on the Bilinear Hilbert Transform}
\label{some classical results BHT}
In this paper we use Chapter 6 of \cite{multilinear_harmonic} as a black box, but we recall a few definitions and results to ease the reading of the presentation. Essential here are the notions of \emph{size} and \emph{energy}, which are quantities associated to certain subsets of the phase-frequency space.

\begin{notation}
For any interval $I \subset \rr{R}$, define
\[
\ci_{I}(x):= \left( 1+ \frac{\dist(x, I)}{|I|}  \right)^{-100}.
\]

The mesh of dyadic intervals is denoted by $\mathcal{D}$.
\end{notation}

\begin{definition} A \emph{tile} is a rectangle $P=I_P \times \omega_P$ with the property that $I_P, \omega_P \in \mathcal{D}$ or $\omega_P$ is in a shifted variant of $\mathcal{D}$. We define a \emph{tri-tile} to be a tuple $P=(P_1, P_2, P_3)$ where each $P_i$ is a tile as defined above and the spatial intervals are the same: $I_{P_i}=I_P$ for all $1 \leq i \leq 3$.
\end{definition}

\begin{definition}[Order relation] Given two tiles $P$ and $P'$, we say $P' < P$ if $I_{P'} \subsetneq I_P$ and $\omega_P \subset 3 \omega_{P'}$. $P' \leq P$ if $P'< P$ or $P'=P$.
Also, $P' \lesssim P$ if $I_{P'} \subset I_P$ and $\omega_P \subseteq 100 \omega_{P'}$, and $P' \lesssim ' P$ if $P' \lesssim P$ but $ P' \nleq P$.
\end{definition}

\begin{definition}A collection $\rr{P}$ of tri-tiles is said to have \emph{ rank 1} if for any $P, P' \in \rr{P}$ the following conditions are satisfied:
\begin{itemize}
\item[-] if the tri-tiles are distinct $P \neq P'$, then $P'_j \neq P_j$ for all $ 1\leq j \leq 3$.   
\item[-] if $\omega_{P_{j_0}}=\omega_{P'_{j_0}}$ for some $j_0$, then $\omega_{P_j}=\omega_{P'_j}$ for all $1 \leq j \leq 3$.
\item[-] if $P'_{j_0} \leq P_{j_0}$ for some $j_0$, then $P'_j \lesssim P_j$ for all $1 \leq j \leq 3$.
\item[-] if in addition to $P'_{j_0} \leq P_{j_0}$ one also assumes $|I_{P'}| << |I_P|$, then $P'_{j} \lesssim ' P_j$ for all $j \neq j_0$.
\end{itemize}
\end{definition}

\begin{definition}
Let $\rr{P}$ be a sparse rank 1 collection of tri-tiles, and let $1 \leq j \leq 3$. A subcollection $T$ of $\rr{P}$ is called a \emph{$j$ -tree} if and only if there exists a tri-tile $P_T$ (called the \emph{top} of the tree) such that $\ds P_j \leq P_{T, j}$ for all $P \in T$. We write $I_T$ for $I_{P_T}$  and $\omega_{T_j}$ for $\omega_{P_T,j}$ and we say $T$ is a \emph{tree} if it is a $j$- tree for some $1 \leq j \leq 3$.
\end{definition}

\begin{definition}
Let $1 \leq i \leq 3$. A finite sequence of trees $T_1, \ldots, T_M$ is said to be a chain of strongly $i$-disjoint trees if and only if
\begin{itemize}
\item[(i)]$P_i \neq P'_i$ for every $P \in T_{l_1}$ and $P' \in T_{l_2}$, with $l_1 \neq l_2$;
\item[(ii)] whenever $P \in T_{l_1}$ and $P' \in T_{l_2}$ with $l_1 \neq l_2$ are such that $2 \omega_{P_i} \cap 2 \omega_{P'_i} \neq \emptyset$, then if $\vert \omega_{P_i} \vert < \vert \omega_{P'_i} \vert $ one has $I_{P'} \cap I_{T_{l_1}} =\emptyset$, and if $\vert \omega_{P'_i} \vert < \vert \omega_{P_i} \vert $, one has $I_P \cap I_{T_{l_2}}=\emptyset$.
\item[(iii)]whenever $P \in T_{l_1}$ and $P' \in T_{l_2}$ with $l_1 < l_2$ are such that $2 \omega_{P_i} \cap 2 \omega_{P'_i} \neq \emptyset$ and $\vert \omega_{P_i} \vert = \vert \omega_{P'_i}  \vert$, then $I_{P'} \cap I_{T_{l_1}}= \emptyset$. 
\end{itemize}
\end{definition}

\begin{definition}
Let $P$ be a tile. A \emph{wave packet} on $P$ is a smooth function $\phi_P$ which has Fourier support inside $\dfrac{9}{10} \omega_P$ and is $L^2$- adapted to $I_P$ in the sense that
\begin{equation}
|\phi_P^{(l)}(x)| \leq C_{l,M} \frac{1}{|I_P|^{1/2+l}} \left( 1+ \frac{\dist(x, I_P)}{|I_P|}   \right)^{-M}
\end{equation}
for sufficiently many derivatives $l$, and any $M>0$.
\end{definition}

\subsection{Model Operator for $BHT$} ~\\
A discretized model operator for $BHT$ is given by
\begin{equation}
\label{eq: BHT model operator}
BHT_{\rr{P}}(f,g)(x)=\sum_{P \in \rr{P}} \frac{1}{|I_P|^{1/2}} \langle f, \phi_{P_1}^1 \rangle  \langle g, \phi_{P_2}^2 \rangle \phi^3_{P_3}(x)
\end{equation}
where the family $\rr{P}$ of tri-tiles is sparse and has rank $1$, while $(\phi_{P_j}^j)_{P \in \rr{P}}$ are wave packets associated to the tiles $P_j$. In some sense, the bilinear Hilbert transform is the canonical example of such an operator. Above we also included the definitions of \emph{trees} and \emph{chains of strongly disjoint trees} because they are essential in understanding such singular bilinear operators.

The model operator from \ref{eq: BHT model operator} was introduced in \cite{initial_BHT_paper}, and the bilinear Hilbert transform itself can be represented as an average of such shifted model operators. The detailed reduction can be found in \cite{multilinear_harmonic}, Chapter 6. As a consequence, the boundedness of the bilinear Hilbert transform within $Range(BHT)$ can be deduced from similar estimates for the model operator. Similarly, estimates for vector-valued and for the localized bilinear Hilbert transform will follow once we prove their equivalents for the model operator, and we will not insist on the exact distinction between the two. 

It is worth mentioning however, that the model operator fails to be bounded for $s \leq \frac{2}{3}$, leaving undecided the boundedness of the bilinear Hilbert transform itself for $\frac{1}{2}< s \leq \frac{2}{3}$.

Bilinear operators are often studied with the use of the associated trilinear form. In the case of the (model operator for) $BHT$ operator, the trilinear form is given by
\begin{equation}
\Lambda_{BHT; \rr{P}} (f, g, h)=\sum_{P \in \rr{P}} \frac{1}{|I_P|^{1/2}} \langle f, \phi_{P_1}^1 \rangle \langle g, \phi_{P_2}^2 \rangle \langle h, \phi_{P_3}^3 \rangle.
\end{equation}

\begin{definition}
If $\rr{P}$ is a collection of tri-tiles and $I_0$ is a dyadic interval, we denote by $\rr{P}(I_0)$ the tiles $P$ in $\rr{P}$ whose spatial interval $I_P$ is contained in $I_0$:
\[
\rr{P}(I_0):=\lbrace P \in \rr{P}: I_P \subseteq I_0 \rbrace.
\]
\end{definition}

\begin{definition}
\label{def size}
Let $\mathbb{P}$ be a finite collection of tri-tiles, let $j \in \lbrace 1, 2, 3  \rbrace$, and let $f$ be an arbitrary function. We define the \emph{size} of the sequence $\displaystyle  \langle f, \Phi_{P_j}^j  \rangle_P$ by
\begin{equation}
\ssize \left(   \langle f, \phi_{P_j}^j  \rangle_P \right):= \sup_{T \subseteq \mathbb{P}} \left( \frac{1}{|I_T|} \sum_{P \in T} \vert   \langle f, \phi_{P_j}^j  \rangle  \vert ^2 \right)^{1/2},
\end{equation}  
where $T$ ranges over all trees in $\mathbb{P}$ that are $i$-trees for some $i \neq j$.
\end{definition}

\begin{lemma}[Lemma 6.13 of \cite{multilinear_harmonic}]
\label{BHT size estimate}
Let $j \in \lbrace 1,2,3  \rbrace$ and let $E$ be a set of finite measure. Then for every $|f|\leq \one_E$ one has
\[
\text{size}\left(\langle f, \phi_{P_j}^j  \rangle_P  \right) \lesssim \sup_{P \in \mathbb{P}} \frac{1}{|I_P|} \int_E \tilde{\chi}_{I_P}^M dx
\]
for all $M>0 $, with implicit constants depending on $M$.
\end{lemma}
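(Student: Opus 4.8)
The plan is to work directly from the definition of $\ssize$ as a supremum over trees: it suffices to fix one admissible tree $T$ (an $i$-tree for some $i\neq j$, with top spatial interval $I_T$) and bound $\frac{1}{|I_T|}\sum_{P\in T}|\langle f,\phi_{P_j}^j\rangle|^2$ by the square of the right-hand side, after which taking the supremum over all such $T$ and a square root gives the claim. I expect the finer tree structure — in particular the lacunarity of $\{\phi_{P_j}^j\}_{P\in T}$ coming from $T$ being an $i$-tree with $i\neq j$ — to be essentially irrelevant here: the only feature I will use is that $I_P\subseteq I_T$ for every $P\in T$. What makes this possible is that the hypothesis $|f|\le\one_E$ is an $L^\infty$ bound, so a crude pointwise estimate suffices.

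First I would record the pointwise consequence of $L^2$-adaptedness of the wave packets: for every $M$ (with implicit constants depending on $M$) one has $|\phi_{P_j}^j(x)|\lesssim |I_P|^{-1/2}\ci_{I_P}^M(x)$. Inserting $|f|\le\one_E$ then gives, tile by tile,
\[
|\langle f,\phi_{P_j}^j\rangle|\le\int_E|\phi_{P_j}^j(x)|\,dx\lesssim |I_P|^{-1/2}\int_E\ci_{I_P}^M\,dx=|I_P|^{1/2}\cdot\Big(\frac{1}{|I_P|}\int_E\ci_{I_P}^M\,dx\Big)\le |I_P|^{1/2}\,\mathcal{A},
\]
where $\mathcal{A}:=\sup_{P\in\mathbb{P}}\frac{1}{|I_P|}\int_E\ci_{I_P}^M\,dx$ is exactly the quantity on the right-hand side of the lemma; here I used that every $P\in T$ belongs to $\mathbb{P}$, so $\mathcal{A}$ dominates each factor $\frac{1}{|I_P|}\int_E\ci_{I_P}^M\,dx$.

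Squaring and summing over $P\in T$ yields $\sum_{P\in T}|\langle f,\phi_{P_j}^j\rangle|^2\lesssim\mathcal{A}^2\sum_{P\in T}|I_P|$, and the one genuinely structural ingredient is the packing bound $\sum_{P\in T}|I_P|\lesssim|I_T|$. This I would obtain from the sparseness of $\mathbb{P}$ — more precisely from the Carleson packing property $\sum_{P\in\mathbb{P},\,I_P\subseteq I_0}|I_P|\lesssim|I_0|$ for every dyadic $I_0$, applied with $I_0=I_T$, recalling that all $I_P$ with $P\in T$ are contained in $I_T$. Dividing by $|I_T|$ then gives $\frac{1}{|I_T|}\sum_{P\in T}|\langle f,\phi_{P_j}^j\rangle|^2\lesssim\mathcal{A}^2$, and the supremum over trees together with a square root would complete the argument.

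The one place to be careful is exactly this packing step: without a Carleson/sparseness hypothesis the quantity $\sum_{P\in T}|I_P|$ can diverge (a rank-$1$ tree may, at every scale, contain tiles whose spatial intervals tile $I_T$), so I would invoke sparseness explicitly rather than the bare tree axioms. It is also worth noting that the seductive orthogonality-based alternative — using frequency lacunarity to get a Bessel inequality $\sum_{P\in T}|\langle f,\phi_{P_j}^j\rangle|^2\lesssim\int_E\ci_{I_T}^M\,dx$ — only produces $\ssize\lesssim\mathcal{A}^{1/2}$, which is strictly weaker than the asserted $\ssize\lesssim\mathcal{A}$ when $\mathcal{A}$ is small; the crude pointwise bound is the right one here precisely because it exploits that $f$ is bounded, not merely square-integrable.
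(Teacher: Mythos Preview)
Your argument has a genuine gap at the packing step. In the BHT setting, ``sparse'' for a rank-$1$ collection of tri-tiles refers to separation properties (well-separated scales and frequencies), \emph{not} to a Carleson packing condition on the spatial intervals. For a single tree $T$, at each dyadic scale the spatial intervals $\{I_P : P\in T,\ |I_P|=2^{-k}|I_T|\}$ are pairwise disjoint but may tile all of $I_T$, so $\sum_{P\in T}|I_P|$ is comparable to $|I_T|$ times the number of scales present --- unbounded in general. Your own parenthetical anticipates exactly this failure; the error is believing that sparseness prevents it. It does not, and no hypothesis in force gives you $\sum_{P\in T}|I_P|\lesssim|I_T|$.

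The paper does not supply a proof (the lemma is quoted from \cite{multilinear_harmonic}), but it does remark that the result is ``a consequence of the John--Nirenberg inequality.'' The standard argument uses precisely the lacunarity you dismissed as irrelevant. Because $T$ is an $i$-tree with $i\neq j$, after modulating by the top frequency the family $\{\phi_{P_j}^j\}_{P\in T}$ becomes a wavelet-type system, and the associated tree square function $\big(\sum_{P\in T}|\langle f,\phi_{P_j}^j\rangle|^2|I_P|^{-1}\one_{I_P}\big)^{1/2}$ behaves like a Calder\'on--Zygmund object. A John--Nirenberg argument shows that the $L^2$-normalized size of Definition~\ref{def size} is comparable to the $L^{1,\infty}$-normalized version $\sup_T |I_T|^{-1}\big\|\big(\sum_{P\in T}\cdots\big)^{1/2}\big\|_{1,\infty}$, and \emph{that} quantity is bounded by $\mathcal A$ via the weak-$(1,1)$ estimate for the tree square function combined with the decay of the wave packets away from $I_T$. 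So the lacunary structure is not incidental: it supplies the cancellation that your crude pointwise bound, followed by an unavailable packing inequality, was meant to replace.
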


Thanks to Lemma \ref{BHT size estimate}, which is a consequence of the John-Nirenberg inequality, we can work with the simpler ``sizes"
\[
\ssize \left( f \right) \sim \sup_{P \in \rr{P}} \frac{1}{|I_P|} \int_{\rr{R}} |f| \cdot \ci_{I_P}^M dx,
\]
where $M$ is some large number to be chosen later.

WE will also need a size that behave well with respect to localization. In the formula above we consider the supremum over the spacial intervals $I_P$ of the collection $\rr P$. In our proofs, we will need to compare $\ssize_{\rr P \left( I_0 \right)} f$ and $\ds \frac{1}{\lft  I_0\rg} \int_{\rr R} \lft f \rg \cdot \ci_{I_0} dx$ so the following definition is natural:

\begin{definition}
\label{def:modified-size}
If $I_0$ is a fixed dyadic interval, then we define
\begin{equation}
\label{eq:mod-size}
\sssize_{\rr P \left( I_0 \right)} f :=\sup_{\substack{J \subseteq 3 I_0 \\ \exists P \in \rr P \left( I_0\right), I_P \subseteq J}} \frac{1}{\lft J \rg } \int_{\rr R} \lft f \rg \cdot \ci_{J}^M dx. 
\end{equation}
We note that for any function $f$,
\[
\ssize_{\rr P \left( I_0 \right)} f \leq \sssize_{\rr P \left( I_0 \right)} f.
\]
\end{definition}

\begin{definition}
\label{def:energy}Let $\mathbb{P}$ be a  finite collection of tri-tiles,  $j \in \lbrace 1,2,3 \rbrace$ and let $f$ be  a fixed function. We define the $\eenergy$ of the sequence $\displaystyle  \langle f, \Phi_{P_j}^j  \rangle_P$ by
\begin{equation}
\eenergy \left( \langle f, \phi_{P_j}^j  \rangle_P \right):=\sup_{n \in \mathbb{Z}} 2^n \sup_{\mathbb{T}} \left( \sum_{T \in \mathbb{T}}\vert I_T \vert  \right)^{1/2}
\end{equation}
where $\mathbb{T}$ ranges over all chains of strongly $j$-disjoint trees in $\mathbb{P}$ (which are $i$-trees for some $i \neq j$) having the property that
\[
\left(  \sum_{P \in T} \vert \langle f, \phi_{P_j}^j  \rangle  \vert ^2 \right)^{1/2} \geq 2^n \vert  I_T \vert^{1/2}
\]
for all $T \in \mathbb{T}$ and such that
\[
\left( \sum_{P \in T'} \vert \langle f, \phi_{P_j}^j  \rangle \vert ^2 \right)^{1/2} \leq 2^{n+1} |I_{T'}|^{1/2}
\]
for all subtrees $T' \subseteq T \in \mathbb{T}$.
\end{definition} 
 
We have the following estimates for the trilinear form and $\eenergy$: 

\begin{proposition}[Prop. 6.12 of \cite{multilinear_harmonic}]
\label{BHT trilinear estimate}
 Let $\mathbb{P}$ be a finite collection of tri-tiles. Then
\[
\Lambda_{BHT;\mathbb{P}}(f_1,f_2,f_3) \lesssim \prod _{j=1}^{3} \left( \text{size }(\langle f_j, \phi_{P_j}^j  \rangle_P)  \right)^{\theta_j}  \left( \text{energy }(\langle f_j, \phi_{P_j}^j  \rangle_P)\right)^{1-\theta_j}
\]
for any $0 \leq \theta_1, \theta_2, \theta_3 <1$ with $\theta_1+\theta_2+\theta_3=1$ ; the implicit constants depend on the $\theta_j$ but are independent of the other parameters.
\end{proposition}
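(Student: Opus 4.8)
The plan is to run the classical Lacey--Thiele time--frequency argument: reduce the trilinear sum to a \emph{single tree estimate}, then organize $\rr{P}$ into trees by a \emph{stopping-time (tree selection)} procedure governed by the energies, and finally sum the single-tree contributions, letting the sizes supply the geometric decay that makes the sum converge. First I would establish the single tree estimate: for a tree $T$, say a $j_0$-tree, \[ \sum_{P \in T} \frac{1}{|I_P|^{1/2}} \prod_{j=1}^{3} |\langle f_j, \phi_{P_j}^j\rangle| \lesssim |I_T| \prod_{j=1}^{3} D_j(T), \] where $D_j(T) := \bigl( \tfrac{1}{|I_T|} \sum_{P\in T} |\langle f_j, \phi_{P_j}^j\rangle|^2 \bigr)^{1/2}$ for $j \neq j_0$ and $D_{j_0}(T) := \sup_{P\in T} |\langle f_{j_0}, \phi_{P_{j_0}}^{j_0}\rangle|/|I_P|^{1/2}$. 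One pulls $D_{j_0}(T)$ out of the sum as a supremum --- using that in a $j_0$-tree the intervals $\omega_{P_{j_0}}$ all contain $\omega_{T,j_0}$, so after removing the common modulation the wave packets $\phi_{P_{j_0}}^{j_0}$ are genuine bumps at scale $|I_P|$ --- and applies Cauchy--Schwarz in $P$ to the two remaining factors. Since $T$ is an $i$-tree with $i=j_0\neq j$, Definition \ref{def size} gives $D_j(T)\le\ssize(\langle f_j,\phi_{P_j}^j\rangle_P)$ for $j\neq j_0$, and testing the size against single tiles gives $D_{j_0}(T)\le\ssize(\langle f_{j_0},\phi_{P_{j_0}}^{j_0}\rangle_P)$; hence $\Lambda_{BHT;T}(f_1,f_2,f_3)\lesssim |I_T|\prod_j\ssize(\langle f_j,\phi_{P_j}^j\rangle_P)$.

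Next I would prove, for each fixed index $j$, a selection lemma: if $\ssize_{\rr{P}}(\langle f_j,\phi_{P_j}^j\rangle_P)\le 2^{k}$, then $\rr{P}=\rr{P}'\sqcup\rr{P}''$ with $\ssize_{\rr{P}'}\le 2^{k-1}$ and $\rr{P}''$ a union of trees $\rr{T}$ satisfying $\sum_{T\in\rr{T}}|I_T|\lesssim 2^{-2k}\,\eenergy(\langle f_j,\phi_{P_j}^j\rangle_P)^2$, the point being that the selected trees can be arranged into chains of strongly $j$-disjoint trees of density $\gtrsim 2^{k}$, which is exactly the configuration Definition \ref{def:energy} controls. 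Iterating downward in $k$ from $k\sim\log_2 S_j$ (write $S_j:=\ssize_{\rr{P}}(\langle f_j,\phi_{P_j}^j\rangle_P)$, $E_j:=\eenergy(\langle f_j,\phi_{P_j}^j\rangle_P)$) produces $\rr{P}=\bigsqcup_k \rr{P}^j_k$ with $\ssize_{\rr{P}^j_k}\lesssim\min(2^{k},S_j)$ and $\rr{P}^j_k=\bigsqcup_{T\in\rr{T}^j_k}T$, $\sum_{T\in\rr{T}^j_k}|I_T|\lesssim 2^{-2k}E_j^2$. Carrying this out for all three indices and intersecting the three partitions gives $\rr{P}=\bigsqcup_{k_1,k_2,k_3}\rr{P}_{k_1,k_2,k_3}$, on which $\ssize(\langle f_j,\phi_{P_j}^j\rangle_P)\lesssim\min(2^{k_j},S_j)$ for every $j$ simultaneously, and which admits, for each $j$ separately, a covering by trees with $\sum|I_T|\lesssim 2^{-2k_j}E_j^2$.

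Then $\Lambda_{BHT;\rr{P}}(f_1,f_2,f_3)\le\sum_{k_1,k_2,k_3}\sum_T\Lambda_{BHT;T}(f_1,f_2,f_3)$, and by the single tree estimate the inner sum is $\lesssim\bigl(\sum_T|I_T|\bigr)\prod_j\min(2^{k_j},S_j)\lesssim\bigl(\min_j 2^{-2k_j}E_j^2\bigr)\prod_j\min(2^{k_j},S_j)$. Bounding $\min_j 2^{-2k_j}E_j^2\le\prod_j(2^{-2k_j}E_j^2)^{\alpha_j}$ with $\alpha_j:=(1-\theta_j)/2$ (so $\sum_j\alpha_j=1$; each $\alpha_j>0$ since $\theta_j<1$, and $\alpha_j<1/2$ since $\theta_j>0$) turns each $k_j$-sum into a convergent geometric series peaking at $k_j\sim\log_2 S_j$, yielding $\Lambda_{BHT;\rr{P}}(f_1,f_2,f_3)\lesssim\prod_j S_j^{\theta_j}E_j^{1-\theta_j}$, which is the claim. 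The faces where one $\theta_j$ vanishes --- at most one can, since $\sum_j\theta_j=1$ with each $\theta_j<1$ --- need a slightly more careful treatment of the selection at that index, where the crude geometric summation is replaced by an additional orthogonality/Bessel bound.

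The main obstacle is the tree selection step: one must set up the greedy algorithm so that (i) the residual size genuinely drops by a factor $2$, (ii) the chosen tree tops really do assemble into chains of strongly $j$-disjoint trees so that Definition \ref{def:energy} delivers $\sum|I_T|\lesssim 2^{-2k}E_j^2$, and (iii) the three single-index decompositions stay compatible after intersection. By contrast, the single tree estimate is routine once one separates the lacunary and overlapping frequency directions of a tree, and the final summation is essentially bookkeeping, modulo the $\theta_j=0$ faces noted above.
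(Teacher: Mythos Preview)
The paper does not prove this proposition at all: it is quoted verbatim as Proposition~6.12 of \cite{multilinear_harmonic} and used as a black box (the authors say explicitly ``In this paper we use Chapter~6 of \cite{multilinear_harmonic} as a black box''). Your sketch is exactly the standard Lacey--Thiele argument that appears in that reference --- single tree estimate, greedy tree selection controlled by energy, intersection of the three decompositions, and geometric summation with weights $\alpha_j=(1-\theta_j)/2$ --- so there is nothing to compare against in this paper, and your outline is the expected one.
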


\begin{lemma}[Lemma 6.14 of \cite{multilinear_harmonic}]
\label{BHT energy estimate}
 Let $j \in \lbrace  1,2,3 \rbrace $ and $f \in L^2(\mathbb{R})$. Then
\[
\text{energy}\left( \langle f, \phi_{P_j}^j  \rangle_P  \right) \lesssim \| f \|_2.
\]
\end{lemma}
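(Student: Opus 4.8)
The plan is to reduce the energy bound to a single almost-orthogonality (Bessel-type) estimate for wave packets, and then to extract that estimate from the two structural hypotheses at our disposal, namely rank $1$ and strong $j$-disjointness. By the very definition of $\eenergy$ it suffices to fix $n \in \rr{Z}$ and a chain $T_1, \dots, T_M$ of strongly $j$-disjoint trees (each an $i$-tree for some $i \neq j$) with $\big( \sum_{P \in T_k} |\langle f, \phi_{P_j}^j\rangle|^2\big)^{1/2} \geq 2^n |I_{T_k}|^{1/2}$ for every $k$, and to prove that $2^{2n} \sum_k |I_{T_k}| \lesssim \|f\|_2^2$ with a constant uniform in $n$, in the chain, and in $\rr{P}$; taking the supremum over all such $n$ and chains then gives the lemma. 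Summing the lower bounds over $k$ yields $2^{2n} \sum_k |I_{T_k}| \leq \sum_k \sum_{P \in T_k} |\langle f, \phi_{P_j}^j\rangle|^2$, so everything reduces to showing
\[
\sum_{k=1}^M \sum_{P \in T_k} \big| \langle f, \phi_{P_j}^j\rangle\big|^2 \lesssim \|f\|_2^2 .
\]

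For this I would run the standard $TT^*$ argument. Setting $a_P = \langle f, \phi_{P_j}^j\rangle$, the left-hand side equals $\langle \sum_{k}\sum_{P \in T_k} a_P \phi_{P_j}^j, f\rangle$, which by Cauchy--Schwarz is at most $\|f\|_2 \big\| \sum_{k}\sum_{P \in T_k} a_P \phi_{P_j}^j \big\|_2$; thus it is enough to establish the dual Bessel inequality $\big\| \sum_{k}\sum_{P \in T_k} a_P \phi_{P_j}^j \big\|_2^2 \lesssim \sum_{k}\sum_{P \in T_k} |a_P|^2$. Expanding the $L^2$ norm produces the double sum $\sum_{k,k'}\sum_{P \in T_k,\, P' \in T_{k'}} a_P \overline{a_{P'}} \langle \phi_{P_j}^j, \phi_{P'_j}^j\rangle$, which I would split into the diagonal part $k = k'$ and the off-diagonal part $k \neq k'$. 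The diagonal part is controlled tree by tree: inside one $i$-tree, rank $1$ forces the tiles of a given scale to share the same frequency interval $\omega_{P_j}$ while having pairwise disjoint spatial intervals $I_P$, and it puts the frequency intervals $\omega_{P_j}$ of tiles of different scales in lacunary position; hence the $L^2$-normalized wave packets $\{\phi_{P_j}^j : P \in T_k\}$ are almost orthogonal and $\big\| \sum_{P \in T_k} a_P \phi_{P_j}^j \big\|_2^2 \lesssim \sum_{P \in T_k}|a_P|^2$, which we sum in $k$.

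The off-diagonal part is where strong $j$-disjointness enters, and it is the step I expect to be the main obstacle. First, $\langle \phi_{P_j}^j, \phi_{P'_j}^j\rangle$ vanishes unless $\tfrac{9}{10}\omega_{P_j}$ and $\tfrac{9}{10}\omega_{P'_j}$ overlap, hence unless $2\omega_{P_j} \cap 2\omega_{P'_j} \neq \emptyset$; for such a pair drawn from two distinct trees $T_k, T_{k'}$, conditions (ii) and (iii) in the definition of a strongly $j$-disjoint chain provide a spatial separation between $I_{P'}$ and the top interval $I_{T_k}$ (or, symmetrically, between $I_P$ and $I_{T_{k'}}$). Using the tree structure this separation has to be converted into a quantitative lower bound for $\dist(I_P, I_{P'})$ relative to $\max(|I_P|, |I_{P'}|)$; combined with the $L^2$-adaptation of the wave packets it makes $|\langle \phi_{P_j}^j, \phi_{P'_j}^j\rangle|$ decay like a large negative power of $1 + \dist(I_P, I_{P'})/\max(|I_P|, |I_{P'}|)$, and a Schur test weighted by this decay sums the off-diagonal terms to $\lesssim \sum_k \sum_{P \in T_k}|a_P|^2$ as well. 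Assembling the two contributions gives the dual Bessel inequality, hence the lemma. The delicate point is precisely the organization of this off-diagonal sum: strong disjointness separates only a single tile from the \emph{top} of another tree, so some bookkeeping over the hierarchy of scales is needed to promote that to genuine pairwise separation and to make the Schur test converge; everything else is the routine wave-packet machinery that is used here as a black box, following Chapter~6 of \cite{multilinear_harmonic}.
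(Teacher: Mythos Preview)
The paper does not prove this lemma; it is quoted verbatim as Lemma~6.14 of \cite{multilinear_harmonic} and used as a black box (the authors say explicitly at the start of Section~\ref{some classical results BHT} that Chapter~6 of \cite{multilinear_harmonic} is taken for granted). Your sketch is exactly the standard $TT^*$/Bessel argument that appears in that reference: reduce to $\sum_{T}\sum_{P\in T}|\langle f,\phi^j_{P_j}\rangle|^2\lesssim\|f\|_2^2$, dualize, and split into a diagonal part (almost-orthogonality within a single $i$-tree) and an off-diagonal part (handled via strong $j$-disjointness and wave-packet decay). So your proposal matches the approach of the cited source, and there is nothing in the present paper to compare it against.
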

However, for our specific problem we need more accurate estimates for the localized trilinear form. This will follow in Section \ref{subsec:technicalLemmas} and in Section\ref{Estimates for Localized $BHT$}.

\bigskip
\subsection{Interpolation}
\label{interpolation}~\\

Since this is a fundamental tool in harmonic analysis, we recall a few facts about interpolation methods. We adapt the results from \cite{wave_packet} and emphasize how the constants change through interpolation. In our applications, we need to keep track of the constants. Many of the proofs in the following sections are iterative, and the operatorial norm obtained after interpolation becomes a ``size" on the subsequent step of the induction. 
We recall a few definitions and results, but we will be mainly using their generalization to Banach spaces.

\begin{definition}
For  a subset $E \subset \rr{R}$ of finite measure, define 
\[
X(E)= \lbrace  f: |f| \leq \one_E \text{  a.e.}  \rbrace.
\]
We will denote by $V$ the linear span of all $X(E)$, which plays an important role because it is a dense subspace of all $L^p$ spaces, for $1 \leq p < \infty$.
\end{definition}

\begin{definition}
A tuple $\alpha=(\alpha_1, \ldots, \alpha_n)$ is called \emph{admissible} if for all $1 \leq i \leq n$ 
\[
-\infty < \alpha_i <1 \quad \text{and    } \alpha_1+\ldots+\alpha_n=1,
\]
and there is at most one index $j_0$ so that $\alpha_{j_0}<0$. We call an index \emph{good} if $\alpha_i >0$ and \emph{bad} if $\alpha_i \leq 0$. 
\end{definition}

\begin{definition}
A multilinear form $\Lambda: V \times \ldots \times V \to \rr{C}$ is of restricted type $\alpha=(\alpha_1, \ldots, \alpha_n)$ with $0 \leq \alpha_i \leq 1$ if there exists a constant $C$ (possibly depending on $\alpha$) such that for each tuple $E=(E_1, \ldots , E_n)$ of measurable subsets of $\rr{R}$ and for each tuple $f=(f_1, \ldots, f_n)$ with $f_j \in X(E_j)$, we have
\[
|\Lambda(f_1, \ldots, f_n)| \leq C \prod_j |E_j|^{\alpha_j}.
\]
\end{definition}

\begin{theorem} \label{interp thm restricted type}
[Similar to Theorem 3.2 in \cite{wave_packet}] Let $\beta=(\beta_1, \ldots, \beta_n)$ be a tuple of real numbers such that $\sum_j \beta_j=1$ and $\beta_j > 0$ for all $j$. Assume $\Lambda$ is of restricted type $\alpha$ for all $\alpha$ in a neighborhood of $\beta$ satisfying $\sum_{j} \alpha_j =1$, with constant $C(\alpha)$ depending continuously on $\alpha$. Then $\Lambda$ is of strong type $\beta$ with constant $C(\beta)$:
\[
|\Lambda(f_1, \ldots, f_n)| \leq C(\beta) \prod_{j=1}^n \|   f_j \|_{1/{\beta_j}} \quad \text{for all  } f_j \in V.
\] 
\end{theorem}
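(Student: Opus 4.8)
The plan is to follow the standard multilinear interpolation scheme, adapted to the restricted-type framework as in \cite{wave_packet}, paying attention to how the constant depends on $\alpha$. Since $\beta$ has all positive components and $\sum_j\beta_j = 1$, there is a small simplex-neighborhood $U$ of $\beta$ inside the hyperplane $\{\alpha: \sum_j\alpha_j = 1\}$ on which $\Lambda$ is of restricted type with constant $C(\alpha)$ continuous. First I would fix $f_j \in V$, which by definition are finite linear combinations $f_j = \sum_{i} c_{j,i}\one_{E_{j,i}}$; by multilinearity and a limiting/normalization argument it suffices to control $|\Lambda(f_1,\dots,f_n)|$ when each $\|f_j\|_{1/\beta_j} = 1$, and one reduces further to the case where each $f_j$ is a ``layer cake'' built from its level sets, i.e. one exploits that $\Lambda$ applied to such step functions can be expanded as a sum over tuples of level sets. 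This is exactly the point where restricted-type control on each piece gets assembled.

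The key step is the ``tensor / distribution function'' argument: write each $f_j = \sum_{k\in\mathbb Z} 2^k g_{j,k}$ where $g_{j,k} \in X(E_{j,k})$ and $E_{j,k} = \{x: 2^k < |f_j(x)| \le 2^{k+1}\}$ (up to the usual normalization constants), so that $|E_{j,k}| \lesssim 2^{-k/\beta_j}$ because $\|f_j\|_{1/\beta_j}=1$. Then
\[
|\Lambda(f_1,\dots,f_n)| \le \sum_{k_1,\dots,k_n} 2^{k_1+\dots+k_n} |\Lambda(g_{1,k_1},\dots,g_{n,k_n})|.
\]
For each tuple $(k_1,\dots,k_n)$ one is free to choose $\alpha \in U$ and invoke the restricted-type bound $|\Lambda(g_{1,k_1},\dots,g_{n,k_n})| \le C(\alpha)\prod_j |E_{j,k_j}|^{\alpha_j} \lesssim C(\alpha)\prod_j 2^{-k_j\alpha_j/\beta_j}$. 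Choosing $\alpha$ adaptively — slightly larger than $\beta_j$ in the coordinates where $k_j$ is large and positive, slightly smaller where $k_j$ is large and negative (a standard ``three-exponent'' or corner-selection trick, using that $U$ is open around $\beta$) — makes the summand $2^{k_1+\dots+k_n}\prod_j 2^{-k_j\alpha_j/\beta_j} = \prod_j 2^{k_j(1-\alpha_j/\beta_j)}$ decay geometrically in $\max_j |k_j|$, so the multi-sum converges, with the total bounded by $\sup_{\alpha\in U} C(\alpha) < \infty$ by continuity. Relabeling $\sup_{\alpha\in U}C(\alpha)$ as $C(\beta)$ (it can be taken as close to the actual behavior at $\beta$ as $U$ shrinks) gives the claimed strong-type bound $|\Lambda(f_1,\dots,f_n)| \le C(\beta)\prod_j\|f_j\|_{1/\beta_j}$.

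The main obstacle — and the only place real care is needed — is the adaptive choice of $\alpha$ guaranteeing summability in \emph{all} $n$ indices simultaneously: one needs that for every ``octant'' of sign patterns of the $k_j$ there is a single $\alpha$ in the neighborhood $U$ making every exponent $1-\alpha_j/\beta_j$ have the correct sign, and this is possible precisely because $U$ is an open neighborhood of $\beta$ within the hyperplane $\sum\alpha_j=1$ and the constraint $\sum_j k_j(1-\alpha_j/\beta_j)$ does not obstruct choosing signs freely (the differences $\alpha_j-\beta_j$ can be taken to sum to zero with arbitrary signs as long as not all vanish). A secondary technical point is justifying the reduction to nonnegative step functions and the normalization $\|f_j\|_{1/\beta_j}=1$, and handling the innocuous normalization constants $C_{l,M}$-type factors from writing $f_j$ as a dyadic superposition of functions in $X(E_{j,k})$; these are routine. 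Finally, one observes that although the hypothesis only assumes restricted type on a neighborhood of $\beta$ with \emph{all} $\alpha_j$ possibly constrained to $\sum\alpha_j=1$, we never need the ``bad index'' case here since $\beta_j>0$ for all $j$ — so the subtler part of the general interpolation theory (where one index may be negative) does not enter, which is why the statement is phrased with $\beta_j>0$.
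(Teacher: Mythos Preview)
The paper does not provide its own proof of this statement; it is simply quoted as ``Similar to Theorem 3.2 in \cite{wave_packet}'' with no argument. So there is nothing in the paper to compare against, and your proposal stands or falls on its own.

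Your overall scheme --- dyadic level-set decomposition of each $f_j$, multilinear expansion, restricted-type bound on each term, adaptive choice of $\alpha$ --- is indeed the standard route. But the crucial summability claim has a gap. After replacing $|E_{j,k_j}|$ by the Chebyshev bound $2^{-k_j/\beta_j}$, the summand becomes $\prod_j 2^{k_j(1-\alpha_j/\beta_j)}$, and you assert that an adaptive choice of $\alpha\in U$ forces geometric decay in $\max_j|k_j|$. This is false. Writing $\delta_j=\alpha_j-\beta_j$ (so $\sum_j\delta_j=0$), the exponent equals $-\sum_j (k_j/\beta_j)\,\delta_j$; when the rescaled indices $m_j:=k_j/\beta_j$ are all equal this vanishes for \emph{every} admissible $\alpha$, so the summand is $\sim 1$. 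In your own language: in the octant with all $k_j>0$ you would need every $\alpha_j>\beta_j$, which is incompatible with $\sum_j(\alpha_j-\beta_j)=0$. Concretely, for $n=2$ and $\beta_1=\beta_2=\tfrac12$, along the diagonal $k_1=k_2=k$ one has
\[
2^{\,k(1-2\alpha_1)+k(1-2\alpha_2)}=2^{\,k(2-2(\alpha_1+\alpha_2))}=2^{0}=1
\]
for every $\alpha$ with $\alpha_1+\alpha_2=1$, and the sum over $k\in\mathbb Z$ diverges. Your parenthetical ``the differences $\alpha_j-\beta_j$ can be taken to sum to zero with arbitrary signs'' already contains the contradiction.

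The remedy is not to discard $|E_{j,k_j}|$ entirely via Chebyshev: one must retain the full information $\sum_k 2^{k/\beta_j}|E_{j,k}|\sim\|f_j\|_{1/\beta_j}^{1/\beta_j}$, which is exactly what controls the diagonal direction. The clean formulation is that restricted type $\alpha$ yields, via the layer-cake identity and factorization of the resulting integral, the Lorentz-space bound $|\Lambda(f_1,\dots,f_n)|\le C(\alpha)\prod_j\|f_j\|_{L^{1/\alpha_j,1}}$; then standard real interpolation between Lorentz spaces, $(L^{p_0,1},L^{p_1,1})_{\theta,p}=L^{p}$, applied in each slot upgrades this to strong type at $\beta$. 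The adaptive-$\alpha$ trick you describe is essentially the off-diagonal half of that interpolation, but by itself it cannot close the argument.
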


For multilinear operators, it often happens that the target space is an $L^p$ space with $0<p<1$. This is not a Banach space, but we can conclude the desired outcome by interpolating weak-$L^q$ estimates, for $q$ in a neighborhood of $p$. $L^{q, \infty}$ norms are dualized in the following way:
\begin{lemma}[Lemma 2.5 from \cite{multilinear_harmonic}] Let $0<r \leq 1$, and $A>0$. Then the following statements are equivalent:
\begin{itemize}
\item[i)] $\|f\|_{r, \infty} \leq A$
\item[ii)] for every set $E$ with $0< \vert E \vert < \infty$, there exists a major subset $E' \subseteq E$( i. e. $\vert E' \vert \geq \vert E \vert /2$) so that $\ds \vert \langle f, \one_{E'}  \rangle \vert \lesssim A \vert E \vert^{1/{r'}}$, where $\ds \frac{1}{r}+\frac{1}{r'}=1$. (Note that for $r \neq 1$, $r'$ is a negative number).
\end{itemize}
\end{lemma}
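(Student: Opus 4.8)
The plan is to prove the equivalence of $\|f\|_{r,\infty} \le A$ with the stated dualized form by exploiting the Lorentz-space distributional description of the weak $L^r$ quasinorm, being careful with the sign of $r'$ when $0<r<1$. Recall that $\|f\|_{r,\infty} = \sup_{\lambda>0} \lambda \, |\{|f|>\lambda\}|^{1/r}$, so the hypothesis i) is equivalent to the layer-cake bound $|\{|f|>\lambda\}| \le (A/\lambda)^r$ for every $\lambda>0$.

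\medskip

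\textbf{From i) to ii).} Fix a set $E$ with $0<|E|<\infty$. The idea is to choose the threshold $\lambda$ so that the super-level set $\{|f|>\lambda\}$ captures only a small fraction of $E$, and then let $E'$ be the part of $E$ where $|f|$ is not too large. Concretely, set $\lambda := c\, A\, |E|^{-1/r}$ for a small absolute constant $c$; then by i), $|\{|f|>\lambda\} \cap E| \le |\{|f|>\lambda\}| \le (A/\lambda)^r = c^{-r}|E|$, which we want to be $\le |E|/2$ — here is where the constant $c$ (depending only on $r$, hence harmless) is chosen. Put $E' := E \setminus \{|f|>\lambda\}$, so $|E'| \ge |E|/2$ and $|f| \le \lambda$ on $E'$. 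Then
\[
|\langle f, \one_{E'}\rangle| \le \int_{E'} |f|\, dx \le \lambda\, |E'| \le \lambda\, |E| = c\, A\, |E|^{1-1/r} = c\, A\, |E|^{1/r'},
\]
using $1 - 1/r = -1/r + 1 = 1/r'$ (with $r'<0$ when $r<1$, but the algebraic identity still holds). This gives ii) with implicit constant $c$.

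\medskip

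\textbf{From ii) to i).} Conversely, assume ii) and fix $\lambda>0$; we must bound $|\{|f|>\lambda\}|$. If this set has infinite measure, apply ii) to an arbitrary finite-measure subset and let it exhaust; so assume $E := \{|f|>\lambda\}$ has finite measure. Let $E' \subseteq E$ be the major subset from ii). On $E'$ we have $|f| > \lambda$ pointwise, so, after multiplying $f$ by a unimodular function (which does not affect $\|f\|_{r,\infty}$ and can be absorbed, or one works with $|f|$ directly via ii) applied to $\operatorname{sgn}(\bar f)\one_{E'}$),
\[
\lambda\, |E'| \le \int_{E'} |f|\, dx = |\langle f, \operatorname{sgn}(\bar f)\one_{E'}\rangle| \lesssim A\, |E|^{1/r'}.
\]
Since $|E'| \ge |E|/2$, this yields $\lambda\, |E| \lesssim A\, |E|^{1/r'}$, i.e. $\lambda\, |E|^{1-1/r'} \lesssim A$, and $1 - 1/r' = 1/r$, so $\lambda\, |E|^{1/r} \lesssim A$. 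Taking the supremum over $\lambda$ gives $\|f\|_{r,\infty} \lesssim A$, as desired.

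\medskip

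\textbf{Main obstacle.} The only genuinely delicate point is bookkeeping with the negative exponent $r'$: one must not accidentally invoke Hölder's inequality or duality of $L^r$ with $L^{r'}$ in the usual Banach sense (which fails for $r<1$), and should instead rely purely on the layer-cake identity for the weak quasinorm together with the elementary algebraic relations $1-1/r = 1/r'$ and $1 - 1/r' = 1/r$. A secondary technical nuisance is handling sets of infinite measure in the direction ii)$\Rightarrow$i) and choosing the sign/phase function so that $\langle f, \one_{E'}\rangle$ genuinely reproduces $\int_{E'}|f|$; both are routine once one tests ii) against $\operatorname{sgn}(\bar f)\,\one_{E'}$ rather than $\one_{E'}$ itself, or equivalently applies the statement to $|f|$.
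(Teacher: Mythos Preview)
The paper does not supply its own proof of this lemma; it is quoted verbatim from \cite{multilinear_harmonic} and used as a black box, so there is no ``paper's proof'' to compare against. Your argument is the standard one and is essentially correct, with one genuine gap and one cosmetic slip.

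The cosmetic slip: in the direction i)$\Rightarrow$ii) you write ``a small absolute constant $c$'', but you need $c^{-r}\le 1/2$, i.e.\ $c\ge 2^{1/r}$, which is large when $r$ is small. This is harmless since you immediately say $c$ is chosen depending on $r$.

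The genuine gap is in ii)$\Rightarrow$i). The hypothesis ii) only controls $|\langle f,\one_{E'}\rangle|=\big|\int_{E'}f\big|$, not $\int_{E'}|f|$. Your two proposed fixes do not work as stated: testing against $\operatorname{sgn}(\bar f)\one_{E'}$ is not allowed, because ii) is a hypothesis about pairings with \emph{characteristic functions} only; and ``applying the statement to $|f|$'' presupposes ii) for $|f|$, which is a different hypothesis from ii) for $f$. The correct repair is to choose $E$ to be a set on which $f$ already has a definite sign. For real-valued $f$, apply ii) separately to $E_+:=\{f>\lambda\}$ and $E_-:=\{f<-\lambda\}$; on the resulting major subsets $E'_\pm$ one has $\big|\int_{E'_\pm}f\big|=\int_{E'_\pm}|f|\ge\lambda|E'_\pm|$, and the rest of your computation goes through, yielding $\lambda|E_\pm|^{1/r}\lesssim A$ and hence $\lambda|\{|f|>\lambda\}|^{1/r}\lesssim A$. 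For complex-valued $f$, first split into real and imaginary parts. With this adjustment your proof is complete.
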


\begin{definition}
Let $\alpha$ be an $n$- tuple of real numbers and assume $\alpha_j \leq 1$ for all $j$. An $n$-linear form $\Lambda$ is called of \emph{generalized restricted type $\alpha$} if there is a constant $C$ (possibly depending on $\alpha$) such that for all tuples $E=(E_1, \ldots, E_n)$, there is an index $j_0$ and a major subset $E'_{j_0} \subseteq E_{j_0}$ so that for all tuples $f=(f_1, \ldots, f_n)$ with $f_j \in X(E_j)$ for $j \neq j_0$ and $f_{j_0} \in X(E'_{j_0})$,
\begin{equation}
|\Lambda(f_1, \ldots, f_n)| \leq C \prod_{j=1}^n |E_j|^{\alpha_j}.
\end{equation}
\end{definition}

If a tuple $\alpha=\left(\alpha_1, \ldots, \alpha_n \right)$ is good, then generalized restricted type estimates coincide with restricted type estimates:
\begin{proposition}[Similar to Lemma 3.6 in \cite{wave_packet}]
If $\alpha=(\alpha_1, \ldots, \alpha_n)$ is a good tuple, and $\Lambda$ is of generalized restricted type $\alpha$ with constant $C(\alpha)$ and the major subset corresponds to the index $j_0$, then $\Lambda$ is of restricted type $\alpha$ with constant $\ds \frac{C(\alpha)}{1-2^{-j_0}}$.
\end{proposition}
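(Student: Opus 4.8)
The plan is to remove, by an exhaustion argument in the $j_0$-th variable, the restriction that $f_{j_0}$ be supported on a \emph{major} subset of $E_{j_0}$. Since $\alpha$ is a good tuple, every entry is positive, and in particular $\alpha_{j_0}>0$; this positivity is exactly what makes the geometric series appearing below summable, and it is why the statement is restricted to good tuples. Throughout, the key structural input is that the generalized restricted type hypothesis always produces the major subset in the \emph{same} slot $j_0$, so the iteration stays in a single coordinate.

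Fix a tuple $E=(E_1,\dots,E_n)$ of sets of finite measure. I would build inductively a decreasing sequence of sets: put $F_1:=E_{j_0}$, and apply the generalized restricted type hypothesis to the tuple obtained from $E$ by replacing $E_{j_0}$ with $F_1$; this yields a major subset $G_1\subseteq F_1$ with $|G_1|\ge|F_1|/2$ such that $|\Lambda(f_1,\dots,f_n)|\le C(\alpha)|F_1|^{\alpha_{j_0}}\prod_{j\ne j_0}|E_j|^{\alpha_j}$ whenever $f_{j_0}\in X(G_1)$ and $f_j\in X(E_j)$ for $j\ne j_0$. Then set $F_2:=E_{j_0}\setminus G_1$ and repeat, obtaining $G_2\subseteq F_2$, and so on: at stage $k$ one puts $F_k:=E_{j_0}\setminus\bigcup_{l<k}G_l$ and extracts $G_k\subseteq F_k$ with $|G_k|\ge|F_k|/2$ and the analogous bound with $|F_k|^{\alpha_{j_0}}$ in place of $|F_1|^{\alpha_{j_0}}$. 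Since each $G_k$ captures at least half of $F_k$, one has $|F_k|\le 2^{-(k-1)}|E_{j_0}|$, so the $G_k$'s are pairwise disjoint and cover $E_{j_0}$ up to a null set. Now, given arbitrary $f_j\in X(E_j)$, decompose $f_{j_0}=\sum_{k\ge1}f_{j_0}\one_{G_k}$, with each summand in $X(G_k)$; multilinearity of $\Lambda$ in the $j_0$-th slot together with the per-piece bounds and $\alpha_{j_0}\ge 0$ gives
\[
|\Lambda(f_1,\dots,f_n)|\le\sum_{k\ge1}C(\alpha)|F_k|^{\alpha_{j_0}}\prod_{j\ne j_0}|E_j|^{\alpha_j}\le C(\alpha)\Big(\sum_{k\ge1}2^{-(k-1)\alpha_{j_0}}\Big)\prod_{j}|E_j|^{\alpha_j}=\frac{C(\alpha)}{1-2^{-\alpha_{j_0}}}\prod_{j}|E_j|^{\alpha_j},
\]
which is precisely the restricted type $\alpha$ inequality with constant $C(\alpha)/(1-2^{-\alpha_{j_0}})$.

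The one point requiring care — and the main (mild) obstacle — is justifying the interchange of $\Lambda$ with the infinite sum $\sum_k f_{j_0}\one_{G_k}$. I would handle this by truncating at level $K$: write $f_{j_0}=\sum_{k\le K}f_{j_0}\one_{G_k}+f_{j_0}\one_{F_{K+1}}$, control the finite part directly, and note that the tail $f_{j_0}\one_{F_{K+1}}=\sum_{k>K}f_{j_0}\one_{G_k}$ contributes at most $C(\alpha)\sum_{k>K}2^{-(k-1)\alpha_{j_0}}\prod_{j}|E_j|^{\alpha_j}$ by the same per-piece estimates applied inside $F_{K+1}$, which tends to $0$ as $K\to\infty$ precisely because $\alpha_{j_0}>0$; alternatively one invokes the a priori boundedness of $\Lambda$ on $V^n$ together with dominated convergence. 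The remaining verifications — measurability of the $G_k$, the elementary bound $|F_k|\le 2^{-(k-1)}|E_{j_0}|$, and the evaluation of the geometric series — are routine. I would also record the remark that if $j_0$ were allowed to depend on $E$, the scheme would not close, since the extracted sets would live in different coordinates at different stages; the hypothesis that the major subset always sits in the fixed slot $j_0$ is exactly what is being used.
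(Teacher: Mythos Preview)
Your proof is correct and is exactly the standard exhaustion argument (as in the cited reference \cite{wave_packet}, Lemma~3.6); the paper itself does not give a proof, it only states the proposition with a citation. One remark: the constant you obtain is $\dfrac{C(\alpha)}{1-2^{-\alpha_{j_0}}}$, which is the correct one --- the $2^{-j_0}$ appearing in the paper's statement is almost certainly a typo for $2^{-\alpha_{j_0}}$, since $j_0$ is merely an index and carries no quantitative meaning on its own.
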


\begin{theorem}(Thm. 3.8 of \cite{wave_packet})
\label{interpolation for bad index}
 Assume $\ds \Lambda=\langle T(f_1, \ldots , f_{n-1}) , f_n \rangle $ is of generalized restricted type $\beta$ where $\sum_j \beta_j=1$. Assume $\beta_k >0$ for $1 \leq k \leq n-1$ and $\beta_n \leq 0$.
Assume $\Lambda$ is also of generalized restricted type $\alpha$ with constant $C(\alpha)$ (continuously depending on $\alpha$) for all $\alpha$ in a neighborhood of $\beta$ satisfying $\sum_j \alpha_j=1$. Then the multilinear operator $T$ satisfies
\begin{equation}
\|  T(f_1, \ldots , f_{n-1}) \|_{1/{(1-\beta_n)}} \leq C(\beta) \prod_{j=1}^{n-1} \| f_j   \|_{1/{\beta_j}}.
\end{equation}
\end{theorem}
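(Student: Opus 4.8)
The plan is a three–move argument: dualize the (possibly sub-Banach) $L^{1/(1-\beta_n)}$ quasi-norm on the output of $T$ in order to reintroduce the multilinear form $\Lambda$; read off a \emph{restricted weak-type} bound for $T$ directly from the generalized restricted type hypothesis; and finally interpolate over a full neighborhood of $\beta$ to upgrade weak to strong type and to pass from indicator functions to arbitrary $f_j\in V$.

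First I would fix an $\alpha$ in the given neighborhood of $\beta$ (so $\alpha_k>0$ for $k<n$, $\alpha_n\le 0$, $\sum_j\alpha_j=1$) and fix measurable sets $F_1,\dots,F_{n-1}$ of finite measure. Put $g:=T(\one_{F_1},\dots,\one_{F_{n-1}})$ and $r:=1/(1-\alpha_n)\in(0,1]$, so that the conjugate index of the dualization lemma satisfies $1/r'=1-1/r=\alpha_n\le 0$. By Lemma 2.5 of \cite{multilinear_harmonic}, to prove $\|g\|_{r,\infty}\lesssim C(\alpha)\prod_{k<n}|F_k|^{\alpha_k}$ it suffices to produce, for every set $E$ with $0<|E|<\infty$, a major subset $E'\subseteq E$ with
\[
\big|\langle g,\one_{E'}\rangle\big|\lesssim C(\alpha)\Big(\prod_{k<n}|F_k|^{\alpha_k}\Big)\,|E|^{\alpha_n}.
\]
But $\langle g,\one_{E'}\rangle=\Lambda(\one_{F_1},\dots,\one_{F_{n-1}},\one_{E'})$, so this inequality is \emph{exactly} the generalized restricted type $\alpha$ estimate for $\Lambda$ applied to the tuple $(F_1,\dots,F_{n-1},E)$, with the distinguished major subset located at the last coordinate — the natural location in $\Lambda=\langle T(\cdot),\cdot\rangle$, and the only non-positive slot. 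Hence $T$ is of restricted weak type $\alpha$, with a constant inheriting the continuous dependence on $\alpha$; shrinking the neighborhood if needed keeps each $\alpha_k\in(0,1)$ for $k<n$, while the sign of $\alpha_n$ is immaterial — the argument does not care whether $1/(1-\alpha_n)$ is below or above $1$.

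The last move is multilinear real interpolation — the operator-valued, sub-Banach-target variant of Theorem~\ref{interp thm restricted type} — applied to the family of restricted weak-type bounds just obtained as $\alpha$ ranges over the neighborhood of $\beta$. Concretely, decompose each $f_j\in V$ ($1\le j\le n-1$) into its dyadic level sets $f_j\approx\sum_m 2^m\one_{F_{j,m}}$, expand $T(f_1,\dots,f_{n-1})$ multilinearly, estimate each term by the restricted weak-type bound at a tuple near $\beta$ tilted slightly in the good variables, and sum: the strict positivity of the $\beta_k$'s ($k<n$) provides the geometric gains that make the $(n-1)$-fold sum converge and, simultaneously, converts the weak quasi-norm $L^{1/(1-\beta_n),\infty}$ into the strong quasi-norm $L^{1/(1-\beta_n)}$ at the interior exponent. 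Collecting constants yields $\|T(f_1,\dots,f_{n-1})\|_{1/(1-\beta_n)}\le C(\beta)\prod_{j=1}^{n-1}\|f_j\|_{1/\beta_j}$, as claimed.

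\emph{Main obstacle.} The crux is the non-positive index $\beta_n$: since the target exponent $1/(1-\beta_n)$ may be $\le 1$ and $\beta$ is not a good tuple, one cannot invoke the Banach-space interpolation of Theorem~\ref{interp thm restricted type} in the last slot, nor use the good-tuple proposition to convert generalized restricted type into honest restricted type there. The workaround — dualizing the $L^{r,\infty}$ quasi-norm via Lemma 2.5 of \cite{multilinear_harmonic} and interpolating an entire neighborhood of weak-type estimates rather than a single strong one — is exactly what forces the hypothesis to be stated with constants $C(\alpha)$ depending continuously on $\alpha$. The remaining delicate points are bookkeeping: ensuring that the generalized restricted type hypothesis delivers the major subset over the dualizing coordinate $n$ (and, should it be returned over a good coordinate, iterating and summing a geometric series in the halved measures), and keeping all perturbed tuples admissible.
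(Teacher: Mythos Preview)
The paper does not actually prove this theorem; it merely quotes it as ``Thm.~3.8 of \cite{wave_packet}'' and uses it as a black box. So there is no in-paper proof to compare against.

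Your outline is the standard argument and is correct in spirit: dualize the target quasi-norm via the $L^{r,\infty}$ lemma to turn generalized restricted type $\alpha$ for $\Lambda$ into restricted weak type $\alpha$ for $T$, then run multilinear Marcinkiewicz over the open neighborhood to upgrade to strong type at $\beta$. One small imprecision: in the first move you set $g:=T(\one_{F_1},\dots,\one_{F_{n-1}})$, but the generalized restricted type hypothesis is stated for all $f_j\in X(F_j)$, not just indicator functions, and that is what you need to feed into the level-set decomposition in the last step; so the restricted weak-type conclusion should be phrased for arbitrary $|f_j|\le\one_{F_j}$ from the outset. Otherwise the argument is fine, including your parenthetical remark about iterating if the major subset is returned over a good coordinate.
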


\subsection{Interpolation for  Banach-valued Functions}~\\
The Banach space interpolation theory is very similar to the scalar version, the difference consisting in replacing the norm $\vert \cdot \vert$ on $\rr{C}$ by $\| \cdot \|_X$ on a Banach space $X$.

We say that $F \in L^p(\rr{R}; X)$ provided
\[
\|F\|_{L^p(\rr{R};X)}:= \left( \int_{\rr{R}} \| F(x)  \|^p_X  dx  \right)^{1/p} < \infty.
\]
The question of integrability of $F(x)$ is reduced to the Lebesgue integrability of $\ds x \mapsto \| F(x) \|_X$. The set of vector-valued step functions is dense in $L^p(\rr{R}; X)$ and for this reason, similarly to the scalar case, it will be enough to deal with function in 
\[
\lbrace F: \| F(x)\|_{X} \leq \one_E(x) \text{  a.e.} , E \subset \rr{R} \text{  subset of finite measure }  \rbrace.
\]
The linear span of such sets will be denoted $V_X$.

The multilinear form associated with an operator is obtained through dualization. More exactly, 
\[
\| F \|_{L^p(\rr{R}; X)}:= \sup\limits_{\|  G  \|_{L^{p'}(\rr{R}; X^*)} \leq 1} \big\vert \int_{\rr{R}} \langle G(x), F(x)   \rangle dx   \big\vert,
\]
whenever $1 \leq p < \infty$.

We will deal with a vector-valued multilinear (or multi-sublinear) operator of the form
\[
\vec{T} : L^{p_1}(\rr{R}; X_1) \times \ldots \times  L^{p_{n-1}}(\rr{R}; X_{n-1}) \to  L^{p_n}(\rr{R}; X_n).
\]
The multilinear form associated with this operator, $\Lambda : V_{X_1}\times \ldots  \times V_{X_{n-1}}\times V _{X_n^*} \to \rr{C}$ is given by:
\[
\Lambda(F_1,\ldots F_{n-1}, F_n )=\int_{\rr{R}} \langle \vec{T}(F_1, \ldots, F_{n-1})(x), F_n(x)  \rangle dx.
\]
The definitions and proofs from the scalar case are adaptable to the vector-valued situation. For completeness,  we present them here, adapting the equivalent statements from \cite{wave_packet}.

\begin{definition} A tuple $\alpha=(\alpha_1, \ldots, \alpha_n)$ is called admissible if $\alpha_1+ \ldots+\alpha_n=1$, $\alpha_1, \ldots, \alpha_n <1$ and for at most one index $j_0$ we have $\alpha_{j_0} <0$.

A multi-sublinear form $\Lambda$ as above is of restricted type $\alpha=(\alpha_1, \ldots, \alpha_n)$ for a good admissible tuple $\alpha$ if there exists a constant $C$ so that for each tuple $E=(E_1, \ldots, E_n)$ of measurable subsets of $\rr{R}$, and for each tuple $F=(F_1, \ldots, F_n)$ with $\| F_j \|_X \leq \one_{E_j}$, we have 
\[
\vert \Lambda(F_1, \ldots, F_n) \vert \leq C |E_1|^{\alpha_1} \cdot \ldots \cdot |E_n|^{\alpha_n}.
\]
\end{definition}

\begin{proposition}[Equivalent of Thm. 3.2 of \cite{wave_packet}]
Let $\beta=(\beta_1, \ldots, \beta_n)$ be an admissible tuple of real numbers such that $\beta_j>0$ for all $j$. Assume that $\Lambda$ is of restricted type $\alpha$ for all admissible tuples $\alpha$ in a neighborhood of $\beta$. Then there is a constant $C$ such that for all $F_j \in V_{X_j}$,
\[
\vert \Lambda(F_1, \ldots , F_n)   \vert \leq C \|F_1\|_{L^{1/{\beta_1}}(\rr{R}; X_1)} \cdot \ldots \cdot \|F_n\|_{L^{1/{\beta_n}}(\rr{R}; X_n)}.
\]
\end{proposition}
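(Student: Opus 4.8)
The plan is to follow the classical Marcinkiewicz-type interpolation argument, exactly as in the scalar case (Theorem \ref{interp thm restricted type}), but carried out entirely with the Banach norms $\|\cdot\|_{X_j}$ in place of $|\cdot|$ on $\rr{C}$. The starting point is the multilinear interpolation identity: since $\beta$ is a good admissible tuple, one can write $\beta$ as an average of finitely many admissible tuples $\alpha^{(1)},\ldots,\alpha^{(N)}$ lying in the prescribed neighborhood, with $\beta = \sum_m \lambda_m \alpha^{(m)}$, $\lambda_m>0$, $\sum_m \lambda_m=1$. The restricted-type hypothesis gives, for each $m$, a bound $|\Lambda(F_1,\ldots,F_n)| \leq C(\alpha^{(m)}) \prod_j |E_j|^{\alpha^{(m)}_j}$ whenever $\|F_j\|_{X_j} \leq \one_{E_j}$.

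The key steps, in order, are as follows. First, by multilinearity and density of $V_{X_j}$ in $L^{1/\beta_j}(\rr{R};X_j)$, it suffices to prove the strong-type bound for $F_j \in V_{X_j}$, i.e. finite linear combinations of functions supported on sets of finite measure with $X_j$-norm bounded by an indicator. Second, for such $F_j$ one performs a dyadic decomposition of each $F_j$ according to the size of $\|F_j(x)\|_{X_j}$: writing $F_j = \sum_{k_j} F_j^{k_j}$ where $\|F_j^{k_j}(x)\|_{X_j} \sim 2^{k_j}$ on a set $E_j^{k_j}$ of finite measure and zero elsewhere, so that $2^{-k_j} F_j^{k_j} \in V_{X_j}$ with $\|2^{-k_j}F_j^{k_j}(x)\|_{X_j} \leq \one_{E_j^{k_j}}(x)$. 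Third, one expands $\Lambda(F_1,\ldots,F_n) = \sum_{k_1,\ldots,k_n} \Lambda(F_1^{k_1},\ldots,F_n^{k_n})$ and applies, to each term, whichever restricted-type estimate $\alpha^{(m)}$ is most favorable for the given configuration of $(k_1,\ldots,k_n)$, obtaining
\[
|\Lambda(F_1^{k_1},\ldots,F_n^{k_n})| \lesssim \prod_{j=1}^n 2^{k_j}|E_j^{k_j}|^{\alpha_j^{(m)}} = \prod_{j=1}^n 2^{k_j (1-1/p_j)} \big( 2^{k_j/p_j}|E_j^{k_j}|^{\alpha_j^{(m)}}\big),
\]
where $p_j = 1/\beta_j$. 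Fourth, one sums the resulting geometric-type series in $(k_1,\ldots,k_n)$: choosing the exponent $\alpha^{(m)}$ adaptively ensures that in each ``direction'' of the summation one has a convergent geometric series, and the total sums to $\prod_j \big(\sum_{k_j} 2^{k_j}|E_j^{k_j}|^{\alpha_j^{(m)} }\big)$-type quantities which, after applying H\"older in the $k_j$-sum and recognizing $\sum_{k_j} 2^{k_j p_j}|E_j^{k_j}| \sim \|F_j\|_{L^{p_j}(\rr{R};X_j)}^{p_j}$, yields $\prod_j \|F_j\|_{L^{p_j}(\rr{R};X_j)}$.

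The main obstacle is purely bookkeeping: one must verify that the neighborhood of $\beta$ consisting of admissible tuples is large enough, and the $\lambda_m$ can be chosen, so that for \emph{every} configuration $(k_1,\ldots,k_n)$ the adaptively selected $\alpha^{(m)}$ produces strictly decaying geometric series in all directions simultaneously — this is where goodness of $\beta$ (all $\beta_j>0$, so all $p_j$ finite and $1/p_j>0$) is essential, since it guarantees a full-dimensional simplex of admissible exponents around $\beta$. Everything else transfers verbatim from the scalar proof in \cite{wave_packet}: the Banach structure enters only through the pointwise inequality $\|F_j(x)\|_{X_j}\leq \one_{E_j}(x)$ replacing $|f_j(x)|\leq \one_{E_j}(x)$, and the bound on $\Lambda$ is by hypothesis already phrased in terms of measures of sets $|E_j|$, so no further vector-valued input (no type/cotype, no UMD) is needed. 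I would remark that the final constant $C$ depends on $\beta$ through the continuity of $\alpha \mapsto C(\alpha)$ and through how close to the boundary of admissibility the chosen $\alpha^{(m)}$ must be, which is exactly the dependence needed for the iterative scheme described in Section \ref{interpolation}.
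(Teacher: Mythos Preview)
Your proposal is correct and follows exactly the approach the paper intends: the paper does not give a detailed proof here but simply states that the scalar argument from \cite{wave_packet} (Theorem 3.2) adapts verbatim, with $\|\cdot\|_{X_j}$ replacing $|\cdot|$, and your level-set decomposition plus adaptive choice of $\alpha$ is precisely that argument. One minor quibble: the opening sentence about writing $\beta$ as a convex combination of the $\alpha^{(m)}$ is not quite how the mechanism works---you don't average the exponents, you simply pick, for each $(k_1,\ldots,k_n)$, whichever nearby $\alpha$ gives geometric decay---but you state this correctly later, so the argument itself is sound.
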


\begin{definition}
Let $\alpha$ be an admissible tuple; the $n$-sublinear form $\Lambda$ is of \emph{generalized restricted type} $\alpha$ if there is a constant $C$ such that for all tuples $E=(E_1, \ldots, E_n)$ there is an index $j_0$ and a major subset $E_{j_0}'$ of $E_{j_0}$(that is, $ |E_{j_0}'| \geq |E_{j_0}|/2$) such that for all tuples $F=(F_1, \ldots, F_n)$ with $\| F_j \|_{X_j} \leq \one_{E_j}$ for $j \neq j_0$, and $\|F_{j_0}\|_{X_{j_0}} \leq \one_{E_{j_0}'}$, we have 
\[
\vert \Lambda(F_1, \ldots, F_n)  \vert \leq C \prod_j |E_j|^{\alpha_j}.
\]
\end{definition}

\begin{proposition}
If $\Lambda$ is of generalized restricted type $\alpha=(\alpha_1, \ldots, \alpha_n)$, and $\alpha_j>0$ for all $j$, then $\Lambda$ is of restricted type $\alpha$.
\end{proposition}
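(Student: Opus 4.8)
The plan is to mimic the scalar ``good tuple'' argument (the Proposition ``similar to Lemma 3.6 of \cite{wave_packet}'' recalled above): one removes the major--subset restriction in one argument by an iterative \emph{peeling} that produces a geometric series, and the series converges precisely because every $\alpha_j$ is \emph{strictly} positive, i.e.\ all indices are good. The Banach norm $\|\cdot\|_{X_j}$ plays no role beyond replacing $|\cdot|$ on $\rr{C}$, so no genuinely new difficulty appears; the only thing requiring care is the bookkeeping that keeps the series geometric when the hypothesis hands us a (possibly different) index at each stage.

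First I would fix a tuple $E=(E_1,\dots,E_n)$ of finite--measure sets and $F=(F_1,\dots,F_n)$ with $\|F_j\|_{X_j}\le \one_{E_j}$, put $\alpha_{\min}:=\min_j\alpha_j>0$, and set $E_j^{(0)}:=E_j$. Given subsets $E_j^{(m-1)}\subseteq E_j$, apply the generalized restricted type hypothesis to $(E_1^{(m-1)},\dots,E_n^{(m-1)})$: it returns an index $j_m$ and a major subset $G_m\subseteq E_{j_m}^{(m-1)}$ (so $|G_m|\ge\tfrac12|E_{j_m}^{(m-1)}|$) such that the bound $C\prod_j|E_j^{(m-1)}|^{\alpha_j}$ holds as soon as the $j_m$-th function is supported in $G_m$. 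Splitting the $j_m$-th argument of $\Lambda(F_1\one_{E_1^{(m-1)}},\dots)$ into its parts on $G_m$ and on $E_{j_m}^{(m)}:=E_{j_m}^{(m-1)}\setminus G_m$ (and keeping $E_j^{(m)}:=E_j^{(m-1)}$ for $j\ne j_m$), sub-additivity of $\Lambda$ in each slot gives
\[ \big|\Lambda(F_1\one_{E_1^{(m-1)}},\dots,F_n\one_{E_n^{(m-1)}})\big|\le C\prod_j|E_j^{(m-1)}|^{\alpha_j}+\big|\Lambda(F_1\one_{E_1^{(m)}},\dots,F_n\one_{E_n^{(m)}})\big|. \]
Since exactly one factor is at least halved at each stage, $\prod_j|E_j^{(m)}|\le 2^{-m}\prod_j|E_j|$, and because every $\alpha_j>0$ one has, writing $|E_j^{(m-1)}|=\lambda_j|E_j|$ with $\lambda_j\in[0,1]$, that $\prod_j|E_j^{(m-1)}|^{\alpha_j}=\big(\prod_j|E_j|^{\alpha_j}\big)\prod_j\lambda_j^{\alpha_j}\le 2^{-(m-1)\alpha_{\min}}\prod_j|E_j|^{\alpha_j}$. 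Iterating the displayed inequality and summing the geometric series $\sum_{m\ge1}2^{-(m-1)\alpha_{\min}}=(1-2^{-\alpha_{\min}})^{-1}$ yields, for every $M$,
\[ |\Lambda(F_1,\dots,F_n)|\le \frac{C}{1-2^{-\alpha_{\min}}}\prod_j|E_j|^{\alpha_j}+\big|\Lambda(F_1\one_{E_1^{(M)}},\dots,F_n\one_{E_n^{(M)}})\big|. \]

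The two points needing a word are: (i) the elementary inequality $\prod_j\lambda_j^{\alpha_j}\le(\prod_j\lambda_j)^{\alpha_{\min}}$ for $\lambda_j\in[0,1]$, which is exactly where the assumption ``$\alpha_j>0$ for every $j$'' is used and what lets a different index be returned at each stage without destroying the decay (if the same index were always returned, as in the cited scalar statement, this collapses to an ordinary geometric series in one coordinate); and (ii) the vanishing of the remainder term as $M\to\infty$. For (ii): there being infinitely many stages but only $n$ indices, some index $j_*$ is selected infinitely often, so the nested sets $E_{j_*}^{(M)}$ satisfy $|E_{j_*}^{(M)}|\to0$ and $F_{j_*}\one_{E_{j_*}^{(M)}}\to0$ while the remaining arguments stay dominated by $\one_{E_j}$; since the forms $\Lambda$ occurring here are given by absolutely convergent expressions on the dense class $V_{X_j}$, the remainder tends to $0$ by dominated convergence, exactly as in the scalar treatment of \cite{wave_packet}. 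Letting $M\to\infty$ then gives $|\Lambda(F_1,\dots,F_n)|\le C'\prod_j|E_j|^{\alpha_j}$ with $C'=C/(1-2^{-\alpha_{\min}})$, which is the asserted restricted type estimate. I do not anticipate any real obstacle: this is a routine adaptation of the scalar argument.
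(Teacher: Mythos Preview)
The paper does not give its own proof of this proposition; it merely states it as the Banach-valued analogue of the scalar fact (itself quoted from \cite{wave_packet}) and declares that ``the definitions and proofs from the scalar case are adaptable to the vector-valued situation.'' Your peeling argument is precisely the standard one, so there is nothing to compare against and your proof is correct.

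One remark: your treatment is slightly more general than the scalar formulation the paper records. There the proposition is phrased with ``the major subset corresponds to the index $j_0$'', i.e.\ a \emph{fixed} index, and the constant $C(\alpha)/(1-2^{-\alpha_{j_0}})$ reflects that. The definition of generalized restricted type, however, allows the index to change with the tuple $E$, and your use of $\alpha_{\min}$ together with the elementary inequality $\prod_j\lambda_j^{\alpha_j}\le(\prod_j\lambda_j)^{\alpha_{\min}}$ for $\lambda_j\in[0,1]$ is exactly what is needed to keep the series geometric in that more general setting. The only step one might tighten is the disposal of the remainder: your appeal to dominated convergence is legitimate for the concrete integral forms $\Lambda$ that actually occur in the paper, but at the level of generality of the statement it is an implicit extra hypothesis. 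In practice this is harmless and matches the treatment in \cite{wave_packet}.
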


On the other hand, if one of the indices $\alpha_j$ is $\leq 0$, the generalized restricted type implies only weak-$L^p$ estimates. This works in the case when the multi-sublinear form is given by
\begin{equation}
\label{vv multilinear form}
\Lambda(F_1, \ldots, F_n)= \int_{\rr{R}}\langle \vec{T}(F_1, \ldots, F_{n-1})(x), F_n(x) \rangle dx,
\end{equation}
and corresponds to an operator $\vec{T}$ defined on $V_{X_1} \times \ldots \times V_{X_{n-1}}$ and taking values in $V_{X_n}$.

\begin{proposition}
Let $\Lambda$ be a multi-sublinear form as in \eqref{vv multilinear form}, and $\alpha=(\alpha_1, \ldots, \alpha_n)$ an admissible tuple with $\alpha_n \leq 0$. Assuming that $\Lambda$ is of generalized restricted type $\alpha$, we have 
\[
\lambda  \vert \lbrace x: \| \vec{T}(F_1, \ldots, F_{n-1})(x) \|_{X_n} > \lambda   \rbrace  \vert^{1/{1-\alpha_n}} \leq A \prod_{j=1}^{n-1} |E_j|^{\alpha_j}
\] 
for all tuples $F=(F_1, \ldots, F_{n-1})$ with $\|f_j\|_{X_j} \leq \one_{E_j}$. 
\end{proposition}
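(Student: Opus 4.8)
The plan is to transcribe, into the Banach-valued setting, the standard argument that turns a generalized-restricted-type bound for the form $\Lambda$ into a weak-type estimate for the operator $\vec{T}$. Fix $\lambda>0$ and a tuple $F=(F_1,\dots,F_{n-1})$ with $\|F_j\|_{X_j}\le\one_{E_j}$, abbreviate $g:=\vec{T}(F_1,\dots,F_{n-1})$ --- a vector-valued step function, since $\vec{T}$ takes values in $V_{X_n}$ --- and set
\[
E_\lambda:=\lbrace x\in\rr{R}:\ \|g(x)\|_{X_n}>\lambda\rbrace .
\]
Since the $E_j$ have finite measure, $\prod_{j=1}^{n-1}|E_j|^{\alpha_j}<\infty$, so by a routine exhaustion of $E_\lambda$ by finite-measure subsets we may assume $|E_\lambda|<\infty$. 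Apply the generalized-restricted-type hypothesis to the tuple $(E_1,\dots,E_{n-1},E_\lambda)$: since $\alpha_1,\dots,\alpha_{n-1}>0$ and $\alpha_n\le 0$, the exceptional index is $j_0=n$, so there is a major subset $E_n'\subseteq E_\lambda$, $|E_n'|\ge|E_\lambda|/2$, with
\[
\bigl|\Lambda(F_1,\dots,F_{n-1},G)\bigr|\le C\,|E_\lambda|^{\alpha_n}\prod_{j=1}^{n-1}|E_j|^{\alpha_j}
\]
for every $G\in V_{X_n^*}$ satisfying $\|G(x)\|_{X_n^*}\le\one_{E_n'}(x)$.

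The next step is to choose $G$ that saturates the dual norm of $X_n$ pointwise. Write $g=\sum_k\one_{A_k}v_k$ as a finite sum with $A_k\subseteq\rr{R}$ disjoint of finite measure and $v_k\in X_n$; by Hahn--Banach pick $\phi_k\in X_n^*$ with $\|\phi_k\|_{X_n^*}\le 1$ and $\langle v_k,\phi_k\rangle=\|v_k\|_{X_n}$, and put $G:=\sum_k\one_{A_k\cap E_n'}\phi_k\in V_{X_n^*}$, so that $\|G(x)\|_{X_n^*}\le\one_{E_n'}(x)$ and $\langle g(x),G(x)\rangle=\|g(x)\|_{X_n}\one_{E_n'}(x)$. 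Then by \eqref{vv multilinear form},
\[
\Lambda(F_1,\dots,F_{n-1},G)=\int_{E_n'}\|g(x)\|_{X_n}\,dx\ \ge\ \lambda\,|E_n'|\ \ge\ \tfrac12\,\lambda\,|E_\lambda| ,
\]
using $\|g\|_{X_n}>\lambda$ on $E_n'\subseteq E_\lambda$. Combining this with the previous display and dividing by $|E_\lambda|^{\alpha_n}$ gives
\[
\lambda\,|E_\lambda|^{\,1-\alpha_n}\le 2C\prod_{j=1}^{n-1}|E_j|^{\alpha_j},
\]
which is the asserted weak-type bound for $\vec{T}$ with $A=2C$.

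Every step of the scalar argument survives the replacement of $|\cdot|$ by $\|\cdot\|_X$, so there is no serious obstacle here; the one ingredient proper to the Banach setting is the construction of the dualizing function $G$, and that is painless precisely because $\vec{T}$ is assumed to land in $V_{X_n}$, so $g$ is finitely valued and only finitely many norming functionals are needed --- in particular no separability or Radon--Nikodym property of $X_n^*$ is used. \textbf{The one point that genuinely needs attention is that the exceptional index returned by the generalized-restricted-type hypothesis be the index $n$, the one carrying the non-positive exponent.} Here this is forced by admissibility; if one instead allowed the hypothesis to hand back an arbitrary $j_0\in\lbrace1,\dots,n\rbrace$, one would first have to exhaust the offending set $E_{j_0}$ by iterating the hypothesis on the successive remainders, at the cost of a harmless geometric-series constant.
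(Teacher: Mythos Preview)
Your approach is the standard one and matches what the paper intends (the paper itself does not give a proof, saying only that it ``follows exactly the same ideas as those corresponding to the scalar case, with very minor differences''). The architecture --- take the superlevel set as $E_n$, invoke the generalized-restricted-type hypothesis to obtain a major subset $E_n'$, dualize pointwise, and compare --- is correct, and your derived inequality $\lambda\,|E_\lambda|^{1-\alpha_n}\le 2C\prod_{j}|E_j|^{\alpha_j}$ is the right weak-$L^{1/(1-\alpha_n)}$ bound (the exponent $1/(1-\alpha_n)$ in the displayed statement is evidently a typo for $1-\alpha_n$).

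There is, however, one genuine slip in the dualization step. You assert that $g=\vec{T}(F_1,\dots,F_{n-1})$ is a vector-valued step function ``since $\vec T$ takes values in $V_{X_n}$'', and then select finitely many Hahn--Banach functionals. But in the paper $V_X$ is the linear span of $\{F:\|F(x)\|_X\le\one_E(x)\}$ --- bounded strongly measurable functions with support of finite measure --- not the space of simple functions. So $g$ need not be finitely valued, your finite Hahn--Banach selection is unavailable as written, and your closing remark that ``no separability or Radon--Nikodym property of $X_n^*$ is used'' is precisely what breaks. The repair is routine: since $g$ is Bochner measurable and bounded on $E_n'$, approximate it in $L^1(E_n';X_n)$ by simple functions $g_m$, dualize those to get $G_m$ with $\|G_m(x)\|_{X_n^*}\le\one_{E_n'}(x)$, and observe
\[
\Lambda(F_1,\dots,F_{n-1},G_m)=\int_{E_n'}\langle g,G_m\rangle\ \ge\ \int_{E_n'}\|g_m\|_{X_n}-\|g-g_m\|_{L^1(E_n';X_n)}\ \ge\ \lambda|E_n'|-o(1).
\]
Alternatively, for the concrete spaces $X_n=L^r(\mathscr W,\mu)$ that actually occur in the paper, one can simply write down the norming element explicitly as $G(x,w)=\one_{E_n'}(x)\,\|g(x,\cdot)\|_{L^r}^{1-r}\,|g(x,w)|^{r-1}\,\mathrm{sgn}\,\overline{g(x,w)}$, with the usual convention when $g(x,\cdot)=0$.
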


\begin{proposition}
\label{banach interpolation gen res}
Assume $\Lambda$ is of generalized restricted type $\beta$ where $\beta$ is an admissible tuple with $\beta_n\leq 0$. Assume $\Lambda$ is also of generalized restricted type $\alpha$ for all  admissible tuples $\alpha$ in a neighborhood of $\beta$. Then $\vec{T}$ satisfies
\begin{equation}
\|  \vec{T}(F_1, \ldots F_{n-1}) \|_{L^{1/{1-\beta_n}}(\rr{R}; X_n)} \leq C \prod_{j=1}^{n-1} \| F_j \|_{L^{1/{\beta_j}}(\rr{R}; X_j)}.
\end{equation}
\end{proposition}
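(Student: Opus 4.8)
The plan is to transcribe, almost line by line, the scalar proof of Theorem~\ref{interpolation for bad index} (Theorem~3.8 of \cite{wave_packet}), replacing the modulus on $\rr{C}$ throughout by the norm $\|\cdot\|_{X_j}$ and the scalar triangle inequality by the pointwise estimate $\|\sum_{k}G_k(x)\|_{X_n}\le\sum_{k}\|G_k(x)\|_{X_n}$. Observe first that, since $\beta$ is admissible with $\beta_n\le 0$ while every good coordinate satisfies $\beta_j<1$, one is forced to have $n\ge 3$, and hence at least two strictly positive coordinates among $\beta_1,\dots,\beta_{n-1}$; this extra room is what the layer summation will consume. By homogeneity and multi-sublinearity it suffices to fix step functions $F_j\in V_{X_j}$ with $\|F_j\|_{L^{1/\beta_j}(\rr{R};X_j)}=1$ for $1\le j\le n-1$ and to prove $\|\vec{T}(F_1,\dots,F_{n-1})\|_{L^{p_0}(\rr{R};X_n)}\lesssim 1$ with a constant depending continuously on $\beta$, where $p_0:=1/(1-\beta_n)\in(0,1]$.

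The preceding proposition already turns the generalized restricted type hypothesis, which holds at every admissible tuple $\alpha$ in a neighbourhood of $\beta$, into the distributional bound $\lambda\,\big|\{x:\|\vec{T}(F_1,\dots,F_{n-1})(x)\|_{X_n}>\lambda\}\big|^{1/(1-\alpha_n)}\lesssim\prod_{j=1}^{n-1}|E_j|^{\alpha_j}$, with constant continuous in $\alpha$, whenever $\|F_j\|_{X_j}\le\one_{E_j}$. The next step promotes this to a \emph{restricted strong type} estimate: for every admissible $\gamma$ near $\beta$, $\|\vec{T}(F_1,\dots,F_{n-1})\|_{L^{1/(1-\gamma_n)}(\rr{R};X_n)}\lesssim\prod_{j=1}^{n-1}|E_j|^{\gamma_j}$ under the same hypothesis on the inputs. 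This is the classical one-variable Marcinkiewicz argument: one picks admissible tuples $\gamma^{-},\gamma^{+}$ in the neighbourhood with $\gamma^{-}_n<\gamma_n<\gamma^{+}_n$ and $\gamma$ a convex combination of $\gamma^{-}$ and $\gamma^{+}$ --- possible because the good coordinates are strictly below $1$ and absorb the compensating perturbations, and because $\alpha_n$ is permitted to become positive --- and then splits $\int_0^{\infty}\lambda^{1/(1-\gamma_n)-1}\big|\{\|\vec{T}(\cdots)(\cdot)\|_{X_n}>\lambda\}\big|\,d\lambda$ at a level adapted to the $|E_j|$'s, bounding the low part by the weak estimate at $\gamma^{+}$ and the high part by the one at $\gamma^{-}$. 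The only place where the Banach-valued setting is felt in this whole circle of results is already inside the preceding proposition, whose proof passes from the generalized restricted type of the form $\Lambda$ to the above distributional bound on $\vec{T}$ by selecting, for a.e.\ $x$, a norming functional $\varphi_x\in X_n^{*}$ with $\|\varphi_x\|=1$ and $\langle\vec{T}(\cdots)(x),\varphi_x\rangle=\|\vec{T}(\cdots)(x)\|_{X_n}$; since only step functions are involved this selection is piecewise constant, hence measurable, and the duality formula for $\|\cdot\|_{L^{p}(\rr{R};X_n)}$ recalled above applies.

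The last step removes the characteristic-function restriction through the dyadic layer decomposition. Writing $E_j^{m}:=\{x:2^{m}\le\|F_j(x)\|_{X_j}<2^{m+1}\}$ and $F_j^{m}:=F_j\one_{E_j^{m}}$, one has $\|F_j^{m}\|_{X_j}\le 2^{m+1}\one_{E_j^{m}}$, only finitely many layers occur, and $\sum_{m}2^{m/\beta_j}|E_j^{m}|\lesssim\|F_j\|_{L^{1/\beta_j}(\rr{R};X_j)}^{1/\beta_j}=1$. Multi-sublinearity and, since $p_0\le 1$, the $p_0$-subadditivity of the $L^{p_0}$-quasinorm reduce matters to bounding $\sum_{\vec{m}}\|\vec{T}(F_1^{m_1},\dots,F_{n-1}^{m_{n-1}})\|_{L^{p_0}(\rr{R};X_n)}^{p_0}$. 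Each block is controlled by the restricted strong type estimate of the second step, applied at a parameter $\gamma(\vec{m})$ perturbed from $\beta$ in the directions prescribed by the signs of the layer indices $m_1,\dots,m_{n-1}$; together with the gains $|E_j^{m}|\le 2^{-m/\beta_j}$ this produces geometric decay in each $m_j$, so that after a H\"older splitting across coordinates the multi-sum is dominated by $\prod_{j=1}^{n-1}\big(\sum_{m}2^{m/\beta_j}|E_j^{m}|\big)^{\beta_j}\lesssim 1$. The passage from $V_{X_j}$ to all of $L^{1/\beta_j}(\rr{R};X_j)$ is then routine by density.

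I expect the real difficulty to lie entirely in the bookkeeping of the third step: organising the family of perturbed tuples $\gamma(\vec{m})$ so that they remain simultaneously admissible, inside the prescribed neighbourhood of $\beta$, and arranged so that the gains are genuinely summable in every coordinate $m_j$ --- which is exactly where the presence of at least two good indices is used --- and in verifying that the resulting constant depends continuously on $\beta$, a point that matters because, as emphasised in the text, this constant re-enters as a ``size'' at the next stage of the induction. No geometric property of the Banach spaces $X_j$ is invoked --- no $UMD$, no type or cotype --- which is precisely why the helicoidal method can tolerate $\ell^{1}$- and $\ell^{\infty}$-valued estimates; the sole adjustments relative to the scalar argument are the measurable selection of norming functionals and the systematic use of the pointwise triangle inequality.
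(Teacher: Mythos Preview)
Your proposal is correct and matches the paper's approach exactly: the paper does not give a detailed proof but simply states that ``the proofs of the last two propositions follow exactly the same ideas as those corresponding to the scalar case, with very minor differences.'' Your write-up makes those minor differences explicit --- the measurable selection of norming functionals in $X_n^{*}$ for step functions, and the systematic replacement of $|\cdot|$ by $\|\cdot\|_{X_j}$ --- and correctly identifies that no geometric property of the $X_j$ is needed, which is consistent with the paper's emphasis that the method tolerates $\ell^1$- and $\ell^\infty$-valued estimates.
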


The proofs of the last two propositions follow exactly the same ideas as those corresponding to the scalar case, with very minor differences.

\subsection{A few technical Lemmas}~\\
\label{subsec:technicalLemmas}
In this section, we present a few results that will be useful later on for estimating a trilinear form associated to a collection $\rr P$ of tri-tiles well localized in space: $I_P \subset I_0$ for all $P \in \rr P$.

\begin{lemma}
\label{lemma:localized_energy}
If $I_0$ is a fixed dyadic interval, $k \in \rr Z^+$, and $f$ is a function so that $\ds 2^{k-1} \leq \frac{\dist( \supp f, I_0)}{\lft I_0 \rg} \leq 2^k$, then
\[
\eenergy_{\rr P\left(I_0\right)} f \lesssim 2^{Mk} \left\| f \right\|_2.
\] 
\begin{proof}

Following Definition \ref{def:energy}, there exists a collection $\rr T$ of $j$-disjoint trees $T \in \rr T \subseteq \rr P\left( I_0 \right)$, so that 
\[
\left( \eenergy_{\rr P \left(I_0\right)} f \right)^2 \sim \sum_{T \in \rr T} \sum_{P \in T} \lft \langle f, \phi_{P_j} \rangle \rg^2.
\]

We denote $\ds \ic T:= \bigcup_{T \in \rr{T}} \bigcup_{P \in T} P$ the collection of all tiles in $\rr T$, and estimate the RHS of the expression above in the following way:
\begin{align*}
\sum_{T \in \rr T} \sum_{P \in T} \lft \langle f, \phi_{P_j} \rangle \rg^2 \lesssim \sum_{m \geq 0} \sum_{\substack{I \subseteq I_0 \\ \lft I \rg=2^{-m } \lft I_0 \rg}} \sum_{ \substack{P \in \ic T \\ I_P =I}} \lft \langle f, \phi_{P_j} \rangle \rg^2.
\end{align*}

The collection of tiles $P \in \ic T$ with $I_P=I$ for a fixed interval $I$ are all disjoint in frequency. In fact, since they are of the same scale, they are translations of some fixed tile and hence 
\begin{align*}
\sum_{ \substack{P \in \ic T \\ I_P =I}} \lft \langle f, \phi_{P_j} \rangle \rg^2 &\lesssim \int_{\rr R} \lft f(x) \rg^2 \cdot \left( 1+\frac{\dist \left( x, I \right)}{\lft I \rg}   \right)^{-2M} dx. 
\end{align*}

This will imply that 
\begin{align*}
\sum_{T \in \rr T} \sum_{P \in T} \lft \langle f, \phi_{P_j} \rangle \rg^2 &\lesssim \sum_{m \geq 0} \sum_{\substack{I \subseteq I_0 \\ \lft I \rg=2^{-m } \lft I_0 \rg}} \int_{\rr R} \lft f(x) \rg^2 \cdot \left( 1+\frac{\dist \left( x, I \right)}{\lft I \rg}   \right)^{-2M} dx \\
&\lesssim  \sum_{m \geq 0} \sum_{\substack{I \subseteq I_0 \\ \lft I \rg=2^{-m } \lft I_0 \rg}} \left\| f \right\|_2^2  \cdot 2^{-2kM} \cdot \left(\frac{\lft I_0 \rg}{\lft I \rg}   \right)^{-2M} \\
&\lesssim \left\| f \right\|_2^2  \cdot 2^{-2kM}  \sum_{m \geq 0} 2^{-mM} \lesssim \left\| f \right\|_2^2  \cdot 2^{-2kM}.
\end{align*}
\end{proof}
\end{lemma}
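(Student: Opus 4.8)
The plan is to unwind the definition of the energy and reduce the whole estimate to a weighted Bessel inequality for wave packets that share a common spatial interval. First I would pick a scale $2^{n_0}$ and a chain $\rr T$ of strongly $j$-disjoint trees inside $\rr P(I_0)$ that essentially realizes the supremum defining $\eenergy_{\rr P(I_0)}f$. Using the lower bound built into Definition \ref{def:energy}, namely $2^{n_0}|I_T|^{1/2}\leq\big(\sum_{P\in T}|\langle f,\phi_{P_j}\rangle|^2\big)^{1/2}$ for every $T\in\rr T$, one gets
\[
\big(\eenergy_{\rr P(I_0)}f\big)^2\lesssim 2^{2n_0}\sum_{T\in\rr T}|I_T|\lesssim\sum_{T\in\rr T}\sum_{P\in T}|\langle f,\phi_{P_j}\rangle|^2\leq\sum_{P\in\bigcup_{T\in\rr T}T}|\langle f,\phi_{P_j}\rangle|^2 .
\]
Since every tile $P$ occurring in the chain has $I_P\in\mathcal D$ with $I_P\subseteq I_0$, I would regroup the tiles according to the value of $I_P$ and then according to its scale, writing the last sum as $\sum_{m\geq 0}\ \sum_{I\subseteq I_0,\ |I|=2^{-m}|I_0|}\ \sum_{P:\,I_P=I}|\langle f,\phi_{P_j}\rangle|^2$.

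The key step is the innermost sum, over tiles with a fixed spatial interval $I$. Inside the chain any two such tiles are distinct tri-tiles of the same scale, so rank $1$ together with the strong $j$-disjointness forces their frequency intervals $\omega_{P_j}$ (all of length $1/|I|$) to be pairwise disjoint; moreover each $\phi_{P_j}$ is $L^2$-adapted to $I$, so $|\phi_{P_j}(x)|\lesssim |I|^{-1/2}\ci_I^M(x)$ with one fixed $M$. The disjointness of the Fourier supports gives an $O(1)$ Bessel inequality for the family $\{\phi_{P_j}:I_P=I\}$, and combining it with the spatial decay — concretely, splitting $f$ into the pieces supported on the dyadic annuli $\{2^\nu\le 1+\dist(x,I)/|I|<2^{\nu+1}\}$ around $I$ and running Bessel on each annulus, where the wave-packet decay contributes a gain $2^{-\nu M}$ — I would obtain the weighted bound
\[
\sum_{P:\,I_P=I}|\langle f,\phi_{P_j}\rangle|^2\lesssim\int_{\rr R}|f(x)|^2\,\ci_I^{2M}(x)\,dx .
\]

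It then remains to feed in the separation hypothesis. If $|I|=2^{-m}|I_0|$ and $I\subseteq I_0$, then $\dist(\supp f,I)\ge\dist(\supp f,I_0)\gtrsim 2^{k}|I_0|=2^{k+m}|I|$, so $\ci_I^{2M}(x)\lesssim 2^{-2M(k+m)}$ for every $x\in\supp f$, whence $\int|f|^2\ci_I^{2M}\lesssim 2^{-2M(k+m)}\|f\|_2^2$. There are at most $2^m$ dyadic subintervals $I\subseteq I_0$ of length $2^{-m}|I_0|$, so summing over $I$ costs a factor $2^m$, and summing the resulting series $\sum_{m\ge 0}2^{m}\,2^{-2M(k+m)}=2^{-2Mk}\sum_{m\ge 0}2^{m(1-2M)}$ converges once $M$ is large. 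Collecting everything yields $\big(\eenergy_{\rr P(I_0)}f\big)^2\lesssim 2^{-2Mk}\|f\|_2^2$; since $k\in\rr Z^+$ this is in particular stronger than the claimed bound $\eenergy_{\rr P(I_0)}f\lesssim 2^{Mk}\|f\|_2$ (the rapidly decaying factor $2^{-Mk}$ is the actual content of the lemma). The only genuinely delicate point is the weighted Bessel inequality in the second paragraph; the tree combinatorics enter only to guarantee that tiles with a common spatial interval have disjoint frequency supports, after which the argument is self-contained — and this inequality can alternatively be quoted from the orthogonality estimates in Chapter 6 of \cite{multilinear_harmonic}, which the paper already treats as a black box.
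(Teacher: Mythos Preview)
Your proof is correct and follows the same line as the paper's: realize the energy through an extremizing chain of strongly $j$-disjoint trees, regroup the resulting tiles by their common spatial interval $I\subseteq I_0$, invoke frequency disjointness to obtain the weighted Bessel inequality $\sum_{P:I_P=I}|\langle f,\phi_{P_j}\rangle|^2\lesssim\int|f|^2\ci_I^{2M}$, then use the separation hypothesis to bound the weight by $2^{-2M(k+m)}$ and sum over $m$. You are also right that the displayed bound $2^{Mk}$ in the statement is a typo for $2^{-Mk}$; the decay is the whole point.
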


On the other hand, if $f$ is supported inside $5 I_0$, we know already from Lemma \ref{BHT energy estimate}, that $\ds \eenergy_{\rr P\left( I_0 \right)} f \lesssim \left\| f \right\|_2$.

Since the collection $\rr P \left( I_0 \right)$ is localized in space on the interval $I_0$, we have the following estimate for the trilinear form $\Lambda_{BHT; \rr P \left( I_0 \right)}$:

\begin{lemma}[Refinement of Proposition 6.12 of \cite{multilinear_harmonic}]
\label{lemma:refined-trilinear-estimate}
The trilinear form $ \Lambda_{BHT; \rr P \left( I_0 \right)}$ satisfies
\begin{align}
\label{eq:refined-trilinear-est}
\lft  \Lambda_{BHT; \rr P \left( I_0 \right)} \left( f, g, h \right) \rg & \lesssim \left( \ssize_{\rr P \left( I_0 \right)} f  \right)^{\theta_1}  \left( \ssize_{\rr P \left( I_0 \right)} g  \right)^{\theta_2}  \left( \ssize_{\rr P \left( I_0 \right)} h  \right)^{\theta_3}\\
& \left\| f \cdot \ci_{I_0}  \right\|_2^{1-\theta_1} \cdot \left\| g \cdot \ci_{I_0}  \right\|_2^{1-\theta_2} \cdot \left\| h \cdot \ci_{I_0}  \right\|_2^{1-\theta_3}, \nonumber
\end{align}
for any $0 \leq \theta_1, \theta_2, \theta_3 <1$, with $\theta_1+\theta_2+\theta_3=1$; the implicit constants depend on the $\theta_j$, but are independent of the other parameters.
\begin{proof}

For any $l \geq 1$, we define $\ds \ic I_l:= 2^{l+1}I_0 \setminus 2^{l} I_0 $, and $\ic{I}_0:=2 I_0$. In this way, for any $x \in \ic I_l$,  $\ds 1+ \frac{\dist \left( x, I_0 \right)}{\lft I_0 \rg} \sim 2^l$.

We will be using the following decompositions:
\begin{equation}
\label{eq:dec_f-shell}
f:=\sum_{k_1 \geq 0} f_{k_1}:= \sum_{k_1 \geq 0} f \cdot \one_{\ic I _{k_1}},
\end{equation}
and similarly, 
\[
g:=\sum_{k_2 \geq 0} g_{k_2}:= \sum_{k_2 \geq 0} g \cdot \one_{\ic I _{k_2}}, \qquad h:=\sum_{k_3 \geq 0} h_{k_3}:= \sum_{k_3 \geq 0} h \cdot \one_{\ic I _{k_3}}.
\]

From Proposition \ref{BHT trilinear estimate}, the trilinear form can be estimated by
\begin{align*}
\lft \Lambda_{BHT; \rr P \left( I_0 \right)} \left( f, g, h \right)\rg &\lesssim \sum_{k_1, k_2, k_3} \lft \Lambda_{BHT; \rr P \left( I_0 \right)} \left( f_{k_1}, g_{k_2}, h_{k_3} \right)\rg \\
&\lesssim \sum_{k_1, k_2, k_3} \left( \ssize_{\rr P \left( I_0\right)} f_{k_1} \right)^{\theta_1}  \left( \ssize_{\rr P \left( I_0\right)} g_{k_2} \right)^{\theta_2}  \left( \ssize_{\rr P \left( I_0\right)} h_{k_3} \right)^{\theta_3}  \\
&\qquad \left( \eenergy_{\rr P \left( I_0 \right)} f_{k_1} \right)^{1-\theta_1} \left( \eenergy_{\rr P \left( I_0 \right)} g_{k_2} \right)^{1-\theta_2} \left( \eenergy_{\rr P \left( I_0 \right)} h_{k_3} \right)^{1-\theta_3}
\end{align*}

We will only employ the extra decay in the energy; for the size we have simply
\[
\ssize_{\rr P \left( I_0\right)} f_{k_1}  \lesssim \ssize_{\rr P \left( I_0\right)} f, 
\]
uniformly in $k_1$. 

On the other hand, since $f_{k_1}$ is supported on $\ic I_{k_1}$, Lemma \ref{lemma:localized_energy} implies that
\[
\eenergy_{\rr P \left( I_0\right)} f_{k_1} \lesssim 2^{-k_1 M} \left\| f_{k_1}  \right\|_2.
\]

Hence we obtain 
\begin{align*}
&\lft \Lambda_{BHT; \rr P \left( I_0 \right)} \left( f, g, h \right)\rg \lesssim \left( \ssize_{\rr P \left( I_0\right)} f \right)^{\theta_1}  \left( \ssize_{\rr P \left( I_0\right)} g \right)^{\theta_2}  \left( \ssize_{\rr P \left( I_0\right)} h  \right)^{\theta_3}\\
&\qquad \cdot \sum_{k_1, k_2, k_3} \left( 2^{-k_1 M} \left\| f_{k_1}  \right\|_2 \right)^{1 -\theta_1} \left( 2^{-k_2 M} \left\| g_{k_2}  \right\|_2 \right)^{1 -\theta_2} \left( 2^{-k_1 M} \left\| h_{k_3}  \right\|_2 \right)^{1 -\theta_3}.
\end{align*}

The expressions in the last line are summable, via H\"older's inequality; more exactly, since $\theta_j <1$, 
\begin{align*}
&\sum_{k_1 \geq 0} 2^{-k_1 M \frac{1-\theta_1}{2}}  \left(  2^{-k_1 \frac{M}{2 \left( 1-\theta_1\right)}}\left\| f_{k_1}  \right\|_2\right)^{1-\theta_1}  \\
&\lesssim \left( \sum_{k_1} 2^{-k_1 M \frac{1-\theta_1}{1+\theta_1}} \right)^{\frac{1+\theta_1}{2}} \cdot \left( \sum_{k_1} 2^{-k_1 \frac{M}{1-\theta_1}} \left\| f_{k_1} \right\|_2^2 \right)^{\frac{1-\theta_1}{2}} \\
&\lesssim \left\| f \cdot \ci_I  \right\|_1^{1-\theta_1},
\end{align*}
for $M$ sufficiently large. We note that the implicit constants will depend on $\theta_1$ only. This proves inequality \eqref{eq:refined-trilinear-est}.
\end{proof}
\end{lemma}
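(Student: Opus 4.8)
\noindent\emph{Sketch of proof.} The plan is to start from the global trilinear estimate of Proposition~\ref{BHT trilinear estimate} and to upgrade its three energy factors from the full norms $\|f\|_2,\|g\|_2,\|h\|_2$ to the localized weighted norms $\|f\cdot\ci_{I_0}\|_2$, etc. The mechanism making this possible is that every tile $P\in\rr P(I_0)$ lives over a spatial interval $I_P\subseteq I_0$, so the energy $\eenergy_{\rr P(I_0)}$ only ``sees'' a function through a neighborhood of $I_0$, with rapid decay away from it; this is exactly the content of Lemma~\ref{lemma:localized_energy}.

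First I would peel off dyadic shells around $I_0$: set $\ic I_0:=2I_0$ and $\ic I_l:=2^{l+1}I_0\setminus 2^{l}I_0$ for $l\geq 1$, so the $\ic I_l$ partition $\rr{R}$ and $\ci_{I_0}(x)\sim 2^{-100 l}$ for $x\in\ic I_l$. Decompose $f=\sum_{k_1\geq 0}f_{k_1}$, $g=\sum_{k_2\geq 0}g_{k_2}$, $h=\sum_{k_3\geq 0}h_{k_3}$ with $f_{k_1}:=f\cdot\one_{\ic I_{k_1}}$ (and similarly for $g,h$), and expand $\Lambda_{BHT;\rr P(I_0)}(f,g,h)$ by trilinearity as $\sum_{k_1,k_2,k_3}\Lambda_{BHT;\rr P(I_0)}(f_{k_1},g_{k_2},h_{k_3})$. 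Apply Proposition~\ref{BHT trilinear estimate} to each summand. For the size factors, use that by Lemma~\ref{BHT size estimate} the size is (up to constants) an average of $|f|$, hence monotone: $\ssize_{\rr P(I_0)}f_{k_1}\leq\ssize_{\rr P(I_0)}f$, and likewise for $g,h$; these pull out of the triple sum. For the energy factors, since $f_{k_1}$ is supported on the shell $\ic I_{k_1}$, whose distance to $I_0$ is comparable to $2^{k_1}|I_0|$, Lemma~\ref{lemma:localized_energy} gives $\eenergy_{\rr P(I_0)}f_{k_1}\lesssim 2^{-k_1 M}\|f_{k_1}\|_2$ for any large $M$ (the index $k_1=0$ being covered by the plain Lemma~\ref{BHT energy estimate}), and similarly for $g_{k_2},h_{k_3}$.

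This reduces matters to summing $\sum_{k_1,k_2,k_3}\prod_{j=1}^{3}\big(2^{-k_jM}\|\cdot_{k_j}\|_2\big)^{1-\theta_j}$, which factors into a product of three one-variable sums. For each, say $S:=\sum_{k\geq 0}2^{-kM(1-\theta)}\|f_k\|_2^{1-\theta}$, write the summand as $2^{-k\varepsilon}\big(2^{-kc}\|f_k\|_2\big)^{1-\theta}$ with $c(1-\theta)+\varepsilon=M(1-\theta)$ — so $c\geq 100$ once $M$ is taken large enough — and apply H\"older with exponents $\frac{2}{1+\theta}$ and $\frac{2}{1-\theta}$:
\[
S\;\le\;C_\theta\Big(\sum_{k\geq 0}2^{-2kc}\|f_k\|_2^2\Big)^{(1-\theta)/2}\;\lesssim\;\|f\cdot\ci_{I_0}\|_2^{1-\theta},
\]
the last step because $\ci_{I_0}\sim 2^{-100k}$ on $\ic I_k$ and $2c\geq 200$, while the geometric factor from the $2^{-k\varepsilon}$ is a constant depending only on $\theta$. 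Doing this for all three functions yields exactly \eqref{eq:refined-trilinear-est}, with constants depending only on $\theta_1,\theta_2,\theta_3$.

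The only genuinely non-routine ingredient is Lemma~\ref{lemma:localized_energy}: extracting the gain $2^{-k_jM}$ in the energy of a piece supported on the $k_j$-th shell. That estimate in turn rests on the rapid ($L^2$-adapted) decay of the wave packets $\phi_{P_j}$ away from $I_P\subseteq I_0$, together with same-scale frequency orthogonality of the tiles sharing a fixed spatial interval. Once that is granted, everything else — the shell decomposition, the monotonicity of the size, and the H\"older summation — is harmless bookkeeping, made so precisely by the fact that $M$ is a free parameter that can be chosen arbitrarily large.
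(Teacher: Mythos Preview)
Your proof is correct and follows essentially the same approach as the paper's: the same dyadic shell decomposition around $I_0$, the same application of Proposition~\ref{BHT trilinear estimate} termwise, the same use of Lemma~\ref{lemma:localized_energy} for the energy decay, and the same H\"older summation at the end. If anything, your presentation of the H\"older step (making the exponents $\tfrac{2}{1\pm\theta}$ and the choice of $c$ explicit) is slightly more detailed than the paper's.
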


\subsection{The Helicoidal Method}~\\
\label{subsec:method_of_the_proof}
With the intention of bringing to light the ideas behind our proofs, we present the main strategy in a simplified setting. Unfortunately, we cannot avoid the specific terminology, but one should think of the \emph{sizes} as being averages, while the \emph{energies} are $L^2$ quantities that reflect orthogonality. For estimating the norms $\ds \| BHT(f,g) \|_s$, we use interpolation results for the trilinear form $\ds \Lambda_{BHT}(f,g,h)=\langle BHT(f,g), h \rangle$. In what follows, $\ds \Lambda_{I_0}(f, g, h)$ denotes a space localization of $\ds \Lambda_{BHT}(f, g, h)$ to the fixed interval $I_0$. More specifically, it is the form associated to a model operator of $BHT$ as in \eqref{eq: BHT model operator}, where the spatial intervals of the tiles lie inside the fixed dyadic interval $I_0$. Similarly, $\Lambda_{I_0}^n(f, g, h)$ denotes a space localization of the corresponding trilinear form in the multiple vector-valued setting.

The helicoidal method is an iterated induction procedure suitable for proving vector-valued estimates for linear and multilinear-operators. We describe the main ideas in the case of the $BHT$ operator, and later on we will indicate the equivalent statements for paraproducts and Carleson operator. At the heart of our argument lies the following induction statement:

\begin{main*}
\label{thm: main BHT}
Let $n \geq 0$. We fix $I_0$ a dyadic interval, and $F, G, H'$ subsets of $\rr{R}$ of finite measure. Let $R_1=\left( r_1^1, \ldots, r_1^n  \right), R_2=\left( r_2^1, \ldots, r_2^n  \right)$ and $R'=\left( (r')^1, \ldots, (r')^n  \right)$ be $n-$tuples so that $\ds \frac{1}{R_1}+\frac{1}{R_2}+\frac{1}{R'}=1$, while $f, g$ and $h$ are vector-valued functions satisfying 
\[
 \|f(x)\|_{L^{R_1}(\ii{W}, \mu)} \leq \one_F(x), \quad  \|g(x)\|_{L^{R_2}(\ii{W}, \mu)} \leq \one_G(x) \quad \text{ and     }  \|h(x)\|_{L^{R'}(\ii{W}, \mu)} \leq \one_{H'}(x).
 \]
 Then we have the following estimate for the trilinear form $\ds \Lambda_{I_0}^n$:
 \begin{equation}
\tag*{$\ii{P}(n) $}
\label{main statement}
\Big \vert \Lambda_{I_0}^n (f, g, h)  \Big \vert \lesssim \left( \sssize_{I_0} \one_F  \right)^{\frac{1}{2}+\frac{\theta_1}{2} -\epsilon}\cdot \left( \sssize_{I_0} \one_G  \right)^{\frac{1}{2}+\frac{\theta_2}{2}-\epsilon} \cdot \left( \sssize_{I_0} \one_{H'}  \right)^{\frac{1}{2}+\frac{\theta_3}{2} -\epsilon} \cdot |I_0|, 
 \end{equation}
for every $0 \leq \theta_1, \theta_2, \theta_3< 1$, $\theta_1+\theta_2+\theta_3=1$, satisfying an extra condition $C(R_1, R_2, R')$. 
\end{main*}

In the local $L^2$ case the condition $C(R_1, R_2, R')$ is satisfied automatically: that is, the $\ii{P}(n)$ statement is true for all $0 \leq \theta_1, \theta_2, \theta_3$ as above. This condition is the main obstruction in obtaining for $\overrightarrow{BHT}_{\vec{r}}$ the same range of $L^p$ estimates as that of the scalar $BHT$; in \eqref{eq: constraint origin} we point out the source of this constraint. Now we present the proofs of the induction statements $\ii{P}(0)$ and $\ii{P}(n) \Rightarrow \ii{P}(n+1)$. Also, for the reader's convenience, we include the $\ii{P}(0) \Rightarrow \ii{P}(1)$ step.

As we will see later on, the fact that $\ii{P}(n)$ implies our Theorems \ref{vector valued BHT} and \ref{multiple vector valued BHT} is based on a standard triple stopping time argument, involving the above localized sizes.
 
\subsection*{Check $\ii{P}(0)$:} ~\\
This is the scalar $BHT$ case, with $\vert f \vert \leq \one_F, \vert g \vert \leq \one_G$ and $\vert h \vert \leq \one_{H'}$. This situation is well-understood, and we have from Proposition \ref{BHT trilinear estimate}:
\begin{align*}
\vert \Lambda_{I_0}(f, g, h)  \vert & \lesssim  \left( \sssize_{I_0} f  \right)^{\theta_1} \cdot \left( \sssize_{I_0} g  \right)^{\theta_2} \cdot \left( \sssize_{I_0} h  \right)^{\theta_3} \\
&\cdot \left( \eenergy_{I_0} f \right)^{1-\theta_1} \left( \eenergy_{I_0} g \right)^{1-\theta_2} \left( \eenergy_{I_0} h \right)^{1-\theta_3},
\end{align*}
for any $0 \leq \theta_1, \theta_2, \theta_3 <1$ such that $\theta_1+\theta_2+\theta_3=1$.

Since we are considering a localized model of $BHT$, where all the tiles have their spatial intervals $I_P \subseteq I_0$, one can refine Lemma \ref{BHT size estimate} and obtain
\[
\eenergy_{I_0} f \lesssim \|  f \cdot \ci_{I_0}  \|_{2} \lesssim \left( \sssize_{I_0} \one_F   \right)^{\frac{1}{2}} \cdot |I_0|^{\frac{1}{2}}.
\]
Noticing that $\ds |I_0|^{\frac{1-\theta_1}{2}} \cdot |I_0|^{\frac{1-\theta_3}{2}} \cdot |I_0|^{\frac{1-\theta_3}{2}}=|I_0|$, we obtain the desired $\ii{P}(0)$.

\subsection*{Check $\ii{P}(0) \Rightarrow \ii{P}(1)$} ~\\

Assume that 
\begin{equation}
 \label{eq: vv assumptions}
 \left( \sum_k \vert f_k  \vert^{r_1}  \right)^{1/{r_1}} \leq \one_F, \left( \sum_k \vert g_k  \vert^{r_2}  \right)^{1/{r_2}} \leq \one_G \quad \text{ and  } \left( \sum_k \vert h_k  \vert^{r'}  \right)^{1/{r'}} \leq \one_{H'}. 
\end{equation}
Given that we know $\ii{P}(0)$, we will prove
\begin{align}
\tag*{$\ii{P}(1)$}
\big \vert \sum_k \Lambda_{I_0}(f_k, g_k, h_k)   \big \vert \lesssim  \left( \sssize_{I_0} \one_F  \right)^{\frac{1}{2}+\frac{\theta_1}{2}-\epsilon}\cdot \left( \sssize_{I_0} \one_G  \right)^{\frac{1}{2}+\frac{\theta_2}{2}-\epsilon} \cdot \left( \sssize_{I_0} \one_{H'}  \right)^{\frac{1}{2}+\frac{\theta_3}{2}-\epsilon} \cdot |I_0|, 
\end{align}
for any $0 \leq \theta_1, \theta_2, \theta_3< 1$, $\theta_1+\theta_2+\theta_3=1$, satisfying 
\begin{equation}
\label{eq:condition}
\tag*{$C(r_1, r_2, r')$}
\frac{1+\theta_1}{2} -\frac{1}{r_1} >0, \quad \frac{1+\theta_2}{2} -\frac{1}{r_2} >0, \quad \frac{1+\theta_3}{2} -\frac{1}{r'} >0.
\end{equation}
Here an intermediate step is necessary in order to get a finer estimate for each $\Lambda_{I_0}(f_k, g_k, h_k)$. That is, we need to prove
\begin{equation}
\label{eq: star}
\Lambda_{I_0}(f_k \cdot \one_F, g_k \cdot \one_G, h_k \cdot \one_{H'} ) \lesssim \| \Lambda_{I_0} \| \cdot \|f_k \cdot \ci_{I_0} \|_{r_1} \|g_k \cdot \ci_{I_0} \|_{r_2} \|h_k \cdot \ci_{I_0} \|_{r'},
\end{equation}
where the operatorial norm is given by 
\[\ds \| \Lambda_{I_0} \|= \left(\sssize_{I_0} \one_F  \right)^{\frac{1+\theta_1}{2}-\frac{1}{r_1}-\epsilon} \left(\sssize_{I_0} \one_G  \right)^{\frac{1+\theta_2}{2}-\frac{1}{r_2}-\epsilon} \left(\sssize_{I_0} \one_{H'}  \right)^{\frac{1+\theta_3}{2}-\frac{1}{r'}-\epsilon}.
\]
Once we have this, H\"{o}lder's inequality and \eqref{eq: vv assumptions} allows one to further estimate \eqref{eq: star} by
\[
\| \Lambda_{I_0} \| \cdot \frac{\| \one_F \cdot \ci_{I_0}  \|_{r_1}}{|I_0|^{1/{r_1}}} \cdot \frac{\| \one_G \cdot \ci_{I_0}  \|_{r_2}}{|I_0|^{1/{r_2}}} \cdot \frac{\| \one_{H'} \cdot \ci_{I_0}  \|_{r'}}{|I_0|^{1/{r'}}} \cdot |I_0|,
\]
which proves $\ii{P}(1)$.

\psscalebox{.55 .55} 
{
\begin{pspicture}(1.5,-5.6379685)(56.34,6)
\psframe[linecolor=black, linewidth=0.04, dimen=outer](8.8,5.237969)(0.4,4.0379686)
\rput[bl](0.8,4.4379687){ \Large $\Lambda_{BHT; \mathbb{P}(I_0)}(f_N \cdot \mathbf{1}_F, g_N \cdot \mathbf{1}_G, h_N \cdot \mathbf{1}_{H'})$}
\psdots[linecolor=black, dotsize=0.14](4.4,2.8379688)
\psdots[linecolor=black, dotsize=0.14](4.4,2.4379687)
\psdots[linecolor=black, dotsize=0.14](4.4,2.0379686)
\psdots[linecolor=black, dotsize=0.14](4.4,0.43796876)
\psdots[linecolor=black, dotsize=0.14](4.4,0.83796877)
\psframe[linecolor=black, linewidth=0.04, dimen=outer](8.8,-2.3620312)(0.4,-3.5620313)
\rput[bl](0.8,-3.1620312){ \Large $\Lambda_{BHT; \mathbb{P}(I_0)}(f_1 \cdot \mathbf{1}_F, g_1 \cdot \mathbf{1}_G, h_1 \cdot \mathbf{1}_{H'})$}
\psframe[linecolor=black, linewidth=0.04, dimen=outer](8.8,-0.36203125)(0.4,-1.5620313)
\rput[bl](0.8,-1.1620313){ \Large $\Lambda_{BHT; \mathbb{P}(I_0)}(f_2 \cdot \mathbf{1}_F, g_2 \cdot \mathbf{1}_G, h_2 \cdot \mathbf{1}_{H'})$}
\psline[linecolor=black, linewidth=0.04, arrowsize=0.05291666666666668cm 2.0,arrowlength=1.4,arrowinset=0.0]{->}(4.4,-2.3620312)(4.4,-1.5620313)
\psline[linecolor=black, linewidth=0.04, arrowsize=0.05291666666666668cm 2.0,arrowlength=1.4,arrowinset=0.0]{->}(4.4,-0.36203125)(4.4,0.03796875)
\psline[linecolor=black, linewidth=0.04, tbarsize=0.07055555555555557cm 5.0]{|*-|*}(8.8,-4.762031)(0.4,-4.762031)(0.4,-4.762031)
\rput[bl](3.2,-5.5620313){\Large $I_0$}
\psline[linecolor=black, linewidth=0.04, arrowsize=0.05291666666666668cm 2.0,arrowlength=1.4,arrowinset=0.0]{->}(4.4,3.2379687)(4.4,4.0379686)
\psframe[linecolor=black, linewidth=0.02, dimen=outer](9.2,5.6379685)(0.0,-4.3620315)
\psline[linecolor=black, linewidth=0.1, arrowsize=0.05291666666666668cm 2.0,arrowlength=1.4,arrowinset=0.0]{->}(9.2,1.2379688)(10.4,1.2379688)
\rput[bl](12.4,-1.9620312){$ $}
\rput[bl](10.8,1.5379688){\huge $\left( \text{ size}_{I_0} \mathbf{1}_F   \right)^{\frac{1+\theta_1}{2}-\frac{1}{r_1}} \cdot \left( \text{ size}_{I_0} \mathbf{1}_G \right)^{\frac{1+\theta_2}{2}-\frac{1}{r_2}} \cdot \left( \text{ size}_{I_0} \mathbf{1}_{H'} \right)^{\frac{1+\theta_3}{2}-\frac{1}{r'}}$}
\rput[bl](12.4,-1.9620312){$ $}
\rput[bl](12.4,-1.9620312){$ $}
\rput[bl](11.2,0.43796876){\huge $\cdot \| \mathbf{1}_F \cdot \tilde{\chi}_{I_0}   \|_{r_1} \| \mathbf{1}_G \cdot \tilde{\chi}_{I_0}   \|_{r_2} \| \mathbf{1}_{H'} \cdot \tilde{\chi}_{I_0}   \|_{r'}$}
\end{pspicture}
}

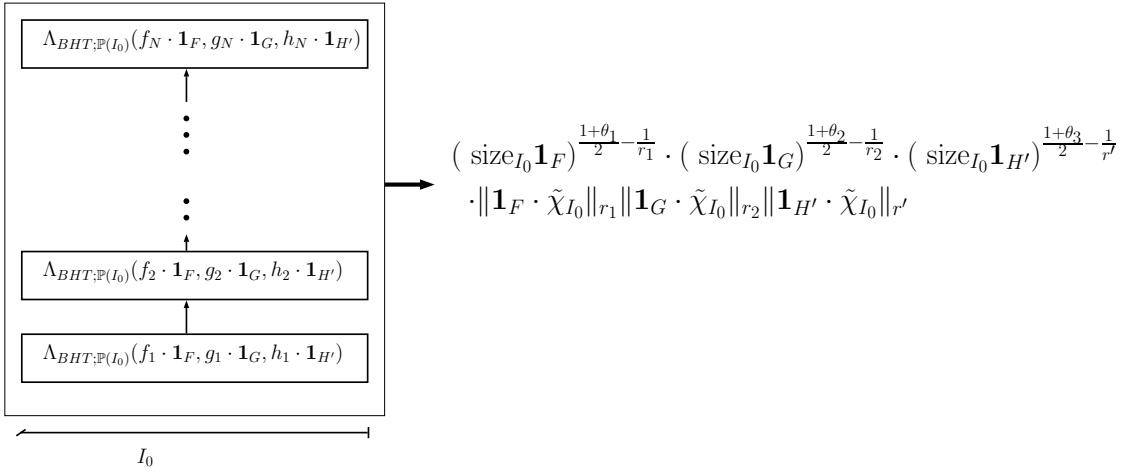
\captionof{figure}{Output of the localization process}

The proof of \eqref{eq: star} is a slight modification of the proof of the boundedness of the bilinear Hilbert transform. Using interpolation methods, we can assume that $\vert f_k \vert \leq \one_{E_1}, \vert g_k \vert \leq \one_{E_2}, \vert h_k \vert \leq \one_{E_3}$. So we need to show 
\[
\Lambda_{I_0}(f_k \cdot \one_F, g_k \cdot \one_G, h_k \cdot \one_{H'} ) \lesssim \| \Lambda_{I_0} \| \cdot |E_1|^{\alpha_1} \cdot |E_2|^{\alpha_2} \cdot |E_3|^{\alpha_3},
\]
where $\left(\alpha_1,\alpha_2, \alpha_3 \right)$ is an admissible tuple arbitrarily close to $\ds \left( \frac{1}{r_1}, \frac{1}{r_2}, \frac{1}{r'} \right)$.
In order to get the desired expression for $\ds \| \Lambda_{I_0} \|$, we need another stopping time inside $I_0$. This is illustrated in Figure \ref{fig: stopping time I_0}.

\psscalebox{.55 .55} 
{
\begin{pspicture}(2,-3.2464285)(44.756523,11)
\rput[bl](19.256521,-10.2464285){$ $}
\rput[bl](38.63652,0.91785717){$ $}
\rput[bl](38.63652,1.7178571){$ $}
\psdots[linecolor=black, dotsize=0.0](17.68,6.6464286)
\psdots[linecolor=black, dotsize=0.0](17.68,6.2464285)
\psdots[linecolor=black, dotsize=0.0](17.68,5.8464284)
\psdots[linecolor=black, dotsize=0.0](17.68,4.2464285)
\psdots[linecolor=black, dotsize=0.0](17.68,4.6464286)
\psframe[linecolor=black, linewidth=0.02, dimen=outer](10.88,9.446428)(1.68,-0.5535714)
\rput[bl](4.88,-1.7535714){ $I$}
\psline[linecolor=black, linewidth=0.04, tbarsize=0.07055555555555557cm 5.0]{|-|}(10.48,-0.95357144)(2.08,-0.95357144)(2.08,-0.95357144)
\psline[linecolor=black, linewidth=0.04, arrowsize=0.05291666666666668cm 2.0,arrowlength=1.4,arrowinset=0.0]{->}(6.08,1.4464285)(6.08,2.2464285)
\rput[bl](2.48,0.6464286){  $\Lambda_{I}(f_1 \cdot \mathbf{1}_F, g_1 \cdot \mathbf{1}_G, h_1 \cdot \mathbf{1}_{H'})$}
\psframe[linecolor=black, linewidth=0.04, dimen=outer](10.48,1.4464285)(2.08,0.24642856)
\psline[linecolor=black, linewidth=0.04, arrowsize=0.05291666666666668cm 2.0,arrowlength=1.4,arrowinset=0.0]{->}(6.08,7.0464287)(6.08,7.8464284)
\psline[linecolor=black, linewidth=0.04, arrowsize=0.05291666666666668cm 2.0,arrowlength=1.4,arrowinset=0.0]{->}(6.08,3.4464285)(6.08,3.8464286)
\rput[bl](2.48,2.6464286){  $\Lambda_{I}(f_2 \cdot \mathbf{1}_F, g_2 \cdot \mathbf{1}_G, h_2 \cdot \mathbf{1}_{H'})$}
\psframe[linecolor=black, linewidth=0.04, dimen=outer](10.48,3.4464285)(2.08,2.2464285)
\psdots[linecolor=black, dotsize=0.14](6.08,4.6464286)
\psdots[linecolor=black, dotsize=0.14](6.08,4.2464285)
\psdots[linecolor=black, dotsize=0.14](6.08,5.8464284)
\psdots[linecolor=black, dotsize=0.14](6.08,6.2464285)
\psdots[linecolor=black, dotsize=0.14](6.08,6.6464286)
\rput[bl](2.48,8.2464285){  $\Lambda_{I}(f_N \cdot \mathbf{1}_F, g_N \cdot \mathbf{1}_G, h_N \cdot \mathbf{1}_{H'})$}
\psframe[linecolor=black, linewidth=0.04, dimen=outer](10.48,9.046429)(2.08,7.8464284)
\psdots[linecolor=black, dotsize=0.14](18.9,4.1264286)
\psdots[linecolor=black, dotsize=0.14](19.98,4.1264286)
\psdots[linecolor=black, dotsize=0.14](19.26,4.1264286)
\psdots[linecolor=black, dotsize=0.14](19.62,4.1264286)
\psframe[linecolor=black, linewidth=0.04, dimen=outer](29.52,10.2464285)(0.0,-2.7135713)
\rput[bl](14.76,-3.7935715){ $I_0$}
\psframe[linecolor=black, linewidth=0.04, dimen=outer](17.460869,9.046429)(12.019131,7.8464284)
\rput[bl](12.098261,8.2464285){  $\Lambda_{I'}(f_N \cdot \mathbf{1}_F, g_N \cdot \mathbf{1}_G, h_N \cdot \mathbf{1}_{H'})$}
\psframe[linecolor=black, linewidth=0.04, dimen=outer](17.460869,3.4464285)(12.019131,2.2464285)
\rput[bl](12.278261,2.6464286){ $\Lambda_{I'}(f_2 \cdot \mathbf{1}_F, g_2 \cdot \mathbf{1}_G, h_2 \cdot \mathbf{1}_{H'})$}
\psframe[linecolor=black, linewidth=0.04, dimen=outer](17.460869,1.4464285)(12.019131,0.24642856)
\rput[bl](12.278261,0.6464286){ $\Lambda_{I'}(f_1 \cdot \mathbf{1}_F, g_1 \cdot \mathbf{1}_G, h_1 \cdot \mathbf{1}_{H'})$}
\psline[linecolor=black, linewidth=0.04, tbarsize=0.07055555555555557cm 5.0]{|-|}(17.460869,-0.95357144)(12.019131,-0.95357144)(12.019131,-0.95357144)
\rput[bl](13.833043,-1.7535714){$I'$}
\psframe[linecolor=black, linewidth=0.02, dimen=outer](17.72,9.446428)(11.76,-0.5535714)
\psdots[linecolor=black, dotsize=0.14](14.519131,4.2812114)
\psdots[linecolor=black, dotsize=0.14](14.519131,4.5942545)
\psdots[linecolor=black, dotsize=0.14](14.519131,6.6290374)
\psdots[linecolor=black, dotsize=0.14](14.519131,6.315994)
\psdots[linecolor=black, dotsize=0.14](14.519131,5.8464284)
\psline[linecolor=black, linewidth=0.04, arrowsize=0.05291666666666668cm 2.0,arrowlength=1.4,arrowinset=0.0]{->}(14.519131,6.942081)(14.519131,7.8812113)
\psline[linecolor=black, linewidth=0.04, arrowsize=0.05291666666666668cm 2.0,arrowlength=1.4,arrowinset=0.0]{->}(14.519131,3.4986024)(14.519131,4.1246896)
\psline[linecolor=black, linewidth=0.04, arrowsize=0.05291666666666668cm 2.0,arrowlength=1.4,arrowinset=0.0]{->}(14.519131,1.4638199)(14.519131,2.2464285)
\psframe[linecolor=black, linewidth=0.04, dimen=outer](27.325827,9.007857)(21.148205,7.812143)
\rput[bl](21.442379,8.210714){  $\Lambda_{I"}(f_N \cdot \mathbf{1}_F, g_N \cdot \mathbf{1}_G, h_N \cdot \mathbf{1}_{H'})$}
\psdots[linecolor=black, dotsize=0.14](24.24,7.0064287)
\psdots[linecolor=black, dotsize=0.14](24.24,6.6064286)
\psdots[linecolor=black, dotsize=0.14](24.24,6.2064285)
\psdots[linecolor=black, dotsize=0.14](24.24,4.426429)
\psdots[linecolor=black, dotsize=0.14](24.24,4.8264284)
\psframe[linecolor=black, linewidth=0.04, dimen=outer](27.325827,3.4278572)(21.148205,2.232143)
\rput[bl](21.442379,2.6307142){ $\Lambda_{I''}(f_2 \cdot \mathbf{1}_F, g_2 \cdot \mathbf{1}_G, h_2 \cdot \mathbf{1}_{H'})$}
\psframe[linecolor=black, linewidth=0.02, dimen=outer](27.62,9.406428)(20.854033,-0.55785716)
\rput[bl](23.207413,-1.7535714){ $I''$}
\psline[linecolor=black, linewidth=0.04, tbarsize=0.07055555555555557cm 5.0]{|-|}(27.325827,-0.9564286)(21.148205,-0.9564286)(21.148205,-0.9564286)
\rput[bl](21.442379,0.63785714){  $\Lambda_{I''}(f_1 \cdot \mathbf{1}_F, g_1 \cdot \mathbf{1}_G, h_1 \cdot \mathbf{1}_{H'})$}
\psframe[linecolor=black, linewidth=0.04, dimen=outer](27.325827,1.435)(21.148205,0.23928571)
\psline[linecolor=black, linewidth=0.04, arrowsize=0.05291666666666668cm 2.0,arrowlength=1.4,arrowinset=0.0]{->}(24.25816,1.387762)(24.25816,2.1317618)
\psline[linecolor=black, linewidth=0.04, arrowsize=0.05291666666666668cm 2.0,arrowlength=1.4,arrowinset=0.0]{->}(24.25816,3.4337618)(24.25816,4.363762)
\psline[linecolor=black, linewidth=0.04, arrowsize=0.05291666666666668cm 2.0,arrowlength=1.4,arrowinset=0.0]{->}(24.25816,7.153762)(24.25816,7.711762)
\end{pspicture}
}

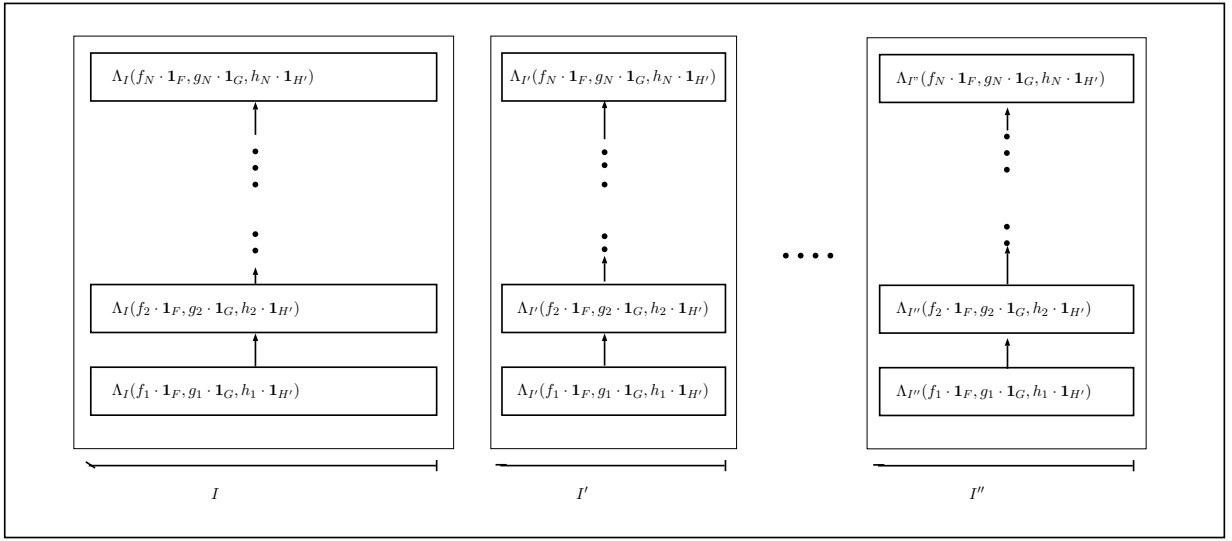
\captionof{figure}{Extra stopping time}    \label{fig: stopping time I_0}

Let $I \subseteq I_0$ be a subinterval of $I_0$. Now we use $\ii{P}(0)$ as follows:
\begin{align*}
& \vert \Lambda_{I}\left( f_k \cdot \one_F, g_k \cdot \one_G, h_k \cdot \one_{H'} \right) \vert  \\
&\lesssim  \left(\sssize_I \one_{F} \cdot \one_{E_1}   \right)^{\frac{1+\theta_1}{2} -\epsilon} \cdot \left(\sssize_I \one_{G} \cdot \one_{E_2}   \right)^{\frac{1+\theta_2}{2} -\epsilon} \cdot \left(\sssize_I \one_{H'} \cdot \one_{E_3}   \right)^{\frac{1+\theta_3}{2}-\epsilon} \cdot |I|  \\
&\lesssim \left(\sssize_{I_0} \one_{F} \right)^{\frac{1+\theta_1}{2}-\alpha_1 -\epsilon} \cdot \left(\sssize_{I_0} \one_{G} \right)^{\frac{1+\theta_2}{2}-\alpha_2 -\epsilon} \cdot \left(\sssize_{I_0} \one_{H'} \right)^{\frac{1+\theta_3}{2}-\alpha_3-\epsilon} \\
&\quad \cdot \left(\sssize_I \one_{E_1}   \right)^{\alpha_1} \cdot \left(\sssize_I \one_{E_2} \right)^{\alpha_2} \cdot \left(\sssize_I \one_{E_3}   \right)^{\alpha_3} \cdot |I|.
\end{align*}
In order to obtain the last inequality, we have to make sure that the exponents
\[
\frac{1+\theta_1}{2}-\alpha_1 -\epsilon, \quad \frac{1+\theta_2}{2}-\alpha_2 -\epsilon, \quad \frac{1+\theta_3}{2}-\alpha_3 -\epsilon
\]
are all positive, which is always the case in the local $L^2$ situation. Since $(\alpha_1, \alpha_2, \alpha_3)$ are arbitrarily close to $\left(\frac{1}{r_1}, \frac{1}{r_2}, \frac{1}{r'} \right)$, this is the origin of the constraint $C(r_1, r_2, r')$.

Summing over the intervals $I$ given by the alluded triple stopping time over the corresponding averages, we recover $\ds |E_1|^{\alpha_1} \cdot |E_2|^{\alpha_2} \cdot |E_3|^{\alpha_3}$. We note that the operatorial norm given by interpolation is
\[
\left(\sssize_{I_0} \one_F  \right)^{\frac{1+\theta_1}{2}-\frac{1}{r_1}-\tilde{\epsilon}} \left(\sssize_{I_0} \one_G  \right)^{\frac{1+\theta_2}{2}-\frac{1}{r_2}-\tilde{\epsilon}} \left(\sssize_{I_0} \one_{H'}  \right)^{\frac{1+\theta_3}{2}-\frac{1}{r'}-\tilde{\epsilon}},
\]
where $\tilde{\epsilon}$ is slightly larger than the initial $\epsilon$, but the difference between the two is irrelevant.

\subsection*{Check $\ii{P}(n) \Rightarrow \ii{P}(n+1)$}~\\
Lastly, we present the general induction step, in the case of iterated $\ell^p$ spaces. We have multi-indices $\ds \vec{r}_1=\left(r_1^1, \ldots r_1^n\right), \vec{r}_2=\left(r_2^1, \ldots r_2^n\right), \vec{r'}=\left(\right( r'\left)^1, \ldots \right( r'\left)^n\right)$, and $\ds \| f\|_{\vec{r}_1} \leq \one_F,  \| g\|_{\vec{r}_2} \leq \one_G,  \| h\|_{\vec{r'}} \leq \one_{H'}$. Then $\ds \ii{P}(n)$ is equivalent to
\begin{align}
\label{eq: p_n}
&\big \vert  \Lambda_{I_0}^n(f, g, h)\big \vert=\big \vert \int_{\rr{R}} \sum_{\vec{l}} BHT_{\rr{P}(I_0)}(f_{\vec{l}}, g_{\vec{l}})(x) h_{\vec{l}}(x) dx  \big \vert  \\
& \lesssim\left( \sssize_{I_0} \one_F  \right)^{\frac{1}{2}+\frac{\theta_1}{2}-\epsilon}\cdot \left( \sssize_{I_0} \one_G  \right)^{\frac{1}{2}+\frac{\theta_2}{2}-\epsilon} \cdot \left( \sssize_{I_0} \one_{H'}  \right)^{\frac{1}{2}+\frac{\theta_3}{2}-\epsilon} \cdot |I_0|, \nonumber
\end{align}
whenever $I_0$ is a dyadic interval.
For $\ii{P}(n+1)$ we consider $n+1$ iterated $\ell^p$ spaces, given by the multi-indices: $\vec{R}_1=\left( r_1, \vec{r}_1 \right), \vec{R}_2=\left( r_2, \vec{r}_2 \right)$ and $\vec{R'}=\left( r', \vec{r'} \right)$, while $f, g$ and $h$ are vector-valued functions satisfying
{
\fontsize{10}{11}
\begin{equation}
\label{eq:  multi spaces}
\|f\|_{\vec{R}_1}:=\left( \sum_{k} \| f_k\|_{\vec{r}_1}^{r_1}  \right)^{1/{r_1}} \leq \one_F, \|g\|_{\vec{R}_2}:=\left( \sum_{k} \| g_k\|_{\vec{r}_2}^{r_2}  \right)^{1/{r_2}} \leq \one_G, \|h\|_{\vec{R'}}:=\left( \sum_{k} \| h_k\|_{\vec{r'}}^{r'}  \right)^{1/{r'}} \leq \one_{H'}.
\end{equation}
}
We want a result similar to \eqref{eq: p_n}, so we need to estimate
\[
\Lambda^{n+1}_{I_0}(f, g, h):=\int_{\rr{R}} \sum_k \sum_{\vec{l}} BHT_{\rr{P}(I_0)}(f_{k, \vec{l}}, g_{k, \vec{l}})(x) h_{k, \vec{l}}(x) dx= \sum_k \Lambda^{n}_{I_0}(f_k, g_k, h_k).
\]
We can't directly apply $\ds \ii{P}(n)$, and instead we will need the following result, similar to \eqref{eq: star}:
\begin{equation}
\label{eq: int P_n}
\big \vert  \Lambda^{n}_{I_0}(f_k, g_k, h_k)   \big \vert \lesssim  \big \| \Lambda_{I_0}^n\big \| \cdot \|f_k \cdot \ci_{I_0} \|_{r_1} \|g_k \cdot \ci_{I_0} \|_{r_2} \|h_k \cdot \ci_{I_0} \|_{r'},
\end{equation}
where $\ds \| \Lambda_{I_0}^n \|= \left(\sssize_{I_0} \one_F  \right)^{\frac{1+\theta_1}{2}-\frac{1}{r_1}-\epsilon} \left(\sssize_{I_0} \one_G  \right)^{\frac{1+\theta_2}{2}-\frac{1}{r_2}-\epsilon} \left(\sssize_{I_0} \one_{H'}  \right)^{\frac{1+\theta_3}{2}-\frac{1}{r'}-\epsilon}$. Once we have such a result, $\ds \ii{P}(n+1)$ follows easily by H{\"o}lder, exactly as before.

We will prove \eqref{eq: int P_n} by using restricted type interpolation. Instead of estimating the trilinear form $\Lambda_{I_0}^n$, we will deal with 
\begin{equation}
\label{eq: localized form}
\Lambda_{I_0}^{n, F, G, H'}(f_k, g_k, h_k):=\Lambda_{I_0}(f_k \cdot \one_F, g_k \cdot \one_G, h_k \cdot \one_{H'}).
\end{equation} 
This is natural since condition \eqref{eq:  multi spaces} implies that the functions $f_k$ are supported on $F$, and similarly the functions $g_k$ are supported on $G$ and $h_k$ on $H'$. By interpolation theory, we can assume that 
\[
\|f_k\|_{\vec{r}_1} \leq \one_{E_1}, \|g_k\|_{\vec{r}_2} \leq \one_{E_2}, \text{  and  } \|h_k\|_{\vec{r'}} \leq \one_{E_3},
\]
and it suffices to prove
\begin{equation}
\label{eq: est Lambda_n, k, F, G, H}
\big \vert \Lambda^{n, F, G, H'}_{I_0}(f_k, g_k, h_k) \big \vert \lesssim \| \Lambda_{I_0}^n  \| \cdot |E_1|^{\alpha_1} \cdot |E_2|^{\alpha_2} \cdot |E_3|^{\alpha_3},
\end{equation}
for $\left(\alpha_1, \alpha_2, \alpha_3 \right)$ in a small neighborhood of $\left(\frac{1}{r_1}, \frac{1}{r_2}, \frac{1}{r'} \right)$.
Similarly to the case $\ds \ii{P}(0) \Rightarrow \ii{P}(1)$, we will have a stopping time inside $I_0$, so in fact we need to estimate $\Lambda^{n, F, G, H'}_{I}(f_k, g_k, h_k)$ for some $I \subseteq I_0$. It is here that we use hypothesis $\ds \ii{P}(n)$:
\begin{align*}
\big \vert \Lambda_I^{n, F, G, H'}(f_k, g_k, h_k)  \big \vert=\big \vert  \Lambda_I^n(f_k \cdot \one_F, g_k \cdot \one_G, h_k \cdot \one_{H'} )   \big \vert,
\end{align*}
with $\ds  \|  f_k \cdot \one_F \|_{\vec{r}_1} \leq \one_{F \cap E_1}, \|  g_k \cdot \one_G \|_{\vec{r}_2} \leq \one_{G \cap E_2}$ and $\ds \|  h_k \cdot \one_{H'} \|_{\vec{r'}} \leq \one_{H' \cap E_3}$. More precisely, 
\begin{align*}
&\big \vert \Lambda_I^{n, F, G, H'}(f_k, g_k, h_k)  \big \vert  \\
&\lesssim \left( \sssize_{I} \one_F \cdot \one_{E_1} \right)^{\frac{1}{2}+\frac{\theta_1}{2} -\epsilon}\cdot \left( \sssize_{I} \one_G \cdot \one_{E_2} \right)^{\frac{1}{2}+\frac{\theta_2}{2}-\epsilon} \cdot \left( \sssize_{I} \one_{H'} \cdot \one_{E_3} \right)^{\frac{1}{2}+\frac{\theta_3}{2} -\epsilon} \cdot |I|  \\
& \lesssim \left( \sssize_{I_0} \one_F  \right)^{\frac{1}{2}+\frac{\theta_1}{2}-\alpha_1-\epsilon}\cdot \left( \sssize_{I_0} \one_G  \right)^{\frac{1}{2}+\frac{\theta_2}{2} -\alpha_2 -\epsilon} \cdot \left( \sssize_{I_0} \one_{H'}  \right)^{\frac{1}{2}+\frac{\theta_3}{2} -\alpha_3 -\epsilon} \\
&\quad \cdot \left( \sssize_{I} \one_{E_1} \right)^{\alpha_1}\cdot \left( \sssize_{I} \one_{E_2} \right)^{\alpha_2} \cdot \left( \sssize_{I} \one_{E_3} \right)^{\alpha_3} \cdot |I|,
\end{align*}
for $\left(\alpha_1, \alpha_2, \alpha_3 \right)$ in a neighborhood of $\left(\frac{1}{r_1}, \frac{1}{r_2},\frac{1}{r'} \right)$. Due to the stopping time which is performed with respect to the three sizes, the expressions $\ds  \left( \sssize_{I} \one_{E_1} \right)^{\alpha_1}$ add up to $\ds |E_1|^{\alpha_1}$ and similarly for the sizes of $\one_{E_2}$ and $\one_{E_3}$. Interpolating, we get the desired \eqref{eq: est Lambda_n, k, F, G, H}. From the above equation, we can see why the operatorial norm has the form
\[
\big \| \Lambda_{I_0}^n  \big \|=\left(\sssize_{I_0} \one_F  \right)^{\frac{1+\theta_1}{2}-\frac{1}{r_1}-\tilde{\epsilon}} \left(\sssize_{I_0} \one_G  \right)^{\frac{1+\theta_2}{2}-\frac{1}{r_2}-\tilde{\epsilon}} \left(\sssize_{I_0} \one_{H'}  \right)^{\frac{1+\theta_3}{2}-\frac{1}{r'}-\tilde{\epsilon}}.
\]
The $\tilde{\epsilon}$ (which is a slight modification on the $\epsilon$ in the $\ii{P}(n)$ statement),  appears as an interpolation error; moreover, the conditions
\begin{equation}
\label{eq: constraint origin}
\frac{1+\theta_1}{2}-\frac{1}{r_1} >0, \quad \frac{1+\theta_2}{2}-\frac{1}{r_2} >0, \quad \frac{1+\theta_3}{2}-\frac{1}{r'} >0 
\end{equation}
are necessary, and they imply the constraint $C(R_1, R_2, R')$. This ends the proof of the induction step.

The same method applies in the case of paraproducts. The difference here is that the energies are $L^1$ quantities, and for that reason we don't have any extra assumptions; the range of the multiple vector-valued extensions is the same as that of the paraproducts. The model operator for paraproducts $\Pi$ corresponds to a ``rank $0$'' family of tri-tiles; that is, once we know the spatial interval $I_P$, there is no other degree of freedom and the frequency intervals are $\ds \left[1/{\lft I_P \rg}, 2/{\lft I_P \rg} \right]$ or  $\ds \left[0, 1/{\lft I_P \rg} \right]$. The exact definitions will be introduced in Section \ref{paraproduct results}.
\begin{main*}[Paraproducts case]
Under the same assumptions as in \eqref{thm: main BHT}, the localized trilinear form for paraproducts satisfies
 \begin{equation}
\tag*{$\ii{P}(n)$}
\label{main statement paraproducts}
\Big \vert \Lambda_{I_0}^n (f, g, h)  \Big \vert \lesssim \left( \sssize_{I_0} \one_F  \right)^{1-\epsilon}\cdot \left( \sssize_{I_0} \one_G  \right)^{1-\epsilon} \cdot \left( \sssize_{I_0} \one_{H'}  \right)^{1-\epsilon} \cdot |I_0|,
 \end{equation}
 provided
 \[
 \|f(x)\|_{L^{R_1}(\ii{W}, \mu)} \leq \one_F(x), \quad  \|g(x)\|_{L^{R_2}(\ii{W}, \mu)} \leq \one_G(x) \quad \text{ and} \quad  \|h(x)\|_{L^{R'}(\ii{W}, \mu)} \leq \one_{H'}(x).
 \]
\end{main*}

Finally, we want to point out that the helicoidal method applies equally in the case of (sub)linear operators. One last example is that of the Carleson operator 
\[
C_{\rr{R}}f(x)=\sup_N \big \vert \int_{\xi< N} \hat{f}(\xi)   e^{2 \pi i x \xi} d \xi \big \vert,
\]
for which $UMD-$valued extensions are already known from the work of Hyt\"{o}nen and Lacey \cite{Carleson_in_UMD_spaces}. 

In \cite{DemSilvaLight}, Demeter and Silva give an alternative proof for $\ell^2$- valued inequalities for the Carleson operator. In fact, they present a new principle, built around ideas from \cite{BatemanThiele2011}, for dealing with $\ell^2$- valued inequalities for sublinear operators which are not of Calder\'{o}n-Zygmund type.

We do not present all the details here, but the essential statement for proving multiple vector-valued inequalities for the Carleson operator, using the helicoidal method, is the following:
\begin{main*}[Carleson Operator]
Under the same assumptions as in \eqref{thm: main BHT}, the localized bilinear form for the discretized Carleson operator satisfies
 \begin{equation}
\tag*{$\ii{P}(n)$}
\label{main statement Carleson}
\Big \vert \Lambda_{\mathcal{C}\left(I_0\right)}^n (f, g)  \Big \vert \lesssim \left( \sssize_{I_0} \one_F  \right)^{1-\epsilon}\cdot \left( \sssize_{I_0} \one_G  \right)^{1-\epsilon} \cdot |I_0|,
 \end{equation}
 provided that
 \[
 \|f(x)\|_{L^{R_1}(\ii{W}, \mu)} \leq \one_F(x), \text{    and } \quad  \|g(x)\|_{L^{R_2}(\ii{W}, \mu)} \leq \one_G(x) .
 \]
\end{main*}

Comparing the main statements of the above three examples, we can see from the exponents of the sizes that the range of $L^p$ estimates for the vector-valued Carleson operator and for the vector-valued paraproduct $\Pi$ will coincide with the range of the scalar operator. However, for $BHT$ things are more complicated.

\section{Multiple vector-valued estimates for $BHT$}
\label{sec: multiple vector estimates for BHT}

In this section we describe the detailed proof of our Theorems \ref{vector valued BHT} and \ref{multiple vector valued BHT}.
\subsection{Estimates for Localized $BHT$}
\label{Estimates for Localized $BHT$}~\\

Here we assume that $F, G$ and $H'$ are fixed subsets of $\rr{R}$ of finite measure and $I_0$ is a fixed dyadic interval. We are interested in finding estimates for the bilinear operator
\[
BHT^{F, G, H'}_{I_0}(f,g)(x):=\sum_{P \in \rr{P}(I_0)} \frac{1 }{|I_P|^{1/2}} \langle f \cdot \one_F , \phi_{P_1}^1  \rangle \langle g \cdot \one_G , \phi_{P_2}^2 \rangle  \phi_{P_3}^3(x) \one_{H'}(x).
\] 

In doing so, we first study the associated trilinear form:
\[
\Lambda_{BHT; \rr{P}(I_0)}^{F, G, H'}(f, g, h):=\sum_{P \in \rr{P}(I_0)} \frac{1 }{|I_P|^{1/2}} \langle f \cdot \one_F , \phi_{P_1}^1  \rangle \langle g \cdot \one_G , \phi_{P_2}^2 \rangle  \langle h \cdot \one_{H'}, \phi_{P_3}^3 \rangle.
\]  
 
While this operator satisfies the same estimates as the bilinear Hilbert transform, the localization to the sets $F, G$ and $H'$, and  the restriction to the tiles in $\rr{P}(I_0)$ will bring some extra decay. First we prove a result in the``local $L^2$ case", when $\ds \frac{1}{r_1}, \frac{1}{r_2}, \frac{1}{r'} < \frac{1}{2}$. In this situation the proof is simpler, because we are employing ``energies", which are $L^2$ expressions, and they can easily be related to $L^{r_i}-$ averages when $r_i \geq 2$.

\begin{proposition}[The case $r_1, r_2, r' > 2$]
\label{Localization for local L^2 BHT}
Let $\rr{P}$ be a family of tri-tiles, $I_0$ a dyadic interval and $F, G, H' \subset \rr{R}$ sets of finite measure. Then one can find positive numbers $a_1, a_2$ and $a_3$ so that
\begin{align}
\label{localization local L^2}
\left|\Lambda_{BHT;\rr{P}(I_0)}^{F, G, H'}(f, g , h )\right| & \lesssim  \left( \ssize_{\rr{P}(I_0)} \one_F  \right)^{a_1} \left( \ssize_{\rr{P}(I_0)} \one_G  \right)^{a_2} \left( \ssize_{\rr{P}(I_0)} \one_{H'}  \right)^{a_3} \nonumber\\
& \qquad \| f \cdot \ci_{I_0} \|_{r_1} \| g \cdot \ci_{I_0} \|_{r_2} \| h  \cdot \ci_{I_0} \|_{r'}.
\end{align}
We can choose  $\ds a_j=1-\frac{2}{r_j}- \epsilon>0$, for a very small $\epsilon > 0$.
\begin{proof}
In this case we are proving restricted type estimates by applying directly Proposition \ref{BHT trilinear estimate}: let $E_1, E_2, E_3$ be sets of finite measure, and $|f| \leq \one_{E_1}, |g| \leq \one_{E_2}, |h| \leq \one_{E_3}. $ We have
\begin{align}
\label{eq: local L^2 BHT localization}
\Lambda_{BHT}(f \cdot \one_F, g \cdot \one_G, h \cdot \one_{H'}) &\lesssim \left( \ssize_{\rr{P}(I_0)} f \cdot \one_F  \right)^{\theta_1} \left( \ssize_{\rr{P}(I_0)} g \cdot \one_G  \right)^{\theta_2} \left( \ssize_{\rr{P}(I_0)} h \cdot \one_{H'} \right)^{\theta_3} \nonumber \\
& \cdot \left( \eenergy f \cdot \one_{F} \right)^{1-\theta_1} \left( \eenergy g \cdot \one_{G} \right)^{1-\theta_2} \left( \eenergy h \cdot \one_{H'} \right)^{1-\theta_3},
\end{align}
for any $0 \leq \theta_1, \theta_2, \theta_3<1$ such that $\theta_1+\theta_2+\theta_3=1$.
Recall that the sizes can be estimated by 
\[
\ssize_{\rr{P}(I_0)} f \cdot \one_{F} \lesssim \sup_{P \in \rr{P}(I_0)} \frac{1}{|I_P|} \int \one_{E_1} \cdot \one_{F} \cdot \ci_{I_P}^M dx
\]
where $M$ can be chosen as large as we wish. Then we observe that if $E_1$ is supported away from $I_0$, the sizes will decay fast, giving the desired $\ds \| f \cdot \ci_{I_0}  \|_{r_1}$ on the right hand side. Similarly for $E_2$ and $E_3$. For this reason, we can assume that the sets $E_1, E_2, E_3$ are supported on $5I_0$ and then we will need to show only that
\begin{align*}
|\Lambda_{BHT;\rr{P}(I_0)}(f \cdot \one_F, g \cdot \one_G, h \cdot \one_{H'} )| \lesssim &
 \left( \ssize_{\rr{P}(I_0)} \one_F  \right)^{a_1} \left( \ssize_{\rr{P}(I_0)} \one_G  \right)^{a_2} \left( \ssize_{\rr{P}(I_0)} \one_{H'}  \right)^{a_3}  \\
 &\quad \cdot \| f \|_{r_1} \| g \|_{r_2} \| h \|_{r'}.
\end{align*}
We are using the energies precisely for estimating the norms of $f, g$ and $h$, so the sizes are playing the role of a constant here. As we have seen in Lemma \ref{BHT energy estimate}, the energies are bounded by $L^2$ norms, so from \eqref{eq: local L^2 BHT localization}, we have
\begin{align*}
&\Lambda_{BHT; \rr P \left( I_0 \right)}^{F, G, H'}(f , g , h ) \lesssim \left( \ssize_{\rr{P}(I_0)}  \one_F  \right)^{\theta_1} \left( \ssize_{\rr{P}(I_0)} \one_G  \right)^{\theta_2} \left( \ssize_{\rr{P}(I_0)} \one_{H'} \right)^{\theta_3} |E_1|^\frac{1-\theta_1}{2} |E_2|^\frac{1-\theta_2}{2} |E_3|^{\frac{1-\theta_3}{2}}.
\end{align*}
By varying $\theta_1, \theta_2$ and $\theta_3$, we see that these restricted type estimates are true in a very small neighborhood of $\ds (\frac{1}{r_1}, \frac{1}{r_2}, \frac{1}{r'})$, and the interpolation Theorem \ref{interp thm restricted type} yields strong type estimates. Note that the constant in this case is 
\[
 \left( \ssize_{\rr{P}(I_0)}  \one_F  \right)^{\theta_1} \left( \ssize_{\rr{P}(I_0)} \one_G  \right)^{\theta_2} \left( \ssize_{\rr{P}(I_0)} \one_{H'} \right)^{\theta_3},
\]
which depends on the functions $\one_F, \one_G, \one_{H'}$, the fixed interval $I_0$, the values of $\theta_1, \theta_2,$ and $\theta_3$, but not on the functions $f, g, h$. 
\end{proof}
\end{proposition}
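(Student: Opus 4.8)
The plan is to prove restricted‑type estimates for the localized trilinear form $\Lambda_{BHT;\rr{P}(I_0)}^{F,G,H'}$ in a small neighborhood of the tuple $\left(\tfrac1{r_1},\tfrac1{r_2},\tfrac1{r'}\right)$ and then to upgrade them to strong‑type via Theorem~\ref{interp thm restricted type}, reading off the constant of the interpolated estimate — which is exactly the size factor we want. The feature that makes the case $r_1,r_2,r'>2$ soft is that the relevant $L^{r_j}$ averages are \emph{weaker} than $L^2$ averages, and the latter control the energies by Lemma~\ref{BHT energy estimate}. Note first that $\tfrac1{r_1}+\tfrac1{r_2}+\tfrac1{r'}=1$ (since $\tfrac1{r_1}+\tfrac1{r_2}=\tfrac1r$ and $r'$ is the conjugate exponent of $r$), and that $r_1,r_2,r'>2$ forces $\tfrac1{r_1},\tfrac1{r_2},\tfrac1{r'}<\tfrac12$; hence every admissible tuple $(\alpha_1,\alpha_2,\alpha_3)$ with $\alpha_1+\alpha_2+\alpha_3=1$ close enough to $\left(\tfrac1{r_1},\tfrac1{r_2},\tfrac1{r'}\right)$ still satisfies $0<\alpha_j<\tfrac12$, so that $\theta_j:=1-2\alpha_j$ obeys $0\le\theta_j<1$ and $\theta_1+\theta_2+\theta_3=1$. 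By the shell‑decomposition device used in the proof of Lemma~\ref{lemma:refined-trilinear-estimate} — the spatial decay of the wave packets of tiles in $\rr{P}(I_0)$ giving a geometric gain on the parts of $f,g,h$ supported in $2^{k+1}I_0\setminus 2^kI_0$, which reconstructs the weights $\ci_{I_0}$ — it is enough to treat $E_1,E_2,E_3\subseteq 5I_0$, where $\|f\|_{r_j}\sim\|f\cdot\ci_{I_0}\|_{r_j}$.

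So the reduced goal, after invoking Theorem~\ref{interp thm restricted type}, is: for sets $E_1,E_2,E_3\subseteq 5I_0$ of finite measure and $|f|\le\one_{E_1}$, $|g|\le\one_{E_2}$, $|h|\le\one_{E_3}$,
\[
\big|\Lambda_{BHT;\rr{P}(I_0)}^{F,G,H'}(f,g,h)\big|\lesssim \big(\ssize_{\rr{P}(I_0)}\one_F\big)^{1-2\alpha_1}\big(\ssize_{\rr{P}(I_0)}\one_G\big)^{1-2\alpha_2}\big(\ssize_{\rr{P}(I_0)}\one_{H'}\big)^{1-2\alpha_3}\,|E_1|^{\alpha_1}|E_2|^{\alpha_2}|E_3|^{\alpha_3},
\]
with constant continuous in $(\alpha_1,\alpha_2,\alpha_3)$. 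To get it, write $\Lambda_{BHT;\rr{P}(I_0)}^{F,G,H'}(f,g,h)=\Lambda_{BHT;\rr{P}(I_0)}(f\one_F,g\one_G,h\one_{H'})$ and apply Proposition~\ref{BHT trilinear estimate} with the exponents $\theta_j$ above, so that
\[
\big|\Lambda_{BHT;\rr{P}(I_0)}(f\one_F,g\one_G,h\one_{H'})\big|\lesssim \prod_{j=1}^{3}\big(\ssize_{\rr{P}(I_0)}(\cdot_j)\big)^{\theta_j}\big(\eenergy_{\rr{P}(I_0)}(\cdot_j)\big)^{1-\theta_j},\qquad (\cdot_1,\cdot_2,\cdot_3)=(f\one_F,g\one_G,h\one_{H'}).
\]
For the sizes I would use monotonicity under pointwise domination, $\ssize_{\rr{P}(I_0)}(f\one_F)\le\ssize_{\rr{P}(I_0)}\one_F$ and likewise for the other two factors; for the energies, Lemma~\ref{BHT energy estimate} gives $\eenergy_{\rr{P}(I_0)}(f\one_F)\lesssim\|f\one_F\|_2\le|E_1|^{1/2}$, and similarly for $g\one_G$ and $h\one_{H'}$. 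Since $\tfrac{1-\theta_j}{2}=\alpha_j$, this is exactly the displayed restricted‑type estimate, with continuous constant $\big(\ssize_{\rr{P}(I_0)}\one_F\big)^{1-2\alpha_1}\big(\ssize_{\rr{P}(I_0)}\one_G\big)^{1-2\alpha_2}\big(\ssize_{\rr{P}(I_0)}\one_{H'}\big)^{1-2\alpha_3}$, depending on $\one_F,\one_G,\one_{H'}$, on $I_0$ and on the tuple, but \emph{not} on $f,g,h$.

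Running Theorem~\ref{interp thm restricted type} with $\beta=\left(\tfrac1{r_1},\tfrac1{r_2},\tfrac1{r'}\right)$ now yields
\[
\big|\Lambda_{BHT;\rr{P}(I_0)}^{F,G,H'}(f,g,h)\big|\lesssim \big(\ssize_{\rr{P}(I_0)}\one_F\big)^{1-\frac2{r_1}}\big(\ssize_{\rr{P}(I_0)}\one_G\big)^{1-\frac2{r_2}}\big(\ssize_{\rr{P}(I_0)}\one_{H'}\big)^{1-\frac2{r'}}\|f\cdot\ci_{I_0}\|_{r_1}\|g\cdot\ci_{I_0}\|_{r_2}\|h\cdot\ci_{I_0}\|_{r'},
\]
where I have put back the $\ci_{I_0}$–weights by the reduction explained above. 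Finally, since $\ssize_{\rr{P}(I_0)}\one_F,\ \ssize_{\rr{P}(I_0)}\one_G,\ \ssize_{\rr{P}(I_0)}\one_{H'}$ are all bounded by an absolute constant (being averages of $\le 1$ functions), the exponents $1-\tfrac2{r_j}$ may be lowered to any $a_j=1-\tfrac2{r_j}-\epsilon>0$ with $\epsilon>0$ arbitrarily small, which gives the claimed estimate.

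The argument is essentially soft once this set‑up is fixed, so I do not expect a genuine obstacle; the two points needing care are (a) that the interpolation constant must depend continuously on $(\alpha_1,\alpha_2,\alpha_3)$ in a full neighborhood of $\left(\tfrac1{r_1},\tfrac1{r_2},\tfrac1{r'}\right)$ for Theorem~\ref{interp thm restricted type} to apply — here the explicit form of the constant makes this transparent — and (b) the passage between $\|f\|_{r_j}$ and the localized $\|f\cdot\ci_{I_0}\|_{r_j}$, the only step where the fine spatial structure of the tiles of $\rr{P}(I_0)$ (through the $\ci_{I_P}^M$ factors in Lemma~\ref{BHT size estimate} and the energy gain of Lemma~\ref{lemma:localized_energy}) is used, and where the decay exponent $M$ in the definitions of size and energy must be taken large enough. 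At the endpoints where some $r_j=\infty$ (so $\tfrac1{r_j}=0$) Theorem~\ref{interp thm restricted type} does not apply verbatim, and one replaces the relevant restricted‑type estimates by generalized‑restricted‑type ones and uses Theorem~\ref{interpolation for bad index} instead; this is a routine modification and does not affect the exponents $a_j$.
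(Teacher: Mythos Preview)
Your proof is correct and follows essentially the same route as the paper: restricted-type estimates via Proposition~\ref{BHT trilinear estimate}, bounding sizes by $\ssize_{\rr{P}(I_0)}\one_F$ etc.\ and energies by $|E_j|^{1/2}$, reducing to $E_j\subseteq 5I_0$ via spatial decay, and then interpolating with Theorem~\ref{interp thm restricted type}. Your presentation is in fact slightly more explicit than the paper's in two places --- the identification $\theta_j=1-2\alpha_j$ and the justification for lowering the exponents to $1-\tfrac{2}{r_j}-\epsilon$ via $\ssize\lesssim 1$ --- but the argument is the same.
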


Now we deal with the general Banach triangle case, where $\ds  \left(\frac{1}{r_1}, \frac{1}{r_2}, \frac{1}{r'}\right)$ is an admissible tuple satisfying
\[
0 <  \frac{1}{r_1}, \frac{1}{r_2}, \frac{1}{r'} <1.
\]

The proof is going to be more complicated because we will need to use the sizes as well, for reconstructing the norms of $f, g, h$. In addition, we will also need to use the sizes of $\one_F, \one_G$ and $\one_{H'}$ later on. 

\begin{proposition}
\label{Localization for local L^1 BHT} Let $F, G$ and $H'$ be as above and let $\rr{P}(I_0)$ be a family of tri-tiles localized to the dyadic interval $I_0$. Then there exist positive numbers $a_1, a_2$ and $a_3$ so that
\begin{align}
\label{localization estimate}
 \quad  & \lft\Lambda_{BHT;\rr{P}(I_0)}^{F, G, H'}(f, g , h )\rg\\
&\lesssim \left( \widetilde{\ssize}_{\rr{P}(I_0)} \one_F  \right)^{a_1} \left( \widetilde{\ssize}_{\rr{P}(I_0)} \one_G  \right)^{a_2} \left( \widetilde{\ssize}_{\rr{P}(I_0)} \one_{H'}  \right)^{a_3} \| f \cdot \ci_{I_0} \|_{r_1} \| g \cdot \ci_{I_0} \|_{r_2} \| h \cdot \ci_{I_0} \|_{r'}, \nonumber
\end{align}
where $\displaystyle \frac{1}{r_1}+\frac{1}{r_2}+\frac{1}{r'}=1$. In fact, for $\epsilon >0$ small enough,
\begin{equation}
\label{localization a_j}
a_1=\frac{1+\theta_1}{2}-\frac{1}{r_1}-\epsilon, \quad a_2=\frac{1+\theta_2}{2}-\frac{1}{r_2}-\epsilon, \quad  a_3=\frac{1+\theta_3}{2}-\frac{1}{r'}-\epsilon,
\end{equation}
where $\theta_1, \theta_2, \theta_3$ are so that $0 \leq \theta_1, \theta_2, \theta_3<1$, $\theta_1+\theta_2+\theta_3=1$, and the expressions in \eqref{localization a_j} are positive.
\begin{proof}
In this case, we will use the interpolation Theorem \ref{interpolation for bad index}, and for this reason we cannot obtain directly the expression in the RHS of \eqref{localization estimate}, which represents \emph{localized} $L^p$ norms. However, as we will see soon, it will be enough to prove that $\Lambda_{BHT; \rr{P}(I_0)}$ is of generalized restricted type $\alpha=\left(\alpha_1, \alpha_2, \alpha_3 \right)$, for $\alpha$ in a small neighborhood of $\ds \left( \frac{1}{r_1}, \frac{1}{r_2}, \frac{1}{r'}  \right)$. Then the result in \eqref{localization estimate} will be a consequence of the fast decay of the wave packets away from $I_0$.

We start with $E_1, E_2, E_3$ sets of finite measure and define $\tilde{\Omega}$ to be the exceptional set:
\[
\tilde{\Omega}:= \left \lbrace x: \mathcal{M}(\one_{E_1}) > C \frac{|E_1|}{|E_3|}   \big \rbrace \cup \big \lbrace  x: \mathcal{M}(\one_{E_2}) > C \frac{|E_2|}{|E_3|  } \right \rbrace.
\]
Let $E_3':=E_3 \setminus \tilde{\Omega}$. We want to prove that \eqref{localization estimate} holds for any functions $f, g, h$ so that $|f| \leq \one_{E_1}, |g| \leq \one_{E_2}$, and $|h| \leq \one_{E_3'}$. For simplicity, we assume that $ \ds 1+ \frac{\dist (I_P, \tilde{\Omega}^c)}{|I_P|} \sim 2^d$ for every tile $P \in \rr{P}(I_0)$. Equivalently, we could decompose the collection of tiles into subcollections for which this property holds, for all $d \geq 0$. In the end however, the estimate \eqref{localization estimate} will be independent of such a decomposition.

With the above assumption, for every $P \in \rr{P}(I_0)$, we have 
\[
\frac{1}{|I_P|} \int_\rr{R} \one_{E_1} \cdot \one_F \cdot \ci_{I_P}^M dx \lesssim 2^d \frac{|E_1|}{|E_3|} \quad  \text{and } \quad \frac{1}{|I_P|} \int_\rr{R} \one_{E_2} \cdot \one_G \cdot \ci_{I_P}^M dx \lesssim 2^d \frac{|E_2|}{|E_3|}.
\]
This is important because now we can perform a stopping time which will allow us to estimate the `sizes' of the functions $\one_{E_j}$. For each of the functions $\one_F \cdot \one_{E_1}$,  $\one_G \cdot \one_{E_2}$ and $\one_{H'} \cdot \one_{E_3'}$, we will be looking for maximal dyadic intervals $J$ which are maximizers for
\begin{equation}
\label{def:new_size}
 \sup_{\substack{J \subseteq I_0 \\ \exists P \in \rr{P}(I_0) , I_P \subseteq J}} \frac{1}{\vert J \vert} \int_{\rr{R}} \one_{E_1} \cdot \one_{F} \cdot \ci_{J}^M dx.
\end{equation}
This is the reason we introduced the new size in Definition \ref{def:modified-size}.

The selection of the intervals and tiles is described in more detail in Section \ref{local L1}, so here we only sketch this process. 

We start with the largest possible value $2^{-l_1} \lesssim 2^d \dfrac{|E_1|}{|E_2|}$ and define $\ii{I}_{l_1}$ to be the collection of maximal dyadic intervals $I$ with the property that it contains some $I_P \in \rr{P}(I_0)$ which is not contained in any of the intervals previously selected, and $I$ also has the property that
\[
2^{-l_1-1} \leq \frac{1}{|I|} \int_{\rr{R}} \one_{E_1} \cdot \one_{F} \cdot \ci_{I}^M dx \leq 2^{-l_1}.
\]
Then for each $I \in \ii{I}_{l_1}$ we find the relevant tiles $P$ with $I_P \subseteq I$, and move them in $\rr{P}(I)$. Afterwards we restart the algorithm for the collection $\rr{P}(I_0) \setminus \cup_{I \in \ii{I}_{l_1}}\rr{P}(I)$. 

The algorithm continues by decreasing  $2^{-l_1}$ until all tiles in $\rr{P}(I_0)$ are exhausted. In this way, for any $l_1$ and any $I \in \ii{I}_{l_1}$ we have $\ds \sssize_{\rr{P}(I)}(\one_{E_1} \cdot \one_{F}) \sim 2^{-l_1}$. Similarly we define the collections of dyadic intervals $\ii{I}_{l_2}$ associated with the functions $\one_{E_2} \cdot \one_G$ as long as $2^{-l_2} \lesssim 2^d \dfrac{|E_2|}{|E_3|}$.

For the third component, the collections $\ii{I}_{l_3}$ are non-empty as long as $2^{-n_3} \lesssim 2^{-\tilde{M}d}$, and in that case, for any $I \in \ii{I}_{l_3}$, we have $\ds \sssize_{\rr{P}(I)}(\one_{H'} \cdot \one_{E'_3}) \sim 2^{-n_3}$. The extra decay is due to the fact that $E_3'$ is actually supported on $\tilde{\Omega}^c$.

Given $l_1, l_2, l_3$ as above, we denote $\ii{I}^{l_1, l_2, l_3}:= \ii{I}_{l_1} \cap \ii{I}_{l_2} \cap \ii{I}_{l_3}$. This is also going to be a collection of dyadic intervals, and any tile in $\rr P(I_0)$ will be contained in some $\rr P \left( I \right)$, with $I \in \ii I ^{l_1, l_2, l_3}$. In fact, these collections depend on the parameter $d$ as well, which controls the distance from the exceptional set. We have $\ds \rr{P}(I_0)= \bigcup_d\bigcup_{l_1, l_2, l_3} \bigcup_{I \in \ii{I}_d^{l_1, l_2, l_3}} \rr{P}(I)$, but we suppress the dependency on $d$ in the notation. Thus
\begin{equation}
\label{decomposition local BHT}
\Lambda^{F, G, H'}_{BHT; \rr{P}(I_0)}(f, g, h)=\sum_{l_1, l_2, l_3} \sum_{I \in \ii{I}^{l_1, l_2, l_3}} \Lambda^{F, G, H'}_{BHT; \rr{P}(I)}(f, g, h).
\end{equation}
Every $\ds \Lambda^{F, G, H'}_{BHT; \rr{P}(I)}(f, g, h)$ is going to be estimated by Lemma \ref{lemma:refined-trilinear-estimate}:
\begin{align*}
\Lambda^{F, G, H'}_{BHT; \rr{P}(I)}(f, g, h) &\lesssim \sssize_{\rr{P}(I)}(\one_{E_1} \cdot \one_{F})^{\theta_1} \sssize_{\rr{P}(I)}(\one_{E_2} \cdot \one_{G})^{\theta_2} \sssize_{\rr{P}(I)}(\one_{E'_3} \cdot \one_{H'})^{\theta_3}  \\
& \quad \quad \cdot \left\|\one_{E_1} \cdot \one_{F} \cdot \ci_{I}\right\|_2^{1 -\theta_1}\left\|\one_{E_2} \cdot \one_{G} \cdot \ci_{I}\right\|_2^{1 -\theta_2} \left\|\one_{E_3'} \cdot \one_{H'} \cdot \ci_{I}\right\|_2^{1 -\theta_3}.
\end{align*}

For the particular function $\one_{E_1} \cdot \one_F$ and an interval $I  \in \ii{I}^{l_1, l_2, l_3}$ we have
\[
\left(\int_{\rr{R}} \one_{E_1} \cdot \one_F \cdot \ci_I^M dx    \right)^{1/2} \lesssim 2^{-l_1/2} |I|^{1/2} \lesssim \left( \sssize_{\rr{P}(I)}(\one_{E_1} \cdot \one_F) \right)^{1/2} |I|^{1/2}.
\]
In this way, as long as
\begin{equation}
\label{condition}
\frac{1+\theta_1}{2}-\frac{1}{r_1} >0, \quad \frac{1+\theta_2}{2}-\frac{1}{r_2} >0, \quad \frac{1+\theta_3}{2}-\frac{1}{r'} >0,
\end{equation} we can estimate  $\Lambda^{F, G, H'}_{BHT; \rr{P}(I_0)}(f, g, h)$ as
\begin{align}
\label{local BHT sum}
&\Lambda^{F, G, H'}_{BHT; \rr{P}(I_0)}(f, g, h) \lesssim \sum_{l_1, l_2, l_3} \sum_{I \in \ii{I}^{l_1, l_2, l_3}} \sssize_{\rr{P}(I)}(\one_{E_1} \cdot \one_F)^{\theta_1} \sssize_{\rr{P}(I)}(\one_{E_2} \cdot \one_G)^{\theta_2} \sssize_{\rr{P}(I)}(\one_{E'_3} \cdot \one_{H'})^{\theta_3} \\
& \left(\frac{1}{|I|} \int_{\rr{R}} \one_{E_1} \cdot \one_{F} \cdot \ci_I^M dx   \right)^{\frac{1-\theta_1}{2}}  \left(\frac{1}{|I|} \int_{\rr{R}} \one_{E_2} \cdot \one_{G} \cdot \ci_I^M dx   \right)^{\frac{1-\theta_2}{2}}  \left(\frac{1}{|I|} \int_{\rr{R}} \one_{E'_3} \cdot \one_{H'} \cdot \ci_I^M dx   \right)^{\frac{1-\theta_3}{2}} \cdot |I| \nonumber \\
& \lesssim \sssize_{\rr{P}(I_0)}(\one_{F})^{\frac{1+\theta_1}{2} -\frac{1}{r_1}} \cdot \sssize_{\rr{P}(I_0)}(\one_{G})^{\frac{1+\theta_2}{2} -\frac{1}{r_2}} \cdot \sssize_{\rr{P}(I_0)}(\one_{H'})^{\frac{1+\theta_3}{2} -\frac{1}{r'}- \epsilon} \nonumber \\
& \cdot \sum_{l_1, l_2, l_3} \sum_{I \in \ii{I}^{l_1, l_2, l_3}} 2^{-\frac{l_1}{r_1}} 2^{-\frac{l_2}{r_2}} 2^{-l_3(\frac{1}{r'}+\epsilon)} |I|, \nonumber
\end{align}

The quantity 
\[
\ds \sssize_{\rr{P}(I_0)}(\one_{F})^{\frac{1+\theta_1}{2} -\frac{1}{r_1}} \sssize_{\rr{P}(I_0)}(\one_{G})^{\frac{1+\theta_2}{2} -\frac{1}{r_2}} \sssize_{\rr{P}(I_0)}(\one_{H'})^{\frac{1+\theta_3}{2} -\frac{1}{r'}- \epsilon}
\] 
is going to represent the operatorial norm $\ds \left\| \Lambda^{F, G, H'}_{BHT; \rr P \left(I_0 \right)} \right\|$ associated to the trilinear form $\Lambda_{BHT; \rr{P}(I_0)}^{F, G, H}$, as seen in \eqref{localization estimate}.

We are left with estimating $\ds \sum_{I \in \ii{I}^{l_1, l_2, l_3}} |I|$, which can be realized in three different ways; for example,
\[
 \sum_{I \in \ii{I}^{l_1, l_2, l_3}} |I| \leq \sum_{I \in \ii{I}_{l_1}} |I|=\| \sum_{I \in \ii{I}_{l_1}} \one_{I}  \|_{1, \infty} \lesssim \| \sum_{I \in \ii{I}_{l_1}} 2^{l_1} \mathcal{M} \one_{E_1} \cdot \one_I   \|_{1, \infty} \lesssim 2^{n_1} |E_1|. 
\]
For this reason, whenever $0 \leq \alpha_j \leq 1$, with $\alpha_1+\alpha_2+\alpha_3=1$, we have
\[
 \sum_{I \in \ii{I}^{l_1, l_2, l_3}} |I| \lesssim \left(  2^{l_1} |E_1|\right)^{\alpha_1} \left(  2^{l_2} |E_2|\right)^{\alpha_2} \left(  2^{l_3} |E'_3|\right)^{\alpha_3}.
\]
This yields
\begin{align*}
&\sum_{l_1, l_2, l_3} \sum_{I \in \ii{I}^{l_1, l_2, l_3}} 2^{-\frac{l_1}{r_1}} 2^{-\frac{l_2}{r_2}} 2^{-l_3(\frac{1}{r'}+\epsilon)} |I|  \\
& \lesssim \sum_{l_1, l_2, l_3} 2^{-l_1(\frac{1}{r_1}- \alpha_1)} 2^{-l_2(\frac{1}{r_2}- \alpha_2)} 2^{-l_3(\frac{1}{r'}+\epsilon- \alpha_1)} |E_1|^{\alpha_1} |E_2|^{\alpha_2}|E_3|^{\alpha_3} \\
& \lesssim \left( 2^d \frac{|E_1|}{|E_3|}   \right)^{\frac{1}{r_1}-\alpha_1} \left( 2^d \frac{|E_2|}{|E_3|}   \right)^{\frac{1}{r_2}-\alpha_2} \left(  2^{-\tilde{M d}} \right)^{(\frac{1}{r'}+\epsilon -\alpha_3)}  |E_1|^{\alpha_1} |E_2|^{\alpha_2}|E_3|^{\alpha_3}  \\
&\lesssim 2^{-100d} |E_1|^{1/{r_1}}  |E_2|^{1/{r_2}}  |E_3|^{1/{r'}}.
\end{align*}
Summing over $d$, this proves \eqref{localization estimate} in the particular case of characteristic functions. Upon interpolating, we lose an $\epsilon$ power of $\sssize_{\rr{P}(I_0)} \one_F$ and $\sssize_{\rr{P}(I_0)} \one_G$ respectively, to get
\begin{align*}
\lft\Lambda_{BHT;\rr{P}(I_0)}^{F, G, H'}(f , g , h )\rg 
&\lesssim \left( \widetilde{\ssize}_{\rr{P}(I_0)} \one_F  \right)^{a_1} \left( \widetilde{\ssize}_{\rr{P}(I_0)} \one_G  \right)^{a_2} \left( \widetilde{\ssize}_{\rr{P}(I_0)} \one_{H'}  \right)^{a_3} \\
&\qquad \cdot\| f \cdot \ci_{I_0} \|_{r_1} \| g \cdot \ci_{I_0} \|_{r_2} \| h \cdot \ci_{I_0} \|_{r'}.
\end{align*}

We note that the ``weights'' $\ci_{I_0}$ will not affect the interpolation process; once we have an inequality that holds for characteristic functions of finite sets, interpolation implies a similar result in full generality.

The exponents $a_1, a_2$ and $a_3$ can be described as
\[
a_1=\frac{1+\theta_1}{2}-\frac{1}{r_1}-\epsilon, \quad a_2=\frac{1+\theta_2}{2}-\frac{1}{r_2}-\epsilon, a_3=\frac{1+\theta_3}{2}-\frac{1}{r'}-\epsilon
\]
for some sufficiently small $\epsilon$, and for $0 \leq \theta_1, \theta_2, \theta_3 <1$, satisfying $\theta_1+\theta_2+\theta_3=1$ that will be chosen later. 
\end{proof}
\end{proposition}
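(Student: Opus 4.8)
The plan is to deduce the estimate from a \emph{generalized restricted type} bound for the trilinear form $\Lambda_{BHT;\rr P(I_0)}^{F,G,H'}$ near the point $\left(\frac1{r_1},\frac1{r_2},\frac1{r'}\right)$, the localized $L^{r_j}$ norms on the right-hand side of \eqref{localization estimate} being produced afterwards by the fast decay of the wave packets away from $I_0$. So I would first fix sets $E_1,E_2,E_3$ of finite measure and functions $|f|\le\one_{E_1}$, $|g|\le\one_{E_2}$, $|h|\le\one_{E_3}$; observe that if $E_j$ sits far from $I_0$ the relevant sizes decay like a large power of the distance (this is exactly the $\ci_{I_0}$ factor), hence reduce to $E_1,E_2,E_3\subset 5I_0$. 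Following the Calder\'on--Zygmund recipe I would then discard an exceptional set $\tilde\Omega=\{\ic M\one_{E_1}>C|E_1|/|E_3|\}\cup\{\ic M\one_{E_2}>C|E_2|/|E_3|\}$, take $E_3'=E_3\setminus\tilde\Omega$ as the major subset on the third index, and aim to prove $\lft\Lambda_{BHT;\rr P(I_0)}^{F,G,H'}(f,g,h)\rg\lesssim\|\Lambda\|\cdot|E_1|^{\alpha_1}|E_2|^{\alpha_2}|E_3|^{\alpha_3}$ whenever $\supp h\subset E_3'$, for $(\alpha_1,\alpha_2,\alpha_3)$ in a small neighborhood of $(1/r_1,1/r_2,1/r')$, where $\|\Lambda\|$ is the claimed product of sizes of $\one_F,\one_G,\one_{H'}$. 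Since this tuple is good, restricted type follows, and Theorem \ref{interp thm restricted type} (and Theorem \ref{interpolation for bad index} for the version that allows a major subset) then upgrades it to the desired strong-type inequality.

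The core of the proof is the triple stopping time. I would decompose $\rr P(I_0)$ with respect to the three modified sizes $\sssize_{\rr P(\cdot)}(\one_{E_1}\cdot\one_F)$, $\sssize_{\rr P(\cdot)}(\one_{E_2}\cdot\one_G)$, $\sssize_{\rr P(\cdot)}(\one_{E_3'}\cdot\one_{H'})$, selecting at each dyadic level $2^{-l_j}$ a family $\ii I_{l_j}$ of maximal dyadic intervals that carry that size, and writing $\ii I^{l_1,l_2,l_3}=\ii I_{l_1}\cap\ii I_{l_2}\cap\ii I_{l_3}$ so that $\rr P(I_0)=\bigcup_{l_1,l_2,l_3}\bigcup_{I\in\ii I^{l_1,l_2,l_3}}\rr P(I)$. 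On each piece $\rr P(I)$ I would apply the refined trilinear estimate Lemma \ref{lemma:refined-trilinear-estimate} with parameters $\theta_1,\theta_2,\theta_3$; bound the local $L^2$ factor $\|\one_{E_1}\one_F\ci_I\|_2$ by $2^{-l_1/2}|I|^{1/2}\sim(\sssize_{\rr P(I)}(\one_{E_1}\one_F))^{1/2}|I|^{1/2}$ and likewise for the other two; and factor out the supremum, which produces $\sssize_{\rr P(I_0)}(\one_F)^{(1+\theta_1)/2-1/r_1}$ together with its two companions. This reduces matters to estimating $\sum_{l_1,l_2,l_3}2^{-l_1/r_1}2^{-l_2/r_2}2^{-l_3(1/r'+\epsilon)}\sum_{I\in\ii I^{l_1,l_2,l_3}}|I|$, and for the inner sum the three a priori bounds $\sum_{I\in\ii I_{l_j}}|I|\lesssim 2^{l_j}|E_j|$, coming from the weak-$(1,1)$ inequality for $\ic M$, combine by H\"older into $\sum_{I\in\ii I^{l_1,l_2,l_3}}|I|\lesssim(2^{l_1}|E_1|)^{\alpha_1}(2^{l_2}|E_2|)^{\alpha_2}(2^{l_3}|E_3'|)^{\alpha_3}$.

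Feeding this back, the $l_j$ summations become geometric series with ratios $2^{-l_j(1/r_j-\alpha_j)}$, convergent once $\alpha_j$ is taken just below $1/r_j$; the stopping time also bounds the ranges, $2^{-l_1}\lesssim 2^d|E_1|/|E_3|$, $2^{-l_2}\lesssim 2^d|E_2|/|E_3|$, and crucially $2^{-l_3}\lesssim 2^{-\tilde M d}$ because $E_3'$ avoids $\tilde\Omega$, so summing the geometric tails and then over the distance parameter $d$ gains a factor $2^{-100d}$ and yields $\|\Lambda\|\cdot|E_1|^{1/r_1}|E_2|^{1/r_2}|E_3|^{1/r'}$. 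After that I would interpolate these generalized restricted type estimates over a neighborhood of $(1/r_1,1/r_2,1/r')$ to get the strong-type bound with the localized norms $\|f\ci_{I_0}\|_{r_1}$, $\|g\ci_{I_0}\|_{r_2}$, $\|h\ci_{I_0}\|_{r'}$ (the weights $\ci_{I_0}$ are inert under interpolation once the estimate holds for characteristic functions of finite sets), the cost being an extra $\epsilon$ in the size exponents, forcing exactly $a_j=\frac{1+\theta_j}{2}-\frac1{r_j}-\epsilon$.

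The step I expect to be the main obstacle is keeping the size exponents strictly positive, i.e.\ condition \eqref{condition}: the whole scheme only closes if one can choose $\theta_1,\theta_2,\theta_3$ with $\sum\theta_j=1$, $0\le\theta_j<1$ and $\alpha_j$ arbitrarily close to $1/r_j$ so that $\frac{1+\theta_1}{2}-\frac1{r_1}$, $\frac{1+\theta_2}{2}-\frac1{r_2}$, $\frac{1+\theta_3}{2}-\frac1{r'}$ are \emph{simultaneously} positive. Verifying that this is always possible precisely when $0<\frac1{r_1},\frac1{r_2},\frac1{r'}<1$ — and tracking how this positivity requirement is what eventually shrinks the range of exponents in the vector-valued theorems — is the delicate point; everything else is the standard size/energy machinery together with the bookkeeping of the interpolation errors.
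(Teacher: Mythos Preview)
Your proposal is correct and follows essentially the same route as the paper: generalized restricted type via removal of the exceptional set $\tilde\Omega$, the triple stopping time on the modified sizes of $\one_{E_j}\cdot\one_F$, $\one_{E_j}\cdot\one_G$, $\one_{E_3'}\cdot\one_{H'}$, application of Lemma~\ref{lemma:refined-trilinear-estimate} on each $\rr P(I)$, the three-way bound on $\sum_{I}|I|$, geometric summation in $l_1,l_2,l_3$ and $d$, and finally interpolation with the $\epsilon$-loss. The only cosmetic difference is that you reduce to $E_j\subset 5I_0$ at the outset, whereas the paper simply remarks that the $\ci_{I_0}$ weights are inert and lets the decay be absorbed by the machinery; your identification of condition~\eqref{condition} as the crux is exactly what the paper emphasizes.
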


\begin{corollary}[The case $r=1$]
\label{Localization for BHT with  target L^1 }Let $1< r_1, r_2 < \infty$ be so that  $\ds \frac{1}{r_1}+ \frac{1}{r_2}=1$, and $\theta_1, \theta_2$ satisfying $\ds \frac{1+\theta_1}{2} >  \frac{1}{r_1}$ and $\ds \frac{1+\theta_2}{2} >  \frac{1}{r_2}$. Then 
\begin{align*}
\|BHT_{\rr P \left( I_0 \right)}^{F, G, H'}(f,g)   \|_1 &\lesssim \left( \sssize_{\rr{P}(I_0)} \one_F  \right)^{\frac{1+\theta_1}{2}-\frac{1}{r_1}-\epsilon} \cdot \left( \sssize_{\rr{P}(I_0)} \one_G  \right)^{\frac{1+\theta_2}{2}-\frac{1}{r_2}-\epsilon} \cdot \left( \sssize_{\rr{P}(I_0)} \one_{H'} \right)^{\frac{1+\theta_3}{2}-\epsilon}  \\
&\qquad \cdot\| f  \cdot \ci_{I_0}  \|_{r_1} \| g \cdot \ci_{I_0}  \|_{r_2}.
\end{align*}
\begin{proof}
A careful examination of \eqref{local BHT sum} shows that one can choose any triple $(\beta_1, \beta_2, \beta_3)$ with $\beta_1+\beta_2+\beta_3=1$, even with $\beta_3 \leq 0$, in the place of $(1/r_1, 1/r_2, 1/{r'})$. In this case we get
\begin{align*}
\vert \Lambda_{BHT; \rr{P}(I_0)}^{F, G, H'}(f, g, h) \vert \lesssim &\left( \widetilde{\ssize}_{\rr{P}(I_0)} \one_F  \right)^{\frac{1+\theta_1}{2}-\beta_1} \left( \widetilde{\ssize}_{\rr{P}(I_0)} \one_G  \right)^{\frac{1+\theta_2}{2}-\beta_2} \left( \widetilde{\ssize}_{\rr{P}(I_0)} \one_{H'}  \right)^{\frac{1+\theta_3}{2} -\epsilon} \\
&\qquad \cdot |E_1|^{\beta_1} |E_2|^{\beta_2} |E_3|^{\beta_3}
\end{align*}
The restrictions are that $\ds \beta_j < \frac{1+\theta_j}{2}$, which works well for very small or negative values of $\beta_3$. Interpolating between tuples $(\beta_1, \beta_2, \beta_3)$ that lie in a small open neighborhood of $\ds \left( \frac{1}{r_1}, \frac{1}{r_2}, 0 \right)$, we get the conclusion. 
In this case, the interpolation is used for estimating the $L^1$ norm of the operator, and not the trilinear form $\Lambda_{BHT; \rr{P}(I_0)}^{F, G, H'}$.
\end{proof}
\end{corollary}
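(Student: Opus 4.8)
The plan is to recycle the proof of Proposition \ref{Localization for local L^1 BHT}, changing only the final combinatorial book-keeping, where the endpoint $\frac{1}{r'}=0$ forces one to interpolate with a non-positive third index. Since the cutoff $\one_{H'}$ is already built into the operator, by duality
\[
\big\|BHT_{\rr P(I_0)}^{F,G,H'}(f,g)\big\|_1=\sup_{\|h\|_\infty\le 1}\big|\Lambda_{BHT;\rr P(I_0)}^{F,G,H'}(f,g,h)\big|,
\]
so it suffices to show that the trilinear form $\Lambda_{BHT;\rr P(I_0)}^{F,G,H'}$, viewed as associated with the bilinear operator $BHT_{\rr P(I_0)}^{F,G,H'}$ taking values in $L^1$ (i.e. $\Lambda^{F,G,H'}_{BHT;\rr P(I_0)}(f,g,h)=\langle BHT^{F,G,H'}_{\rr P(I_0)}(f,g),h\rangle$), is of generalized restricted type $(\beta_1,\beta_2,\beta_3)$ for every admissible tuple in a small neighborhood of $\big(\frac{1}{r_1},\frac{1}{r_2},0\big)$, with constant
\[
\big\|\Lambda_{BHT;\rr P(I_0)}^{F,G,H'}\big\|=\big(\sssize_{\rr P(I_0)}\one_F\big)^{\frac{1+\theta_1}{2}-\beta_1}\big(\sssize_{\rr P(I_0)}\one_G\big)^{\frac{1+\theta_2}{2}-\beta_2}\big(\sssize_{\rr P(I_0)}\one_{H'}\big)^{\frac{1+\theta_3}{2}-\epsilon}.
\]
Once this is available, the corollary follows from the bad-index interpolation theorem (Theorem \ref{interpolation for bad index}, equivalently Proposition \ref{banach interpolation gen res}) applied with $\beta=\big(\frac{1}{r_1},\frac{1}{r_2},0\big)$, whose third entry $\beta_3=0$ is $\le0$: this yields an $L^{1/(1-0)}=L^1$ bound for the operator with the operatorial norm displayed above, at the cost of the usual $\epsilon$-loss in the exponents of $\sssize_{\rr P(I_0)}\one_F$ and $\sssize_{\rr P(I_0)}\one_G$. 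The weights $\ci_{I_0}$ and the reduction to the case where $E_1,E_2,E_3$ are supported on $5I_0$ are handled exactly as in Proposition \ref{Localization for local L^1 BHT}, via the fast decay of the wave packets of the tiles in $\rr P(I_0)$.

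To prove the generalized restricted type estimate I would follow the argument of Proposition \ref{Localization for local L^1 BHT} verbatim up to \eqref{local BHT sum}: starting from $E_1,E_2,E_3$ of finite measure, form the exceptional set $\tilde\Omega$ and take $E_3'=E_3\setminus\tilde\Omega$ as the major subset attached to the index $j_0=3$; perform the triple stopping time for $\one_{E_1}\one_F$, $\one_{E_2}\one_G$, $\one_{E_3'}\one_{H'}$, obtaining $\rr P(I_0)=\bigcup_d\bigcup_{l_1,l_2,l_3}\bigcup_{I\in\ii I_d^{l_1,l_2,l_3}}\rr P(I)$; and estimate each piece $\Lambda_{BHT;\rr P(I)}^{F,G,H'}$ by Lemma \ref{lemma:refined-trilinear-estimate}, obtaining a bound $\lesssim|I|\prod_{j}2^{-l_j(1+\theta_j)/2}$. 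I then split $2^{-l_j(1+\theta_j)/2}=2^{-l_j\left(\frac{1+\theta_j}{2}-\gamma_j\right)}\cdot 2^{-l_j\gamma_j}$ with $\gamma_1=\beta_1$, $\gamma_2=\beta_2$, $\gamma_3=\epsilon$; since $2^{-l_1}\lesssim\sssize_{\rr P(I_0)}\one_F$, $2^{-l_2}\lesssim\sssize_{\rr P(I_0)}\one_G$, $2^{-l_3}\lesssim\sssize_{\rr P(I_0)}\one_{H'}$ and $\gamma_j<\frac{1+\theta_j}{2}$ (this is precisely where the hypotheses $\frac{1+\theta_1}{2}>\frac{1}{r_1}$, $\frac{1+\theta_2}{2}>\frac{1}{r_2}$ are used, together with $\beta_j$ close to $\frac{1}{r_j}$), one pulls out the claimed operatorial norm and is left with estimating $\sum_d\sum_{l_1,l_2,l_3}2^{-l_1\beta_1}2^{-l_2\beta_2}2^{-l_3\epsilon}\sum_{I\in\ii I_d^{l_1,l_2,l_3}}|I|$.

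The only genuinely new point is the summation of this last quantity when $\beta_3$ is allowed to be $\le0$. Instead of distributing $\sum_{I\in\ii I_d^{l_1,l_2,l_3}}|I|$ against $\big(\frac{1}{r_1},\frac{1}{r_2},\frac{1}{r'}\big)$ as in Proposition \ref{Localization for local L^1 BHT}, I would use all three available bounds $\sum_{I}|I|\le2^{l_j}|E_j|$ ($j=1,2,3$, with $E_3'$ in place of $E_3$) to write $\sum_I|I|\lesssim(2^{l_1}|E_1|)^{\alpha_1}(2^{l_2}|E_2|)^{\alpha_2}(2^{l_3}|E_3'|)^{\alpha_3}$ for a good tuple $(\alpha_1,\alpha_2,\alpha_3)$ with $\alpha_1<\beta_1$, $\alpha_2<\beta_2$, and $\alpha_3>0$ as small as one wishes. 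Summing the resulting geometric series in $l_1,l_2,l_3$ over their admissible ranges $2^{-l_1}\lesssim2^d|E_1|/|E_3|$, $2^{-l_2}\lesssim2^d|E_2|/|E_3|$, $2^{-l_3}\lesssim2^{-\tilde Md}$, and invoking the identity $\frac{1}{r_1}+\frac{1}{r_2}=1$ (so $\beta_1+\beta_2+\beta_3=1$ with the $\beta_j$ near $\frac{1}{r_j}$), all powers of $|E_3|$ recombine into a single $|E_3|^{\beta_3}$, leaving $|E_1|^{\beta_1}|E_2|^{\beta_2}|E_3|^{\beta_3}$ times a geometric factor in $d$ whose exponent is $-\beta_3+\alpha_3(1+\tilde M)-\tilde M\epsilon$; choosing $\tilde M$ large and $\alpha_3$ small makes this $\lesssim2^{-100d}$, hence summable in $d$. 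This establishes the generalized restricted type estimate for all admissible $(\beta_1,\beta_2,\beta_3)$ near $\big(\frac{1}{r_1},\frac{1}{r_2},0\big)$ — the only constraints being $\beta_1<\frac{1+\theta_1}{2}$, $\beta_2<\frac{1+\theta_2}{2}$, which hold by hypothesis — and completes the proof modulo the interpolation step of the first paragraph.

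I expect the main obstacle to be exactly this last summation: one must keep simultaneous track of the non-positive exponent $\beta_3$, the exponent identity $\frac{1}{r_1}+\frac{1}{r_2}=1$ that makes the $|E_3|$-powers collapse, and the off-diagonal decay $2^{-\tilde Md}$ away from the exceptional set. It is precisely the conspiracy of these three features that makes the $L^1$ endpoint reachable, rather than merely $L^r$ for $r>1$ as in Proposition \ref{Localization for local L^1 BHT}; everything else is a line-by-line repetition of arguments already carried out in the excerpt.
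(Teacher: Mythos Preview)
Your proposal is correct and follows essentially the same approach as the paper: both re-run the stopping-time argument of Proposition \ref{Localization for local L^1 BHT} with the target tuple $(\beta_1,\beta_2,\beta_3)$ allowed to have $\beta_3\le 0$, observe that the only constraints are $\beta_j<\frac{1+\theta_j}{2}$, and then apply the bad-index interpolation Theorem \ref{interpolation for bad index} around $(\frac{1}{r_1},\frac{1}{r_2},0)$ to obtain the $L^1$ bound for the operator. Your write-up simply makes explicit the ``careful examination of \eqref{local BHT sum}'' that the paper alludes to, including the verification that the $|E_3|$-powers recombine to $|E_3|^{\beta_3}$ and that the $d$-series still converges.
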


\subsection{Proof of Theorem \ref{vector valued BHT}}
\label{local L1}
Recall that the vector-valued $BHT$ is defined by:
\[
BHT(f,g)(x,w)=\int_{\rr{R}} f(x-t, w) g(x+t, w) \frac{dt}{t}=BHT(f_w, g_w)(x).
\]
Then the trilinear form associated with it is
\[
\Lambda_{\overrightarrow{BHT}}(f,g,h)=\int_{\rr{R}} \int_{\ii{W}} BHT(f,g)(x, w) h(x,w) d \mu(w) dx.
\]

First we prove generalized restricted type estimates for $\Lambda_{\vBHT}(f,g,h)$, and the general result will follow from the vector-valued  interpolation result presented in Proposition \ref{banach interpolation gen res}. Let $F, G$ and $H$ be sets of finite measure. In what follows, we will construct a major subset $H' \subseteq H$ and show
\begin{equation}
\label{gen restriction for linearization}
\vert \Lambda_{\vBHT; \rr{P}}(f, g, h)  \vert \lesssim |F|^{\alpha_1}  |G|^{\alpha_2} |H|^{\alpha_3} 
\end{equation} 
whenever $\ds \| f(x, \cdot)  \|_{L^{r_1}(\ii{W}, \mu)}  \leq \one_F(x),  \| g(x, \cdot)  \|_{L^{r_2}(\ii{W}, \mu)}\leq \one_G(x)$ and $ \| h(x, \cdot)  \|_{L^{r'}(\ii{W}, \mu)} \leq \one_{H'}(x)$. For simplicity, assume $|H|=1$. 
The exceptional set is defined as
\[
\Omega :=\lbrace x: \mathcal{M}(\one_F) > C |F| \rbrace \cup \lbrace x: \mathcal{M}(\one_G) > C |G|  \rbrace.
\]
Because of the $L^1 \to L^{1, \infty}$ boundedness of the maximal operator, for a constant $C$ large enough, we have $\vert\Omega \vert \ll 1$.

We partition the collection of tri-tiles according to the scaled distance from  the exceptional set
\[
\rr{P}^d= \lbrace P \in \rr{P} : 1 +\frac{ \dist (I_P, \Omega^c) }{|I_P|} \sim 2^d  \rbrace
\]
and we will prove estimates equivalent to \eqref{gen restriction for linearization} for the family $\rr{P}^d$, with an extra $2^{-10d}$ decay:
\begin{equation}
\label{tri-linear BHT 2^d}
\vert \Lambda_{\vBHT; \rr{P}^d}(f, g, h)\vert \lesssim 2^{-10d}|F|^{1/p} |G|^{1/q} |H|^{1/{s'}}.
\end{equation}
 We suppress the $d$-dependency for the moment, but all the subcollections $\ii I ^{n_j}_j$ and $\ii I ^{n_1, n_2, n_3}$ will actually depend on this parameter. At the very end we sum in $d$, and use interpolation, so that the final estimate depends only on the fixed interval $I_0$, and the fixed sets $F, G, H'$. 

Now we construct a collection $\lbrace \mathscr{I_1}^{n_1} \rbrace_{n_1 \geq \bar{n}_1}$ of relevant dyadic intervals,  according to the concentration of $\one_F$:
\begin{itemize}
\item[-] start with $\bar{n}_1$ so that $2^{- \bar{n}_1} \sim 2^d |F|$ and let $\rr{P}_{\bar{n}_1-1}'=\rr{P}$ ( here $\rr{P}_{n_1}'$ will play the role of \emph{Stock}, or collection of available tiles)
\item[-] define $\mathscr{I}_1^{\bar{n}_1}$ to be the collection of maximal dyadic intervals $I$ with the property that there exists at least one tile $P \in \rr{P}_{\bar{n}_1}'$ with $I_P \subseteq I$ and 
\begin{equation}
\frac{1}{|I|} \int \one_F \cdot \ci_{I}^M dx \sim 2^{- \bar{n}_1}.
\end{equation}
\item[-] for every such interval $I$, let $\rr{P}_{\bar{n}_1}(I)$ be the collection of tiles $P \in \rr{P}'_{\bar{n}_1}$ with the property that $I_P \subseteq I $
\item[-] set $ \ds \rr{P}_{\bar{n}_1}'=\rr{P} \setminus \bigcup _{I \in \mathscr{I}_1^{\bar{n}_1}} \rr{P}_{\bar{n}_1}(I)$
\item[-] repeat the procedure for all $n_1 \geq \bar{n}_1$; $\mathscr{I}_1^{n_1}$ will denote the collection of maximal dyadic intervals which contain a time interval $I_P$ for some $P \in \rr{P}'_{n_1+1}$ (which was not selected previously) and so that
\[
2^{-n_1-1} \leq \frac{1}{|I|} \int \one_F \cdot \ci_I^M dx < 2^{- n_1}. 
\]
\item[-] as before, $\ds \rr{P}_{n_1}(I):= \lbrace P \in \rr{P}'_{n_1} : I_P \subseteq I   \rbrace$
\item[-] set $\ds \rr{P}_{n_1}'=\rr{P}_{n_1+1} \setminus \bigcup _{I \in \mathscr{I}_1^{n_1}} \rr{P}_{n_1}(I)$ and notice that after a finite number of steps, $\ds \rr{P}_{n_1}'=\emptyset$.
\item[-]note that we always have $2^{-n_1} \lesssim 2^d |F|$.
\end{itemize}

For $d$ sufficiently large, the intervals $I_P$ for $P \in \rr{P}^d$ are going to be essentially disjoint and the intervals $I \in \mathscr{I}_1^{n_1}$ can be selected in an easier way; but this is not the case for example when $d=0$, which corresponds to $I_P \cap \Omega^c \neq \emptyset$. However, for every $n_1$, the intervals in $\mathscr{I}_1^{n_1}$ are going to be disjoint and this is going to be used later in the proof.

Similarly, $\mathscr{I}_2^{n_2}$ denotes the collection of maximal dyadic intervals $I$ containing at least some $I_P \subseteq I$ for some $P \in \rr{P}^d$, and 
\[
\frac{1}{|I|} \int \one_{G} \cdot \ci_I^M dx \sim 2^{- n_2} \lesssim 2^d |G|.
\]
For $\one_{H'}$, $\mathscr{I}_3^{n_3}=$ collection of maximal dyadic intervals $I$ containing at least some $I_P$ for some $P \in \rr{P}^d$ and so that 
\[
\frac{1}{|I|} \int \one_{H'} \cdot \ci_I^M dx \sim 2^{-n_3} \lesssim 2^{-Md}.
\] 
We denote $\ds \mathscr{I}^{n_1, n_2, n_3}:=\mathscr{I}_1^{n_1} \cap \mathscr{I}_2^{n_2} \cap \mathscr{I}_3^{n_3}$, and we further partition $\rr{P}^d$ as $\ds \rr{P}^d= \bigcup_{n_1, n_2, n_3} \bigcup_{ I \in \mathscr{I}^{n_1, n_2, n_3}} \rr{P}(I)$.

For $I \in \mathscr{I}_1^{n_1}$, we have $\ds \sssize_{\rr{P}_{n_1}(I)} \one_F \sim 2^{-n_1}$. When we consider the intersection $I'$ of different intervals in $\mathscr{I}_1^{n_1}, \mathscr{I}_2^{n_2}$ and $\mathscr{I}_3^{n_3}$ all we can say is that $\ds \sssize_{\rr{P}(I')} \one_F \lesssim 2^{-n_1}$. This fact is the technical obstruction in obtaining vector-valued $BHT$ estimates for any $p, q, s$ in the whole range of $BHT$.

In a similar way, the relation  $\ds \frac{1}{|I|} \int_{\rr{R}} \one_F \cdot \ci_I^M dx \sim 2^{-n_1}$ for $I \in \mathscr{I}_1^{n_1}$ becomes for an interval $I' \in \mathscr{I}_1^{n_1} \cap \mathscr{I}_2^{n_2} \cap \mathscr{I}_3^{n_3}$ an inequality: $\ds \frac{1}{|I'|} \int_{\rr{R}} \one_F \cdot \ci_{I'}^M dx \lesssim 2^{-n_1}.$

The trilinear form in \eqref{tri-linear BHT 2^d} becomes
\begin{align*}
&\sum_{n_1, n_2, n_3} \sum_{I \in \mathscr{I}^{n_1, n_2, n_3}} \Lambda_{\vBHT; \rr{P}(I)}(f, g, h) \\
& =\sum_{n_1, n_2, n_3} \sum_{I \in \mathscr{I}^{n_1, n_2, n_3}} \int_{\rr{R}}\int_{\ii{W}} BHT_{\rr{P}(I)}(f_w, g_w)(x) \cdot h_w(x) d\mu(w) dx\\
&= \int_{\ii{W}}\left(\sum_{n_1, n_2, n_3} \sum_{I \in \mathscr{I}^{n_1, n_2, n_3}} \int_{\rr{R}} BHT_{\rr{P}(I)} (f_w \cdot \one_F, g_w \cdot \one_G)(x) \cdot \one_{H'}(x) \cdot h_w(x) dx   \right) d\mu(w)
\end{align*}
Note that the functions $f_w$ are supported on $F$, the $g_w$ on $G$ and the $h_w$ on $H'$, for a.e. $w$. We can apply the localization Proposition \ref{Localization for local L^1 BHT} to get
\begin{align*}
|\Lambda_{BHT; \rr{P}(I)}^{F, G, H'}(f_w, g_w, h_w) | \lesssim & \left( \sssize_{\rr{P}(I)} \one_F   \right)^{a_1} \left( \sssize_{\rr{P}(I)} \one_G   \right)^{a_2} \left( \sssize_{\rr{P}(I)} \one_{H'}   \right)^{a_3}  \\
& \| f_w \cdot \ci_I   \|_{r_1} \| g_w \cdot \ci_I   \|_{r_2} \| h_w \cdot \ci_I   \|_{r'},
\end{align*}
where $\ds \frac{1}{r_1}+\frac{1}{r_2}+\frac{1}{r'}=1$.

Recall the expressions for $a_j$ from \eqref{localization a_j}:
\[
a_1=\frac{1+\theta_1}{2}-\frac{1}{r_1}-\epsilon, \quad a_2=\frac{1+\theta_2}{2}-\frac{1}{r_2}- \epsilon, \quad a_3=\frac{1+\theta_3}{2}-\frac{1}{r'}-\epsilon,
\]
where the only conditions we have on $\theta_1, \theta_2$ and $\theta_3$ are that $\theta_1+\theta_2+\theta_3=1$ and $a_j >0$. Using H\"{o}lder's inequality, the initial trilinear form can be estimated by
\begin{align*}
& \sum_{n_1, n_2, n_3} \sum_{I \in \mathscr{I}^{n_1, n_2, n_3}} \int_{\ii{W}} |\Lambda_{BHT; \rr{P}(I)}(f_w, g_w, h_w) | \\
&\lesssim \sum_{n_1, n_2, n_3} \sum_{I \in \mathscr{I}^{n_1, n_2, n_3}} \left( \sssize_{\rr{P}(I)} \one_F   \right)^{a_1} \left( \sssize_{\rr{P}(I)} \one_G   \right)^{a_2} \left( \sssize_{\rr{P}(I)} \one_{H'}   \right)^{a_3}   \\
&\qquad \left(\int_{\ii{W}} \| f_w \cdot \ci_{I}\|_{r_1}^{r_1} d \mu(w)\right)^{1/r_1} \cdot \left(\int_{\ii{W}} \| g_w \cdot \ci_{I}\|_{r_2}^{r_2} d \mu(w)\right)^{1/r_2} \cdot \left(\int_{\ii{W}} \| h_w\cdot\ci_{I}\|_{r'}^{r'} d \mu(w)\right)^{1/{r'}} \\
\end{align*}
\begin{align*}
&\lesssim \sum_{n_1, n_2, n_3} \sum_{I \in \mathscr{I}^{n_1, n_2, n_3}} \left( \sssize_{\rr{P}(I)} \one_F   \right)^{a_1} \left( \sssize_{\rr{P}(I)} \one_G   \right)^{a_2} \left( \sssize_{\rr{P}(I)} \one_{H'}   \right)^{a_3}  \\
&\qquad \frac{\| \one_F \cdot \ci_I   \|_{r_1}}{|I|^{1/{r_1}}}  \frac{\| \one_G \cdot \ci_I   \|_{r_2}}{|I|^{1/{r_2}}} \frac{\| \one_{H'} \cdot \ci_I   \|_{r'}}{|I|^{1/{r'}}} \cdot |I|  \\
& \lesssim \sum_{n_1, n_2, n_3} \sum_{I \in \mathscr{I}^{n_1, n_2, n_3}} 2^{- n_1/p} 2^{- n_2/q} 2^{-n_3 (a_3+\frac{1}{r'})}  \cdot |I|
\end{align*}
In the last inequality we need to assume $\ds \frac{1}{p} \leq a_1+\frac{1}{r_1}=\frac{1+ \theta_1}{2}$ and similarly $\ds \frac{1}{q} \leq \frac{1+ \theta_2}{2}$.
We will be summing $|I|$ when $I \in \mathscr{I}^{n_1, n_2, n_3}$. Note that
\begin{align*}
&\sum_{I \in \mathscr{I}^{n_1, n_2, n_3}} |I| \leq \sum_{I \in \mathscr{I}_1^{n_1}} |I|= \| \sum_{I \in \mathscr{I}^{n_1}_1} \one_{I}  \|_{1, \infty} \lesssim  \| \sum_{I \in \mathscr{I}_1^{n_1}} 2^{n_1} (\mathcal{M} \one_F) \cdot \one_I   \|_{1, \infty} \lesssim 2^{n_1} |F|.
\end{align*} 
Similarly, $\ds \sum_{I \in \mathscr{I}^{n_1, n_2, n_3}} |I| \lesssim 2^{n_2} |G|, 2^{n_3}|H| $ and interpolating these three inequalities we get 
$$ \sum_{I \in \mathscr{I}^{n_1, n_2, n_3}} |I| \lesssim (2^{n_1} |F|)^{\gamma_1} (2^{n_2} |G|)^{\gamma_2} (2^{n_3} |H|)^{\gamma_3},   $$
where $0 \leq \gamma_j \leq 1$ and $\gamma_1+\gamma_2+\gamma_3=1$. Finally, 
\begin{align*}
&|\sum_{n_1, n_2, n_3} \sum_{I \in \mathscr{I}^{n_1, n_2, n_3}} \Lambda_{\vBHT; \rr{P}(I)}(f, g, h)| \\
&\lesssim \sum_{n_1, n_2, n_3} 2^{-n_1/p} 2^{-n_2/q} 2^{-n_3 \frac{1 + \theta_3}{2}}  (2^{n_1} |F|)^{\gamma_1} (2^{n_2} |G|)^{\gamma_2} (2^{n_3} |H|)^{\gamma_3}  \\
&\lesssim \sum_{n_1, n_2, n_3} 2^{-n_1(1/p -\gamma_1)} 2^{-n_2(1/q -\gamma_2)} 2^{-n_3(\frac{1+\theta_3}{2}-\gamma_3)} |F|^{\gamma_1} |G|^{\gamma_2}
\end{align*}
The above series converges if we can pick $\gamma_j$ so that
\[
\frac{1}{p}> \gamma_1, \quad \frac{1}{q}>\gamma_2 \quad \text{and  } \frac{1+ \theta_3}{2}>\gamma_3.
\]
This will be possible as long as
\begin{equation}
\label{conditions r_1, r_2, r'}
\frac{1}{p}+\frac{1}{q}+\frac{1+\theta_3}{2}>1.
\end{equation}
If the above conditions are satisfied, we get generalized restricted type estimates
\[
| \Lambda_{\vBHT}(f, g, h)| \lesssim |F|^{1/p} |G|^{1/q}.
\]

There are four distinct cases:
\begin{enumerate}
\item[i)] $\ds \frac{1}{r_1}, \frac{1}{r_2}, \frac{1}{r'} \leq \frac{1}{2}$. In this case, if we pick $\theta_1=\theta_2 \sim 0$ and $\theta_3 \sim 1$, all the conditions hold and the range of $L^p$ estimates for $\vBHT_{\vec{r}}$ is going to be the convex hull of the points 
\[
(0, 0, 1), (1, 0, 0), \left( 1, \frac{1}{2}, -\frac{1}{2} \right),  \left( \frac{1}{2}, 1, -\frac{1}{2} \right), (0, 1, 0).
\]
That is, we get the same range as that of the $BHT$ operator: $p, q > 1$, $s> \dfrac{2}{3}$ and  $\ds \frac{1}{p}+\frac{1}{q}=\frac{1}{s}$.

\item[ii)] $\ds \frac{1}{r_2}, \frac{1}{r'} \leq \frac{1}{2}$ and $\ds \frac{1}{r_1} > \frac{1}{2}$.
For the condition $\ds \frac{1+\theta_1}{2}-\frac{1}{r_1} >0$ to hold, we have to choose $\theta_1 > \frac{2}{r_1}-1$ and this will imply that the range of the operator, described as a region in the hyperplane $\beta_1+\beta_2+\beta_3=1$,  is the convex hull of the points
\[
\left(0, 0, 1 \right), \left(1, 0, 0\right), \left(1, \frac{1}{2}, - \frac{1}{2} \right), \left( \frac{1}{r_1}, \frac{3}{2}-\frac{1}{r_1}, -\frac{1}{2}   \right), \left( 0, \frac{3}{2}-\frac{1}{r_1}, \frac{1}{r_1}-\frac{1}{2}  \right).
\]
\item[iii)] $\ds \frac{1}{r_1}, \frac{1}{r'} \leq \frac{1}{2}$, and $\ds \frac{1}{r_2} > \frac{1}{2}$. Similarly to the previous case, the range of the operator is the convex hull of 
\[
\left( 0, 0, 1  \right), \left( 0, 1, 0  \right), \left( 1, \frac{1}{2}, -\frac{1}{2}  \right), \left( \frac{3}{2}-\frac{1}{r_2}, \frac{1}{r_2}, -\frac{1}{2}  \right), \left( \frac{3}{2}-\frac{1}{r_2}, 0, \frac{1}{r_2}-\frac{1}{2}  \right). 
\]

\item[iv)] $\ds \frac{1}{r_1}, \frac{1}{r_2}\leq \frac{1}{2}$, $\ds \frac{1}{r'} > \frac{1}{2}$. The range is the convex hull of
\[
 (0, 0, 1), \left( \frac{1}{2}+\frac{1}{r}, 0, \frac{1}{2}-\frac{1}{r} \right), \left( \frac{1}{2}+\frac{1}{r}, \frac{1}{2}, -\frac{1}{r} \right), \left( \frac{1}{2}, \frac{1}{2}+\frac{1}{r}, -\frac{1}{r} \right), \left( 0, \frac{1}{2}+\frac{1}{r}, \frac{1}{2}-\frac{1}{r} \right).
\]

\end{enumerate}

\subsection{The cases $r=1$ or $r_i= \infty$}
\label{the case r=1}~\\
The proof is similar to the one in the previous Section  \ref{local L1}. We first consider the case $r=1$. Because the dual space of $L^1(\ii{W}, \mu)$ is $L^\infty(\ii{W}, \mu)$, the functions appearing in the trilinear form satisfy
\[
\|f(x, \cdot)\|_{L^{r_1}(\ii{W}, \mu)}\leq \one_F(x), \quad \|g(x, \cdot)\|_{L^{r_2}(\ii{W}, \mu)}\leq \one_G(x), \quad \|h(x, \cdot)\|_{L^{\infty}(\ii{W}, \mu)}\leq \one_{H'}.
\]
All the details are identical to the case $r>1$; the restrictions are given by only two inequalities:
\[
\frac{1+\theta_1}{2} > \frac{1}{r_1}, \quad \frac{1+\theta_2}{2} > \frac{1}{r_2}.
\]
In the case $r_1=r_2=2$, $r=1$, these are automatically satisfied and $\ds \ii{D}_{r_1, r_2, r}=Range(BHT)$.

When $r_1=\infty$, we use the fact that the adjoint $BHT^{\ast, 1}$ of $BHT$ is a bilinear operator of the same kind, which is bounded from $L^r \times L^{r'} \to L^1$; more precisely,
\[
\Lambda_{BHT}(f_w, g_w, h_w)=\int_{\rr{R}} BHT(f_w, g_w)(x) \cdot h_w(x) dx= \int_{\rr{R}} f_w(x) \cdot BHT^{*,1}(g_w, h_w)(x)dx.
\]

In proving the boundedness of vector-valued $BHT$ via interpolation, we assume 
\[\ds \|f(x, \cdot)\|_{L^{\infty}(\ii{W}, \mu)}\leq \one_F(x), \quad \|g(x, \cdot)\|_{L^{r}(\ii{W}, \mu)}\leq \one_G(x), \quad \|h(x, \cdot)\|_{L^{r'}(\ii{W}, \mu)}\leq \one_{H'}. \text{ Then}
\]
\begin{align*}
&\vert  \Lambda_{BHT; \rr{P}(I)}(f_w, g_w,  h_w) \vert \leq \| BHT_{\rr{P}(I)}^{*, 1}(g_w \cdot \one_G, h_w \cdot \one_{H'}) \cdot \one_F    \|_1 \\
& \lesssim \left( \sssize_{\rr{P}(I)} \one_F   \right)^{\frac{1+\theta_1}{2}-\epsilon} \left( \sssize_{\rr{P}(I)} \one_G   \right)^{\frac{1+\theta_2}{2}-\frac{1}{r}-\epsilon} \left( \sssize_{\rr{P}(I)} \one_{H'}   \right)^{\frac{1+\theta_3}{2}-\frac{1}{r'}-\epsilon} \cdot  \| g_w \cdot \ci_I   \|_{r} \| h_w \cdot \ci_I   \|_{r'}.
\end{align*}

The rest follows as before. Note that in the case $(\infty, 2, 2)$ we have no constraints on $p, q$, and $s$ except those coming from the original $BHT$ operator itself: indeed, for $\theta_2, \theta_3 >0$, we have 
\[
\frac{1+\theta_2}{2}-\frac{1}{2} > 0, \quad \frac{1+\theta_3}{2}-\frac{1}{2}>0.
\]

\subsection{Iterated $L^p(\ii{W}, \mu)$ spaces estimates for $BHT$}~\\
\label{iterated vv}

Previously, we proved that for any tuple $ (r_1, r_2, r)$ with $\ds \frac{1}{r_1}+\frac{1}{r_2}=\frac{1}{r}$,  $ 1 \leq r < \infty$, and $1 < r_1, r_2 \leq \infty$, we have
\[
BHT: L^p(\rr{R}; L^{r_1}(\ii{W}, \mu)) \times L^q(\rr{R}; L^{r_2}(\ii{W}, \mu)) \to L^s(\rr{R}; L^{r}(\ii{W}, \mu))
\]
whenever $p, q, r$ are in a certain range $\mathscr{D}_{r, r_1, r_2}$ which can be described in a precise manner.  The general ideas for proving multiple vector-valued estimates for $BHT$ (as presented in Theorem \ref{multiple vector valued BHT}) via the helicoidal method were described in the introduction. In this section, we present in more detail the proof in the case of two iterated spaces $\ell^s(\ell^r)$ in order to simplify the notation. First, we prove the following localized vector-valued result:

\begin{proposition}
\label{local vv-BHT}
\begin{align*}
&\| \left( \sum_{k=1}^N \vert BHT_{\rr{P}(I_0)}(f_k \cdot \one_F, g_k \cdot \one_G) \vert ^r   \right)^{1/r} \cdot \one_{H'}\|_s  \\
& \leq \tilde{C}\| \left( \sum_{k=1}^N \vert f_k \vert ^{r_1}  \right)^{1/r_1} \cdot \ci_{I_0} \|_{p}  \| \left( \sum_{k=1}^N \vert g_k \vert ^{r_2}  \right)^{1/{r_2}} \cdot \ci_{I_0} \|_{q}.
\end{align*}
where $\ds \tilde{C}= \left( \sssize_{\rr{P}(I_0)} \one_F  \right)^{\frac{1+\theta_1}{2}-\frac{1}{p}-\epsilon} \cdot \left( \sssize_{\rr{P}(I_0)} \one_G  \right)^{\frac{1+\theta_2}{2}-\frac{1}{q}-\epsilon} \cdot \left( \sssize_{\rr{P}(I_0)} \one_{H'}  \right)^{\frac{1+\theta_3}{2}-\frac{1}{s'}-\epsilon} $. 
\begin{proof}
This is going to be a refinement of the proof of Theorem \ref{vector valued BHT} from the previous section. In constructing the collection of intervals $\mathscr{I}_j^{n_j}$, we note that we only need to select intervals $I$ that are already contained in $I_0$, because all the tiles in $\rr{P}(I_0)$ are so that $I_P \subseteq I_0$.

As before, we prove generalized restricted type estimates, and we assume that the functions have the following properties:
\[
\left( \sum_k |f_k|^{r_1}\right)^{1/{r_1}} \leq \one_{E_1}, \quad \left( \sum_k |f_k|^{r_2}\right)^{1/{r_2}} \leq \one_{E_2}, \quad \left( \sum_k |h_k|^{r'}\right)^{1/{r'}} \leq \one_{E_3'}.
\]
The exceptional set is defined by $\ds \tilde{\Omega}=\lbrace  \mathcal{M} \one_{E_1} > C \frac{|E_1|}{|E_3|}  \rbrace \cup \lbrace \mathcal{M} \one_{E_2} > C \frac{|E_2|}{|E_3|}   \rbrace$, and we assume the tiles to be so that $\ds 1+ \frac{\dist (I_P, \tilde{\Omega}^c)}{|I_P|} \sim 2^d$.

For intervals $I \in \mathscr{I}_1^{n_1}$ we have 
\[
\frac{1}{|I|} \int_{\rr{R}} \one_{E_1} \cdot \one_F \cdot \ci_I^M dx \sim \sssize_{\rr{P}_{n_1}(I)} (\one_{E_1} \cdot \one_F) \sim 2^{-n_1} \leq 2^d \frac{|E_1|}{|E_3|}.
\]
When we consider intervals $I \in \mathscr{I}_1^{n_1} \cap \mathscr{I}_2^{n_2}\cap \mathscr{I}_3^{n_3}$, the above approximations become inequalities. We also need to point out that
\[
\sssize_{\rr{P}(I)} (\one_{E_1} \cdot \one_F) \quad \text{and } \quad \frac{1}{|I|} \int_{\rr{R}} \one_{E_1} \cdot \one_F \cdot \ci_I^M dx  \leq \sssize_{\rr{P}(I_0)} (\one_{E_1} \cdot \one_F).
\]

Now we add the trilinear forms in order to obtain generalized restricted type estimates:
\begin{align*}
&\sum_k \vert \Lambda_{BHT; \rr{P}(I_0)} (f_k \cdot \one_F, g_k \cdot \one_G, h_k \cdot \one_{H'} )  \vert \\
& \leq \sum_{n_1, n_2, n_3} \sum_{I \in \mathscr{I}^{n_1, n_2, n_3}} \sum_k \vert \Lambda_{BHT; \rr{P}(I_0 \cap I)} (f_k \cdot \one_F, g_k \cdot \one_G, h_k \cdot \one_{H'} )   \vert \\
&\lesssim \sum_{n_1, n_2, n_3} \sum_{I \in \mathscr{I}^{n_1, n_2, n_3}} \left( \sssize_{\rr{P}(I)} \one_{E_1} \one_F  \right)^{\frac{1+\theta_1}{2}-\frac{1}{r_1}-\epsilon} \cdot \left( \sssize_{\rr{P}(I)} \one_{E_2} \one_G  \right)^{\frac{1+\theta_2}{2}-\frac{1}{r_2}-\epsilon} \cdot \left( \sssize_{\rr{P}(I)} \one_{E_3'} \one_{H'} \right)^{\frac{1+\theta_3}{2}-\frac{1}{r'}-\epsilon} \\
& \quad \quad \quad \cdot \frac{\| \one_{E_1} \cdot \one_{F} \cdot \ci_{I}   \|_{r_1}}{|I|^{1/{r_1}}} \cdot \frac{\| \one_{E_2} \cdot \one_{G} \cdot \ci_{I}   \|_{r_2}}{|I|^{1/{r_2}}}  \cdot \frac{\| \one_{E_3'} \cdot \one_{H'} \cdot \ci_{I}   \|_{r'}}{|I|^{1/{r'}}} |I| 
\end{align*}
Using the \emph{modified} sizes from Definition \ref{def:modified-size} imply that 
\begin{align*}
&\sum_k \vert \Lambda_{BHT; \rr{P}(I_0)} (f_k \cdot \one_F, g_k \cdot \one_G, h_k \cdot \one_{H'} )  \vert \\
&\lesssim \left( \sssize_{\rr{P}(I_0)} \one_{E_1} \cdot \one_F  \right)^{\frac{1+\theta_1}{2}-\frac{1}{p}-\epsilon} \cdot \left( \sssize_{\rr{P}(I_0)} \one_{E_2} \one_G  \right)^{\frac{1+\theta_2}{2}-\frac{1}{q}-\epsilon} \cdot \left( \sssize_{\rr{P}(I_0)} \one_{E_3'} \one_{H'} \right)^{\frac{1+\theta_3}{2}-\frac{1}{s'} -\epsilon} \\
& \quad  \quad \quad \cdot \sum_{n_1, n_2, n_3} \sum_{I \in \mathscr{I}^{n_1, n_2, n_3}}  2^{- \frac{n_1}{p}} 2^{-\frac{n_2}{q}} 2^{-n_3(\frac{1}{s'}+\epsilon)} |I|
\end{align*}
The last part adds up to something $\ds \lesssim  2^{- \tilde{M}d}|E_1|^{\frac{1}{p}} |E_2|^{\frac{1}{q}} |E_3|^{\frac{1}{s'}}$, which is precisely what we were aiming in the beginning.

The cases when one of the $r_1, r_2$ or $r' = \infty$ follow in a similar manner.  
\end{proof}
\end{proposition}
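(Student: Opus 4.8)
## Proof proposal for Proposition \ref{local vv-BHT}

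The plan is to follow closely the argument already carried out in Section \ref{local L1} for the scalar vector-valued $BHT$, but to keep track of the extra vector-valued summation index $k$ throughout. As in that section, it suffices to prove \emph{generalized restricted type} estimates for the trilinear form $\sum_k \Lambda_{BHT;\rr{P}(I_0)}(f_k\cdot\one_F, g_k\cdot\one_G, h_k\cdot\one_{H'})$, and then invoke the Banach-valued interpolation machinery from Proposition \ref{banach interpolation gen res} (with the Banach spaces being $\ell^{r_1}$, $\ell^{r_2}$, $\ell^{r'}$ on the three slots) to pass from restricted type to the strong-type inequality claimed. So I would assume $\big(\sum_k|f_k|^{r_1}\big)^{1/r_1}\le\one_{E_1}$, $\big(\sum_k|g_k|^{r_2}\big)^{1/r_2}\le\one_{E_2}$, $\big(\sum_k|h_k|^{r'}\big)^{1/r'}\le\one_{E_3}$, define the exceptional set $\tilde\Omega=\{\mathcal{M}\one_{E_1}>C|E_1|/|E_3|\}\cup\{\mathcal{M}\one_{E_2}>C|E_2|/|E_3|\}$, set $E_3'=E_3\setminus\tilde\Omega$, decompose $\rr{P}(I_0)$ into the pieces with $1+\dist(I_P,\tilde\Omega^c)/|I_P|\sim 2^d$, and aim for the target $\lesssim 2^{-\tilde Md}|E_1|^{1/p}|E_2|^{1/q}|E_3|^{1/s'}$ with summability in $d$.

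The core of the argument is the triple stopping-time construction, which I would import verbatim from Section \ref{local L1}: build the collections $\mathscr{I}_1^{n_1}, \mathscr{I}_2^{n_2}, \mathscr{I}_3^{n_3}$ of maximal dyadic intervals adapted to the concentration of $\one_{E_1}\cdot\one_F$, $\one_{E_2}\cdot\one_G$, $\one_{E_3'}\cdot\one_{H'}$ respectively (using the modified size $\sssize$ of Definition \ref{def:modified-size}), with the key gains $2^{-n_1}\lesssim 2^d|E_1|/|E_3|$, $2^{-n_2}\lesssim 2^d|E_2|/|E_3|$, and the extra decay $2^{-n_3}\lesssim 2^{-\tilde Md}$ coming from the restriction to $\tilde\Omega^c$. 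Setting $\mathscr{I}^{n_1,n_2,n_3}=\mathscr{I}_1^{n_1}\cap\mathscr{I}_2^{n_2}\cap\mathscr{I}_3^{n_3}$, we split $\rr{P}(I_0)=\bigcup\bigcup_{I\in\mathscr{I}^{n_1,n_2,n_3}}\rr{P}(I)$ and estimate each $\sum_k\Lambda_{BHT;\rr{P}(I)}(f_k\cdot\one_F,g_k\cdot\one_G,h_k\cdot\one_{H'})$ by summing the localization estimate of Proposition \ref{Localization for local L^1 BHT} over $k$ and then applying H\"older in $k$ (i.e.\ in the $\ell^{r_1}$-$\ell^{r_2}$-$\ell^{r'}$ triple). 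Since $\big\|\big(\sum_k|f_k|^{r_1}\big)^{1/r_1}\big\|$ plays exactly the role that $\|f_w\|$ played in the scalar-valued-$w$ argument, the $k$-summation is cost-free; concretely, $\sum_k\|f_k\cdot\ci_I\|_{r_1}\|g_k\cdot\ci_I\|_{r_2}\|h_k\cdot\ci_I\|_{r'}\le\|\one_{E_1}\one_F\cdot\ci_I\|_{r_1}\|\one_{E_2}\one_G\cdot\ci_I\|_{r_2}\|\one_{E_3'}\one_{H'}\cdot\ci_I\|_{r'}$ by H\"older. This produces, after bounding each $\sssize_{\rr{P}(I)}$ by $\sssize_{\rr{P}(I_0)}$ and pulling those out as the operatorial norm $\tilde C$, a sum $\sum_{n_1,n_2,n_3}\sum_{I\in\mathscr{I}^{n_1,n_2,n_3}}2^{-n_1/p}2^{-n_2/q}2^{-n_3(1/s'+\epsilon)}|I|$, provided $1/p\le(1+\theta_1)/2$ and $1/q\le(1+\theta_2)/2$.

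Finally, I would close the $n_j$-sums exactly as in Section \ref{local L1}: using $\sum_{I\in\mathscr{I}^{n_1,n_2,n_3}}|I|\le\sum_{I\in\mathscr{I}_j^{n_j}}|I|\lesssim 2^{n_j}|E_j|$ for each $j$, interpolate to get $\sum_{I\in\mathscr{I}^{n_1,n_2,n_3}}|I|\lesssim(2^{n_1}|E_1|)^{\gamma_1}(2^{n_2}|E_2|)^{\gamma_2}(2^{n_3}|E_3|)^{\gamma_3}$ for any $\gamma_j\ge0$ with $\sum\gamma_j=1$, insert the gains $2^{-n_1}\lesssim 2^d|E_1|/|E_3|$, $2^{-n_2}\lesssim 2^d|E_2|/|E_3|$, $2^{-n_3}\lesssim 2^{-\tilde Md}$, and sum the resulting geometric series — feasible precisely when $1/p+1/q+(1+\theta_3)/2>1$, which is where the range constraint $C(r_1,r_2,r')$ enters. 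Summing over $d$ gives the generalized restricted type bound, and Proposition \ref{banach interpolation gen res} upgrades it (losing the harmless $\epsilon$ in the exponents of the sizes) to the stated inequality; the weights $\ci_{I_0}$ do not interfere with interpolation, as already noted. The cases $r_1=\infty$, $r_2=\infty$ or $r'=1$ are handled by the same adjoint trick as in Section \ref{the case r=1}. The main obstacle — really the only place requiring care beyond bookkeeping — is verifying that the $k$-summation genuinely costs nothing, i.e.\ that Proposition \ref{Localization for local L^1 BHT} can be applied with constants uniform in $k$ and that the subsequent H\"older step in $k$ assembles the pieces into the $\ell^{r_1}\times\ell^{r_2}\times\ell^{r'}$ norms with no loss; this works because the stopping-time collections $\mathscr{I}_j^{n_j}$ depend only on $\one_F,\one_G,\one_{H'}$ and on $E_1,E_2,E_3'$, \emph{not} on the individual $f_k,g_k,h_k$, so the whole decomposition is performed once and used simultaneously for all $k$.
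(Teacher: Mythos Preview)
Your proposal is correct and follows essentially the same approach as the paper's proof: generalized restricted type estimates via the triple stopping time on $\one_{E_1}\one_F$, $\one_{E_2}\one_G$, $\one_{E_3'}\one_{H'}$ (with all selected intervals contained in $I_0$), application of Proposition~\ref{Localization for local L^1 BHT} on each piece followed by H\"older in $k$, extraction of the $\sssize_{\rr{P}(I_0)}$ factors as the constant $\tilde C$, and the same geometric-series closing argument. Your explicit remark that the stopping-time collections are independent of $k$ (so the decomposition is performed once and used simultaneously for all $k$) is exactly the point that makes the argument go through, and matches the paper's reasoning.
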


The above proposition is an intermediate step in the proof of $L^p$ estimates for $\vBHT_{\vec{R}}$, in the case of two iterated vector spaces, which is presented below.

\begin{proposition}
\label{two iterated vv-BHT}
\begin{align*}
& \big \|  \left( \sum_{l}\left( \sum_{k} \big \vert BHT(f_{kl}, g_{kl}) \big \vert^{r}    \right)^{s/r}\right)^{1/s}   \big \|_t \\
&\leq C \big \|  \left( \sum_{l}\left( \sum_{k} \big \vert f_{kl}\big \vert^{r_1}    \right)^{{s_{1}}/{r_{1}}}  \right)^{1/{s_1}}   \big \|_p  \big \|  \left( \sum_{l}\left( \sum_{k} \big \vert g_{kl}\big \vert^{r_r}    \right)^{{s_{2}}/{r_{2}}}  \right)^{1/{s_2}}   \big \|_q
\end{align*}
\begin{proof}
Once again, we use generalized restricted type interpolation; $F, G, H$ are sets of finite measure, with $|H|=1$. The exceptional set is defined as usual, and $H'=H \setminus \Omega$. The sequences of functions will be so that 
\[
\left(\sum_l \left( \sum_k |f_{kl}|^{r_1}\right)^{\frac{s_1}{r_1}}\right)^{\frac{1}{s_1}} \leq \one_F, \left(\sum_l \left( \sum_k |g_{kl}|^{r_2}\right)^{\frac{s_2}{r_2}}\right)^{\frac{1}{s_2}} \leq \one_G, \left(\sum_l \left( \sum_k |h_{kl}|^{r'}\right)^{\frac{s'}{r'}}   \right)^{\frac{1}{s'}} \leq \one_{H'}.
\] 

The collections $\mathscr{I}_j^{n_j}$ are going to be chosen in the same way as in the proof of Theorem \ref{vector valued BHT}, depending on the sizes and averages of the characteristic functions $\one_F, \one_G, \one_{H'}$. Proposition \ref{local vv-BHT} yields the following:
{
\fontsize{10}{10}
\begin{align}
&\sum_k \vert \Lambda_{BHT; \rr{P}(I)}(f_{kl}, g_{kl}, h_{kl})  \vert  \lesssim \left( \sssize_{\rr{P}(I)} \one_F  \right)^{\frac{1+\theta_1}{2}-\frac{1}{s_1}-\epsilon} \cdot \left( \sssize_{\rr{P}(I)} \one_G  \right)^{\frac{1+\theta_2}{2}-\frac{1}{s_2}-\epsilon} \cdot \left( \sssize_{\rr{P}(I)} \one_{H'} \right)^{\frac{1+\theta_3}{2}-\frac{1}{s'}-\epsilon} \\
& \cdot \|  \left( \sum_k |f_{kl}|^{r_1}   \right)^{\frac{1}{r_1}}  \cdot \ci_I\|_{s_1}  \cdot \|  \left( \sum_k |g_{kl}|^{r_2}   \right)^{\frac{1}{r_2}}  \cdot \ci_I\|_{s_2} \cdot  \|  \left( \sum_k |h_{kl}|^{r'}   \right)^{\frac{1}{r'}}  \cdot \ci_I\|_{s'} \label{will use holder}
\end{align}
}
Then we sum \eqref{will use holder} over $l$ as well, and apply H\"{o}lder for the triple $s_1, s_2, s'$. In this way, we recover $\| \one_{F} \cdot \ci_I  \|_{s_1}$, and the corresponding quantities for the second and third entries. We have
\begin{align*}
&\vert \sum_{k,l} \Lambda_{BHT}(f_{kl}, g_{kl}, h_{kl})  \vert \\
&\lesssim \sum_{n_1, n_2, n_3} \sum_{\mathscr{I}^{n_1, n_2, n_3}} \left( \sssize_{\rr{P}(I)} \one_F  \right)^{\frac{1+\theta_1}{2}-\frac{1}{s_1}-\epsilon} \cdot \left( \sssize_{\rr{P}(I)} \one_G  \right)^{\frac{1+\theta_2}{2}-\frac{1}{s_2}-\epsilon} \cdot \left( \sssize_{\rr{P}(I)} \one_{H'} \right)^{\frac{1+\theta_3}{2}-\frac{1}{s'}-\epsilon} \\
&\quad \quad \quad  \cdot \frac{\| \one_F \cdot \ci_I    \|_{s_1}}{|I|^{1/{s_1}}} \cdot \frac{\| \one_G \cdot \ci_I    \|_{s_2}}{|I|^{1/{s_2}}} \cdot \frac{\| \one_{H'} \cdot \ci_I    \|_{s'}}{|I|^{1/{s'}}} \cdot |I| \\
& \lesssim \sum_{n_1, n_2, n_3} \sum_{\mathscr{I}^{n_1, n_2, n_3}} \left( ``\sssize"_{\rr{P}(I)} \one_F  \right)^{\frac{1+\theta_1}{2}-\frac{1}{p}-\epsilon} \cdot \left( ``\sssize"_{\rr{P}(I)} \one_G  \right)^{\frac{1+\theta_2}{2}-\frac{1}{q}-\epsilon} \cdot \left( ``\sssize"_{\rr{P}(I)} \one_{H'} \right)^{\frac{1+\theta_3}{2}-\frac{1}{t'} -\epsilon} \\
&\quad \quad \quad  \cdot  2^{- \frac{n_1}{p}} 2^{-\frac{n_2}{q}} 2^{-n_3 (\frac{-1}{t'}+ \epsilon) }  \cdot |I|.
\end{align*}

\begin{remark}
The ``sizes" appearing in the line above are not exactly the ones from Definition \ref{def size}, but the modified ones from Definition \ref{def:modified-size} . Note that
\[
\max \left( \sssize_{\rr{P}(I)} \one_F,  \frac{1}{|I|} \int_{\rr{R}} \one_{F} \cdot \ci_I^M dx\right) \leq ``\sssize"_{\rr{P}(I)} \one_F.
\]
This is the step where we can prove also the localized version of the statement in Proposition \ref{two iterated vv-BHT}. Assuming all the tiles are sitting above an interval $I_0$, we can obtain the same result with operatorial norm
\[
\left( \sssize_{\rr{P}(I_0)} \one_F  \right)^{\frac{1+\theta_1}{2}-\frac{1}{p}-\epsilon} \cdot \left( \sssize_{\rr{P}(I_0)} \one_G  \right)^{\frac{1+\theta_2}{2}-\frac{1}{q}-\epsilon} \cdot \left( \sssize_{\rr{P}(I_0)} \one_{H'} \right)^{\frac{1+\theta_3}{2}-\frac{1}{t'}-\epsilon}.
\]
\end{remark}
The rest of the proof is identical to the simpler vector case of Theorem \ref{vector valued BHT}; the quantities on $LHS$ add up to $|F|^{1/p} |G|^{1/q}$, provided 
\[
\frac{1+\theta_1}{2}>\frac{1}{p}, \quad \frac{1+\theta_2}{2}>\frac{1}{q}, \quad \frac{1+\theta_3}{2}>\frac{1}{s'}.
\] 
\end{proof}
\end{proposition}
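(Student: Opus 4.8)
The plan is to run the helicoidal scheme already used for Theorem~\ref{vector valued BHT}, but now feeding the localized single-vector estimate of Proposition~\ref{local vv-BHT} into the slot occupied there by Proposition~\ref{Localization for local L^1 BHT}. By the iterated $L^p$ version of Proposition~\ref{banach interpolation gen res} it is enough to prove a generalized restricted type bound for the form $\Lambda_{\vBHT}(f,g,h)=\sum_{k,l}\Lambda_{BHT}(f_{kl},g_{kl},h_{kl})$. So I would fix $F,G,H\subset\rr R$ of finite measure, normalize $|H|=1$, put $\Omega=\{\,\ic M\one_F>C|F|\,\}\cup\{\,\ic M\one_G>C|G|\,\}$ and $H'=H\setminus\Omega$, and assume the functions are normalized so that the iterated $\ell^{s_1}(\ell^{r_1})$ norm of $(f_{kl})_{k,l}$ is pointwise $\leq\one_F$, the $\ell^{s_2}(\ell^{r_2})$ norm of $(g_{kl})_{k,l}$ is $\leq\one_G$, and the $\ell^{s'}(\ell^{r'})$ norm of $(h_{kl})_{k,l}$ is $\leq\one_{H'}$; the target is $|\Lambda_{\vBHT}(f,g,h)|\lesssim|F|^{1/p}|G|^{1/q}$.

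Next I would split $\rr P$ into subcollections $\rr P^d$ according to $1+\dist(I_P,\Omega^c)/|I_P|\sim 2^d$, aiming for an extra $2^{-10d}$ gain that makes the eventual sum in $d$ converge, and inside each $\rr P^d$ build the three families $\mathscr I_1^{n_1}$, $\mathscr I_2^{n_2}$, $\mathscr I_3^{n_3}$ of maximal dyadic intervals exactly as in the proof of Theorem~\ref{vector valued BHT}: an interval $I$ is selected into $\mathscr I_1^{n_1}$ when it still contains an as yet unselected $I_P$ and $\frac1{|I|}\int_{\rr R}\one_F\,\ci_I^M\,dx\sim 2^{-n_1}$, and similarly with $\one_G$, $\one_{H'}$. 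Writing $\mathscr I^{n_1,n_2,n_3}=\mathscr I_1^{n_1}\cap\mathscr I_2^{n_2}\cap\mathscr I_3^{n_3}$ one obtains $\rr P^d=\bigcup_{n_1,n_2,n_3}\bigcup_{I\in\mathscr I^{n_1,n_2,n_3}}\rr P(I)$, with $\sssize_{\rr P(I)}\one_F\lesssim 2^{-n_1}$ on every such $I$ (and analogously for $\one_G$, $\one_{H'}$).

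On each piece $\rr P(I)$, and for a fixed value of $l$, I would invoke Proposition~\ref{local vv-BHT} with the $k$-index as the inner vector index and with $(s_1,s_2,s')$ playing the role of its outer exponents $(p,q,s)$; this bounds $\sum_k\lft\Lambda_{BHT;\rr P(I)}(f_{kl}\cdot\one_F,g_{kl}\cdot\one_G,h_{kl}\cdot\one_{H'})\rg$ by the operatorial constant $\big(\sssize_{\rr P(I)}\one_F\big)^{\frac{1+\theta_1}{2}-\frac1{s_1}-\epsilon}\big(\sssize_{\rr P(I)}\one_G\big)^{\frac{1+\theta_2}{2}-\frac1{s_2}-\epsilon}\big(\sssize_{\rr P(I)}\one_{H'}\big)^{\frac{1+\theta_3}{2}-\frac1{s'}-\epsilon}$ times the localized norms $\big\|(\sum_k|f_{kl}|^{r_1})^{1/r_1}\ci_I\big\|_{s_1}$, $\big\|(\sum_k|g_{kl}|^{r_2})^{1/r_2}\ci_I\big\|_{s_2}$, $\big\|(\sum_k|h_{kl}|^{r'})^{1/r'}\ci_I\big\|_{s'}$. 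Summing over $l$ and applying H\"{o}lder in $l$ with exponents $(s_1,s_2,s')$ reassembles $\|\one_F\ci_I\|_{s_1}$, $\|\one_G\ci_I\|_{s_2}$, $\|\one_{H'}\ci_I\|_{s'}$ out of the hypothesis; after dividing by the factors $|I|^{1/s_j}$, using $\sssize_{\rr P(I)}\leq\sssize_{\rr P(I_0)}$ and the budget $\frac1p\leq\frac{1+\theta_1}{2}$, $\frac1q\leq\frac{1+\theta_2}{2}$, the contribution of $\rr P(I)$ is at most the operatorial norm built from $\sssize_{\rr P(I_0)}$ of $\one_F,\one_G,\one_{H'}$, times $2^{-n_1/p}\,2^{-n_2/q}\,2^{-n_3(1/s'+\epsilon)}\,|I|$.

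To finish, sum over $I\in\mathscr I^{n_1,n_2,n_3}$ using $\sum_I|I|\lesssim(2^{n_1}|F|)^{\gamma_1}(2^{n_2}|G|)^{\gamma_2}(2^{n_3}|H|)^{\gamma_3}$ for any $0\leq\gamma_j\leq1$ with $\gamma_1+\gamma_2+\gamma_3=1$ (each factor coming from the weak $(1,1)$ bound for $\ic M$ applied to the single family $\mathscr I_j^{n_j}$), and then run the geometric series in $(n_1,n_2,n_3)$, which converges as soon as the $\gamma_j$ can be chosen with $\gamma_1<1/p$, $\gamma_2<1/q$, $\gamma_3<\frac{1+\theta_3}{2}$, i.e.\ provided $\frac1p+\frac1q+\frac{1+\theta_3}{2}>1$. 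Summing in $d$ with the $2^{-\widetilde{M}d}$ gain produces $|\Lambda_{\vBHT}(f,g,h)|\lesssim|F|^{1/p}|G|^{1/q}$, and interpolating over tuples near $(1/p,1/q,1/t')$ by means of the vector-valued interpolation proposition yields the claimed strong-type inequality; the same computation carried out with the modified sizes in place of the ``honest'' ones also gives the localized form of the statement. I expect the genuine obstacle to be the one that already limits Theorem~\ref{vector valued BHT}: on the intersection $\mathscr I^{n_1,n_2,n_3}$ one only controls $\sssize_{\rr P(I)}\one_F\lesssim 2^{-n_1}$ as an inequality, not an equivalence, which is exactly what forces $\frac1p+\frac1q+\frac{1+\theta_3}{2}>1$ and confines the full $Range(BHT)$ to the local $L^2$ regime where $\theta_1,\theta_2,\theta_3$ are unconstrained; a secondary nuisance is the bookkeeping ensuring that the final constant depends only on $I_0$ and on $F,G,H'$, for which one uses the spare $\epsilon$ in the exponents together with the monotonicity of $\sssize$ in the collection.
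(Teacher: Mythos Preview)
Your proposal is correct and follows essentially the same route as the paper: generalized restricted type setup with $|H|=1$ and $H'=H\setminus\Omega$, the stopping-time families $\mathscr I_j^{n_j}$ built from $\one_F,\one_G,\one_{H'}$ exactly as in the proof of Theorem~\ref{vector valued BHT}, application of Proposition~\ref{local vv-BHT} on each $\rr P(I)$ for fixed $l$, H\"older in $l$ with exponents $(s_1,s_2,s')$ to recover $\|\one_F\ci_I\|_{s_1}$ etc., and then the standard summation over $(n_1,n_2,n_3)$ via $\sum_I|I|\lesssim(2^{n_1}|F|)^{\gamma_1}(2^{n_2}|G|)^{\gamma_2}(2^{n_3}|H|)^{\gamma_3}$. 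Your remarks about the modified sizes and the localized version also match the paper's.
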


\section{Similar Results for Paraproducts : proof of Theorem \ref{multiple vector valued paraproducts}}
\label{paraproduct results}
The paraproduct case is similar to $BHT$, even though the bilinear Hilbert transform is a much more complicated object. The extra difficulties are hidden in Propositions \ref{BHT trilinear estimate}, but we will see from the proof of the vector-valued extensions that the complexity of the paraproduct case is comparable to the ``local $L^2$" case for  $BHT$. In both situations, we recover the maximal range for vector-valued estimates.

We will be working with the \emph{discrete paraproduct} of the functions $f$ and $g$, which is defined by
\[
\Pi(f, g)(x)=\sum_{I \in \ii{I}} \frac{1}{|I|^{1/2}} \langle f, \phi_I^1  \rangle \langle g, \phi_I^2 \rangle \phi_I^3(x).
\]
Here $\ii{I}$ is a family of dyadic intervals, and the wave packets $\lbrace \phi_I^j\rbrace_{I \in \ii{I}}$ are so that two of the families are \emph{lacunary} ($\phi_I^j$ is a wave packet on $\ I \times [\frac{1}{|I|}, \frac{2}{|I|}]$ ), and the third one is \emph{non-lacunary} ($\phi_I^{j_0}$ is a wave packet on $\ I \times [0,\frac{1}{|I|}]$ ). Again, we present the case of $\ell^p$ spaces, for simplicity. The operator we are interested in is
\[
\vec{\Pi}_{r}(f, g):=\left( \sum_{k=1}^N |\Pi(f_k, g_k)|^r   \right)^{1/r}.
\]
\begin{remark}
We could alternatively look at operators of the form
\[
(f, g) \mapsto \left( \sum_{k=1}^N |\Pi_k(f_k, g_k)|^r   \right)^{1/r},
\]
where each paraproduct $\Pi_k$ is associated to a family $\ii{I}_k$ of dyadic intervals. The $\Pi_k$s don't need to be precisely the same, but they display a similar behavior. Similarly, for $\vBHT$ we could have a ``perturbation" $BHT_w$ for each $w \in \ii{W}$, and the method of the proof applies in that case as well.
\end{remark}

\subsection{A few results about Paraproducts}
The concepts of \emph{sizes} and \emph{energies} are similar to the corresponding ones for the bilinear Hilbert transform; we don't need to organize the tiles into trees because the family of tiles is of rank $0$. We recall some definitions bellow.
\begin{definition}
Let $\mathscr{I}$ be a family of dyadic intervals. For any $1 \leq j \leq 3$, we define
\[
\ssize_{\mathscr{I}}\left( \langle f, \phi_I^j  \rangle_{I \in \mathscr{I}}   \right)=\sup_{I \in \ii{I}} \frac{|\langle f, \phi_I^j \rangle|}{|I|^{1/2}}, \quad \text{ if $(\phi_I^j)_I$ is non-lacunary and}
\]
\[
\ssize_{\mathscr{I}}\left( \langle f, \phi_I^j  \rangle_{I \in \mathscr{I}}   \right)=\sup_{I_0 \in \ii{I}}  \frac{1}{|I_0|^{1/2}} \| ( \sum_{\substack{I \subseteq I_0\\ I \in \ii{I}}} \frac{|\langle f, \phi_I^j \rangle|^2}{|I|}  \cdot \one_I )^{1/2}    \|_{1, \infty}, \quad \text{ if $(\phi_I^j)_I$ is lacunary.}
\]
Similarly to the $BHT$ case, energy is defined as
\[
\eenergy^j_{\ii{I}} \left( \langle f, \phi_I^j   \rangle_{I \in \ii{I}}   \right):=\sup_{n \in \rr{Z}} 2^n \sup_{\rr{D}} (\sum_{I \in \rr{D}}|I|)
\]
where $\rr{D}$ ranges over all collections of disjoint intervals $I_0$ with the property that 
\begin{align*}
& \frac{|\langle f, \phi_{I_0}^j \rangle|}{|I_0|^{1/2}} \geq 2^n, \quad \text{ if $(\phi_{I}^j)_I$ is non-lacunary and respectively} \\
&\frac{1}{|I_0|^{1/2}} \| ( \sum_{\substack{I \subseteq I_0\\ I \in \ii{I}}} \frac{|\langle f, \phi_I^j \rangle|^2}{|I|}  \cdot \one_I )^{1/2}    \|_{1, \infty} \geq 2^n, \quad \text{ if $(\phi_I^j)_I$ is lacunary.}
\end{align*}

\end{definition}

We have estimates similar to Lemma \ref{BHT size estimate} and Lemma \ref{BHT energy estimate}. However, because we don't need to use orthogonality of trees, the energy becomes an $L^1$ quantity.

\begin{lemma}[Lemma 2.13 of \cite{multilinear_harmonic}] If $F$ is an $L^1$ function and $1 \leq j \leq 3$, then
\[
\ssize_{\ii{I}}^j(\langle F, \phi_I^j  \rangle_{I \in \ii{I}}) \lesssim \sup_{I \in \ii{I}}\frac{1}{|I|} \int_{\rr{R}} |F| \ci_{I}^M dx
\]
for $M>0$, with implicit constants depending on $M$.
\end{lemma}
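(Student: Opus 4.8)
\medskip
\noindent\textbf{Proof proposal.} The inequality is a size/average comparison for Littlewood--Paley type families, and the only structural input is the pointwise bound on wave packets coming from $L^2$-adaptedness: $\lft \phi_I^j(x)\rg \lesssim \lft I\rg^{-1/2}\ci_I(x)^{M'}$ for every $M'$, with $M'$ chosen as large as needed. It is convenient to treat the two cases in the definition of $\ssize$ separately. In the \emph{non-lacunary} case we have $\ssize_{\ii I}^j(\langle F,\phi_I^j\rangle_{I\in\ii I})=\sup_{I\in\ii I}\lft I\rg^{-1/2}\lft\langle F,\phi_I^j\rangle\rg$, and for each $I$ the pointwise bound gives at once
\[
\frac{\lft\langle F,\phi_I^j\rangle\rg}{\lft I\rg^{1/2}}\le\frac{1}{\lft I\rg^{1/2}}\int_{\rr R}\lft F(x)\rg\,\lft\phi_I^j(x)\rg\,dx\lesssim\frac{1}{\lft I\rg}\int_{\rr R}\lft F(x)\rg\ci_I(x)^{M'}\,dx,
\]
so taking the supremum over $I\in\ii I$ finishes this case with $M=M'$.

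In the \emph{lacunary} case one must control $\lft I_0\rg^{-1/2}\big\|\big(\sum_{I\subseteq I_0,\,I\in\ii I}\lft I\rg^{-1}\lft\langle F,\phi_I^j\rangle\rg^2\one_I\big)^{1/2}\big\|_{1,\infty}$ for every $I_0\in\ii I$. Fix $I_0$ and decompose $F=F_0+\sum_{k\ge1}F_k$, where $F_0=F\cdot\one_{3I_0}$ and $F_k=F\cdot\one_{2^{k+1}I_0\setminus 2^kI_0}$. For the local piece $F_0$, the map $F_0\mapsto\big(\sum_{I\subseteq I_0}\lft I\rg^{-1}\lft\langle F_0,\phi_I^j\rangle\rg^2\one_I\big)^{1/2}$ is, after a rescaling, a truncated Littlewood--Paley square function built from the lacunary family $(\phi_I^j)$, hence of weak type $(1,1)$ by Calder\'on--Zygmund theory; this yields $\|\cdot\|_{1,\infty}\lesssim\|F_0\|_1=\int_{3I_0}\lft F\rg\lesssim\int_{\rr R}\lft F\rg\ci_{I_0}^M\,dx$, which after the normalization is bounded by $\sup_I\frac{1}{\lft I\rg}\int_{\rr R}\lft F\rg\ci_I^M\,dx$. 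For $k\ge1$ the support of $F_k$ lies at distance $\sim 2^k\lft I_0\rg$ from every $I\subseteq I_0$, so the wave-packet decay gives $\lft\langle F_k,\phi_I^j\rangle\rg\lesssim 2^{-kM'}\lft I\rg^{1/2}\cdot\frac{1}{\lft I\rg}\int_{\rr R}\lft F_k\rg\ci_I^M\,dx$; feeding this into the square function produces an extra factor $2^{-kM'}$, and the resulting series over $k$ converges. Summing the two contributions proves the claim.

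The step that needs genuine care is the treatment of the lacunary piece: the weak $(1,1)$ bound for the truncated square function (equivalently, the observation that the truncation $I\subseteq I_0$ does not interfere with the Calder\'on--Zygmund structure because of the rapid decay of the underlying kernels), together with an honest bookkeeping of the weights $\ci_I^M$ versus $\ci_{I_0}^M$, so that the quantity that finally appears on the right is really $\sup_I\frac{1}{\lft I\rg}\int\lft F\rg\ci_I^M\,dx$ and not a weaker average. Everything else is an immediate consequence of the pointwise decay of wave packets, and both ingredients are classical, which is why this is quoted as a lemma from \cite{multilinear_harmonic}.
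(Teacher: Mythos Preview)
The paper does not give its own proof of this lemma; it is simply quoted from \cite{multilinear_harmonic}. Your argument is the standard one and is essentially correct, with two caveats worth recording.

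First, your ``after the normalization'' step for the local piece $F_0$ only works if the normalizing factor in the lacunary size is $|I_0|^{-1}$, as in the original reference, and not $|I_0|^{-1/2}$ as printed here. With $|I_0|^{-1/2}$ the weak $(1,1)$ bound yields
\[
\frac{1}{|I_0|^{1/2}}\|S_{I_0}F_0\|_{1,\infty}\lesssim\frac{1}{|I_0|^{1/2}}\int|F|\,\ci_{I_0}^M\,dx=|I_0|^{1/2}\cdot\frac{1}{|I_0|}\int|F|\,\ci_{I_0}^M\,dx,
\]
which carries an extra $|I_0|^{1/2}$ and does not close; indeed the statement is false with that normalization (take $\ii I=\{I_0\}$ and $F=\overline{\phi_{I_0}}$). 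This is a typo in the paper, not a flaw in your reasoning, but you should be explicit about which normalization you are using.

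Second, for the far pieces $F_k$ your pointwise bound $|\langle F_k,\phi_I^j\rangle|\lesssim 2^{-kM'}|I|^{1/2}\cdot\frac{1}{|I|}\int|F_k|\ci_I^M$ together with $\frac{1}{|I|}\int|F_k|\ci_I^M\le A$ is not by itself enough to control the square function: at a fixed $x\in I_0$ there is one $I\in\ii I$ per dyadic scale containing $x$, and the scale sum diverges. What saves you is that on $\supp F_k$ one actually has $\ci_I(x)\lesssim\big(|I|/(2^k|I_0|)\big)^{100}$, so the wave--packet decay carries an additional factor $(|I|/|I_0|)^{100M'}$ which makes the sum over scales geometric. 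This is an easy fix and does not change the structure of your proof.
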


\begin{lemma}[Lemma 2.14 of \cite{multilinear_harmonic}]
If $F$ is an $L^1$ function and $1 \leq j \leq 3$, then 
$$\ds \eenergy_{\ii{I}}^j(\langle F, \phi_I^j  \rangle_{I \in \ii{I}}) \lesssim \| F \|_1.$$
\end{lemma}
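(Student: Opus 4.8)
\emph{Proof proposal.} The plan is to exploit the feature that distinguishes paraproducts from the bilinear Hilbert transform: here the energy is an $L^1$ quantity, because for a \emph{single} dyadic interval $I_0$ the relevant density is pointwise dominated by the Hardy--Littlewood maximal function $\mathcal M F$, which is of weak type $(1,1)$. (For $BHT$ one instead controls only an $\ell^2$-sum over a \emph{tree}, whence the $L^2$ bound of Lemma \ref{BHT energy estimate}.) I would treat the non-lacunary and lacunary families $(\phi_I^j)_I$ separately, reducing in both cases, for each fixed level $2^n$, to a disjoint family $\rr D$ of intervals $I_0$ whose union lies in a super-level set of $\mathcal M F$.

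In the non-lacunary case $\phi_{I_0}^j$ is $L^2$-adapted to $I_0$ with rapid decay, hence
\[
\frac{\lft\langle F,\phi_{I_0}^j\rangle\rg}{|I_0|^{1/2}}\lesssim\frac{1}{|I_0|}\int_{\rr R}|F|\cdot\ci_{I_0}^M\,dx\lesssim\inf_{x\in I_0}\mathcal M F(x).
\]
So if $\rr D$ is a disjoint collection of dyadic intervals realizing $\lft\langle F,\phi_{I_0}^j\rangle\rg\,|I_0|^{-1/2}\geq 2^n$, then each $I_0\in\rr D$ is contained in $\{x:\mathcal M F(x)\gtrsim 2^n\}$, and disjointness together with the weak $(1,1)$ inequality gives $\sum_{I_0\in\rr D}|I_0|\lesssim 2^{-n}\|F\|_1$. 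Multiplying by $2^n$ and taking the supremum over $n$ and $\rr D$ settles this case.

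For the lacunary case I would run the same scheme with the truncated square function $G_{I_0}(x):=\big(\sum_{I\subseteq I_0,\,I\in\ii I}|I|^{-1}\lft\langle F,\phi_I^j\rangle\rg^{2}\one_I(x)\big)^{1/2}$ in place of the single coefficient; the density entering the energy is then a fixed multiple of $|I_0|^{-1}\big\|G_{I_0}\big\|_{1,\infty}$. The one extra ingredient is the bound
\[
\big\|G_{I_0}\big\|_{1,\infty}\lesssim|I_0|\,\inf_{x\in I_0}\mathcal M F(x),
\]
which I would prove by splitting $F=F\one_{3I_0}+F\one_{(3I_0)^c}$: for the near part the dyadic square function is of weak type $(1,1)$ (a vector-valued Calder\'on--Zygmund operator), giving $\|G_{I_0}^{\mathrm{near}}\|_{1,\infty}\lesssim\|F\one_{3I_0}\|_1\lesssim|I_0|\inf_{I_0}\mathcal M F$; for the far part, the rapid decay of the $\phi_I^j$ yields $\lft\langle F\one_{(3I_0)^c},\phi_I^j\rangle\rg\lesssim|I|^{1/2}(|I|/|I_0|)^{M-2}\,|I_0|^{-1}\int_{\rr R}|F|\ci_{I_0}^{2}\,dx$, and since for each $x$ the intervals $I\subseteq I_0$ with $x\in I$ are nested with one per scale, the geometric series collapses to a pointwise bound $G_{I_0}^{\mathrm{far}}(x)\lesssim\inf_{x'\in I_0}\mathcal M F(x')$ on $I_0$. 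Given this, the count of $\sum_{I_0\in\rr D}|I_0|$ at level $2^n$ goes exactly as in the non-lacunary case.

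The genuinely delicate point is the lacunary case --- the weak $(1,1)$ bound for the truncated square function together with the bookkeeping of the off-diagonal tails; the non-lacunary case and the final geometric summation are routine. (As the statement is quoted as Lemma 2.14 of \cite{multilinear_harmonic}, in practice one just invokes it, but this is the argument behind it.)
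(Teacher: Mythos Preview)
The paper does not give its own proof of this lemma: it is quoted verbatim as Lemma 2.14 of \cite{multilinear_harmonic} and used as a black box (the authors say explicitly at the start of Section \ref{some classical results BHT} that they use Chapter 6 of \cite{multilinear_harmonic} as a black box, and the same applies to the paraproduct lemmas). So there is nothing in the paper to compare against.

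Your sketch is the standard argument and is correct. The mechanism---show that each interval $I_0$ participating in the energy at level $2^n$ satisfies $I_0\subseteq\{x:\mathcal M F(x)\gtrsim 2^n\}$, then use disjointness and the weak $(1,1)$ inequality for $\mathcal M$---is exactly how the result is proved in \cite{multilinear_harmonic}. The non-lacunary case is immediate from the decay of $\phi_{I_0}^j$, and in the lacunary case the extra ingredient you identify (the weak $(1,1)$ bound for the localized discrete square function $G_{I_0}$, obtained by splitting into near and far parts) is precisely what is needed. One small remark: in the paper's transcription of the lacunary size the normalization is written as $|I_0|^{-1/2}$; regardless of how one writes the normalization, what matters is that the quantity is pointwise controlled by $\inf_{x\in I_0}\mathcal M F(x)$, which is what you establish.
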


\begin{proposition}[Proposition 2.12 of \cite{multilinear_harmonic}]
\label{paraproduct estimates}
Given a paraproduct $\Pi$ associated with a family $\ii{I}$ of intervals,
\begin{align*}
& \lft\Lambda_{\Pi}(f_1, f_2, f_3) |=| \sum_{I \in \ii{I}} \frac{1}{|I|^{1/2}} \langle f_1, \phi_I^1 \rangle \langle f_2, \phi_I^2 \rangle \langle f_3, \phi_I^3 \rangle  \rg\\
&\lesssim \prod_{j=1}^3 \left( \ssize_{\ii{I}}^{(j)} ( \langle f_j, \phi_I^j  \rangle_{I \in \ii{I}} )  \right)^{1-{\theta_j}} \left( \eenergy_{\ii{I}}^{(j)} (\langle f_j, \phi_I^j  \rangle_{I \in \ii{I}}   )  \right)^{\theta_j},
\end{align*}
for any $0 \leq \theta_1, \theta_2, \theta_3 <1$ such that $\theta_1+\theta_2+\theta_3=1$, where the implicit constant depends on $\theta_1, \theta_2, \theta_3$ only.
\end{proposition}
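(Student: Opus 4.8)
The plan is to prove this the classical way for a rank--$0$ collection of intervals: establish a \emph{single tree estimate}, run a \emph{stopping-time selection} that sorts $\ii{I}$ into ``forests'' indexed by the three size-levels, and then sum geometric series. By the invariance of the asserted inequality under permutations of $\{1,2,3\}$ I may assume that $(\phi_I^1)_{I\in\ii{I}}$ is the non-lacunary family, while $(\phi_I^2)_{I\in\ii{I}}$ and $(\phi_I^3)_{I\in\ii{I}}$ are lacunary.

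\emph{Single tree estimate.} For a dyadic interval $I_0$ set $\ii{I}_0:=\{I\in\ii{I}:\ I\subseteq I_0\}$, and write $\ssize_{\ii{I}_0}^{(j)}$ for $\ssize_{\ii{I}_0}(\langle f_j,\phi_I^j\rangle_{I\in\ii{I}_0})$. The first step is
\[
\lft \sum_{I\in\ii{I}_0}\frac{1}{|I|^{1/2}}\langle f_1,\phi_I^1\rangle\langle f_2,\phi_I^2\rangle\langle f_3,\phi_I^3\rangle\rg\lesssim |I_0|\,\ssize_{\ii{I}_0}^{(1)}\,\ssize_{\ii{I}_0}^{(2)}\,\ssize_{\ii{I}_0}^{(3)}.
\]
For the non-lacunary factor one simply pulls out $\sup_{I\in\ii{I}_0}|\langle f_1,\phi_I^1\rangle|/|I|^{1/2}=\ssize_{\ii{I}_0}^{(1)}$. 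For the two lacunary factors one bounds, pointwise in $x$ and by the Cauchy--Schwarz inequality in the $I$-sum,
\[
\sum_{I\in\ii{I}_0}\frac{|\langle f_2,\phi_I^2\rangle|\,|\langle f_3,\phi_I^3\rangle|}{|I|}\,\one_I(x)\le\Big(\sum_{I\in\ii{I}_0}\frac{|\langle f_2,\phi_I^2\rangle|^2}{|I|}\one_I(x)\Big)^{1/2}\Big(\sum_{I\in\ii{I}_0}\frac{|\langle f_3,\phi_I^3\rangle|^2}{|I|}\one_I(x)\Big)^{1/2},
\]
then integrates in $x$ and applies Cauchy--Schwarz once more; the two square functions live, up to the rapidly decaying $\ci_I$-tails, on $I_0$, and the John--Nirenberg inequality upgrades the weak-$L^1$ normalisation built into $\ssize_{\ii{I}_0}^{(j)}$ to the $L^2(I_0)$-bound $\|(\sum_{I\in\ii{I}_0}|\langle f_j,\phi_I^j\rangle|^2|I|^{-1}\one_I)^{1/2}\|_2\lesssim|I_0|^{1/2}\ssize_{\ii{I}_0}^{(j)}$, which closes the estimate.

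\emph{Selection and assembly.} For each $j$ I would run a stopping time on $\ii{I}$: starting from the largest dyadic $2^{-n_j}\lesssim S_j:=\ssize_{\ii{I}}^{(j)}$ and decreasing, select at each level a maximal family of disjoint tops $I_0$ whose trees $\ii{I}_0$ have $\ssize_{\ii{I}_0}^{(j)}\sim 2^{-n_j}$, remove the intervals below, and iterate; this exhausts $\ii{I}$ and sorts it into a disjoint union of forests $\mathcal{F}_{n_1,n_2,n_3}$. Because the paraproduct energy carries \emph{no} square root on $\sum|I|$, the definition of $\eenergy_{\ii{I}}^{(j)}$ gives, for the disjoint tops selected at level $2^{-n_j}$, the count $\sum_{I_0}|I_0|\lesssim 2^{n_j}\eenergy_{\ii{I}}^{(j)}$. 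Summing the single tree estimate over all trees grouped by forest,
\[
\lft \Lambda_{\Pi}(f_1,f_2,f_3)\rg\lesssim\sum_{n_1,n_2,n_3}2^{-n_1-n_2-n_3}\sum_{I_0\in\mathcal{F}_{n_1,n_2,n_3}}|I_0|,
\]
and, for any $\theta_1,\theta_2,\theta_3\ge0$ with $\theta_1+\theta_2+\theta_3=1$, the elementary bound $\sum_{I_0\in\mathcal{F}_{n_1,n_2,n_3}}|I_0|\le\min_j 2^{n_j}\eenergy_{\ii{I}}^{(j)}\le\prod_j(2^{n_j}\eenergy_{\ii{I}}^{(j)})^{\theta_j}$ turns this into $\prod_j(\eenergy_{\ii{I}}^{(j)})^{\theta_j}\sum_{n_1,n_2,n_3}\prod_j 2^{-n_j(1-\theta_j)}$; since each $\theta_j<1$ the geometric series in $n_j$ (ranging over $2^{-n_j}\lesssim S_j$) converges and contributes a multiple of $S_j^{1-\theta_j}$, giving exactly $\prod_j(\ssize_{\ii{I}}^{(j)})^{1-\theta_j}(\eenergy_{\ii{I}}^{(j)})^{\theta_j}$.

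\emph{Main obstacle.} The delicate point is the stopping-time bookkeeping: one must arrange the selection so that every $I\in\ii{I}$ is assigned to exactly one tree, so that the trees extracted at level $2^{-n_j}$ have local size \emph{comparable} to (not merely bounded by) that level, and so that the energy count is applied to a genuinely disjoint family of tops. Here the three simultaneous stopping times interact harmlessly precisely because $\ssize_{\ii{I}}^{(j)}$ and $\eenergy_{\ii{I}}^{(j)}$ are benign (weak-)$L^1$ averages; this is exactly the place where, by contrast, the $BHT$ analogue in Proposition \ref{BHT trilinear estimate} is forced to route through $L^2$ energies and chains of strongly disjoint trees. A secondary nuisance is the John--Nirenberg step in the single tree estimate, which is what makes the lacunary size a robust $L^2$-type average of the wave-packet coefficients rather than a bare weak-$L^1$ quantity.
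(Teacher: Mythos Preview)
The paper does not contain a proof of this proposition; it is quoted verbatim as Proposition~2.12 of \cite{multilinear_harmonic} and used as a black box. Your proposal reproduces the standard argument from that reference: the single tree estimate via Cauchy--Schwarz on the two lacunary square functions together with the John--Nirenberg upgrade, the size decomposition lemma that sorts $\ii{I}$ into disjoint-top forests at each dyadic size level (with the $L^1$-type energy counting the tops as $\sum|I_0|\lesssim 2^{n_j}\eenergy^{(j)}$), and the geometric summation after interpolating the three top-counts with weights $\theta_j$. This is correct and is essentially the proof in \cite{multilinear_harmonic}.
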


While the above proposition is the main ingredient, we need ``localized" estimates. If $I_0$ is some fixed dyadic interval, then we define
\[
\Pi(I_0)(f, g)(x)=\sum_{\substack{I \in \ii{I} \\ I \subseteq I_0}} \frac{1}{|I|^{1/2}} \langle f, \phi_I^1 \rangle \langle g, \phi_I^2 \rangle \phi_I^3(x). 
\]
Here again we need some localization results which play the role of Proposition \ref{Localization for local L^1 BHT} and Corollary \ref{Localization for BHT with target L^1 } from the $BHT$ case. 

The trilinear form associated to the localized paraproduct is given by
\[
\Lambda_{\Pi(I_0)}^{F, G, H'}(f, g, h):=\Lambda_{\Pi(I_0)}(f \cdot \one_F, g \cdot \one_G, h \cdot \one_{H'}).
\]

\begin{proposition}
\label{Localization for local L^1 paraproducts}
Let $I_0$ be a fixed dyadic interval and $F, G, H' \subset \rr{R}$ sets of finite measure. Then there exist some positive numbers $0 \leq a_1, a_2, a_3 < 1 $ so that 
\[
|\Lambda_{\Pi(I_0)}^{F, G, H'}(f, g, h)| \lesssim \sssize_{\ii{I}(I_0)}(\one_{F})^{a_1} \cdot \sssize_{\ii{I}(I_0)}(\one_{G})^{a_2} \cdot  \sssize_{\ii{I}(I_0)}(\one_{H'})^{a_3} \cdot \| f \cdot \ci_{I_0}\|_{r_1}   \| g \cdot \ci_{I_0} \|_{r_2} \| h \cdot \ci_{I_0} \|_{r'} 
\] 
whenever $\ds \frac{1}{r_1}+\frac{1}{r_2}+\frac{1}{r'}=1$, and $ 1< r_1, r_2, r' < \infty$. Here $\ds a_j =1-\frac{1}{r_j}-\epsilon$.
\begin{proof}
The idea of the proof is very similar to that of Proposition \ref{Localization for local L^2 BHT}. Restricted type estimates are proved by performing a triple stopping time and then the result follows by interpolation. We leave the routine details to the reader.
\end{proof}
\end{proposition}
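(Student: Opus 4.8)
The approach is to mirror the proof of Proposition~\ref{Localization for local L^2 BHT}; the paraproduct case is lighter than the $BHT$ localization of Proposition~\ref{Localization for local L^1 BHT} because for paraproducts the energies are $L^1$ quantities (Lemma 2.14 of \cite{multilinear_harmonic}), so the analogue of the constraint \eqref{condition} is just $1-\tfrac1{r_j}>0$, which holds automatically for $1<r_j<\infty$; in particular no exceptional set is needed. Since the target exponents $a_j=1-\tfrac1{r_j}-\epsilon$ are all strictly positive, it suffices to prove restricted type estimates and then invoke the interpolation Theorem~\ref{interp thm restricted type}, the weights $\ci_{I_0}$ being harmless for interpolation. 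So I fix sets $E_1,E_2,E_3$ of finite measure and reduce to bounding $|\Lambda_{\Pi(I_0)}^{F,G,H'}(f,g,h)|$, for $|f|\le\one_{E_1}$, $|g|\le\one_{E_2}$, $|h|\le\one_{E_3}$, by
\[
\sssize_{\ii I(I_0)}(\one_F)^{a_1}\,\sssize_{\ii I(I_0)}(\one_G)^{a_2}\,\sssize_{\ii I(I_0)}(\one_{H'})^{a_3}\,|E_1|^{\alpha_1}|E_2|^{\alpha_2}|E_3|^{\alpha_3}
\]
for every admissible $(\alpha_1,\alpha_2,\alpha_3)$ in a small neighbourhood of $\big(\tfrac1{r_1},\tfrac1{r_2},\tfrac1{r'}\big)$.

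The first step is to localize to $I_0$. Decomposing $f=\sum_{k\ge0}f\cdot\one_{\ic I_k}$ with $\ic I_k:=2^{k+1}I_0\setminus2^kI_0$ and $\ic I_0:=2I_0$, and likewise for $g$ and $h$, one checks — exactly as in Lemma~\ref{lemma:localized_energy} and Lemma~\ref{lemma:refined-trilinear-estimate}, but with the $L^1$ paraproduct energy in place of the $L^2$ one — that a function supported on $\ic I_k$ contributes an extra factor $2^{-kM}$ to both its size and its energy relative to $\ii I(I_0)$. Summing the resulting geometric series against the $\ci_{I_0}$-profile (for $M$ large, exactly as in the proof of Lemma~\ref{lemma:refined-trilinear-estimate}) produces the weighted norms $\|f\cdot\ci_{I_0}\|_{r_1}$, $\|g\cdot\ci_{I_0}\|_{r_2}$, $\|h\cdot\ci_{I_0}\|_{r'}$ on the right, so from now on we may assume $E_1,E_2,E_3\subseteq5I_0$ and it is enough to bound the left side by the same product with $\|f\|_{r_1}\|g\|_{r_2}\|h\|_{r'}$.

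For this localized piece I would apply Proposition~\ref{paraproduct estimates} directly to $\Lambda_{\Pi(I_0)}(f\cdot\one_F,g\cdot\one_G,h\cdot\one_{H'})$ with $(\theta_1,\theta_2,\theta_3)$ close to $\big(\tfrac1{r_1},\tfrac1{r_2},\tfrac1{r'}\big)$ and $\theta_1+\theta_2+\theta_3=1$. The size factors play the role of constants: by Lemma 2.13 of \cite{multilinear_harmonic},
\[
\ssize^{(j)}_{\ii I(I_0)}\!\big(\langle f\cdot\one_F,\phi_I^j\rangle\big)\lesssim\sup_{I}\frac1{|I|}\int_{\rr R}\one_F\cdot\ci_I^M\,dx\le\sssize_{\ii I(I_0)}(\one_F),
\]
and likewise for $g\cdot\one_G$ and $h\cdot\one_{H'}$ — this is exactly where the modified size of Definition~\ref{def:modified-size} is the right object, and one may equivalently organize this step via a triple stopping time over the maximal dyadic intervals realizing these three averages, as in Section~\ref{local L1} (now without the exceptional set, since all three indices are ``good''). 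The energy factors carry the $L^p$ information: by Lemma 2.14 of \cite{multilinear_harmonic}, $\eenergy^{(j)}_{\ii I(I_0)}(\langle f\cdot\one_F,\phi_I^j\rangle)\lesssim\|f\cdot\one_F\|_1\le|E_1|$, and likewise $|E_2|$, $|E_3|$. Hence
\[
|\Lambda_{\Pi(I_0)}^{F,G,H'}(f,g,h)|\lesssim\sssize_{\ii I(I_0)}(\one_F)^{1-\theta_1}\sssize_{\ii I(I_0)}(\one_G)^{1-\theta_2}\sssize_{\ii I(I_0)}(\one_{H'})^{1-\theta_3}\,|E_1|^{\theta_1}|E_2|^{\theta_2}|E_3|^{\theta_3},
\]
a restricted type estimate whose constant depends only on $\one_F,\one_G,\one_{H'}$, $I_0$, and the $\theta_j$ — not on $f,g,h$. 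Letting $(\theta_1,\theta_2,\theta_3)$ range over a neighbourhood of $\big(\tfrac1{r_1},\tfrac1{r_2},\tfrac1{r'}\big)$ inside the simplex, Theorem~\ref{interp thm restricted type} upgrades this to strong type, at the usual cost of an $\epsilon$-loss in each size exponent; reinstating the $\ci_{I_0}$ weights gives the claim with $a_j=1-\tfrac1{r_j}-\epsilon$.

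The only point needing care — and it is a soft one — is the localization step: one must check that the $2^{-kM}$ decay over the shells $\ic I_k$ beats the number of dyadic subintervals of $I_0$ at each scale, which forces $M$ large depending on $r_1,r_2,r'$ but introduces no circularity. Everything else is a direct transcription of the scalar paraproduct estimate, and, unlike the general $BHT$ case, no bad-index interpolation is required because the relevant exponents are automatically positive.
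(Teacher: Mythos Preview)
Your proof is correct and follows essentially the route the paper points to: reduce to restricted type, apply Proposition~\ref{paraproduct estimates} directly with the $L^1$ energy Lemma~2.14 and the size Lemma~2.13, then interpolate via Theorem~\ref{interp thm restricted type}, picking up the $\epsilon$-loss from varying $(\theta_1,\theta_2,\theta_3)$ over a neighbourhood of $\big(\tfrac1{r_1},\tfrac1{r_2},\tfrac1{r'}\big)$. One remark: the paper's sketch mentions a triple stopping time, whereas you show (correctly) that in the paraproduct setting the direct size--energy bound already yields the restricted type estimate without any stopping time, because the energy is an $L^1$ quantity and the exponents $1-\tfrac1{r_j}$ are automatically positive; the stopping time is only an alternative bookkeeping device here, not an essential ingredient as it was in Proposition~\ref{Localization for local L^1 BHT}.
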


The case $r=1$ is obtained through interpolation of restricted type estimates only. This comes in contrast with the $r=1$ case for $BHT$, where generalized restricted type interpolation is necessary. More exactly, for the $BHT$ operator, in order to conclude estimates for $\ds (\frac{1}{r_1}, \frac{1}{r_2}, 0)$, one needs to interpolate between good ($\beta_i>0$) and bad ($\beta_3 <0$) tuples $\beta=(\beta_1, \beta_2, \beta_3)$.

\begin{proposition}
\label{Localization for paraproducts with target L^1}
If $H'$ is a fixed set of finite measure, 
\begin{equation}
\label{paraproduct multilinear form when target space is L^1}
\big \vert \Lambda_{\Pi(I_0)}(f, g, \one_{H'}) \big \vert  \lesssim \sssize_{\ii{I}(I_0)}(\one_{H'}) \| f \cdot \ci_{I_0}\|_p   \| g \cdot \ci_{I_0} \|_q,
\end{equation}
whenever $\ds \frac{1}{p}+\frac{1}{q}=1$, and  $1< p, q < \infty$.
\begin{proof}
In this case $\Lambda_{\Pi(I_0)}(f, g, \one_{H'})$ becomes a bilinear form with respect to the first two entries. Because of the decay of $\ci_{I_0}$, it will be sufficient to prove the proposition in the case $\text{supp }f, g \subseteq 5 I_0$. By Theorem \ref{interp thm restricted type}, it will be enough to show restricted type estimates for the bilinear form
\[
(f,g) \mapsto \Lambda_{\Pi(I_0)}(f, g, \one_{H'}).
\]
Let $F$ and $G$ be sets of finite measure and $|f| \leq \one_F $ and $|g| \leq \one_G$. Using Proposition \ref{paraproduct estimates} with $\theta_3=0$ and estimating $\ds \sssize_{\ii{I}(I_0)}(f) \lesssim 1, \sssize_{\ii{I}(I_0)}(g) \lesssim 1$, we get
\[
|\Lambda_{\Pi(I_0)}(f, g, \one_{H'})| \lesssim \sssize_{\ii{I}(I_0)} (\one_{H'}) |F|^{\theta_1} |G|^{\theta_2},
\] 
where $\theta_1+\theta_2=1$ and $0< \theta_1, \theta_2<1.$ This proves restricted type estimates in a small neighborhood of $\ds \left(  \frac{1}{p}, \frac{1}{q}\right)$.
\end{proof}
\end{proposition}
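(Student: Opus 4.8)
The plan is to freeze the third entry at $\one_{H'}$ and treat $B(f,g):=\Lambda_{\Pi(I_0)}(f,g,\one_{H'})$ as a \emph{bilinear} form in $(f,g)$, then prove restricted-type bounds for $B$ and interpolate. First I would reduce to the case $\supp f,\supp g\subseteq 5I_0$: writing $f=\sum_{k\ge 0}f\cdot\one_{\ic I_k}$ with $\ic I_k:=2^{k+1}I_0\setminus 2^k I_0$ and $\ic I_0:=2I_0$ exactly as in the proof of Lemma~\ref{lemma:refined-trilinear-estimate}, the pairings $\langle f\cdot\one_{\ic I_k},\phi^j_I\rangle$ with $I\subseteq I_0$ acquire a gain $2^{-Mk}$ from the $L^2$-adaptedness of $\phi^j_I$ to $I$; these gains are summable and reconstitute the weights $\ci_{I_0}$ on the right-hand side, so it is enough to prove $\big\vert B(f,g)\big\vert\lesssim \sssize_{\ii{I}(I_0)}(\one_{H'})\,\|f\|_p\|g\|_q$ for $f,g$ supported in $5I_0$.

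By Theorem~\ref{interp thm restricted type} it then suffices to establish restricted-type estimates for $B$ in a neighborhood of $(1/p,1/q)$: for $F,G$ of finite measure and $|f|\le\one_F$, $|g|\le\one_G$ (which we may take supported in $5I_0$),
\[
\big\vert\Lambda_{\Pi(I_0)}(f,g,\one_{H'})\big\vert\lesssim \sssize_{\ii{I}(I_0)}(\one_{H'})\,|F|^{\theta_1}|G|^{\theta_2}
\]
for every $\theta_1,\theta_2>0$ with $\theta_1+\theta_2=1$ lying in a small fixed neighborhood of $(1/p,1/q)$; the hypothesis $1<p,q<\infty$ is precisely what makes such a neighborhood with strictly positive exponents available, and since the constant $\sssize_{\ii{I}(I_0)}(\one_{H'})$ does not depend on $F,G,f,g$ it factors cleanly through the interpolation.

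The restricted-type bound follows from Proposition~\ref{paraproduct estimates} with the energy exponent of the third entry set to $\theta_3=0$, so that the third factor is $\big(\ssize^{(3)}_{\ii{I}(I_0)}(\langle\one_{H'},\phi^3_I\rangle)\big)^{1}\big(\eenergy^{(3)}_{\ii{I}(I_0)}(\langle\one_{H'},\phi^3_I\rangle)\big)^{0}$. By Lemma~2.13 of \cite{multilinear_harmonic} (applied to the collection $\ii{I}(I_0)$) together with the definition of the modified size, $\ssize^{(3)}_{\ii{I}(I_0)}(\langle\one_{H'},\phi^3_I\rangle)\lesssim\sup_{I\subseteq I_0}\tfrac{1}{|I|}\int|\one_{H'}|\,\ci^M_I\,dx\le\sssize_{\ii{I}(I_0)}(\one_{H'})$; the sizes of $f$ and $g$ are $\lesssim 1$ because $|f|\le\one_F$, $|g|\le\one_G$; and the energies of $f$ and $g$ are controlled by $\|f\|_1\lesssim|F|$ and $\|g\|_1\lesssim|G|$ via Lemma~2.14 of \cite{multilinear_harmonic}. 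Substituting these into Proposition~\ref{paraproduct estimates} yields $\big\vert\Lambda_{\Pi(I_0)}(f,g,\one_{H'})\big\vert\lesssim\sssize_{\ii{I}(I_0)}(\one_{H'})\,|F|^{\theta_1}|G|^{\theta_2}$, which is exactly the desired estimate, and interpolating over $(\theta_1,\theta_2)$ finishes the argument.

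This is essentially a soft argument, so there is no serious obstruction; the only genuinely technical point is the weight bookkeeping in the first reduction, namely verifying that the shell decomposition converts the bare $L^p,L^q$ norms into the $\ci_{I_0}$-weighted ones uniformly in $k$ — this is identical in spirit to Lemma~\ref{lemma:refined-trilinear-estimate}, and one could alternatively bypass it by recording once and for all localized estimates of the form $\eenergy^{(j)}_{\ii{I}(I_0)}(\langle f,\phi^j_I\rangle)\lesssim\|f\cdot\ci_{I_0}\|_1$ and feeding those directly into Proposition~\ref{paraproduct estimates}. Note that the freedom to take $\theta_3=0$ here — not available for the analogous $BHT$ statement, where generalized restricted-type interpolation between good and bad tuples is forced — reflects the fact that for paraproducts the energy is an $L^1$ quantity, so no $L^2$-type constraint on $(p,q)$ is imposed.
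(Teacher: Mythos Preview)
Your proof is correct and follows essentially the same approach as the paper's: both treat the form as bilinear in $(f,g)$, reduce to $\supp f,g\subseteq 5I_0$ via the decay of $\ci_{I_0}$, and then obtain restricted-type estimates by applying Proposition~\ref{paraproduct estimates} with $\theta_3=0$, bounding $\ssize(f),\ssize(g)\lesssim 1$ and $\eenergy(f)\lesssim|F|$, $\eenergy(g)\lesssim|G|$. Your write-up is slightly more explicit about the shell decomposition and the invocations of Lemmas~2.13 and~2.14, and your closing remark about why $\theta_3=0$ is permissible here but not for $BHT$ matches the paper's own commentary preceding the proposition.
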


\subsection{Proof of Theorem \ref{multiple vector valued BHT}: a particular case }~\\

We will be using vector-valued interpolation theorems, as usual. Hence, we fix $F, G$ and $H$ sets of finite measure and we assume $|H|=1$. Let $f=\lbrace f_k\rbrace_k, g=\lbrace g_k\rbrace_k$, with $ (\sum_k |f_k|^{r_1})^{1/{r_1}} \leq \one_F$, $(\sum_k |g_k|^{r_2})^{1/{r_2}} \leq \one_G$.

The exceptional set will be
\[
\tilde{\Omega}:=\lbrace x: \mathcal{M}(\one_F)(x) > C |F|  \rbrace \cup \lbrace x: \mathcal{M}(\one_G)(x) > C |G|    \rbrace
\]
and $H'=H \setminus \tilde{\Omega}$. We have a sequence of functions $\lbrace h_k \rbrace_k$ with $\ds (\sum_k |h_k|^{r'})^{1/{r'}} \leq \one_{H'}$.

 For every $d \geq 0$
\[
\mathscr{I}^d:=\lbrace I \in \ii{I}: 1 +\frac{\dist(I, \Omega^c)}{|I|} \sim 2^d \rbrace
\]
When estimating paraproducts associated to the collection $\ii{I}^d$, we get an extra $2^{-10d}$ decay and thus the $d$-dependency of the paraproducts can be assumed to be implicit. As before, for each of the sets $F, G$ and $H'$ we define collections of disjoint maximal intervals $\mathcal{J}_1^{n_1}, \mathcal{J}_2^{n_2} $ and $\mathcal{J}_3^{n_3}$ respectively. For example, if $I \in \mathcal{J}_1^{n_1}$, then
\[
2^{-n_1-1} \leq \frac{1}{|I|} \int_{\rr{R}} \one_F \cdot \ci_{I} dx \leq 2^{-n_1} \lesssim |F|.
\]
Returning to the operator $\vec{\Pi}_{r}$, we have for the associated multilinear form
\begin{align*}
&|\sum_k \Lambda_{\Pi}(f_k, g_k, h_k)|\leq \sum_{n_1, n_2, n_3} \sum_{I \in \mathcal{J}^{n_1, n_2, n_3}} \sum_k|\Lambda_{\Pi(I_0)}(f_k, g_k, h_k)|.
\end{align*}

Now we use the localization results of Proposition \ref{Localization for local L^1 paraproducts} to estimate the above expression by
\begin{align*}
& \sum_{n_1, n_2, n_3} \sum_{I_0 \in \mathcal{J}^{n_1, n_2, n_3}} \sum_{k=1}^n  \sssize_{\ii{I}(I_0)}(\one_{F})^{b_1} \cdot \sssize_{\ii{I}(I_0)}(\one_{G})^{b_2} \cdot  \sssize_{\ii{I}(I_0)}(\one_{H'})^{b_3} \\
& \quad \quad \cdot \|  f_k \cdot \ci_{I_0}  \|_{r_1} \|  g_k \cdot \ci_{I_0}  \|_{r_2} \|  h_k \cdot \ci_{I_0}  \|_{r'}  \\
&\lesssim \sum_{n_1, n_2, n_3} \sum_{I_0 \in \mathcal{J}^{n_1, n_2, n_3}}  \sssize_{\ii{I}(I_0)}(\one_{F})^{b_1} \cdot \sssize_{\ii{I}(I_0)}(\one_{G})^{b_2} \cdot  \sssize_{\ii{I}(I_0)}(\one_{H'})^{b_3} \\
&\quad \quad  \cdot \frac{\|  \one_F \cdot \ci_{I_0}  \|_{r_1}}{|I_0|^{1/{r_1}}} \frac{\|  \one_G \cdot \ci_{I_0}  \|_{r_2}}{|I_0|^{1/{r_2}}} \frac{\|  \one_{H'} \cdot \ci_{I_0}  \|_{r'}}{|I_0|^{1/{r'}}} |I_0|.
\end{align*}
Here we choose some $0 \leq b_j \leq a_j$, which we can do because the sizes are subunitary.
Whenever $0 \leq\gamma_j \leq 1$ are so that $\gamma_1+\gamma_2+\gamma_3=1$, 
\[
\sum_{I_0 \in \mathcal{J}^{n_1, n_2, n_3}}|I_0| \lesssim (2^{n_1}|F|)^{\gamma_1} (2^{n_2}|G|)^{\gamma_2} (2^{n_3}|H|)^{\gamma_3}. 
\]
Adding all the pieces together we have
\begin{align*}
&|\sum_k \Lambda_{\Pi}(f_k, g_k, h_k)|\lesssim \sum_{n_1, n_2, n_3} 2^{-n_1(b_1+\frac{1}{\tilde{p}} -\gamma_1)} 2^{-n_2(b_2+\frac{1}{\tilde{q}} -\gamma_2)} 2^{-n_3(b_3+\frac{1}{r'} -\gamma_3)}  |F|^{\gamma_1} |G|^{\gamma_2} \lesssim |F|^{\frac{1}{\tilde{p}}} |G|^{\frac{1}{\tilde{q}}}.
\end{align*}
Of course, the last inequality is true provided we can choose $\gamma_1, \gamma_2, \gamma_3$  so that the series converges.
Choosing the $\theta_j$s and $\alpha_j$s carefully, one can prove that the restricted weak type estimates hold arbitrary close to the points
\[
(0, 0, 1), (1, 0, 0), (0, 1, 0), \text{   and  } (1, 1, -1).
\]
Then the general result follows by interpolation.
\begin{remark}
With a few adjustments, the proof is valid in the case $r=1$ as well.
\end{remark}
\bigskip

\section{Tensor products $BHT \otimes \Pi^{\otimes^n}$}
\label{tensor products}
In this section, we will prove the boundedness of the tensor product 
\[
BHT \otimes \Pi^{\otimes n}=BHT \otimes \Pi \otimes \ldots \otimes \Pi : L^p(\rr{R}^{n+1})\times L^q(\rr{R}^{n+1}) \to L^r(\rr{R}^{n+1})
\]
whenever $\ds \frac{1}{r}=\frac{1}{p}+\frac{1}{q}$, with $\frac{2}{3}<r< \infty$, $1\leq p, q < \infty$. 

If $T_1: L^p(\rr{R}^{n_1})\times L^q(\rr{R}^{n_1}) \to L^r(\rr{R}^{n_1})$ and $T_2: L^p(\rr{R}^{n_2})\times L^q(\rr{R}^{n_2}) \to L^r(\rr{R}^{n_2})$ are two bilinear operators, then the tensor product
\[
T_1\otimes T_2 : L^p(\rr{R}^{n_1+n_2})\times L^q(\rr{R}^{n_1+n_2}) \to L^r(\rr{R}^{n_1+n_2})
\]
will act as $T_1$ in the first variable and as $T_2$ in the second variable. In our case, the operators are given by singular multipliers, and in this situation we can give a characterization of the tensor product. Assume
\[
T_1(f, g)(x)=\int_{\rr{R}^{2n_1}} \hat{f}(\xi_1) \hat{g}(\xi_2) m_1(\xi_1, \xi_2) e^{2\pi i x \cdot (\xi_1+\xi_2)} d\xi_1 d\xi_2
\]
and similarly
\[
T_2(f, g)(y)=\int_{\rr{R}^{2n_2}} \hat{f}(\eta_1) \hat{g}(\eta_2) m_2(\eta_1, \eta_2) e^{2\pi i y \cdot (\eta_1+\eta_2)} d\eta_1 d\eta_2.
\]
Then the multiplier of the tensor product is precisely $m_1(\xi_1, \xi_2) \cdot m_2(\eta_1, \eta_2)$:
{
\fontsize{10}{10}
\[
T_1\otimes T_2(f, g)(x,y)=\int \hat{f}(\xi_1, \eta_1) \hat{g}(\xi_2,\eta_2) m_1(\xi_1, \xi_2) m_2(\eta_1, \eta_2) e^{2 \pi i x \cdot(\xi_1+\xi_2)} e^{2\pi i y \cdot (\eta_1+\eta_2)}d \xi_1 d \xi_2 d\eta_1 d\eta_2.
\]
}
The multiplier associated with $BHT$ is $sgn(\xi_1-\xi_2)$, while the multiplier of a paraproduct of two functions on the real line is a classical Marcinkiewicz-Mikhlin-H\"{o}rmander multiplier $m(\xi_1, \xi_2)$, smooth away from the origin, satisfying the condition $\ds |\partial ^\alpha m (\xi)| \lesssim |\xi|^{-|\alpha|}$ for sufficiently many multi-indices $\alpha$. The decay in $m$ and a Fourier series decomposition allows one to approximate the multiplier by a finite number of sums of the form 
\[
\sum_{k} \hat{\varphi}_k(\xi_1) \hat{\psi}_k(\xi_2) \hat{\psi}_k(\xi_1+\xi_2), \quad \sum_{k} \hat{\psi}_k(\xi_1) \hat{\varphi}_k(\xi_2) \hat{\psi}_k(\xi_1+\xi_2) \quad \text{or   } \sum_{k} \hat{\psi}_k(\xi_1) \hat{\psi}_k(\xi_2) \hat{\varphi}_k(\xi_1+\xi_2).
\] 

Recall that $Q_k$ is the Littlewood-Paley projection onto $\lbrace |\xi| \sim 2^k   \rbrace$(which is really the convolution with $\psi_k(\cdot)$), and $P_k$ is the projection onto $\lbrace |\xi| \leq 2^k \rbrace$, corresponding to the convolution with $\varphi_k$.  Then we can regard paraproducts as being expressions of the form
\begin{equation}
\label{eq:def paraprod}
\sum_k Q_k (P_k f \cdot Q_k g )(x,y)  \quad, \sum_k Q_k (Q_k f \cdot P_k g )(x,y)  \quad \text{or  } \sum_k P_k (Q_k f \cdot Q_k g )(x,y).
\end{equation}
It is important in the following proofs that the outer-most function $\ds \hat{\varphi}_k(\xi_1+\xi_2)$ and $\ds \hat{\psi}_k(\xi_1+\xi_2)$ are identically equal to $1$ on the supports of $\hat{\psi}_k(\xi_1)\cdot \hat{\psi}_k(\xi_2)$ and $\hat{\psi}_k(\xi_1) \cdot \hat{\varphi}_k(\xi_2) $ respectively. This can always be achieved with the price of an extra decomposition.

\begin{proposition}
\label{characterization of tensor with paraproduct}
Let $T_m: L^p(\rr{R}^n) \times L^q(\rr{R}^n) \to L^r(\rr{R}^n)$ be a bilinear operator with smooth symbol $m$, and $\Pi: L^p(\rr{R}) \times L^q(\rr{R}) \to L^r(\rr{R})$ a paraproduct as described above.
\begin{enumerate}
\item \label{a type paraproduct}
If $\Pi$ is given by $\sum_k Q_k (P_k f \cdot Q_k g )(x,y)$, then 
\[
(T_m \otimes \Pi )(f,g)(x,y)=\sum_k Q_k^2 \left(T_m( P_k^y f, Q_k^y g)\right)(x)=\sum_k T_m( P_k^y f, Q_k^y g)(x).
\]
\item If $\Pi$ is given by $\sum_k P_k (Q_k f \cdot Q_k g )(x,y)$, then
\[
(T_m \otimes \Pi)(f,g)(x,y)=\sum_k P_k^2 \left(T_m(Q_k^y f, Q_k^y g)\right)(x)=\sum_k  T_m(Q_k^y f, Q_k^y g)(x).
\]
\end{enumerate}
Here we need to explain the notation: $Q_k^2$ denotes the projection onto $|\xi_2| \sim 2^k$ in the second variable, and $P_k^yf $ is a function of $x$ only, with the variable $y$ fixed. The exact formulas are
\[
P_k^yf(x)=\int_{\rr{R}} \varphi_k(s) f(x,y-s) ds, \quad P_k^2f(x,y)=\int_{\rr{R}} \varphi_k(s) f(x,y-s) ds,
\] 
\[
Q_k^yf(x)=\int_{\rr{R}} \psi_k(s) f(x,y-s) ds, \quad Q_k^2f(x,y)=\int_{\rr{R}} \psi_k(s) f(x,y-s) ds.
\]
\begin{proof}
The proof is a series of direct computations, and we only present the case \eqref{a type paraproduct}:
{
\fontsize{10}{10}
\begin{align*}
& (T_m \otimes \Pi)(f, g)(x,y)\\
&= \int_{\rr{R}^{2n+2}} \hat{f}(\xi_1, \eta_1) \hat{g}(\xi_2, \eta_2) m(\xi_1, \xi_2) \left( \sum_k \hat{\varphi}_k(\eta_1) \hat{\psi}_k(\eta_2) \hat{\psi}_k(\eta_1+\eta_2)   \right) e^{2 \pi i x \cdot (\xi_1+\xi_2)} e^{2 \pi i y(\eta_1+\eta_2)} d\xi d \eta \\
&=\sum_k \int_{\rr{R}^{2n+2}} \hat{f}(\xi_1, \eta_1) \hat{g}(\xi_2, \eta_2) m(\xi_1, \xi_2) \hat{\varphi}_k(\eta_1) \hat{\psi}_k(\eta_2) \left( \int_{\rr{R}} \psi_k(s) e^{-2 \pi i s(\eta_1+\eta_2)} ds  \right) \\
& \quad \quad\cdot  e^{2 \pi i x \cdot (\xi_1+\xi_2)} e^{2 \pi i y(\eta_1+\eta_2)} d\xi d \eta=\sum_k \int_{\rr{R}} \psi_k(s) (T_m(P_k^{y-s}f, Q_k^{y-s}g)(x)) ds \\
&=\sum_k Q_k^2 T_m(P_k^yf, Q_k^yg)(x).
\end{align*}
}
\end{proof}
\end{proposition}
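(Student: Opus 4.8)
The plan is to verify both identities by a direct Fourier-side computation, starting from the known multiplier of a tensor product. The key input is the description of paraproducts from \eqref{eq:def paraprod}: after a Fourier series decomposition one may assume $\Pi$ has multiplier $\sum_k \hat\varphi_k(\eta_1)\hat\psi_k(\eta_2)\hat\psi_k(\eta_1+\eta_2)$ (case \eqref{a type paraproduct}) or $\sum_k \hat\psi_k(\eta_1)\hat\psi_k(\eta_2)\hat\varphi_k(\eta_1+\eta_2)$ (second case), where crucially the outer-most factor is identically $1$ on the support of the product of the two inner factors, so it may be freely inserted or suppressed. Combined with the fact (stated just before the Proposition) that the multiplier of $T_m\otimes\Pi$ is the product $m(\xi_1,\xi_2)\cdot(\text{multiplier of }\Pi)(\eta_1,\eta_2)$, this reduces everything to recognizing partial Littlewood–Paley projections inside the integral.

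For case \eqref{a type paraproduct}, the first step is to write out $(T_m\otimes\Pi)(f,g)(x,y)$ as the iterated Fourier integral with symbol $m(\xi_1,\xi_2)\sum_k\hat\varphi_k(\eta_1)\hat\psi_k(\eta_2)\hat\psi_k(\eta_1+\eta_2)$. Next I would interchange the (finite, after truncation) sum with the integral and, for each fixed $k$, write $\hat\psi_k(\eta_1+\eta_2)=\int_{\rr R}\psi_k(s)e^{-2\pi i s(\eta_1+\eta_2)}\,ds$. Substituting this and separating the $s$-integral from the $\xi,\eta$-integrals, the inner $\xi$- and $\eta$-integrals reorganize exactly into $T_m$ applied to $P_k^{y-s}f$ and $Q_k^{y-s}g$ evaluated at $x$; here one uses that $\hat\varphi_k(\eta_1)$ acting on the $\eta_1$-variable of $\hat f$ produces $P_k$ in the $y$-slot and likewise $\hat\psi_k(\eta_2)$ produces $Q_k$. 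The remaining $s$-integral $\int\psi_k(s)\,(\cdots)(y-s)\,ds$ is by definition $Q_k^2$ applied to the function $y\mapsto T_m(P_k^y f,Q_k^y g)(x)$. Finally, since $P_k^y f$ already has $\eta_1$-Fourier support in $\{|\eta_1|\le 2^k\}$ and $Q_k^y g$ has $\eta_2$-support in $\{|\eta_2|\sim 2^k\}$, the function $T_m(P_k^yf,Q_k^yg)$ has $y$-Fourier support where $\hat\psi_k(\eta_1+\eta_2)\equiv 1$, so $Q_k^2$ acts as the identity and may be dropped, giving the second equality. Case (2) is identical after replacing the outer $\hat\psi_k$ by $\hat\varphi_k$ and the inner $\hat\varphi_k(\eta_1)$ by $\hat\psi_k(\eta_1)$, so that $P_k^2$ replaces $Q_k^2$ and both inner projections become $Q_k^y$.

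I expect the only genuinely delicate point to be the bookkeeping that justifies the interchange of sum and integral and the insertion/removal of the outer-most multiplier: one must first perform the harmless truncation of the frequency sum to finitely many $k$, invoke Fubini on absolutely convergent integrals (using the Schwartz decay of $\varphi_k,\psi_k,\psi_k$), and then observe that the support condition ``$\hat\psi_k(\eta_1+\eta_2)\equiv 1$ on $\supp(\hat\varphi_k(\eta_1)\hat\psi_k(\eta_2))$'' — which is arranged by an extra decomposition as noted in the text — is exactly what lets $Q_k^2$ be reinstated or erased without changing the operator. Everything else is a routine unwinding of definitions, and no new estimates are needed.
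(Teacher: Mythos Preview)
Your proposal is correct and follows essentially the same approach as the paper: write the tensor product on the Fourier side, express the outer factor $\hat\psi_k(\eta_1+\eta_2)$ (or $\hat\varphi_k$) as an inverse Fourier transform in a new variable $s$, and recognize the resulting inner integral as $T_m(P_k^{y-s}f, Q_k^{y-s}g)(x)$ so that the $s$-integral becomes $Q_k^2$ (or $P_k^2$). You actually supply more detail than the paper on why the outer projection may be dropped and on the Fubini justification.
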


A final ingredient that we will need in the proof of Theorem \ref{tensor product BHT d-paraproducts_intro} is the following lemma, which appears in \cite{multi-parameter_hardy_spaces}:
\begin{lemma}
\label{norm bounded by square function}
Let $f \in \mathcal{S}(\rr{R}^n) $, and $1 \leq l \leq n$, and $\lbrace i_1, \ldots i_l   \rbrace \subset \lbrace 1, \ldots ,n  \rbrace$. Then 
\[
\|f\|_{L^p} \lesssim \|    \left(  \sum_{k_1, \ldots, k_l} | Q_{k_1}^{i_1} \ldots Q_{k_l}^{i_l} f  |^2   \right)^{1/2} \|_{L^p}
\]
for any $0<p<\infty$.
\end{lemma}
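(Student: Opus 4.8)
The statement to prove is Lemma \ref{norm bounded by square function}: for $f \in \mathcal{S}(\rr{R}^n)$, a choice of $l$ coordinate directions $\{i_1,\dots,i_l\}\subset\{1,\dots,n\}$, and any $0<p<\infty$, one has
\[
\|f\|_{L^p} \lesssim \Big\|\,\Big(\sum_{k_1,\dots,k_l}|Q_{k_1}^{i_1}\cdots Q_{k_l}^{i_l}f|^2\Big)^{1/2}\Big\|_{L^p}.
\]

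\textbf{Overall approach.} The plan is to induct on $l$, reducing everything to the one-parameter statement that the Littlewood--Paley square function in a \emph{single} variable controls the $L^p$ norm, which in turn follows from the classical vector-valued Calder\'on--Zygmund theory (the Littlewood--Paley inequality for $0<p<\infty$, available because $\{Q_k\}_k$ is a vector-valued CZ operator and $L^p$ estimates for $0<p\le 1$ follow from the $H^1$--$L^1$ endpoint plus interpolation, or directly from the Chang--Fefferman / Journ\'e product theory already invoked elsewhere in the paper). The key one-parameter fact I would isolate first is: for a function $g$ on $\rr{R}^n$ and a fixed direction $i$,
\[
\|g\|_{L^p(\rr{R}^n)} \lesssim \Big\|\Big(\sum_{k}|Q_k^{i} g|^2\Big)^{1/2}\Big\|_{L^p(\rr{R}^n)},
\]
which holds because $g = \sum_k \widetilde Q_k^{i} Q_k^{i} g$ with a suitable resolution of identity using ``fattened'' projections $\widetilde Q_k^i$ (here one needs $\hat\psi_k$ to not be compactly supported away from $0$ in the strict Littlewood--Paley sense, or one uses a smooth partition with overlap; this is where the remark after \eqref{eq:def paraprod} about being able to arrange supports is used implicitly), and then duality against $L^{p'}$ when $p\ge 1$, or the square-function characterization of local Hardy spaces $h^p$ when $p<1$.

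\textbf{Induction step.} Assuming the statement for $l-1$ directions, apply it with the directions $\{i_2,\dots,i_l\}$ to the function $f$ itself — but more precisely, one wants to first insert the square function in direction $i_1$. Concretely: by the one-parameter fact in direction $i_1$,
\[
\|f\|_{L^p} \lesssim \Big\|\Big(\sum_{k_1}|Q_{k_1}^{i_1}f|^2\Big)^{1/2}\Big\|_{L^p}.
\]
Now I would treat the inner sum as an $\ell^2(\rr{Z})$-valued function $F = (Q_{k_1}^{i_1}f)_{k_1}$ and apply the $(l-1)$-parameter inductive hypothesis in the directions $i_2,\dots,i_l$ in its \emph{vector-valued} (Banach-space-valued, here Hilbert-space $\ell^2$) form — the Littlewood--Paley theory for $\ell^2$-valued functions is again standard (UMD-valued CZ theory, or direct square function manipulations since $\ell^2$ is the natural target). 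This yields
\[
\Big\|\Big(\sum_{k_1}|Q_{k_1}^{i_1}f|^2\Big)^{1/2}\Big\|_{L^p} \lesssim \Big\|\Big(\sum_{k_1}\sum_{k_2,\dots,k_l}|Q_{k_2}^{i_2}\cdots Q_{k_l}^{i_l}Q_{k_1}^{i_1}f|^2\Big)^{1/2}\Big\|_{L^p},
\]
using that the projections in distinct variables commute, which is exactly the claimed bound after relabeling.

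\textbf{Main obstacle.} The genuinely delicate point is the range $0<p\le 1$: there the square function estimate $\|g\|_{L^p}\lesssim\|S_i g\|_{L^p}$ is \emph{not} true for arbitrary $g$ (one needs a cancellation/mean-zero hypothesis, i.e. $g$ in the Hardy space $h^p$, not merely $L^p$). The resolution is that every function we feed in at each stage is \emph{already} a Littlewood--Paley piece (or an $\ell^2$-sum of such pieces) in the relevant direction, hence automatically has the required cancellation — so the reverse square-function inequality applies. I would therefore be careful to phrase the one-parameter lemma not as ``$\|g\|_p\lesssim\|S_ig\|_p$ for all $g$'' but as ``$\|g\|_p\lesssim\|S_ig\|_p$ provided $g$ is supported (in frequency, in the $i$-th variable) where $\sum\hat\psi_k\equiv1$,'' which is preserved through the induction since applying $Q^{i_1}_{k_1}$ first puts us in exactly that situation; the $\ell^2$-valued upgrade of this statement is what makes the induction close. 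Everything else — commutation of the $Q^{i}_{k}$, Fubini, Minkowski — is routine and I would not spell it out.
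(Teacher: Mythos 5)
The paper does not actually prove this lemma: it is quoted from the reference \cite{multi-parameter_hardy_spaces}, with only the remark that the case $p>1$ is classical and that the case $p\le 1$ ``makes use of multi-parameter Hardy spaces.'' Your proposal therefore supplies an argument where the paper supplies a citation, and the route you choose --- induct on the number of directions, reducing at each step to a one-parameter, $\ell^2$-valued reverse Littlewood--Paley inequality applied in a single coordinate with the remaining variables frozen (Fubini being available because all exponents equal $p$) --- is a legitimate and in fact rather clean way to obtain the inequality. It avoids the genuinely hard parts of product Hardy space theory (Journ\'e's covering lemma, non-rectangular atoms), which are not needed for the direction ``product square function dominates the $L^p$ norm''; what it does need at each stage is the Hilbert-space-valued Fefferman--Stein theory in one variable, namely the square-function characterization of $H^p(\rr{R};\ell^2)$ together with the inclusion $H^p(\rr{R};\ell^2)\subset L^p(\rr{R};\ell^2)$ for $p\le 1$. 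So you have not eliminated the Hardy space input; you have localized it to the one-parameter Hilbert-valued setting, which is a fair trade.

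Two inaccuracies in your write-up are worth correcting, though neither is fatal. First, your description of the ``main obstacle'' has the direction backwards: for $0<p\le 1$ it is the \emph{forward} inequality $\|S_ig\|_p\lesssim\|g\|_p$ that fails for general $L^p$ functions and requires $g\in H^p$; the \emph{reverse} inequality $\|g\|_p\lesssim\|S_ig\|_p$, which is the one you use, holds unconditionally for Schwartz functions (if the right-hand side is finite then $g$ automatically lies in $H^p$, hence in $L^p$). Consequently the frequency-support hypothesis you propose to carry through the induction is unnecessary, and it is not the correct substitute for $H^p$ membership in any case. Second, the $\ell^2$-valued ingredient for $p\le 1$ is not covered by ``UMD-valued Calder\'on--Zygmund theory'' (which gives only $1<p<\infty$), nor does duality against $L^{p'}$ handle the endpoint $p=1$, since the square function is not bounded on $L^\infty$; the correct reference at each inductive step, for all of $0<p\le 1$, is the Hilbert-valued real-variable $H^p$ theory. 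With those attributions repaired, the induction closes as you describe.
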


Lemma \ref{norm bounded by square function} above states that the $L^p$ norm of $f$ is bounded by the $L^p$ norm of a square function associated with the variables $x_{i_1}, \ldots, x_{i_l}$, even when $0< p \leq 1$. In the case $p>1$, it is well known that the two norms are equivalent. When $p<1$, the proof makes use of multi-parameter Hardy spaces.

\subsection{Proof of Theorem \ref{tensor product BHT d-paraproducts_intro}:}
\label{section proof tensor products}
\begin{proof}
We start with the proof in the case $BHT \otimes \Pi$, in order to make the presentation clear.

(a) Assume that $\Pi(f,g)=\sum_k Q_k\left( P_k f \cdot Q_k g \right)$. Then Proposition \ref{characterization of tensor with paraproduct} implies that $\ds  BHT \otimes \Pi(f, g)(x,y)=\sum_k Q_k^2 BHT \left(P_k^y f, Q_k^y g   \right)(x)$. Lemma \ref{norm bounded by square function} yields
\[
\big \| BHT \otimes \Pi  \big \|_{L^s\left( \rr{R}^2 \right)} \lesssim \big \| \left( \sum_k  \vert Q_k^2 BHT \left( P_k^yf, Q_k^y g   \right)      \vert ^2 \right)^{1/2}  \big \|_{L^s\left( \rr{R}^2 \right)}.
\]
For the paraproducts that we are considering, $\ds Q_k\left( P_k f \cdot Q_k g \right)(y)= P_k f(y) \cdot Q_k g(y) $, so we need to estimate 
\[
\| \left( \sum_k  \vert BHT ( P_k^y f, Q_k^y g) \vert ^2 \right)^{1/2}  \big \|_{L^s\left( \rr{R}^2 \right)}.
\] 
We first estimate the $L^s$ norm of $\ds x \mapsto \left( \sum_k  \vert BHT \left( P_k^y f, Q_k^y g   \right)(x) \vert ^2 \right)^{1/2}$, and Fubini will imply the desired result for $BHT \otimes \Pi$. Here we use the vector-valued extension for the bilinear Hilbert transform
\[
BHT: L^p\left( \ell^\infty \right)\times L^q\left( \ell^2\right) \to L^s\left( \ell^2\right),
\]
which holds whenever $(p, q, s) \in Range(BHT)$. More exactly, 
\begin{align*}
& \big \| BHT \otimes \Pi  \big \|_{L^s\left( \rr{R}^2 \right)} \lesssim \big \|  \big \|  \left( \sum_k  \vert BHT \left( P_k^y f, Q_k^y g   \right)(x) \vert ^2 \right)^{1/2}  \big\|_{L^s_x}   \big \|_{L^s_y}  \\
&\lesssim \big \|   \big \| \sup_k \vert  P_k^y f  \vert \|_{L^p_x}  \cdot \big \|  \left(  \sum_k \vert  Q_k^y g  \vert ^2\right)^{1/2}  \big \|_{L^q_x}  \big \|_{L^s_y}  \\
& \lesssim \big \|   \big \| \sup_k \vert  P_k^y f  \vert \|_{L^p_x}\big \|_{L_y^p}  \cdot \big \| \big \|  \left(  \sum_k \vert  Q_k^y g  \vert ^2\right)^{1/2}  \big \|_{L^q_x}  \big \|_{L^q_y}  \lesssim \| f \|_p \cdot \|g\|_q
\end{align*}
To get the conclusion, we are using Fubini again, and the boundedness of the maximal and square function operators.

(b) The case $\Pi(f, g)=\sum_k P_k \left( Q_k f, Q_k g \right)$ is more direct, but the ideas are similar. The functions $\varphi$ in the paraproduct definition are so that $\Pi(f, g)=\sum_k \left( Q_k f \cdot Q_k g \right)$, so we have
\[
BHT \otimes \Pi(f, g)(x,y)= \sum_k BHT(Q_k^y f, Q_k^y g)(x).
\]
Now we use the vector-valued extension $\ds BHT: L^p\left( \ell^2 \right) \times L^q\left( \ell^2 \right) \to L^s\left( \ell^1\right)$(which is well defined for any $(p, q, s) \in Range(BHT)$), together with Fubini and the boundedness of the square function to get 
\begin{align*}
& \big \| BHT \otimes \Pi  \big \|_{L^s\left( \rr{R}^2 \right)} \lesssim \big \|  \big \|  \sum_k  \vert BHT \left( Q_k^y f, Q_k^y g   \right)(x) \vert  \big\|_{L^s_x}   \big \|_{L^s_y} \lesssim \\
&\lesssim \big \|   \big \| \left(  \sum_k \vert  Q_k^y f  \vert ^2\right)^{1/2} \|_{L^p_x}  \cdot \big \|  \left(  \sum_k \vert  Q_k^y g  \vert ^2\right)^{1/2}  \big \|_{L^q_x}  \big \|_{L^s_y} \lesssim \|f \|_p \cdot \|g\|_q.
\end{align*}

The general case of Theorem \ref{tensor product BHT d-paraproducts_intro} is similar, but slightly more technical. We present it below for completeness. The paraproducts can be of three types, as seen in \eqref{eq:def paraprod}. This generates a partition of $\lbrace 1, \ldots, n \rbrace$ into three subsets of indices $\mathcal{I}_1$, $\mathcal{I}_2$ and $\mathcal{I}_3$ so that if $k \in \mathcal{I}_1$, then $\ds \Pi(f,g)(y)=\sum_k Q_k(P_k f \cdot Q_k g)(y) $, and similarly for $\mathcal{I}_2$ and $\mathcal{I}_3$.

Because the projections on different coordinates commute, i.e. $Q_{k}^i P_l^j=P_l^j Q_k^i$ and $Q_{k}^i Q_l^j=Q_l^j Q_k^i$, we can assume 
$$\mathcal{I}_1=\lbrace 1, \ldots, l  \rbrace, \quad \mathcal{I}_2=\lbrace l+1, \ldots, l+d  \rbrace, \quad \mathcal{I}_3=\lbrace l+d+1, \ldots, n \rbrace.$$
Of course, we allow the possibility that one or even two of these sets of indices are empty.
With this assumption, Proposition \ref{characterization of tensor with paraproduct} applied iteratively yields
\begin{align*}
& BHT\otimes \Pi \otimes \ldots \otimes\Pi(f, g)(x, y_1, \ldots, y_n)\\
&=\sum_{k_1, \ldots, k_n} Q_{k_1}^1\ldots Q_{k_l}^l Q_{k_{l+1}^{l+1}} \ldots Q_{k_{l+d}}^{l+d} P_{k_{l+d+1}}^{l+d+1} \ldots P_{k_n}^n \circ \\
& BHT(
P_{k_1}^{y_1}\ldots P_{k_l}^{y_l} Q_{k_{l+1}}^{y_{l+1}}\ldots Q_{k_n}^{y_n} f, Q_{k_1}^{y_1} \ldots Q_{k_l}^{y_l} P_{k_{l+1}} \ldots P_{k_{l+d}}^{y_{l+d}} Q_{k_{l+d+1}^{y_{l+d+1}}} \ldots Q_{k_n}^{y_n}g)(x).
\end{align*}

The outer-most expressions $\ds Q_{k_1}^1\ldots Q_{k_l}^l Q_{k_{l+1}}^{l+1} \ldots Q_{k_{l+d}}^{l+d} P_{k_{l+d+1}}^{l+d+1} \ldots P_{k_n}^n$ are extremely important. Expressions of the type $P_k$ will be associated with $\ell^1$ norms, and the $Q_k$s with $\ell^2$ norms and square functions. Here we want to apply Proposition \ref{norm bounded by square function}, so we need to deal with the $Q_k$ functions first. Once we do this, we can estimate the $L^r$ norm of $\ds BHT\otimes \Pi \otimes \ldots \Pi(f, g)$ by
{
\fontsize{9}{10}
\begin{align*}
&\| \left( \sum_{k_1,\ldots, k_{l+d} } \big\vert \sum_{k_{l+d+1}, \ldots, k_n} P_{k_{l+d+1}}^{l+d+1}   \ldots P_{k_{n}}^n BHT(P_{k_1}^{y_1}\ldots Q_{k_{l+1}}^{y_{l+1}}\ldots  f, Q_{k_1}^{y_1} \ldots P_{k_{l+1}}^{y_{l+1}} \ldots Q_{k_{l+d+1}^{y_{l+d+1}}} \ldots g)   \big\vert^2     \right)^{1/2} \|_r \\
& \lesssim \| \left( \sum_{k_1,\ldots, k_{l+d} } \big \vert \sum_{k_{l+d+1}, \ldots, k_n} | BHT(P_{k_1}^{y_1}\ldots Q_{k_{l+1}}^{y_{l+1}}\ldots f, Q_{k_1}^{y_1} \ldots P_{k_{l+1}}^{y_{l+1}} \ldots Q_{k_{l+d+1}^{y_{l+d+1}}} \ldots g)|   \big \vert^2  \right)^{1/2} \|_r\\
& \lesssim \|f\|_{p} \|g\|_q
\end{align*}
}
For the last part we used the following vector-valued estimates for the $BHT$:
\begin{align*}
&L^p(\underbrace{\ell^{\infty}(\ldots (\ell^{\infty}}_l(\underbrace{\ell^2( \ldots (\ell^2}_{d} (\underbrace{\ell^2( \ldots (\ell^2}_{n-l-d} ))\ldots)
 \times L^q(\underbrace{\ell^{2}(\ldots (\ell^{2}}_l(\underbrace{\ell^\infty( \ldots (\ell^\infty}_{d} (\underbrace{\ell^2( \ldots (\ell^2}_{n-l-d} ))\ldots)\\
 & \mapsto L^s(\underbrace{\ell^{2}(\ldots (\ell^{2}}_l(\underbrace{\ell^2( \ldots (\ell^2}_{d} (\underbrace{\ell^1( \ldots (\ell^1}_{n-l-d} ))\ldots)
\end{align*}
together with the boundedness of the maximal operator and square function.

Similarly, we can obtain estimates for $\Pi^{\otimes^{d_1} } \otimes BHT \otimes \Pi^{\otimes ^{d_2}}$ within the same range as that of $BHT$. Some partial results in mixed norm $L^p$ spaces can be obtained too, but the general case, for arbitrary values of $d_1$ and $d_2$ remains open. We present a few particular cases that illustrates the main ideas, without being too technical. 
\begin{itemize}
\item[i)] Here, we prove mixed norm $L^p$ estimates for $\ds \Pi_1 \otimes BHT \otimes \Pi_3$, where $\ds \Pi_1= \sum_k Q_k^1(P_k^1 \cdot Q_k^1)$, $\ds \Pi_3= \sum_l Q_l^3(Q_l^3 \cdot P_l^3)$, and the exponents $p_j, q_j \in \left[ 2, \infty  \right)$. We note that 
\[
\Pi_1 \otimes BHT \otimes \Pi_3(f, g)(x,y,z)=\sum_{k,l} Q_k^1 Q_l^3 BHT(P_k^x Q_l^z f, Q_k^x P_l^z g)(y),
\]
and we want to estimate the above expression in the space $\ds \| \cdot \|_{L_x^{s_1}L^{s_2}_yL_z^{s_3}}$. The key observation is that whenever $ 1< s_2, s_3<\infty$,
\begin{equation}
\label{eq:vv-HardyIneq}
\left \|  \sum_{k, l} Q_k^1 Q_l^3 F(x, y, z)   \right\|_{L_x^{s_1}L^{s_2}_yL_z^{s_3}} \lesssim \left\| \left( \sum_{k, l} \vert Q_k^1 Q_l^3 F(x, y, z)    \vert ^2 \right)^{1/2}   \right \|_{L_x^{s_1}L^{s_2}_yL_z^{s_3}},
\end{equation}
which is a Banach-valued equivalent of Lemma \ref{norm bounded by square function}. This result, for $s_1>1$, can be found in \cite{vv_singular_integrals_product_kernel} and \cite{vv_CZ}, and it follows from the boundedness of Calder\'{o}n-Zygmund operators (the dual of the square function is such an operator) on $L^p$ spaces with mixed norms. The proof in the case $s_1 \leq 1$ is a Banach space adaptation of the proof of Lemma \ref{norm bounded by square function}. Given the special properties of the $Q_k^1$ and $Q_l^3$ operators, we obtain
\[
\big \| \Pi_1 \otimes BHT \otimes \Pi_3(f, g)\big \|_{L_x^{s_1}L^{s_2}_yL_z^{s_3}} \lesssim \| \left( \sum_{k, l} \vert BHT(P_k^x Q_l^z f, Q_k^x P_l^z g)(y)\vert ^2 \right)^{1/2}\big \|_{L_x^{s_1}L^{s_2}_yL_z^{s_3}}.
\]

The multiple vector-valued estimates $\ds BHT: L^{p_2}_y(L^{p_3}_z(\ell^\infty(\ell^2)))) \times L^{q_2}_y(L^{q_3}_z(\ell^2(\ell^\infty)))) \to L^{s_2}_y(L^{s_3}_z(\ell^2(\ell^2))))$, which exist in the local $L^2$ case at least, together with H\"{o}lder's inequality imply
\begin{align*}
&\big \| \Pi_1 \otimes BHT \otimes \Pi_3(f, g)\big \|_{L_x^{s_1}L^{s_2}_yL_z^{s_3}} \\
& \quad \lesssim \| \sup_k \left( \sum_l \vert P_k^x Q_l^z f (y) \vert^2 \right)^{1/2}  \|_{L_x^{p_1}L^{p_2}_yL_z^{p_3}}  \cdot \| \left( \sum_k  \vert \sup_l \vert Q_k^x P_l^z g (y)\vert \vert^2 \right)^{1/2}  \|_{L_x^{q_1}L^{q_2}_yL_z^{q_3}} \\
&\quad \lesssim \|f\|_{L_x^{p_1}L^{p_2}_yL_z^{p_3}} \cdot \|g\|_{L_x^{q_1}L^{q_2}_yL_z^{q_3}}.
\end{align*}
The last inequality follows again from Banach-valued extensions of convolution operators. Since our proof makes use of multiple vector-valued estimates for $BHT$, we cannot obtain mixed norm $L^p$ estimates for all the exponents in the Banach range. From the above example, one can see that besides the constraints imposed by the square functions and maximal operators, we also need $(p_3, q_3, s_3) \in \ii{D}_{p_2, q_2, s_2}$.
\item[ii)] If $d_1=0$ and $d_2=1$, we have
\[
 BHT \otimes \Pi: L_x^{p_1}L_y^{p_2} \times L_x^{q_1}L_y^{q_2} \to L_x^{s_1}L_y^{s_2},
\]
whenever $1<p_2, q_2, s_2<\infty$, $1<p_1, q_1 \leq \infty, \frac{2}{3} < s_1 <\infty$ and $\left(p_2, q_2, s_2\right) \in \ii D_{p_1, q_1, s_1}$.
\item[iii)]If $d_1=1$ and $d_2=0$, we have
\[
\Pi\otimes BHT : L_x^{p_1}L_y^{p_2} \times L_x^{q_1}L_y^{q_2} \to L_x^{s_1}L_y^{s_2},
\]
whenever $1<p_2, q_2, s_2<\infty$, $1<p_1, q_1 \leq \infty, \frac{1}{2} < s_1 <\infty$.
Since the ``target'' spaces (that is, inner spaces in the mixed norms) are strictly between $1$ and $\infty$, the outer $L^\infty$ cases (that is, $p_1=\infty$ or $q_1=\infty$) follow easily from similar estimates on the adjoints.
\end{itemize}

We note that mixed norm estimates for $\Pi \otimes BHT$ appear also in \cite{francesco_UMDparaproducts}, where all the inner spaces involved are $L^p$ spaces with $1<p< \infty$ (in our notation, that means $1<p_2, q_2, s_2<\infty$).
\end{proof}

\section{Leibniz rules: Theorem \ref{Leibniz rule}}
\label{sec: Leibniz rule}
Now we present some ideas behind the proof of Theorem \ref{Leibniz rule}. Littlewood-Paley projections play an important role when dealing with derivatives.
\begin{align*}
D_1^\alpha D_2^\beta (f \cdot g)(x,y)&= \sum_{k,l} \left[ (f \ast \varphi_k \otimes \varphi_l) \cdot (g \ast  \psi_k \otimes \psi_l   )\right] \ast \left(  D_1^\alpha \psi_k \otimes D_2^\beta \psi_l \right)(x,y) \\
&=\sum_{k,l}\left[ \left( f \ast \varphi_k \otimes \varphi_l  \right) \cdot \left( g \ast \psi_k \otimes \psi_l  \right)  \right] \ast
\left( 2^{k \alpha} \tilde{\psi}_k \otimes 2^{l \beta} \tilde{\psi}_l  \right)(x,y),
\end{align*}
where $\ds \quad  \widehat{\tilde{\psi}}_k(\xi) =\frac{|\xi|^{\alpha}}{2^{k \alpha}} \hat{\psi}_k(\xi) \quad $ and $\ds \quad \widehat{\tilde{\psi}}_l(\eta) =\frac{|\eta|^{\beta}}{2^{l \beta}} \hat{\psi}_l(\eta).$ Then one can move the $2^{k \alpha}$ inside, and couple it with the $\psi_k$s because $\ds 2^{k \alpha} \psi_k(x)=D^\alpha \tilde{\tilde{\psi}}_k(x)$. Here $\ds \widehat{\tilde{\tilde{\psi}}_k}(\xi)=\frac{2^{k \alpha}}{\vert \xi \vert^\alpha} \hat{\psi}_k(\xi)$.

In this way, we obtain $\ds D_1^\alpha D_2^\beta(f \cdot g)= \tilde{\Pi} \otimes \tilde{\Pi} (f, D_1^\alpha D_2^\beta g )+$ eight other similar terms. We can estimate $\ds \Pi \otimes \Pi$ in $L^p$ spaces with mixed norms, as long as the ``outside" functions $\hat{\psi}_k$ and $\hat{\varphi}_k$ are constantly equal to $1$ on $2^{k-2} \leq \vert  \xi \vert \leq 2^{k+2}$, and $\vert \xi \vert \leq 2^{k+2}$, respectively. The operators $\tilde{\Pi}$ are slightly different, but using Fourier series we can write $\tilde{\Pi}(F, G)$ as
\begin{align*}
(F,  G) \mapsto \sum_{n \in \rr{Z}} c_n \sum_{k,l} \left[  F \ast ( \varphi_k\otimes \varphi_l) \cdot G \ast (\tilde{\tilde{\psi}}_k \otimes \tilde{\tilde{\psi}}_l) \right] \ast \psi_k \otimes \tilde{\psi}_{l,n}(x,y).
\end{align*}
Here the coefficients $|c_n| \lesssim n^{-M}$, and $\ds \psi_{k,n}(x)=\psi_k(x+2^{-k}n)$. Now notice that the RHS above becomes
\begin{align*}
\sum_n c_n \sum_l Q_l^2 \tilde{\Pi}(P^y_{l,n} F, \tilde{\tilde{Q}}^y_{l,n} G)(x),
\end{align*}
which is a superposition of $\Pi \otimes \Pi$ operators.

The proof of the Leibniz rule follows from 
\begin{itemize}
\item[1)](multiple) vector-valued estimates for the paraproduct
\[
\tilde{\Pi}(f,g)=\sum_l \left[ (f \ast \varphi_l)\cdot (g \ast \tilde{\tilde{\psi}}_l)   \right] \ast \tilde{\psi}_l
\]
\item[2)] the boundedness of the shifted maximal and square functions:
\[
\|  \sup_l \vert f \ast \varphi_{l,n}\vert \| _{p} \lesssim \log \langle n\rangle \|f\|_{p}, \quad \| \left(  \sum_l  |f \ast \tilde{\tilde{\psi}}_{l,n}|^2  \right)^{1/2}\|_p \lesssim \log \langle n\rangle \|f\|_p.
\] 
\end{itemize}

Returning to the Leibniz rules, we have for $s_1, s_2 \geq 1$
\begin{align*}
&\|  \|  D_1 ^\alpha D_2^\beta(f,g)\|_{L_y^{s_2}}\|_{L_x^{s_1}}\leq \sum_n |c_n| \|   \|  \sum_l Q_l^2 \tilde{\Pi}(P^y_{l,n} F,\tilde{\tilde{ Q}}_{l,n}^y G)     \|_{L_y^{s_2}}\|_{L_x^{s_1}}  \\
\lesssim & \sum_n |c_n|  \|  \| \left( \sum_l  \vert \tilde{\Pi}^{\beta_1, \beta_2}(P^y_{l,n} F, \tilde{\tilde{Q}}_{l,n}^y G)  \vert^2  \right)^{1/2}\|_{L_y^{s_2}}\|_{L_x^{s_1}} \\
\lesssim & \sum_n |c_n|  \| \| \sup_l \vert P_{l,n}^y F  \vert \|_{L_y^{p_2}}\|_{L_x^{p_1}} \|  \|  \left( \sum_l \vert \tilde{\tilde{Q}}_{l,n}^yG  \vert^2 \right)^{1/2} \|_{L_y^{q_2}}\|_{L_x^{q_1}}\\
\lesssim & \|f  \|_{L_x^{p_1}L_y^{p_2}}    \|D_1^{\alpha}D_2^{\beta}g \|_{L_x^{q_1}L_y^{q_2}}.
\end{align*}
Here we used the vector-valued estimates 
\[
\tilde{\Pi}: L^{p_1}_x\left( L^{p_2}_y \left( \ell^\infty \right)   \right)\times  L^{q_1}_x\left( L^{q_2}_y \left( \ell^2 \right)   \right) \to  L^{s_1}_x\left( L^{s_2}_y \left( \ell^2\right)   \right),
\]
as well as the boundedness of the square function and maximal operator. We note that the square function is in the $y$ variable, and for that reason at first we cannot allow $p_2=\infty$ or $q_2=\infty$. However, this obstruction can be removed by using duality.

The same proof works in the case $\frac{1}{2}<s_1<1$, if $1< p_2, q_2<\infty$. In this case, we use the subadditivity of $\| \cdot \|_{s_1}^{s_1}$. The case $\frac{1}{2}<s_1<1$ and $p_2=\infty$ requires a slightly different reasoning, and can be deduced from the corresponding mixed norm estimates for $\Pi \otimes \Pi$. This will be presented at the end of this section.

A slightly more difficult case of the Leibniz rule is when one of the last components is a $\varphi-$type function:
\begin{align*}
D_1^\alpha D_2^\beta (f \cdot g)(x,y)& \sim \sum_{k,l} \left[ (f \ast \psi_k \otimes \varphi_l) \cdot (g \ast  \psi_l \otimes \psi_l   )\right] \ast \left(  D_1^\alpha \varphi_k \otimes D_2^\beta \psi_l \right)(x,y) \\
&=\sum_{k,l}\left[ \left( f \ast \psi_k \otimes \varphi_l  \right) \cdot \left( g \ast \psi_k \otimes \psi_l  \right)  \right] \ast
\left( 2^{k \alpha} \tilde{\varphi}_k \otimes 2^{l \beta} \tilde{\psi}_l  \right)(x,y).
\end{align*}
In this case $\ds \widehat{\tilde{\varphi}}_k(\xi)=\frac{|\xi|^\alpha}{2^{k \alpha}} \hat{\varphi}_k(\xi)$, but $\tilde{\varphi}$ doesn't behave as nicely as $\tilde{\psi}$; it is not smooth at the origin, and for that reason its decay is much slower:
\[
\big \vert \tilde{\varphi}(x)  \big \vert \leq \frac{1}{ \left( 1+|x| \right)^{1+\alpha}}.
\]
We use a Fourier series decomposition of $\widehat{\tilde{\varphi}}_k$ on its support
\[
\widehat{\tilde{\varphi}}_k(\xi)=\sum_{n \in \rr{Z}} c_n e^{\frac{2 \pi i n \xi}{2^k}} \cdot \hat{\varphi}_k(\xi), \quad \text{where } \quad c_n=\frac{1}{2^k} \int_{\rr{R}} \widehat{\tilde{\varphi}}_k(\xi) e^{-\frac{2 \pi i n \xi}{2^k}} d \xi.
\]
In this case we only have $\ds \vert  c_n  \vert \leq \frac{1}{n^{1+\alpha}}$, but this is enough for the coefficients to sum up, if $s_1>\dfrac{1}{1+\alpha}$. Since $s_2 \geq 1$, we will not have a similar issue when doing the decomposition in the second variable.

Following the same line of ideas, the problem reduces to estimating 
\[
\sum_n c_n \sum_k P_k^1 \tilde{\Pi}(\tilde{\tilde{Q}}^x_{k,n} F, Q^x_{k,n}G)(y),
\]
and it would imply ``mixed square functions" estimates of the form 
\[
\big \| \left( \sum_n \vert Q^x_{k,n} G  \vert^2 \right)^{1/2}   \big \|_{L_x^{q_1}L_y^{q_2}}.
 \]
This is bounded as long as $1<q_1, q_2< \infty$, and in order to recover the case $p_i=\infty$ or $q_i=\infty$ we want to make sure that the square functions are in the inner-most variable, which is $y$. So we need a decomposition of $\tilde{\psi}_l$, as before. Also, we will need vector-valued estimates for the ``generalized paraproduct" 
\[
(f, g) \mapsto \sum_k \left( f \ast \psi_k \cdot g \ast \psi_k    \right) \ast \tilde{\varphi_k},
\]
where the last component $\tilde{\varphi}$ has slow decay. The vector spaces involved are $(\ell^2, \ell^\infty, \ell^2)$ or $(\ell^2, \ell^2, \ell^1)$, and such estimates can be proved using ideas similar to those in Section \ref{paraproduct results}, modulo standard technical difficulties, as discussed in \cite{multilinear_harmonic}. 

We now present the proof of the mixed norm estimates for the biparameter paraproducts:

\begin{proof}[Proof of  Theorem \ref{bi-parameter paraproducts in L^p spaces with mixed norms}]
 
 Since the other case are very similar, we can assume that $\Pi_y$, the paraproduct acting on the variable $y$ is of the form 
\[
\Pi_y\left( \cdot , \cdot \right)= \sum_k Q_k \left( P_k \left( \cdot  \right) , Q_k \left( \cdot \right) \right).
\]  
Then we can write $\Pi \otimes \Pi$ as $\Pi \otimes \Pi(f, g)(x, y)=\sum_k Q_k^2 \Pi \left( P_k^y, Q_k^y \right)(x)$. Then we have
\begin{align*}
&\left\|    \left\|    \sum_k Q_k^2 \Pi \left( P_k^y, Q_k^y \right)(x)    \right\|_{L_y^{s_2}}   \right\|_{L_x^{s_1}} \lesssim 
\left\|    \left\|  \left(  \sum_k  \lft \Pi \left( P_k^y, Q_k^y \right)(x)\rg^2 \right)^{1/2} \right\|_{L_y^{s_2}}   \right\|_{L_x^{s_1}}\\
& \lesssim  \left\| \left\|  \sup_k \lft  P_k^y f(x) \rg \right\|_{L_y^{p_2}} \right\|_{L_x^{p_1}} \cdot  \left\|    \left\|    \left( \sum_k \lft Q_k^y g(x)  \rg^2  \right)^{1/2}    \right\|_{L_y^{q_2}}   \right\|_{L_x^{q_1}}.
\end{align*}

In the above inequality we used the multiple vector-valued estimate
\[
\Pi_x : L_x^{p_1}\left(  L_y^{p_2} \left(  \ell^\infty \right) \right) \times  L_x^{q_1}\left(  L_y^{q_2} \left(  \ell^2 \right) \right) \to L_x^{s_1}\left(  L_y^{s_2} \left(  \ell^2 \right) \right),
\]
which is a consequence of Theorem \ref{multiple vector valued paraproducts}.

Now we focus on the case $p_2=\infty, 1<q_2=q<\infty$, since $q_2=\infty$ is symmetric. We want to prove that 
\[
\Pi \otimes \Pi : L_x^{p_1}L_y^{\infty} \times L_x^{q_1}L_y^{q} \to L_x^{s_1}L_y^q,
\]
by using Banach-valued restricted type interpolation. That is, for any $F, G, H$ sets of finite measure, we can find a major subset $H' \subseteq H$, and we will prove that 
\begin{equation}
\label{eq:ineq-paraprod-bidisc-restricted} 
\lft \int_{\rr R ^2} \Pi \otimes \Pi \left( f, g \right)(x,y) h(x,y) dx dy \rg \lesssim \lft F \rg^{\alpha_1} \lft G \rg^{\alpha_2} \lft H  \rg^{\alpha_3}
\end{equation}
for any functions $f, g$ and $h$ satisfying 
\[
\left\| f\left(x, \cdot\right)  \right\|_{L_y^\infty} \leq \one_{F}(x), \qquad \left\| g\left(x, \cdot\right)  \right\|_{L_y^q} \leq \one_{G}(x), \qquad \left\| h\left(x, \cdot\right)  \right\|_{L_y^{q'}} \leq \one_{H'}(x),
\]
and $\left( \alpha_1, \alpha_2, \alpha_3  \right)$ any tuple satisfying $\alpha_1+\alpha_2+\alpha_3=1$, situated in the neighborhood of $\left( \frac{1}{p_1}, \frac{1}{p_2}, \frac{1}{p'}\right)$.
 
A triple stopping time similar to the one appearing in the proof of Theorem \ref{vector valued BHT} will allow us to recover any exterior $L^{p_j}_x$ norms, while the interior norms are fixed: $L^\infty_y, L^q_y, L^q_y$.

We will consider \emph{localizations} of the paraproduct acting on the $x$ variable. More exactly, the following estimate, the proof of which is a combination of Proposition \ref{Localization for local L^1 paraproducts} and $L^p$ estimates for $\Pi \otimes \Pi$, is key:

If $I_0$is a fixed dyadic interval, then $\ds \Pi_{I_0}^{F, G, H'} \otimes \Pi : L_x^{\infty}L_y^\infty \times L_x^q L_y^q \to L_x^q L_y^q$ with operatorial norm
\[
\left\| \Pi_{I_0}^{F, G, H'} \otimes \Pi \right\|_{L_x^\infty L_y^\infty \times L_x^q L_y^q \to L_x^q L_y^q} = \left\| \left(\Pi_{I_0}^{F, G, H'} \otimes \Pi \right)^{*, 1}\right\|_{L_x^{q'} L_y^{q'}\times L_x^q L_y^q \to L_x^1 L_y^1}.
\]

The latter is bounded above by 
\[
 \left\| \left(\Pi_{I_0}^{F, G, H'} \otimes \Pi \right)^{*, 1}\right\|_{L_x^{q'} L_y^{q'}\times L_x^q L_y^q \to L_x^1 L_y^1} \lesssim \ \left( \sssize_{I_0} \one_{H'}\right)^{\frac{1}{q}-\epsilon}\left( \sssize_{I_0} \one_{G}\right)^{\frac{1}{q'}-\epsilon} \left( \sssize_{I_0} \one_{F}\right)^{1-\epsilon},
\]
which is a consequence of the localized multiple vector-valued estimates that always appear in the iterative step of the helicoidal method.

More exactly, we have
\begin{align*}
\lft \Pi_{I_0}^{F, G, H} \otimes \Pi (f, g)(x,y) h(x,y) dx dy\rg &\lesssim  
\left( \sssize_{I_0} \one_{H'}\right)^{\frac{1}{q}-\epsilon}\left( \sssize_{I_0} \one_{G}\right)^{\frac{1}{q'}-\epsilon} \left( \sssize_{I_0} \one_{F}\right)^{1-\epsilon} \\
& \left\| \left\| h(x, \cdot) \right\|_{L_y^{q'}} \cdot \ci_{I_0} \right\|_{L_x^{q'}} \cdot \left\| \left\| g(x, \cdot) \right\|_{L_y^{q}} \cdot \ci_{I_0} \right\|_{L_x^{q}} \left\| f(\cdot, \cdot) \right\|_{L_x^\infty L_y^\infty}.
\end{align*}
This implies, after performing the usual stopping times that
\begin{align*}
& \lft \int_{\rr R ^2} \left( \Pi \otimes \Pi  \right)(f, g)(x,y) h(x, y) dx dy \rg \lesssim \sum_{n_1, n_2, n_3} \sum_{I_0} \lft \int_{\rr R ^2} \left( \Pi_{I_0}^{F, G, H'} \otimes \Pi  \right)(f, g)(x,y) h(x, y) dx dy \rg\\
&\lesssim \sum_{n_1, n_2, n_3} \sum_{I_0} \left( \sssize_{I_0} \one_{F}\right)^{1-\epsilon} \left( \sssize_{I_0} \one_{G}\right)^{1-\epsilon} \left( \sssize_{I_0} \one_{H'}\right)^{1-\epsilon} \cdot \lft I_0 \rg.
\end{align*}
From here, the desired $L^p$ estimates follow almost immediately. 

\end{proof}

\section{Rubio de Francia Theorem for Iterated Fourier Integrals}
\label{Rubio de Francia Theorem for Iterated Fourier Integrals}

We end by answering the initial question that motivated the study of vector-valued $BHT$. More exactly, we prove Theorem \ref{main theorem}, which is a consequence of Theorem \ref{vector valued BHT}, with $r_1, r_2$ chosen carefully so that $\ds \frac{1}{r_1}+\frac{1}{r_2}=\frac{1}{r}$.

\begin{proof}[Proof of Theorem \ref{main theorem}]
We start with the case $r\geq 2$; this follows from Theorem \ref{vector valued BHT}:
\begin{equation}
\label{zzz}
\| \left( \sum_k |BHT(P_{I_k}f, P_{I_k}g)(x)|^2  \right)^{1/2}\|_s \lesssim \|\left(\sum_k |P_{I_k} f|^{r_1} \right)^{1/{r_1}}\|_p \|\left(\sum_k |P_{I_k} g|^{r_2} \right)^{1/{r_2}}\|_q, 
\end{equation}
for any $1<p, q< \infty, \frac{2}{3}<s < \infty$.

This is implied by Rubio de Francia's theorem, if one can find $r_1$ and $r_2$ with $\ds \frac{1}{r_1}+\frac{1}{r_2}=\frac{1}{2}$ and
\[
\frac{1}{p}< \frac{1}{r_1'}, \quad \frac{1}{q}<\frac{1}{r_2'}.
\]
This is possible as long as $\ds\frac{1}{s} =\frac{1}{p}+\frac{1}{q}< \frac{1}{r_1'}+\frac{1}{r_2'}=\frac{3}{2}$, which coincides with the condition that we have for the range of $BHT$.

The case $1\leq r <2$ is similar; for $p, q, $ and $s$ as above, one needs to find $r_1$ and $r_2 \geq 2$ so that
\[
2- \frac{1}{r}=\frac{1}{r_1'}+\frac{1}{r_2'}> \frac{1}{p}+\frac{1}{q}.
\]
Note that  $ \ds \frac{1}{p}< \frac{1}{r_1'} = 1-\frac{1}{r}+\frac{1}{r_2} \leq \frac{1}{r'}+\frac{1}{2}$, and similarly for $q$. Because of this restriction, the operator $T_r$ is bounded as long as admissible triple $\ds (\frac{1}{p}, \frac{1}{q}, \frac{1}{s'})$ is in the convex hull of the points
\[
\left( 0, 0, 1 \right), \left( \frac{1}{2}+ \frac{1}{r'}, \frac{1}{2}, -\frac{1}{r'} \right), \left( \frac{1}{2}, \frac{1}{2}+\frac{1}{r'}, -\frac{1}{r'} \right), \left( \frac{1}{2}+ \frac{1}{r'}, 0, \frac{1}{2} -\frac{1}{r'} \right), \left( 0, \frac{1}{2}+\frac{1}{r'},  \frac{1}{2}-\frac{1}{r'} \right).
\]

\end{proof}

\begin{remark}
An alternative way of proving the boundedness of $T_r$ within the range mentioned in Theorem \ref{main theorem} is by interpolating between
\begin{align}
&L^{p_1}\times L^{q_1} \to L^{s_1}(\ell^2) \quad \text{with } p_1, q_1, s_1 \text{ in the range of the $BHT$ operator  and} \label{l^2$ estimate}\\
&L^{p_2}\times L^{q_2} \to L^{s_2}(\ell^1) \quad \text{with } p_2, q_2>1,  s_2 \geq 1. \label{l^1 estimate}
\end{align}
\end{remark}

\subsection{Boundedness of operators $M_1$ and $M_2$}

In what follows we prove the boundedness of operators $M_1$ and $M_2$ presented in \eqref{breaking op 1} and \eqref{breaking op 2}:
\[
M_1(f_1, f_2, g)(\xi)=\sum_{\omega} \int_{\substack{ x_1< x_2, x_1, x_2 \in \omega_L \\ x_3 \in \omega_R }} \hat{f}_1(x_1) \hat{f}_2(x_2) g(x_3) e^{2 \pi i \xi (x_1+x_2+x_3)} d x_1 d x_2 d x_3 
\]
and 
\[
M_2(f_1, f_2, g)(\xi)=\sum_{\omega} \int_{\substack{x_1< L(\omega_L), x_2 \in \omega_L \\ x_3 \in \omega_R}} \hat{f}_1(x_1) \hat{f}_2(x_2) g(x_3) e^{2 \pi i \xi (x_1+x_2+x_3)} d x_1 d x_2 d x_3
\]

For both operators, we are going to use the triangle inequality in $L^r$, the target space for operators $M_1$ and $M_2$. However, if $r<1$ this inequality is not available anymore for the quasi-norm $\| \cdot \|_r$ and instead we use the triangle inequality  for $\| \cdot \|_r^r$. This is the only difference between the Banach and quasi-Banach case, and for simplicity we assume $r \geq 1$. Also, as previously stated, we assume $\ds \|g\|_p=1$.

\begin{proposition}
\label{boundedness of M_1}
Let $1<p<2$ and $\ds \frac{1}{r}=\frac{1}{s}+\frac{1}{p'}=\frac{1}{p_1}+\frac{1}{p_2}+\frac{1}{p'}$. Then
\[
\| M_1(f_1, f_2, g) \|_r \lesssim \|f_1\|_{p_1} \|f_2\|_{p_2} \|g\|_{p}.
\]
\begin{proof}
Recall that $\omega \in \mathcal{D}$ is the mesh of dyadic intervals contained in $[0,1]$, and we identify them with their preimage: $\omega \sim \varphi^{-1}(\omega)$. We rewrite $M_1$ as
\[
M_1(f_1, f_2, g)(\xi)=\sum_{\omega} BHT(P_{\omega_L} f_1, P_{\omega_L} f_2)(\xi) \cdot \widehat{g \cdot \one_{\omega_R}}(\xi).
\]
 Then
\begin{align*}
\| M_1(f_1, f_2, g)  \|_r &\lesssim \sum_{k \geq 0} \| \sum_{|\omega|=2^{-k}} BHT(P_{\omega_L} f_1, P_{\omega_L} f_2) \cdot \widehat{g \cdot \one_{\omega_R}} \|_r  \\
& \lesssim \sum_{k \geq 0} \| \left(  \sum_{|\omega|=2^{-k}} \vert BHT(P_{\omega_L} f_1, P_{\omega_L} f_2)   \vert ^{p}  \right)^{1/p} \cdot \left( \sum_{|\omega|=2^{-k}} \vert  \widehat{g \cdot \one_{\omega_R}}  \vert^{p'}  \right)^{1/{p'}}     \|_r  \\
&\lesssim \sum_{k \geq 0} \| \left(  \sum_{|\omega|=2^{-k}} \vert BHT(P_{\omega_L} f_1, P_{\omega_L} f_2)   \vert ^{p}  \right)^{1/p}  \|_s   \cdot \left( \sum_{|\omega|=2^{-k}} \|  \widehat{g \cdot \one_{\omega_R}} \|_{p'}^{p'}   \right)^{1/{p'}}
\end{align*}
We estimate $\ds \| \widehat{g \cdot \one_{\omega_R}}  \|_{p'} \lesssim \|  g \cdot \one_{\omega_R} \|_p=2^{- \frac{k}{p}}$ using the Hausdorff-Young theorem. Also, there are $2^k$ dyadic intervals of length $2^{-k}$ in $[0,1]$ and because of this
\[
\|M_1(f_1, f_2, g)\|_r \lesssim \sum_{k \geq 0} 2^{-k\left( \frac{1}{p}-\frac{1}{p'}   \right)} \|  \left(  \sum_{|\omega|=2^{-k}} \vert BHT(P_{\omega_L} f_1, P_{\omega_L} f_2)   \vert ^{p}  \right)^{1/p} \|_s.
\]
If we estimate the last term using the operator $T_p$ directly, we will not obtain the full range stated above, as there will appear extra constraints of the type 
\[
\frac{1}{p_1}+\frac{1}{p} < \frac{3}{2}, \quad \frac{1}{p_2}+\frac{1}{p} < \frac{3}{2}.
\]
Instead, using H\"{o}lder and the fact that $1<p<2$, we have $$\ds \|BHT(P_{\omega_L}f_1, P_{\omega_L} f_2) \|_{\ell^p(\omega)} \leq \|BHT(P_{\omega_L}f_1, P_{\omega_L} f_2) \|_{\ell^2(\omega)} \cdot 2^{k \left( \frac{1}{p}-\frac{1}{2}  \right)}.$$

Using the boundedness of $T_2$, we have $\ds \|M_1(f_1, f_2, g)\|_r \lesssim \sum_{k \geq 0} 2^{-k\left( \frac{1}{2}-\frac{1}{p'} \right)} \|f_1\|_{p_1} \|f_2\|_{p_2}$.
\end{proof}
\end{proposition}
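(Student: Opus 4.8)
\textbf{Proof proposal for Proposition \ref{boundedness of M_1}.}

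The plan is to exploit the structure already exhibited in the problem: writing $M_1(f_1,f_2,g)(\xi)$ as a sum over dyadic scales $k\geq0$ of the expression $\sum_{|\omega|=2^{-k}} BHT(P_{\omega_L}f_1,P_{\omega_L}f_2)(\xi)\cdot\widehat{g\cdot\one_{\omega_R}}(\xi)$, and estimating each scale separately with a geometrically decaying constant. First I would apply the triangle inequality in $L^r$ (assuming $r\geq1$; for $r<1$ one uses subadditivity of $\|\cdot\|_r^r$ exactly as noted), reducing matters to bounding $\|\sum_{|\omega|=2^{-k}} BHT(P_{\omega_L}f_1,P_{\omega_L}f_2)\cdot\widehat{g\cdot\one_{\omega_R}}\|_r$ for each fixed $k$. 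Since the intervals $\{\omega_R\}_{|\omega|=2^{-k}}$ are pairwise disjoint, I apply H\"older's inequality pointwise in $\xi$ with exponents $p$ and $p'$ to split the product into $\ell^p(\omega)$ and $\ell^{p'}(\omega)$ pieces, then H\"older in $\xi$ with exponents $s$ and $p'$ (using $1/r=1/s+1/p'$) to separate the two factors into $\|\,\cdot\,\|_{\ell^p L^s}$ and $\|\,\cdot\,\|_{\ell^{p'}L^{p'}}$.

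Next I would dispatch the easy factor: by Hausdorff--Young (valid since $1<p<2$), $\|\widehat{g\cdot\one_{\omega_R}}\|_{p'}\lesssim\|g\cdot\one_{\omega_R}\|_p = 2^{-k/p}$ because $\|g\|_p=1$ and $|\omega_R|=2^{-k-1}$; summing over the $2^k$ intervals of length $2^{-k}$ gives $\big(\sum_{|\omega|=2^{-k}}\|\widehat{g\cdot\one_{\omega_R}}\|_{p'}^{p'}\big)^{1/p'}\lesssim (2^k\cdot 2^{-kp'/p})^{1/p'} = 2^{-k(1/p-1/p')}$. For the $BHT$ factor, the key trick — which is the only real subtlety — is \emph{not} to invoke the operator $T_p$ (i.e. $\vBHT_{(p_1,p_2,p)}$) directly, since that would force the spurious constraints $\tfrac{1}{p_1}+\tfrac1p<\tfrac32$ and $\tfrac1{p_2}+\tfrac1p<\tfrac32$ coming from $Range(BHT)$. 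Instead, since $1<p<2$ and there are $2^k$ terms, I use the elementary embedding $\|(a_\omega)\|_{\ell^p}\leq \|(a_\omega)\|_{\ell^2}\cdot (2^k)^{1/p-1/2}$ to pass from $\ell^p(\omega)$ to $\ell^2(\omega)$, paying a factor $2^{k(1/p-1/2)}$. Then I apply the $\ell^2$-valued estimate for $BHT$ — that is, the $r\geq2$ case of Theorem \ref{main theorem}, equivalently $T_2:L^{p_1}\times L^{p_2}\to L^s(\ell^2)$, which holds because $\tfrac1{p_1}+\tfrac1{p_2}=\tfrac1s<\tfrac32$ — to get $\|(\sum_\omega |BHT(P_{\omega_L}f_1,P_{\omega_L}f_2)|^2)^{1/2}\|_s\lesssim\|f_1\|_{p_1}\|f_2\|_{p_2}$, uniformly in $k$.

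Finally I would collect the exponents: the $k$-th term is bounded by $2^{-k(1/p-1/p')}\cdot 2^{k(1/p-1/2)}\cdot\|f_1\|_{p_1}\|f_2\|_{p_2} = 2^{-k(1/2-1/p')}\|f_1\|_{p_1}\|f_2\|_{p_2}$, and since $p'>2$ we have $1/2-1/p'>0$, so the geometric series in $k$ converges and yields $\|M_1(f_1,f_2,g)\|_r\lesssim\|f_1\|_{p_1}\|f_2\|_{p_2}$ (recalling $\|g\|_p=1$, which restores the full $\|g\|_p$ by homogeneity). The main obstacle is precisely the observation that one must route through the $\ell^2$-valued $BHT$ estimate rather than the $\ell^p$-valued one: the former is available on the entire Rubio de Francia range (no Hölder-type obstruction beyond $1/s<3/2$), and the loss $2^{k(1/p-1/2)}$ incurred by the $\ell^p\hookrightarrow\ell^2$ embedding is more than compensated by the decay $2^{-k(1/p-1/p')}$ already present from the Hausdorff--Young bound on the $g$-factor, because $p'>2$. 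Everything else is bookkeeping of Hölder exponents, and the quasi-Banach range $r<1$ requires only the cosmetic change of summing $\|\cdot\|_r^r$ instead of $\|\cdot\|_r$.
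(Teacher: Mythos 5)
Your proposal is correct and follows essentially the same route as the paper's proof: the same scale-by-scale decomposition, the same double application of H\"older, the Hausdorff--Young bound giving $2^{-k(1/p-1/p')}$, and the key maneuver of passing from $\ell^p$ to $\ell^2$ at the cost of $2^{k(1/p-1/2)}$ so as to invoke the $\ell^2$-valued $BHT$ estimate ($T_2$) rather than $T_p$, with the net decay $2^{-k(1/2-1/p')}$ closing the argument. Nothing further is needed.
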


\begin{proposition}
\label{boundedness of M_2}
Let $1<p<2$ and $\ds \frac{1}{r}=\frac{1}{s}+\frac{1}{p'}=\frac{1}{p_1}+\frac{1}{p_2}+\frac{1}{p'}$. Then
\[
\| M_2(f_1, f_2, g) \|_r \lesssim \|f_1\|_{p_1} \|f_2\|_{p_2} \|g\|_{p},
\]
provided $\ds \frac{1}{p_2}+\frac{1}{p'}<1$.
\begin{proof}
First, we remark that
\[
|M_2(f_1, f_2, g)(\xi)| \leq \sum_\omega |C f_1(\xi)| \cdot |P_{\omega_L} f_2(\xi)| \cdot |\widehat{g \cdot \omega_R}(\xi)|,
\] 
where $C$ is the Carleson operator, bounded on $L^p$ whenever $1< p < \infty$. From here on the estimates are similar to those in Proposition \ref{boundedness of M_1}, but instead of the bilinear operator $T_r(f,g)$ we will have to use the more restrictive Rubio de Francia operator $RF_\nu$:

\begin{align*}
&\|M_2(f_1, f_2, g)\|_r \leq \sum_{k \geq 0} \| C f_1 \cdot \left( \sum_{|\omega|=2^{-k}} \vert P_{\omega_L} f_2  \vert^p    \right)^{1/p} \cdot \left( \sum_{|\omega|=2^{-k}} \vert \widehat{g \cdot \one_{\omega_R}}  \vert^{p'}  \right)^{1/{p'}}  \|_r  \\
&\leq \sum_{k \geq 0} \|  C f_1 \|_{p_1} \cdot \|  \left( \sum_{|\omega|=2^{-k}} \vert P_{\omega_L} f_2  \vert^p    \right)^{1/p} \|_{p_2} \cdot \left( \sum_{|\omega|=2^{-k}} \| \widehat{g \cdot \one_{\omega_R}}  \|_{p'}^{p'}   \right)^{1/{p'}} \\
&\leq \sum_{k \geq 0}  2^{k \left( \frac{1}{p}-\frac{1}{\nu}  \right)}\|  C f_1 \|_{p_1} \cdot \|  \left( \sum_{|\omega|=2^{-k}} \vert P_{\omega_L} f_2  \vert^\nu    \right)^{1/\nu} \|_{p_2} \cdot \left( \sum_{|\omega|=2^{-k}} \| \widehat{g \cdot \one_{\omega_R}}  \|_{p'}^{p'}   \right)^{1/{p'}} \\
&\leq \sum_{k \geq 0} 2^{-k \left( \frac{1}{\nu}-\frac{1}{p'}  \right)} \|f_1 \|_{p_1} \| RF_{\nu}( f_2) \|_{p_2}.
\end{align*}

If $p_2 \geq 2$,we can take $\nu =2$ and there are no other restrictions. In the case $p_2< 2$, Rubio de Francia requires $\ds \frac{1}{\nu}+\frac{1}{p_2} < 1$. This and the condition $\ds \frac{1}{\nu}-\frac{1}{p'} >0$ (so that the geometric series above is finite) can be summarized as $\ds \frac{1}{p_2}+\frac{1}{p'}<1$.

\end{proof}
\end{proposition}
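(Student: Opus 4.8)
\textbf{Proof proposal for Proposition \ref{boundedness of M_2}.}

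The plan is to follow the same template as the proof of Proposition \ref{boundedness of M_1}, exploiting the extra structure of $M_2$: the constraint $x_1 < L(\omega_L)$ decouples the first factor from the dyadic block, so instead of a bilinear Hilbert transform on $\omega_L$ we only see the Carleson operator $Cf_1$ applied globally, multiplied by the Fourier projection $P_{\omega_L}f_2$ and by $\widehat{g\cdot\one_{\omega_R}}$. First I would record the pointwise bound
\[
|M_2(f_1,f_2,g)(\xi)| \leq \sum_\omega |Cf_1(\xi)|\cdot|P_{\omega_L}f_2(\xi)|\cdot|\widehat{g\cdot\one_{\omega_R}}(\xi)|,
\]
which follows by writing the inner integral in $x_1$ (over $x_1 < L(\omega_L)$) as a truncated Fourier integral of $f_1$ and dominating the supremum over such truncations by the Carleson operator. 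Then, organizing the sum over dyadic $\omega$ by scale $|\omega|=2^{-k}$ (assuming $r\geq 1$ so the triangle inequality in $L^r$ is available; for $r<1$ one uses $\|\cdot\|_r^r$ exactly as noted), I would apply H\"older in the three factors with exponents $p_1$, $\nu$ (a Rubio de Francia exponent for the middle factor) and $p'$ for the $\ell^{p'}$-summed Hausdorff--Young piece.

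The key estimates are: (i) $\|\widehat{g\cdot\one_{\omega_R}}\|_{p'}\lesssim \|g\cdot\one_{\omega_R}\|_p$ by Hausdorff--Young, so $\big(\sum_{|\omega|=2^{-k}}\|\widehat{g\cdot\one_{\omega_R}}\|_{p'}^{p'}\big)^{1/p'}\lesssim \|g\|_p^{?}$ times a favourable power of $2^{-k}$ coming from $p<2$; (ii) boundedness of the Carleson operator $C$ on $L^{p_1}$ for $1<p_1<\infty$; (iii) replacing the $\ell^p$-norm of $\{P_{\omega_L}f_2\}$ by the $\ell^\nu$-norm at the cost of $2^{k(1/p-1/\nu)}$ (valid since $p<2\leq \nu$ or $\nu$ close to $2$), followed by the Rubio de Francia square-function-type bound $\|RF_\nu(f_2)\|_{p_2}\lesssim \|f_2\|_{p_2}$. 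Collecting the powers of $2^{-k}$, the geometric series over $k\geq 0$ converges precisely when $\tfrac{1}{\nu}-\tfrac{1}{p'}>0$. Matching this against the Rubio de Francia constraint $\tfrac1\nu + \tfrac{1}{p_2} < 1$ (needed when $p_2<2$; when $p_2\geq 2$ one simply takes $\nu=2$ with no extra condition) gives, after eliminating $\nu$, the single hypothesis $\tfrac{1}{p_2}+\tfrac{1}{p'}<1$.

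The main obstacle is the bookkeeping of exponents: one must verify that the chosen $\nu$ can simultaneously satisfy $\tfrac1\nu > \tfrac{1}{p'}$ (summability) and $\tfrac1\nu < 1-\tfrac{1}{p_2}$ (applicability of Rubio de Francia's theorem), and that the residual powers of $2^{-k}$ from Hausdorff--Young and from the $\ell^p\hookrightarrow\ell^\nu$ embedding combine with a positive total exponent. A direct computation shows the net exponent is $\tfrac1\nu - \tfrac{1}{p'}$ (the $2^{k(1/p-1/\nu)}$ gain from the embedding cancels the $2^{-k/p}$-type loss against the count $2^k$ of intervals, leaving exactly the Hausdorff--Young surplus), so the interval of admissible $\nu$ is nonempty exactly when $\tfrac{1}{p'} < 1 - \tfrac{1}{p_2}$, i.e. $\tfrac{1}{p_2}+\tfrac{1}{p'}<1$, as claimed. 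Everything else is routine: Hausdorff--Young, the $L^p$-boundedness of the Carleson operator, and Rubio de Francia's inequality are all classical and quoted in the introduction. Since $M = M_1 + M_2$, combining Propositions \ref{boundedness of M_1} and \ref{boundedness of M_2} yields the boundedness of $M$ under the more restrictive hypothesis $\tfrac{1}{p_2}+\tfrac{1}{p'}<1$, completing the theorem.
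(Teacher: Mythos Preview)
Your proposal is correct and follows essentially the same route as the paper's own proof: the pointwise domination by $|Cf_1|\cdot|P_{\omega_L}f_2|\cdot|\widehat{g\cdot\one_{\omega_R}}|$, the scale-by-scale H\"older splitting, Hausdorff--Young on the third factor, the $\ell^p\hookrightarrow\ell^\nu$ embedding, Rubio de Francia on the middle factor, and the case analysis $p_2\geq 2$ versus $p_2<2$ all appear in the paper in the same order with the same resulting exponent $\tfrac{1}{\nu}-\tfrac{1}{p'}$. The only cosmetic slip is your phrase ``H\"older with exponents $p_1,\nu,p'$'': the H\"older step in $\xi$ is with $p_1,p_2,p'$ (matching $\tfrac1r=\tfrac1{p_1}+\tfrac1{p_2}+\tfrac1{p'}$), and the passage to $\nu$ is the separate $\ell^p\hookrightarrow\ell^\nu$ embedding you invoke in (iii); your exponent bookkeeping afterwards is nonetheless correct.
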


\

\bibliographystyle{alpha}
\bibliography{../../../../work/harmonic.bib}

\newcommand{\etalchar}[1]{$^{#1}$}
\begin{thebibliography}{RdFRT86}

\bibitem[Bat13]{Bateman-annulus}
Michael Bateman.
\newblock Single annulus {$L^p$} estimates for {Hilbert} transforms along
  vector fields.
\newblock {\em Rev. Mat. Iberoam.}, pages 1021--1069, 2013.

\bibitem[BCP62]{vv_convolution}
A.~Benedek, A.~Calder{\'o}n, and R.~Panzone.
\newblock Convolution operators on {Banach} space valued functions.
\newblock {\em Proc. Nat. Acad. Sci.}, pages 356–--365, 1962.

\bibitem[Bou83]{Bourk83}
D.~Bourkholder.
\newblock A geometric condition that implies the existence of certain singular
  integrals of {Banach}-space-valued functions.
\newblock {\em Conf. on Harmonic Analysis in Honor of A. Zygmund, Chicago
  1981}, pages 270--286, 1983.

\bibitem[Bou86]{Bou86}
Jean Bourgain.
\newblock Vector valued singular integrals and the {$H_1-BMO$} duality.
\newblock {\em Probability Theory and Harmonic Analysis}, pages 1--19, 1986.

\bibitem[BT13]{BatemanThiele2011}
Michael Bateman and Christoph Thiele.
\newblock {$L^p$} estimates for the {Hilbert} transforms along a one-variable
  vector field.
\newblock {\em Anal. PDE}, pages 1577--1600, 2013.

\bibitem[Car66]{initial_Carleson}
Lennart Carleson.
\newblock On convergence and growth of partial sums of {Fourier} series.
\newblock {\em Acta. Math.}, pages 135--157, 1966.

\bibitem[CDPO16]{weighted_BHT}
Amalia Culiuc, Francesco Di~Plinio, and Yumeng Ou.
\newblock Domination of multilinear singular integrals by positive sparse
  forms.
\newblock \url{http://arxiv.org/abs/1603.05317}, 2016.
\newblock Online; accessed June 2016.

\bibitem[CK98]{absolutely_continuous_spectrum}
Michael Christ and Alexander Kiselev.
\newblock Absolutely continuous spectrum for one-dimensional {Schr{\"o}dinger}
  operators with slowly decaying potentials: some optimal results.
\newblock {\em J. Amer. Math. Soc.}, 11(4):771--797, 1998.

\bibitem[CK01a]{maximal_multilinear_and_filtrations}
Michael Christ and Alexander Kiselev.
\newblock Maximal functions associated to filtrations.
\newblock {\em J. Funct. Anal.}, pages 409--425, 2001.

\bibitem[CK01b]{wkb}
Michael Christ and Alexander Kiselev.
\newblock W{K}{B} asymptotic behavior of almost all generalized eigenfunctions
  for one-dimensional {Schr{\"o}dinger} operators with slowly decaying
  potentials.
\newblock {\em J. Funct. Anal.}, 179(2):426--447, 2001.

\bibitem[CM97]{CoifMeyer-ondelettes}
R.~Coifman and Y.~Meyer.
\newblock {\em Wavelets, {Calder{\'o}n}-{Zygmund} Operators and Multilinear
  Operators}.
\newblock Cambridge University Press, 1997.

\bibitem[DPO15]{francesco_UMDparaproducts}
Francesco Di~Plinio and Yumeng Ou.
\newblock Banach-valued multilinear singular integrals.
\newblock \url{http://arxiv.org/abs/1506.05827}, 2015.
\newblock Online; accessed June 2016.

\bibitem[DS15]{DemSilvaLight}
Ciprian Demeter and Prabath Silva.
\newblock Some new light on a few classical results.
\newblock {\em Colloq. Math.}, 140(1):129--147, 2015.

\bibitem[Fer87]{vv_singular_integrals_product_kernel}
Dicesar~Lass Fernandez.
\newblock Vector-valued singular integral operators on {$L^p$}-spaces with
  mixed norms and applications.
\newblock {\em Pacific J. Math.}, 129(2):257--275, 1987.

\bibitem[GCF85]{RFCuervaBook}
José Garcia-Cuerva and José L. Rubio~De Francia, editors.
\newblock {\em Editor}, volume 116 of {\em North-Holland Mathematics Studies}.
\newblock North-Holland, 1985.

\bibitem[GL06]{bilinear_disc_multiplier}
Loukas Grafakos and Xiaochun Li.
\newblock The disc as a bilinear multiplier.
\newblock {\em Amer. J. Math.}, 128(1):91--119, 2006.

\bibitem[HL13]{Carleson_in_UMD_spaces}
Tuomas~P. Hyt{\"o}nen and Michael~T. Lacey.
\newblock Pointwise convergence of vector-valued {F}ourier series.
\newblock {\em Math. Ann.}, 357(4):1329--1361, 2013.

\bibitem[HLP13]{vv_quartile}
Tuomas Hyt{\"o}nen, Michael Lacey, and Ioannis Parissis.
\newblock The vector valued quartile operator.
\newblock {\em Collect. Math.}, 64:427--454, 2013.

\bibitem[Jou85]{JournCZopRF2}
Jean-Lin Journ{\'e}.
\newblock Calder{\'o}n-{Z}ygmund operators on product spaces.
\newblock {\em Revista Matemática Iberoamericana}, 1(3):55--91, 1985.

\bibitem[Kes15]{Robert-MixedDeg}
Robert Kesler.
\newblock Mixed estimates for degenerate multi-linear operators associated to
  simplexes.
\newblock {\em J. Math. Anal. Appl.}, 424(1):344--360, 2015.

\bibitem[KPV93]{kenig1993_vvLeibniz}
Carlos Kenig, Gustavo Ponce, and Luis Vega.
\newblock Well-posedness and scattering results for the generalized
  {Korteweg}-de {Vries} equation via the contraction principle.
\newblock {\em Comm. Pure Appl. Math.}, pages 527--620, 1993.

\bibitem[Ler13]{Lerner-simplerA_2}
Andrei~K. Lerner.
\newblock A simple proof of the {$A_2$} conjecture.
\newblock {\em Int. Math. Res. Not. IMRN}, (14):3159--3170, 2013.

\bibitem[LT99]{initial_BHT_paper}
Michael Lacey and Christoph Thiele.
\newblock On {C}alder\'on's conjecture.
\newblock {\em Ann. of Math. (2)}, 149(2):475--496, 1999.

\bibitem[MPTT04]{bi-parameter_paraproducts}
Camil Muscalu, Jill Pipher, Terence Tao, and Christoph Thiele.
\newblock Bi-parameter paraproducts.
\newblock {\em Acta Mathematica}, pages 269--296, 2004.

\bibitem[MPTT06]{multi-parameter_paraproducts}
Camil Muscalu, Jill Pipher, Terence Tao, and Christoph Thiele.
\newblock Multi-parameter paraproducts.
\newblock {\em Rev. Mat. Iberoamericana}, pages 963--976, 2006.

\bibitem[MS13]{multilinear_harmonic}
Camil Muscalu and Wilhem Schlag.
\newblock {\em Classical and {Multilinear} {Harmonic} {Analysis}}.
\newblock Cambridge University Press, 2013.

\bibitem[MTT04]{biest}
Camil Muscalu, Terence Tao, and Christoph Thiele.
\newblock {$L^p$} estimates for the {B}iest {I}{I}. {T}he {Fourier} case.
\newblock {\em Math. Ann.}, pages 427--461, 2004.

\bibitem[MTT06]{bi-Carleson}
Camil Muscalu, Terrence Tao, and Christoph Thiele.
\newblock The {Bi}-{Carleson} operator.
\newblock {\em Geom. Funct. Anal.}, 16(1):230--277, 2006.

\bibitem[OST{\etalchar{+}}12]{variational_Carleson}
R.~Oberlin, A.~Seeger, T.~Tao, C.~Thiele, and J.~Wright.
\newblock A variation norm {Carleson} theorem.
\newblock {\em J. Eur. Math. Soc.}, pages 421--464, 2012.

\bibitem[Pal31]{PaleyOrthFun}
R.~E. A.~C. Paley.
\newblock Some theorems on orthonormal functions.
\newblock {\em Studia Math.}, pages 226--245, 1931.

\bibitem[RdF85]{RF}
Jos{\'e} Rubio~de Francia.
\newblock A {Littlewood}-{Paley} {Inequality} for {Arbitrary} {Intervals}.
\newblock {\em Revista Matematica Iberoamericana}, 1(2):891--921, 1985.

\bibitem[RdFRT86]{vv_CZ}
Jos{\'e} Rubio~de Francia, F.~Ruiz, and J.~Torrea.
\newblock Calder{\'o}n-{Zygmund} theory for operator-valued kernels.
\newblock {\em Adv. in Math.}, pages 7--48, 1986.

\bibitem[Rua10]{multi-parameter_hardy_spaces}
Zhuoping Ruan.
\newblock Multi-parameter {H}ardy spaces via discrete {L}ittlewood-{P}aley
  theory.
\newblock {\em Anal. Theory Appl.}, 26(2):122--139, 2010.

\bibitem[Sil14]{vv_bht-Prabath}
Prabath Silva.
\newblock Vector valued inequalities for families of bilinear {Hilbert}
  transforms and applications to bi-parameter problems.
\newblock {\em J. Lond. Math. Soc.}, pages 695–--724, 2014.

\bibitem[Thi06]{wave_packet}
Christoph Thiele.
\newblock {\em Wave packet analysis}, volume 105 of {\em CBMS Regional
  Conference Series in Mathematics}.
\newblock Published for the Conference Board of the Mathematical Sciences,
  Washington, DC; by the American Mathematical Society, Providence, RI, 2006.

\bibitem[TW15]{TorresWardLeibniz}
Rodolfo~H. Torres and Erika~L. Ward.
\newblock Leibniz's rule, sampling and wavelets on mixed {L}ebesgue spaces.
\newblock {\em J. Fourier Anal. Appl.}, 21(5):1053--1076, 2015.

\end{thebibliography}

\end{document}